\newtheorem{thm}{Theorem}
\newtheorem{prop}{Proposition}
\newtheorem{lem}{Lemma}
\newtheorem{cor}{Corollary}
\theoremstyle{remark}
\newtheorem{rem}{Remark}
\newcommand{\C}{\mathbb{C}}
\newcommand{\OO}{\mathcal{ O}}
\newcommand{\R}{\mathbb{ R}}
\newcommand{\T}{\operatorname{T}}
\newcommand{\Z}{\mathbb{ Z}}
\DeclareMathOperator{\dummyO}{O}
\renewcommand{\O}{\dummyO}
\DeclareMathOperator{\dummyo}{o}
\renewcommand{\o}{\dummyo}
\DeclareMathOperator{\dummygg}{\mathfrak{g}}
\renewcommand{\gg}{\dummygg}
\DeclareMathOperator{\rank}{rank}
\DeclareMathOperator{\cone}{cone}
\DeclareMathOperator{\diag}{diag}
\title{Topology and geometry of the canonical action of $T^4$ on the complex Grassmannian $G_{4,2}$ and the complex projective space $\C P^{5}$}
\author{Victor M.~Buchstaber and Svjetlana Terzi\'c}
\begin{document}

\maketitle
\begin{abstract}

We consider the canonical action of the compact torus $T^4$ on the Grassmann manifold $G_{4,2}$ and prove that the orbit space $G_{4,2}/T^4$ is homeomorphic to the sphere $S^5$. We prove that the induced differentiable structure    
on $S^5$ is not the smooth one  and describe the smooth and the singular points. We also consider the action  of $T^4$ on
$\C P^5$ induced by the composition of the second symmetric power $T^4\subset T^6$  and the standard action of $T^6$ on $\C P^5$ and prove that the orbit space $\C P^5/T^4$ is homeomorphic to the join $\C P^2\ast S^2$. The Pl\"ucker embedding $G_{4,2}\subset \C P^5$ is equivariant for these actions and induces embedding $\C P^1\ast S^2 \subset \C P^2 \ast S^2$ for the standard embedding $\C P^1 \subset \C P^2$. 

All our constructions are compatible with the involution given by the complex conjugation and give the  corresponding results 
for real Grassmannian $G_{4,2}(\R )$ and real projective space $\R P^5$ for the action of the group $\Z _{2}^{4}$. We prove that the orbit space $G_{4,2}(\R )/\Z _{2}^{4}$ is homeomorphic to the sphere $S^4$ and that the orbit space $\R P^{5}/\Z _{2}^{4}$ is homeomorphic to the join $\R P^2\ast S^2$.
\footnote{MSC[2000]: 57S25, 57N65, 53D20, 57B20, 14M25, 52B11}
\footnote{keywords: torus action, orbit, space, Grassmann manifold, complex projective space}
\end{abstract}

\tableofcontents
\section{Introduction}

As the natural extension of the theories of toric and quasitoric manifolds, as well as the theory of symplectic manifolds with Hamiltonian compact torus action, we proposed  in~\cite{OB} the theory of $(2n,k)$-manifolds. This theory  studies the  smooth manifolds $M^{2n}$ with  a smooth action  of a compact torus $T^k$ and an analogous of a moment map $M^{2n}\to \R ^k$. The main examples of $(2n,k)$-manifolds are Grassmann manifolds and one of our key aims is to apply the achievements of toric geometry and topology for the solution of the well known problems on Grassmann manifolds.  In that context our aim in this paper is to extract the properties from the  crucial examples of such manifolds and to describe the corresponding orbit spaces. 

More precisely, our focus in this paper  is on the complex  Grassmann manifold $G_{4,2}$ and the complex projective space $\C P^5$ endowed with  the action of the compact torus $T^4$. In the case of $G_{4,2}$ we consider the canonical action of the torus $T^4$, while in the case of $\C P^5$ the action of $T^4$ is given by the composition of the second symmetric power $T^4\subset T^6$  and the standard action of $T^6$ on $\C P^5$. In both cases the action extends to the action of the algebraic torus $(\C ^{*})^4$. 

Our approach makes us possible to obtain the results about the geometry and topology of these actions using the methods of toric geometry and toric topology.  In connection with this we want to recall  the paper~\cite{GFMC}, where  it were introduced and studied  some polyhedra in the real Grassmann manifolds which are called Grassmannian simplices. For an arbitrary Grassmann manifold these polyhedra are obtained as the generalization of the standard simplices in the real projective space. We want here to emphasize that such simplices exist for any smooth toric manifold. Namely, for a smooth toric manifold $M^{2n}$ with the moment map $\mu : M^{2n}\to P^{n}$ there exists a continious section $s : P^{n}\to M^{2n}$, $\mu \circ s = I_{d}$, such that this section gives equivariant surjection 
\[
(T^{n}\times P^{n})\to M^{2n},\;\; (t,p) \to t\cdot s(p).
\]
In that sense the simplices considered in~\cite{GFMC} are the reflexion of the general fact in toric topology. 

Therefore, our aim is not only to obtain the results on geometry and topology of the compact torus action on a smooth manifold, but also to study them from the point of view of the existence of the corresponding sections. 

On the contrary to the case of toric manifold, where the section exists over the whole polytope $P^{n}$, it the case  of  real or complex Grassamnn manifold $G_{n,k}$ such section over $P^{n-1}$ which is hypersimplex $\Delta _{n,k}$  does not exist. Instead, in this paper we will consider the cell complex for $G_{4,2}$ consisting of the Grassmannian simplices in terminology of~\cite{GFMC} and  the question of the existence of the section over such complex. 

In the case of the Grassmannian $G_{4,2}$ we use the standard moment map arising from the Pl\"ucker embedding of $G_{4,2}$ in $\C P^5$. The image of this moment map in both cases is the octahedron $\Delta _{4,2}$.  The action of $(\C ^{*})^4$ allows us to use the classical results of algebraic geometry about  the image of the algebraic torus orbits by the moment map. 

The canonical action of the compact torus $T^n$ extends  to the action of the algebraic torus $(\C ^{*})^{n}$ on general Grassmann manifold $G_{n,k}$.  In connection with such action of algebraic torus it is defined the notion of the main stratum~\cite{GEL4}, it is the set $W\subset G_{n,k}$ consisting of the points  $X$  such that the image  by the standard moment map of the closure of the orbit $(\C ^{*})^{n}\cdot X$ is the whole hypersimplex $\Delta _{n,k}$. Nevertheless, the closed subset $G_{n,k} - W$ is  again invariant under the action of $(\C ^{*})^{n}$ and its image by the moment map is again the whole hypersimplex $\Delta _{n,k}$. The description of the action of $T^n$ on the main stratum $W$ is simple, since it is homeomorphic to $T^{n-1}\times \stackrel{\circ}{\Delta_{n,k}}\times F$, for some space $F$. Still the description of the action of $T^n$ on $G_{n,k}- W$ remains highly nontrivial problem. 
In this paper, using the methods of toric topology, we solve this problem in the case of Grassmannian $G_{4,2}$.

We prove that $G_{4,2}/T^3$ is homeomorphic to the join $\C P^1\ast S^2$, while $\C P^5/T^3$ is homeomorphic to the join 
$\C P^2\ast S^2$, where $S^2 = \partial \Delta _{4,2}$.  Moreover, we prove that $G_{4,2}/T^4$ is topological manifold without boundary, and, thus it is homeomorphic to $S^5$. In the same time we describe the smooth and singular points of the orbit space $G_{4,2}/T^4=S^5$. It gives that the projection $G_{4,2}\to S^5$ is not a smooth map for the unique, the standard one, differentiable structure on $S^5$.

The action of $T^4$ on $G_{4,2}$ and $\C P^{5}$ induces the action of the group $\Z _{2}^{4}$  on the real Grassmann manifold $G_{4,2}(\R )$ and real projective space $\R P^{5}$.  All our constructions in the complex case are compatible with the involution given by the complex conjugation and, thus,  give the  corresponding results in the real case.  We prove that the orbit space $G_{4,2}(\R )/\Z _{2}^{4}$ is homeomorphic to the sphere $S^4$ and that the orbits space $\R P^{5}/\Z _{2}^{4}$ is homeomorphic to the join $\R P^2\ast S^2$. We also describe the smooth and the singular points of the orbit space $G_{4,2}(\R )/Z_{2}^{4} = S^4$. It implies that there is no differentiable structure on the sphere $S^4$  such that the natural projection $\pi : G_{4,2}(\R )\to S^{4}$ is a smooth map.

We want to note that  there is a series of papers~\cite{KT1},~\cite{KT2},\cite{KT3} about the Hamiltonian  action of the torus $T^{n-1}$ on a symplectic manifold $M^{2n}$. In our paper we have  such an action of the torus $T^3$ on $G_{4,2}$. But, we  also have the action of the torus $T^3$ on $\C P^5$, which is an example of Hamiltonian action of the torus $T^{n-2}$ on a symplectic manifold $M^{2n}$.   In that sense, we consider the case which is  studied in the mentioned series of papers by the methods of symplectic geometry, but also the more general case. We hope if develop the theory of Hamiltonian action of the torus $T^k$ on a symplectic manifold $M^{2n}$, where $k< n-1$, the example of the action of $T^3$ on $\C P^5$ studied in detail in our paper will be useful. 

In the paper~\cite{GFMC} it is considered   interpretation of the complex and real Grassmann manifold, $G_{n,k}(\C)$ and $G_{n,k}(\R)$, as configuration manifolds of $n$ points in $k$-dimensional complex or real vector space. Such interpretation is in~\cite{GFMC} further used  for the description of the action of the group $(\C ^{*})^{n}$ on $G_{n,k}(\C)$ and the action of the group  $(\R ^{*})^{n}$ on $G_{n,k}(\R )$, where it  is constructed the models for $G_{n,k}(\C)/(\C ^{*})^{n}$ and $G_{n,k}(\R )/(\R^{*})^{n}$ by proving that  they  naturally diffeomorphic to the spaces $C_{n,k}(\C)$ and $C_{n,k}(\R)$  of equivalence classes of generic configurations of $n$ points in $\C P^{k-1}$ or $\R P^{k-1}$.   Still the questions on cohomology structure or homotopy type of these orbit spaces are not completely answered.  The model spaces $C_{n,k}(\R)$   found further important applications for example in~\cite{TO}. In connection with this, our results about the orbit spaces of the action of compact torus $T^{n}\subset (\C ^{*})^{n}$ on $G_{n,k}(\C)$ and the action of $(\Z _{2})^{n}\subset (\R ^{*})^{n}$ on $G_{n,k}(\R)$ have applications as the results about configuration spaces of $n$ points in $k$-dimensional spaces.

\section{Generalities}
Denote by $G_{n,k}$ the Grassmann manifold of all $k$-dimensional subspaces in $\C ^n$. We consider  the canonical actions of algebraic torus $(\C ^{*})^{n}$ and compact torus
$T^{n}\subset (\C ^{*})^{n}$ on $G_{n,k}$. Note that $(\C ^{*})^{n}=T^{n}\times \R ^{n}_{+}$. 

Recall that the canonical action of $(\C ^{*})^{n}$ on $G_{n,k}$ is induced from the canonical  of $(\C ^{*})^{n}$ on $\C ^{n}$  which is given by
\[
(z_1,\ldots ,z_n)(v_1,\ldots ,v_n) = (z_1v_1,\ldots ,z_nv_n)
\]
for $(z_1,\ldots ,z_n)\in (\C ^{*})^n$ and $(v_1,\ldots ,v_n)\in \C ^n$. 

We denote by $\OO _{\C}(X)$ the  orbit of an element $X\in G_{n,k}$ under the action of the algebraic torus $(\C ^{*})^n$  and by $\overline{\OO _{\C} (X)}$ its closure in $G_{n,k}$. It is known that $\overline{\OO _{\C}(X)}$ is a compact algebraic manifold which consists of finitely many $(\C ^{*})^n$ - orbits and $\overline{\OO _{\C}(X)}$ is a toric manifold  in which $\OO _{\C}( X)$ is unique everywhere dense open orbit. 

Note that the action of $(\C ^{*})^n$ on $G_{n,k}$ is not effective, namely each point in $G_{n,k}$ is fixed by diagonal subgroup $\C ^{*}=\{(z,\ldots ,z) | z\in \C ^*\}$. Because of that we will further equally consider the effective action of $(\C^{*})^{n-1}\subset (\C ^{*})^{n}$ given by the embedding $(z_1,\ldots ,z_{n-1})\to (z_1,\ldots z_{n-1},1)$. 

\section{Pl\"ucker coordinates and moment map}
Recall the notion of the Pl\"ucker coordinates on Grassmann manifolds. We consider first  the Stiefel manifold $V_{n,k}$ of the orthonormal frames in $\C ^n$. There is the canonical embedding of $V_{n,k}$ into $\C ^{N}$, where $N=nk$ which to each $k$-reaper assigns $(n\times k)$-matrix $A$. This embedding is equivariant related to the action of the group $U(n)\times U(k)$ on $(n\times k)$-matrices, where the group $U(n)$ acts from the left, while the group $U(k)$ acts from the right. The Pl\"ucker map $V_{n,k} \to \C ^{N_1}$, where $N_1 = \frac{n!}{k!(n-k)!}$ assigns to each $(n\times k)$-matrix $A$ the collection of $(k\times k)$-minors $P^{J}(A)$, $J\subset \{1,\ldots ,n\}$, $|J|=k$ and induces the map from $V_{n,k}$ to $\C ^{N_1}-\{0\}$. Related to the right action of $U(k)$ on $\C ^{N}$ and coordinate wise action of the torus $T ^{N_1}$ on $\C ^{N_1}$ the Pl\"ucker map is equivariant. Passing to the quotient space we obtain an embedding of the Grassmann manifold $G_{n,k}$ into complex projective space $\C P^{N_1-1}$. This embedding is equivariant related to the representation $T^n\to T^{N_1}$, which is given by the $k$-th exterior power of the standard representation of $T^n$ into $\C ^n$.  

In order to make the exposition simpler we will use an arbitrary and not just an orthonormal frames for Grassmann manifolds.  

Using Pl\"ucker map it is defined   the moment map $\mu : G_{n,k} \to \R ^n$, first in~\cite{M} and~\cite{GFMC}, by
\begin{equation}\label{moment-map}
\mu (X) = \frac{\sum\limits_{J}|P^{J}(X)|^2\delta _{J}}{\sum\limits_{J}|P^{J}(X)|^2}.
\end{equation}   
Here $J$ goes through $k$-element subsets of $\{1,\ldots ,n\}$ and $\delta _{J}\in \R ^{n}$ is given by
$(\delta _{J})_{i}=1$ for $i\in J$, while $(\delta _{J})_{i}=0$ for $i\notin J$.

The moment map is equivariant under the action of compact torus meaning that $\mu (T^n\cdot X) = \mu (X)$. It easily follows since $P^{J}(T^n\cdot X) = t_{j_1}\cdots t_{j_k}\cdot P^{J}(X)$, where $J=\{j_1,\ldots ,j_k\}$. The image of the moment map is hypersimplex $\Delta_{n,k}$ which can be defined as the set of all points  $(x_1,\ldots ,x_n)\in \R ^n$ such that  $0\leq x_i\leq 1$ and $\sum _{i=1}^{k}x_i=k$.

By the  classical convexity theorem of~\cite{AT} and~\cite{GUST} it follows that  $\mu (\overline{\OO _{\C}(X)})$  is a convex polytope for any $X\in G_{n,k}$.  More precisely the following result  holds:
\begin{thm}\label{moment-map-polytope}
Let $\OO _{\C}(X)$ be an orbit of an element $X\in G_{n,k}$ under the canonical action of $(\C ^{*})^{n}$. Then
$\mu (\overline{\OO _{\C}(X)})$ is a convex polytope in $\R ^{n}$ whose vertex set is given by $\{ \delta _{J} | P^{J}(X)\neq 0 \}.$ The mapping $\mu$ gives a bijection between $p$-dimensional orbits of the group $(\C ^{*})^{n}$ in 
$\overline{\OO _{\C}(X)}$ and $p$-dimensional open faces of the polytope $\mu (\overline{\OO _{\C}(X)})$.
\end{thm}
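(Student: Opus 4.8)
The strategy is to reduce the statement to the classical convexity theorem of Atiyah and of Guillemin--Sternberg (references \cite{AT}, \cite{GUST}), applied to the toric variety $\overline{\OO_{\C}(X)}$ rather than to $G_{n,k}$ itself. First I would recall that, as noted in the Generalities section, $\overline{\OO_{\C}(X)}$ is a (possibly singular, but normal projective) toric variety under the residual action of $(\C^*)^n$, in which $\OO_{\C}(X)$ is the unique dense orbit. The key observation is that the restriction of the Pl\"ucker embedding $G_{n,k}\hookrightarrow \C P^{N_1-1}$ to $\overline{\OO_{\C}(X)}$ is a $T^n$-equivariant projective embedding whose torus-weight set is exactly $\{\delta_J : P^J(X)\neq 0\}$: indeed a homogeneous coordinate $P^J$ restricted to the orbit closure is not identically zero precisely when $P^J(X)\neq 0$, because $P^J(t\cdot X)=t_{j_1}\cdots t_{j_k}P^J(X)$ shows $P^J$ is either identically zero on $\OO_{\C}(X)$ or nowhere zero there, and the same dichotomy passes to the closure. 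Hence the moment map $\mu$ of~\eqref{moment-map}, restricted to $\overline{\OO_{\C}(X)}$, is precisely the standard moment map of this equivariantly embedded projective toric variety, normalized to land in the affine hyperplane $\sum x_i = k$.

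Next I would invoke the Atiyah--Guillemin--Sternberg convexity theorem in the form adapted to (possibly singular) projective toric varieties: for a projective variety with an effective Hamiltonian torus action coming from an equivariant embedding with weights $w_1,\dots,w_m$, the image of the moment map is the convex hull of those weights, and moreover the moment image of the closure of a $p$-dimensional torus orbit is a $p$-dimensional face, with $\mu$ inducing an inclusion-reversing... (rather, dimension-preserving) bijection between orbits and open faces. Applying this with the weight set $\{\delta_J : P^J(X)\neq 0\}$ immediately gives that $\mu(\overline{\OO_{\C}(X)})$ is the convex hull of $\{\delta_J : P^J(X)\neq 0\}$. To get that this set is exactly the vertex set of the polytope (and not merely a spanning set), I would argue that each $\delta_J$ with $P^J(X)\neq 0$ is the image of the corresponding $T^n$-fixed point in $\overline{\OO_{\C}(X)}$ — the coordinate subspace $\C^J = \mathrm{span}(e_j : j\in J)$ lies in $\overline{\OO_{\C}(X)}$ whenever $P^J(X)\neq 0$, since it is a limit of $(\C^*)^n$-translates of $X$ — and fixed points map to vertices under the moment map of a smooth-enough toric variety; since distinct $\delta_J$ are distinct $0/1$ vectors, no one of them is a convex combination of the others, so all of them are genuine vertices. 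The correspondence between $p$-dimensional orbits and $p$-dimensional open faces is then the orbit-face dictionary of toric geometry, transported through $\mu$.

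The main obstacle I anticipate is the singularity issue: $\overline{\OO_{\C}(X)}$ need not be smooth as an abstract toric variety, so one cannot literally cite the version of the convexity/bijection theorem for symplectic manifolds or for smooth toric varieties. I would handle this either by passing to a toric resolution $\widetilde{Y}\to \overline{\OO_{\C}(X)}$ (which changes neither the moment polytope nor the fan's face lattice of maximal-dimension-orbit strata in a way that affects the statement, since resolution only subdivides the fan and the moment image is unchanged) and pulling back, or — more cleanly — by using the purely combinatorial form of the convexity theorem for projectively embedded normal toric varieties, where the moment image equals the weight polytope of the embedding and the orbit stratification corresponds to the face stratification by construction of the projective toric variety from its polytope. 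A secondary, more bookkeeping-type point is to verify carefully that $\C^J \in \overline{\OO_{\C}(X)} \iff P^J(X)\neq 0$ in both directions: the forward direction is the weight-coordinate argument above, and the reverse uses that scaling $X$ by one-parameter subgroups $t\mapsto (t^{a_1},\dots,t^{a_n})$ with suitable exponents drives $X$ to $\C^J$ in the limit exactly when $P^J(X)\neq 0$. Once these points are settled the theorem follows directly.
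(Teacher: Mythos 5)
The paper gives no proof of this theorem at all: it states it as a known consequence of the convexity theorem of Atiyah and Guillemin--Sternberg (and of the Gelfand--Serganova/GGMS circle of results cited in the next section), so your reduction to the convexity theorem for the projectively embedded toric variety $\overline{\OO_{\C}(X)}$ with weight set $\{\delta_J : P^J(X)\neq 0\}$ is exactly the route the paper implicitly relies on. Your sketch is correct, and the two points you single out --- the character dichotomy showing $P^J$ is identically zero or nowhere zero on the orbit, and the observation that the $\delta_J$ are $0/1$ vectors so every weight that occurs is genuinely a vertex --- are precisely the details needed to upgrade ``convex hull of weights'' to the stated vertex-set and orbit--face bijection.
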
 

\begin{rem} We want to recall that the convexity theorem states that the image of the moment map is convex hull of the images of fixed points for  a Hamiltonian torus action on compact, connected, symplectic manifold. The example of such action  is the standard action of $T^n$ on $\C P ^n$ endowed with the K\"ahler structure given by the Fubini-Study metric and with the standard moment map $\mu : \C P^n \to \R^ {n+1}$ given by 
\[
\mu (z) = \frac{1}{\|z\| ^2}(|z_1|^2(1,0,\ldots ,0)+\cdots +|z_{n+1}|^2(0,\ldots,0,1)).
\]
The composition of this moment map together with Pl\"ucker embedding of the Grassmann manifold into complex projective space gives the moment map~\eqref{moment-map}.  This follows from the following more general result.

Namely,  according to~\cite{Kir}, if a compact Lie group $G$ acts on $\C P^n$ via homomorphism $\psi : G\to U(n+1)$  and  $M \subseteq \C P^n$ is a nonsingular subvariety invariant under the action of $G$, than the restriction of Fubini-Study metric on $\C P^n$ gives $M$ a K\"ahler structure that is preserved by $G$. Moreover, a moment map
$\mu : M^{2n}\to \gg^{*}$, where $\gg$ denotes the Lie algebra for $G$, is defined by
\[
\mu (x)(v) = \frac{1}{2\pi i\|x^{*}\|^{2}}(\bar{x}^{*})^{T}\psi _{*}(v)x^{*},
\]
where $v\in \gg$ and $x^{*}$ is non-zero vector in $\C ^{n+1}$ lying over the point $x\in M\subseteq \C P^n$.
\end{rem}    

\section{The Strata on Grassmanians}
In recalling the notion of the strata on Grassmanians we follow~\cite{GELSERG},~\cite{GEL4}.
We provide three definitions of the strata on Grassmannians for which it is proved in~\cite{GELSERG},~\cite{GEL4} to be equivalent.
\begin{enumerate}
\item For the standard orthonormal basis $e_1,\ldots ,e_n$ in $\C ^{n}$ and $k$ - dimensional complex subspace $X\in G_{n,k}$ there are $n$ vectors $\pi _{X}(e_1),\ldots ,\pi _{X}(e_n)$ determined by the projection $\pi _{X} : \C ^n\to \C ^{n}/X\cong \C^{n-k}$.
Then it is defined rank function on subsets $J\subset \{ 1,\ldots ,n\}$ by
\[
 \rank (J) = \dim _{\C }(\text{span} \{ \pi _{X}(e_j) | j\in J \} ).
\]
Two points $X_1,X_2\in G_{n,k}$ are said to belong to the same Grassmann stratum $W$ of $G_{n,k}$ if for each subset
$J\subset \{ 1,\ldots ,n\}$ it holds
\[
\dim _{\C }(\text{span} \{ \pi _{X_1}(e_j) | j\in J \} ) = \dim _{\C }(\text{span} \{ \pi _{X_2}(e_j) | j\in J \} ).
\]
\item
Two points $X_1,X_2\in G_{n,k}$ are said to belong to the same Grassmann stratum $W$ of $G_{n,k}$ if 
\[
\mu (\overline{\OO _{\C}(X_1) }) = \mu (\overline{\OO _{\C}(X_2) }).
\]
\item
It is well known the notion of the Schubert  cell decompositions for the Grassmann manifolds. These decompositions are parametrized by the complete flags in $\C ^n$. More precisely, to  the standard orthonormal basis $e_1,\ldots ,e_n$ in $\C ^{n}$ it corresponds the flag $\C ^1\subset \C^2 \ldots \subset \C^{n}$, where $\C ^{i} = span\{e_1,\ldots ,e_i\}$. The Schubert cell decomposition for the Grassmannain $G_{n,k}$ corresponding to  this flag is defined as follows: for the Schubert symbol $(i_1,\ldots ,i_k)$, where $1\leq i_1<i_2\ldots <i_k\leq n$, the Schubert cell is defined by
\[
C_{(i_1,\ldots ,i_k)}=\{ X\in G_{n,k} | \dim X\cap \C ^{i_j}= j,\; \dim X\cap \C ^{i_j-1}<j \}.
\]
For any permutation $\sigma \in S_{n}$ one can consider the new orthonormal basis $e_{\sigma (1)},\ldots ,e_{\sigma (n)}$ in $\C ^n$ and the corresponding flag $\C ^{1}_{\sigma (1)}\subset \C ^{2}_{\sigma (2)}\ldots \subset \C ^{n}_{\sigma (n)}$,
where $\C ^{i}_{\sigma} = span\{e_{\sigma (1)},\ldots ,e_{\sigma (i)}\}$. It also gives the Schubert cell decomposition by
\[
C^{\sigma}_{(i_1,\ldots ,i_k)} = \{ X\in G_{n,k} | \dim X\cap \C ^{\sigma (i_j)} = j,\; \dim X\cap \C ^{\sigma (i_j)-1}<1 \}.
\]
The refinement of all Schubert cell decompositions $\{ C^{\sigma} \}$, $\sigma \in S_{n}$ gives also the cell decomposition of $G_{n,k}$. Each cell in such decomposition is obtained as the intersection of $n!$ cells picked up from the each of $n!$  Schubert cell decompositions. 

Two points $X_1,X_2\in G_{n,k}$ are said to belong to the same Grassmann stratum of $G_{n,k}$ if they belong to the same  cell obtained by this refinement.
\end{enumerate}
 
\section{Polytopes for the Grassmannian $G_{4,2}$}  
We describe the polytopes that are the images of the closure of $(\C ^{*})^{4}$-orbits  on $G_{4,2}$ by the moment map. Such polytopes we call admissible polytopes. The set of vertices of any such polytope
is by~\eqref{moment-map} and Theorem~\ref{moment-map-polytope}  subset of the set of the following points: $\delta_{12}=(1,1,0,0), \delta _{13}=(1,0,1,0), \delta _{14} =(1,0,,0,1), \delta_{23}=(0,1,1,0), \delta _{24}=(0,1,0,1), \delta _{34}=(0,0,1,1)$. 
\begin{lem}\label{polytopes}
The convex polytopes that are the images of the closure of $(\C ^{*})^{4}$-orbits  on $G_{4,2}$ by the moment map have the following sets of vertices:
\begin{enumerate}
\item six-point set - $\delta _{12}, \delta _{13}, \delta _{14}, \delta _{23}, \delta _{24}, \delta _{34}$; 
\item  each five-point subset  of these six points; 
\item  the following four-point sets -- $\delta _{13}, \delta _{14}, \delta _{23}, \delta _{24}$; $\delta _{12}, \delta _{14}, \delta _{23}, \delta _{34}$; $\delta _{12}, \delta _{13}, \delta _{24}, \delta _{34}$;
\item  the following  three-point sets -- $\delta _{12}, \delta _{13}, \delta _{14}$; $\delta _{12}, \delta _{13}, \delta _{23}$; $\delta _{13}, \delta _{14}, \delta _{34}$; $\delta _{13}, \delta _{23}, \delta _{34}$; $\delta _{12}, \delta _{14}, \delta _{24}$;
$\delta _{12}, \delta _{23}, \delta _{24}$; $\delta _{14}, \delta _{24}, \delta _{34}$; $\delta _{23}, \delta _{24},\delta _{34}$;
\item  one of two-point sets -- $\delta _{12},\delta _{13}$; $\delta _{12}, \delta _{14}$; $\delta _{12}, \delta _{23}$; $\delta _{12}, \delta _{24}$; $\delta _{13}, \delta _{14}$; $\delta _{13}, \delta _{23}$; $\delta _{13}, \delta _{34}$; $\delta _{14}, \delta _{24}$; $\delta _{14}, \delta _{34}$; $\delta _{23}, \delta _{24}$; $\delta _{23}, \delta _{34}$; $\delta _{24}, \delta _{34}$.
\item each point form the set  $\{\delta _{12},\ldots ,\delta_{34} \}$.
\end{enumerate}
\end{lem}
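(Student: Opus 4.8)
The plan is to classify all subsets $S \subseteq \{\delta_{12}, \delta_{13}, \delta_{14}, \delta_{23}, \delta_{24}, \delta_{34}\}$ that actually occur as $\{\delta_J : P^J(X) \ne 0\}$ for some $X \in G_{4,2}$, since by Theorem~\ref{moment-map-polytope} these are exactly the vertex sets of the admissible polytopes. The key structural observation is the \emph{Pl\"ucker relation}: for $G_{4,2}$ there is a single quadratic Pl\"ucker relation
\[
P^{12}P^{34} - P^{13}P^{24} + P^{14}P^{23} = 0.
\]
This imposes a combinatorial constraint: the three "complementary pairs" $\{12,34\}$, $\{13,24\}$, $\{14,23\}$ cannot have exactly one of the three products $P^{12}P^{34}$, $P^{13}P^{24}$, $P^{14}P^{23}$ nonzero — if one is nonzero, at least one other must be too. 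Equivalently, the support $\Sigma = \{J : P^J(X) \ne 0\}$, viewed inside the octahedron $\partial\Delta_{4,2}$ whose vertices are the $\delta_J$ and whose three "diagonals" are the complementary pairs, cannot contain both endpoints of exactly one diagonal.

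First I would set up the correspondence between subsets of the six Pl\"ucker coordinates and vertex sets, and observe that the octahedron $\Delta_{4,2}$ has its six vertices $\delta_J$ with $\delta_J$ and $\delta_{J^c}$ antipodal; edges of the octahedron join $\delta_J, \delta_{J'}$ precisely when $|J \cap J'| = 1$. Then the necessity direction: any admissible support $\Sigma$ must (a) be nonempty (the frame is nonzero, so some minor is nonzero), and (b) satisfy the Pl\"ucker constraint above — I would check this is the only obstruction by a direct case analysis on $|\Sigma| \in \{1,\dots,6\}$. For $|\Sigma| = 6,5$ every such set trivially avoids "exactly one full diagonal," giving items (1) and (2). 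For $|\Sigma| = 4$ one must omit two vertices; if the two omitted vertices are antipodal, the remaining four contain the other two full diagonals — fine, but wait, that gives a square, which is item (3) listed (the three "equatorial squares" of the octahedron); if the omitted pair is an edge, the remaining four vertices contain exactly one full diagonal, violating Pl\"ucker, so those are excluded — matching the fact that only three four-point sets appear. For $|\Sigma| = 3$ the admissible triples are exactly the eight triangular faces of the octahedron (no diagonal among any of them), which is item (4). For $|\Sigma| = 2$ the admissible pairs are exactly the twelve edges of the octahedron (the three diagonals $\{12,34\},\{13,24\},\{14,23\}$ are excluded by Pl\"ucker), item (5). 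For $|\Sigma| = 1$ all six singletons occur, item (6).

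For the sufficiency direction — that each combinatorially allowed support is realized by an actual orbit closure — I would exhibit explicit $2\times 4$ matrices (or arbitrary frames, as the paper permits). The full six-point case is realized by a generic $X$. For the squares, e.g. the support $\{\delta_{13},\delta_{14},\delta_{23},\delta_{24}\}$ corresponds to $P^{12} = P^{34} = 0$, realized by a matrix $\begin{pmatrix} 1 & 0 & a & b \\ 0 & 0 & c & d \end{pmatrix}$-type configuration, i.e. a subspace containing $e_1$ and lying in $\mathrm{span}(e_1,e_3,e_4)$... more simply, the block-diagonal-type subspace spanned by $e_1 + (\text{stuff in } e_3,e_4)$... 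I would just write down the appropriate matrices and compute the six minors. The triangular-face supports correspond to coordinate subspaces' neighborhoods (subspaces close to $\mathrm{span}(e_i,e_j)$), the edge supports to one-parameter families, and the vertex supports to the $T^4$-fixed points $\mathrm{span}(e_i,e_j)$ themselves. Each realization is a short explicit calculation.

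The main obstacle I expect is organizing the case analysis cleanly rather than any single hard computation: one must be careful that the Pl\"ucker relation is genuinely the \emph{only} constraint (for $G_{4,2}$ this is true because $\dim \Lambda^2\C^4 = 6$ and the single quadric cuts out the Grassmannian), and that the realizability examples for the four-point squares and three-point triangles are chosen so that exactly the claimed minors vanish and no others. It is worth double-checking the $|\Sigma|=4$ case in particular, since a priori one might worry about a four-point set lying on a single edge of the octahedron together with two non-adjacent further points — but the octahedron has no such configuration of four coplanar non-square vertices, so the only four-vertex planar sections are the three equatorial squares, consistent with item (3). A secondary subtlety: I should confirm via Theorem~\ref{moment-map-polytope} that the convex hull of each realized vertex set is indeed full-dimensional as a polytope in the relevant sense (a point, a segment, a triangle, a square, a pyramid, or the octahedron), so that the "admissible polytope" is unambiguously determined by its vertex set — this is immediate from the geometry of the octahedron.
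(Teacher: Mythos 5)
Your proposal is correct and its skeleton is the same as the paper's: classify the possible supports $\Sigma=\{J: P^J(X)\neq 0\}$, exclude the impossible ones by a vanishing constraint, and realize the rest explicitly. The difference is one of organization and economy. The paper never names the Pl\"ucker relation; it realizes every case by an explicit $4\times 2$ matrix and handles the exclusions with two ad hoc observations (``if two Pl\"ucker coordinates with a common index vanish, a third must vanish'' and ``a complementary pair alone is impossible''), both of which are exactly your quadric $P^{12}P^{34}-P^{13}P^{24}+P^{14}P^{23}=0$ in disguise. You make that relation the single organizing principle (``the support may not contain exactly one full diagonal'') and, more importantly, you observe that sufficiency is automatic because this quadric cuts out $G_{4,2}$ in $\C P^5$: any assignment of Pl\"ucker coordinates with a combinatorially allowed support and satisfying the relation is a genuine point of the Grassmannian. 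That one line replaces most of the paper's matrix inventory, which is a real simplification. One small repair is needed in your realization sketch: the matrix of shape $\left(\begin{smallmatrix}1&0&a&b\\0&0&c&d\end{smallmatrix}\right)$ you float for the square $\{\delta_{13},\delta_{14},\delta_{23},\delta_{24}\}$ actually has support $\{13,14,34\}$ (a triangle), since $P^{23}=P^{24}=0$ there; the correct realization is the plane spanned by $e_1+e_2$ and $e_3+e_4$, as in the paper, or simply the Pl\"ucker vector with $P^{12}=P^{34}=0$ and $P^{13}P^{24}=P^{14}P^{23}\neq 0$ via your quadric argument. With that fixed, the case analysis on $|\Sigma|$ and the count $6,\,6,\,3,\,8,\,12,\,6$ all check out against the statement.
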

\begin{figure}[h]
\begin{center}
\includegraphics[width=5cm]{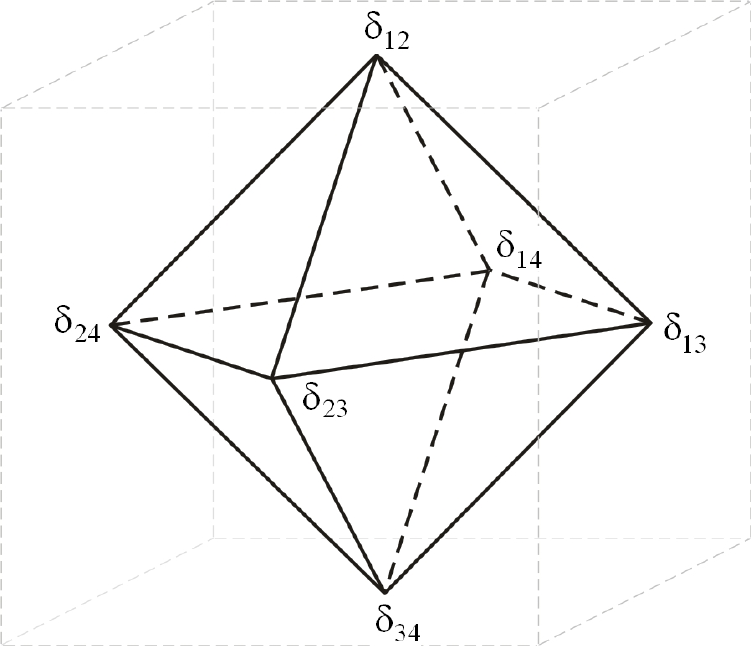}  

\end{center}
\end{figure}

\begin{proof}
We use Theorem~\ref{moment-map-polytope} and for each  polytope spanned by the vertices given by the enumerated cases provide explicitly a point $X\in G_{4,2}$ such that the closure of its $(C^{*})^4$-orbit  maps to it by the moment map.

For the first case consider the point $X\in G_{4,2}$ given in standard basis by the matrix
\[
A_{X}=\left(\begin{array}{cc}
1 & 0\\
-1 & 1\\
1 & 1\\
0 & 1 
\end{array}\right) .
\]
Then all $2\times 2$ minors of $A_X$ are non-trivial implying that $\mu (\overline{\OO _{\C}(X)})$ is a convex polytope
spanned by the vertices $\delta _{12},\ldots ,\delta _{34}$. 

The polytopes with five vertices $\{\delta_{12},\ldots ,\delta_{34}\} - \delta_{ij}$ are realised by the closure of $(\C ^{*})^4$-orbits of the elements $X_{ij}\in G_{4,2}$, where
$1\leq i<j\leq 4$ and which are represented by the matrices 
\[
A_{X_{12}}=\left(\begin{array}{cc}
1 & 1\\
1 & 1\\
1 & 0\\
0 & 1 
\end{array}\right) , \;\;
A_{X_{13}}=\left(\begin{array}{cc}
1 & 1\\
1 & 0\\
1 & 1\\
0 & 1 
\end{array}\right) , \;\;
A_{X_{14}}=\left(\begin{array}{cc}
1 & 1\\
1 & 0\\
0 & 1\\
1 & 1 
\end{array}\right) ,
\]
\[
A_{X_{23}}=\left(\begin{array}{cc}
1 & 0\\
1 & 1\\
1 & 1\\
0 & 1 
\end{array}\right) , \;\;
A_{X_{24}}=\left(\begin{array}{cc}
1 & 0\\
0 & 1\\
1 & 1\\
0 & 1 
\end{array}\right) , \;\;
A_{X_{34}}=\left(\begin{array}{cc}
1 & 0\\
0 & 1\\
1 & 1\\
1 & 1 
\end{array}\right) .
\]

In order to decide which four-point subsets appear as the vertex set for polytopes obtained as the images of the moment map, we note the following. If two Pl\"ucker coordinates with a common index of a point $X\in G_{4,2}$ are zero than one more Pl\"ucker coordinate has to be zero as well. It implies that the image of the closure of $(\C ^{*})^4$-orbit for $X\in G_{4,2}$ has four vertices in the case when two Pl\"ucker coordinates for $X$ with no common index are zero. We obtain the following:
\[
A_{X}=\left(\begin{array}{cc}
1 & 0\\
1 & 0\\
0 & 1\\
0 & 1 
\end{array}\right) , \;\; P^{12}(X)=P^{34}(X)=0, \;\; \text{vertex set:}\;\; \delta_{13}, \delta_{14}, \delta_{23}, \delta _{24};   
\]
\[
A_{X}=\left(\begin{array}{cc}
1 & 0\\
0 & 1\\
1 & 0\\
0 & 1 
\end{array}\right) , \;\; P^{13}(X)=P^{24}(X)=0, \;\; \text{vertex set:}\;\; \delta_{12}, \delta _{14}, \delta _{23}, \delta _{34};   
\]
\[
A_{X}=\left(\begin{array}{cc}
1 & 0\\
0 & 1\\
0 & 1\\
1 & 0 
\end{array}\right) , \;\; P^{14}(X)=P^{23}(X)=0, \;\; \text{vertex set:}\;\; \delta _{12}, \delta _{13}, \delta _{24}, \delta _{34}.   
\]

To describe the triangles that can be obtained as the images of the moment map for  
$\overline{\OO _{\C}(X)}$ we
note that $X$ must have exactly three zero Pl\"ucker coordinates. It may happen  when  the two Pl\"ucker coordinates for $X$ with the common index are zero or when the matrix $A_{X}$ has zero row. The possible examples for such a matrix are   
\[
A_{X}=\left(\begin{array}{cc}
1 & 0\\
1 & 0\\
1 & 0\\
0 & 1 
\end{array}\right) ,\;\;    
A_{X}=\left(\begin{array}{cc}
1 & 0\\
1 & 0\\
0 & 1\\
1 & 0 
\end{array}\right) , \;\;    
A_{X}=\left(\begin{array}{cc}
0 & 1\\
1 & 0\\
0 & 1\\
0 & 1 
\end{array}\right) , \;\;
A_{X}=\left(\begin{array}{cc}
0 & 1\\
1 & 0\\
1 & 0\\
1 & 0 
\end{array}\right) ,
\] 
\[
A_{X}=\left(\begin{array}{cc}
0 & 0\\
0 & 1\\
1 & 0\\
1 & 1 
\end{array}\right) , \;\;    
A_{X}=\left(\begin{array}{cc}
1 & 0\\
0 & 0\\
0 & 1\\
1 & 1 
\end{array}\right) , \;\;
A_{X}=\left(\begin{array}{cc}
1 & 0\\
0 & 1\\
0 & 0\\
1 & 1 
\end{array}\right) , \;\;  
A_{X}=\left(\begin{array}{cc}
1 & 0\\
0 & 1\\
1 & 1\\
0 & 0 
\end{array}\right) .   
\]
The corresponding polytopes we obtain by the moment map are triangles with the set of vertices $\delta_{14},\delta_{24},\delta_{34}$; $\delta _{13}, \delta _{23},\delta _{34} $; $\delta _{12},\delta _{23},\delta _{24}$; $\delta _{12},\delta _{13},\delta _{14}$; $\delta _{23}, \delta _{24}, \delta _{34}$; $\delta _{13}, \delta _{14}, \delta _{34}$; $\delta _{12}, \delta _{14}, \delta _{24}$; $\delta _{12}, \delta _{13}, \delta _{23}$ respectively. 

To finish to proof we  note that  if  two vertices correspond to the Pl\"ucker coordinates with complementary indexes that one can not  find $X\in G_{4,2}$ such that closure of its orbit maps to the interval
defined by the given vertices. For the other pairs of vertices obviously it is possible.
\end{proof}

\subsection{Geometric description of the polytopes.} 
We derive form Lemma~\ref{polytopes} the following table in which the first row gives the dimensions of the  polytopes  which are obtained as the image of the orbits on $G_{4,2}$ by the  moment map, while the second row gives their number in corresponding dimension:
\begin{equation}\label{number}
\left[\begin{array}{cccc}
3 & 2 & 1 & 0\\
7 & 11 & 12 & 6 
\end{array}\right].
\end{equation}
We explain these polytopes in more detail. Note that the  points $\delta _{12},\ldots \delta _{34}$ belong to the hyperplane in $\R ^4$ given by the equation $x_1+x_2+x_3+x_4=2$. Therefore any such polytope might be at most three-dimensional. Moreover each of four vertices which by Lemma~\ref{polytopes} span the polytope belong to the same plane in this hyperplane.  Now Lemma~\ref{polytopes} gives that in the image of the moment map we have the polytope $\Delta _{4,2}$ which is convex hull of the vertices $\delta_{12},\ldots ,\delta _{34}$.  Note that to the interior of this polytope maps the orbit of any element $X\in G_{4,2}$ whose all Pl\"ucker coordinates are non-zero. The other $6$  three-dimensional polytopes are the pyramids having one of the three diagonal rectangles as the basis. Among two-dimensional  polytopes there are $8$ triangles forming the boundary of the octahedron and three diagonal rectangles inside of an octahedron. The one-dimensional polytopes are those from the boundary from the octahedron.
\begin{rem}
Note that not  every polytope spanned by some subset  of  vertices for $\Delta _{4,2}$ can be realised as the polytope of the moment map. More precisely, except for the three squares and six pyrhamids, none of the polytope which intersect the interior of the octahedron can not be obtained as the image of the moment map for some orbit on $G_{4,2}$.
\end{rem}
\begin{rem}
The  admissible polytopes in $\Delta _{4,2}$ of dimension three  are not simple polytopes, while all polytopes in dimension $\leq 2$  are simple.
\end{rem}   
\subsection{Lattice of the strata on $G_{4,2}$}
As we see from its definition each stratum on Grassmannian maps by  the moment map to the interior of exactly one polytope which we  will call the polytope of the stratum. In this context we will use the phrase a stratum over the polytope. There is a bijection between all polytopes in the image of the moment map and all strata on  Grassmannian.  In this way the  table~\eqref{number} shows that stratification of  $G_{4,2}$ consists of  $36$ strata  corresponding to the polytopes
in the image of the moment map. For example, there are $7$ strata that map to the interior of three-dimensional polytopes and they
can be described as follows:
\[
W_{\Delta _{4,2}} = \{ X\in G_{4,2}\; |\; P^{J}(X)\neq 0\},
\]
\[
W_{P} = \{ X\in G_{4,2}\; |\; P^{j_1j_2}(X)=0,\; P^{J}(X)\neq 0,\; J\neq j_1j_2 \},
\]
where $\dim P=3$ and $P\neq \Delta _{4,2}$. Here $1\leq j_1<j_2\leq 4$ and $J\subset \{ 1,2,3,4 \},|J|=2$.

We can introduce on the set of strata for $G_{4,2}$ a lattice structure by saying $W_{P_1}\leq W_{P_2}$ if and only if $\mu (W_{P_1})\subseteq \mu (W_{P_2})$. In order to have a lattice structure we assume here that empty set is a strata as well. We describe the graph of this lattice. At the bottom and the top of this lattice are the empty strata and the strata that maps to $\Delta _{4,2}$. The first level of the lattice corresponds to the  point-strata and at each point we have four arrows going to the "one" - strata that map to the intervals. From each such stratum go three arrows to the "two" - strata that map to the two-dimensional polytopes, while from each "two" - strata we have two arrows going to "three" - strata which do not map to $\Delta _{4,2}$. From each of these strata we have further arrow to the strata that map to $\Delta _{4,2}$.     

\section{The structure of the stratification for $G_{4,2}$}\label{structure}
We use the results from previous sections to describe the topological structure of the orbit space $G_{4,2}/T^4$.  

We first note that using Pl\" ucker coordinates there is on  $G_{4,2}$ canonical atlas consisting of six charts: for any $J\subset \{ 1,2,3,4 \}$, $|J|=2$, we have a chart $M_{J}=\{ X\in G_{4,2}\; |\; P^{J}(X)\neq 0\}$. The homeomorphism $u_{J} : M_{J}\to \C ^4$ is given as follows: for $X\in J$ we can choose the basis such that in the matrix $A_{X}$, which represents $X$,  the submatrix determined by the rows indexed by $J$ is the identity matrix. Then $u_{J}(X) = (a_{ij}(X))\in \C^ 4$, where $i\notin J$. Note the following:
\begin{itemize}
\item the charts $M_{J}$ are invariant under the action of $(\C ^{*})^4$,
\item this action, by the homeomorphism $u_{J}$, induces the action of $(\C ^{*})^4$ on $\C ^4$.
\end{itemize}
The orbits of this induced action in the charts  can be explicitly described.
We want to note that the Weyl group which acts on Grassmann manifolds permutes these charts, so it is enough to describe the orbits in  one chart and then apply permutations. 

\begin{prop}\label{orbits}
The six-dimensional orbit of a point $(a_1,a_2,a_3,a_4)\in \C ^4$ for   $(\C ^{*})^4$ - action on $\C ^4$  which corresponds to the  chart $(M_{J}, u_{J})$ on $G_{4,2}$ is:
\begin{itemize}
\item the hypersurface in $\C ^4$ given by the equation
\[
\frac{z_1z_4}{z_2z_3} = c,\;\; c\neq 0 - \text{constant},
\]
if all $a_i$ are non-zero;
\item $(\C^{*})^{3}$ in the case when exactly one coordinates  $a_i$ is zero, $(\C ^{*})^2$ when exactly two coordinates $a_i$ are zero, $\C ^{*}$ when exactly three  coordinates $a_i$ are zero and  $O=(0,0,0,0)$ is the fixed point.
\end{itemize}
\end{prop}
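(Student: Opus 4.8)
The plan is to set up the explicit $(\mathbb{C}^*)^4$-action in the chart $(M_J,u_J)$ and then read off the orbits directly. Fix $J=\{1,2\}$ for concreteness (all other charts follow by applying a permutation from the Weyl group, as noted before the proposition). An element $X\in M_J$ is represented by a matrix $A_X$ whose rows indexed by $1,2$ form the identity; writing the remaining entries as
\[
A_X=\begin{pmatrix} 1 & 0\\ 0 & 1\\ a_1 & a_2\\ a_3 & a_4\end{pmatrix},
\]
so $u_J(X)=(a_1,a_2,a_3,a_4)$. First I would compute the action: $(z_1,z_2,z_3,z_4)\in(\mathbb{C}^*)^4$ sends $A_X$ to $\mathrm{diag}(z_1,z_2,z_3,z_4)A_X$, and then one must renormalize so that the first two rows again form the identity, i.e. multiply on the right by $\mathrm{diag}(z_1^{-1},z_2^{-1})$. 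The net effect on the affine coordinates is
\[
(a_1,a_2,a_3,a_4)\longmapsto \Bigl(\tfrac{z_3}{z_1}a_1,\ \tfrac{z_3}{z_2}a_2,\ \tfrac{z_4}{z_1}a_3,\ \tfrac{z_4}{z_2}a_4\Bigr).
\]
This is the key formula; everything else is bookkeeping.

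Next I would analyze the orbit of $(a_1,a_2,a_3,a_4)$ according to how many coordinates vanish. If all $a_i\neq 0$, note that the quantity $\frac{z_1z_4}{z_2z_3}$ — wait, more precisely the combination $\frac{a_1 a_4}{a_2 a_3}$ — is invariant under the substitution above: each of $z_1,z_2,z_3,z_4$ appears with total exponent zero in $\frac{a_1a_4}{a_2a_3}$. Hence the orbit lies in the hypersurface $\{z_1 z_4 = c\, z_2 z_3\}$ with $c=\frac{a_1a_4}{a_2a_3}\neq 0$ (using the paper's coordinate names $z_i$ for the ambient $\mathbb{C}^4$). Conversely, I would show the action is transitive on the locus where all coordinates are nonzero and the invariant equals $c$: given $\frac{z_1z_4}{z_2z_3}=c$ with all $z_i\neq 0$, one solves for $(z_1,z_2,z_3,z_4)\in(\mathbb{C}^*)^4$ carrying $(a_1,a_2,a_3,a_4)$ to it — there are three free torus parameters (the effective torus is $(\mathbb{C}^*)^3$ since the diagonal acts trivially, which matches the stabilizer computation) and the target locus is three-complex-dimensional, so a direct solving of the linear-in-logarithms system finishes it. That gives the six-real-dimensional hypersurface orbit.

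For the degenerate cases, if exactly one $a_i=0$, say $a_1=0$, then the image is $\{(0,b_2,b_3,b_4): b_2,b_3,b_4\neq 0\}$: the map on the surviving three coordinates is $(a_2,a_3,a_4)\mapsto(\frac{z_3}{z_2}a_2,\frac{z_4}{z_1}a_3,\frac{z_4}{z_2}a_4)$, whose orbit is all of $(\mathbb{C}^*)^3$ because the three monomials $\frac{z_3}{z_2},\frac{z_4}{z_1},\frac{z_4}{z_2}$ are multiplicatively independent (their exponent vectors in $z_1,z_2,z_3,z_4$ are linearly independent). The cases of exactly two or three vanishing coordinates are handled the same way — I would just check that the relevant monomials stay multiplicatively independent, giving $(\mathbb{C}^*)^2$ and $\mathbb{C}^*$ respectively — and the origin is visibly fixed. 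I expect the only mildly delicate point to be the transitivity claim in the top-dimensional case (showing the hypersurface equation is not just necessary but sufficient, i.e. that no finer invariant exists); this reduces to checking that a certain $3\times 3$ integer matrix of exponents has full rank, which is a one-line computation, so there is no real obstacle — the proposition is essentially a direct consequence of writing down the action formula above and doing elementary linear algebra over the exponent lattice.
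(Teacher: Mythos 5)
Your proposal is correct and follows essentially the same route as the paper: write the induced action in the chart explicitly (your formula $(a_1,\dots,a_4)\mapsto(\tfrac{z_3}{z_1}a_1,\tfrac{z_3}{z_2}a_2,\tfrac{z_4}{z_1}a_3,\tfrac{z_4}{z_2}a_4)$ is the paper's computation up to its different labeling of the matrix entries), identify $\tfrac{a_1a_4}{a_2a_3}$ as the invariant cutting out the generic orbit, and handle the degenerate cases by checking that the surviving monomial characters are independent, which is the paper's reduction to an effective $(\C^*)^3$-action in a slightly different dress. The only point worth making explicit is that full rank of the exponent matrix gives genuine transitivity (not just an open dense orbit) because the image of a monomial homomorphism of tori is a closed subtorus; alternatively one solves $t_1=w_1/a_1$, $t_2=w_2/a_2$, $t_3=w_3/a_3$ and checks the fourth coordinate using the invariant, as the paper implicitly does.
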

\begin{proof}
We provide the proof for the action induced by the chart $(M_{12}, u_{12})$. For $X\in M_{12}$ we can choose the basis such that  it is represented by the matrix
\[
A_{X}=\left(\begin{array}{cc}
1 & 0\\
0 & 1\\
a_1 & a_3\\
a_2 & a_4 
\end{array}\right) .   
\]
Than the orbit $\O _{\C}(X)$ has the form
\[
\left(\begin{array}{cc}
t_1 & 0\\
0 & t_2\\
t_3a_1 & t_3a_3\\
t_4a_2 & t_4a_4 
\end{array}\right)
=\left(\begin{array}{cc}
1 & 0\\
0 & 1\\
\frac{t_3}{t_1}a_1 & \frac{t_3}{t_2}a_3\\
\frac{t_4}{t_1}a_2 & \frac{t_4}{t_2}a_4 
\end{array}\right) ,   
\]
where $(t_1,t_2,t_3,t_4)\in (\C ^{*})^{4}$.
In other words we have an action of $(\C ^{*})^4$ on $\C ^4$ via the representation $(t_1,t_2,t_3, t_4)\to (\frac{t_3}{t_1},\frac{t_4}{t_1},\frac{t_3}{t_2},\frac{t_4}{t_2})$.
If put $\tilde{t_1}=\frac{t_3}{t_1}$, $\tilde{t_2}=\frac{t_4}{t_1}$, $\tilde{t_3}=\frac{t_3}{t_2}$ and $\tilde{t_4}=\frac{t_4}{t_2}$, we obtain that  this action induces an effective action of $(\C ^{*})^3$ on $\C ^4$. Then the orbit of an element $(a_1,a_2,a_3,a_4)$ can be written in the form  $(t_1a_1,t_2a_2,t_3a_3,  (t_3t_2/t_1)a_4)$, where   we denoted  $\tilde{t_i}$ by $t_i$ as well.
Therefore the  orbit will be  $(\C ^{*})^3$, $(\C ^{*})^2$ or $\C ^{*}$  if exactly one, two or three  of $a_i$'s equal to zero respectively. If $a_i\neq 0$, $i=1,\ldots ,4$ then  $(z_1,z_2,z_3,z_4)\in \C ^4$ is in this orbit if and only  if it satisfies the equation
\[
\frac{z_1z_4}{z_2z_3} = c \;\; \text{for}\;\;  
c=\frac{a_1a_4}{a_2a_3}.
\]
\end{proof}

\begin{thm}\label{orbits_chart}
In each chart $(M_{J}, u_{J})$ on Grassmannian $G_{4,2}$ there are 
\begin{enumerate}
\item four orbits homeomorphic by $u_{J}$ to $(\C ^{*})^3\subset \C ^4$ and they all belong to different strata mapping to the three-dimensional polytopes different from $\Delta _{4,2}$;
\item six orbits homeomorphic by $u_{J}$ to $(\C ^{*})^2\subset \C ^{4}$ and they all belong to different strata;
\item four orbits homeomorphic by $u_{J}$ to $\C ^{*}\subset \C ^{4}$ and they all belong to different strata;
\item Orbits homeomorphic by $u_{J}$ to the  hypersurfaces in $\C ^{4}$ given by the equation $\frac{z_1z_4}{z_2z_3}=c$,
$c\neq 0$, where:
\begin{itemize}
\item for $c\neq 1$ they all belong to the same strata which maps to octahedron $\Delta _{4,2}$;
\item the orbit given by $c=1$  belongs to the strata mapping to the three-dimensional polytope which does not contain the vertex  $delta_{L}$, where $L = \{1,2,3,4\}-J$.
\end{itemize}
\end{enumerate}
\end{thm}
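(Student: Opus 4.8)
The strategy is to work in the single chart $(M_{12}, u_{12})$ — as the statement before Proposition~\ref{orbits} remarks, the Weyl group permutes the charts, so it suffices to treat one. In this chart $X$ is represented by the matrix with rows $(1,0),(0,1),(a_1,a_3),(a_2,a_4)$, and the four relevant Pl\"ucker coordinates (up to the common factor fixed by the normalization) are $P^{12}(X)=1$, $P^{34}(X)=a_1a_4-a_2a_3$, and the four ``mixed'' ones $P^{13}(X)=a_3$, $P^{14}(X)=a_4$, $P^{23}(X)=-a_1$, $P^{24}(X)=-a_2$. So the vanishing pattern of the Pl\"ucker coordinates of $X$ is read off directly from which of $a_1,a_2,a_3,a_4$ vanish together with whether $a_1a_4=a_2a_3$. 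By Theorem~\ref{moment-map-polytope} the stratum of $X$ is determined by exactly this vanishing pattern (it is the vertex set $\{\delta_J : P^J(X)\neq 0\}$ of $\mu(\overline{\OO_\C(X)})$), and by Lemma~\ref{polytopes} together with its geometric discussion we already know the complete list of admissible polytopes. So the whole proof is a bookkeeping matching: for each orbit type from Proposition~\ref{orbits}, compute the vanishing pattern and look up which polytope it corresponds to.

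I would organize it by orbit dimension, exactly as in the statement. First, the $(\C^*)^3$-orbits: these occur when exactly one $a_i=0$. Then exactly one of $P^{13},P^{14},P^{23},P^{24}$ vanishes and $P^{12},P^{34}$ are (generically, and in fact automatically since one of the products $a_1a_4,a_2a_3$ already has a zero factor while the other need not — one must check $P^{34}\neq0$ can be arranged, and it can) nonzero; hence the polytope has five vertices, $\{\delta_{12},\dots,\delta_{34}\}\setminus\{\delta_{ij}\}$, which by the table is one of the $6$ three-dimensional pyramids $\neq\Delta_{4,2}$. The four choices of which $a_i$ is zero give four different missing vertices, hence four different strata; that they are pairwise distinct and all $\neq\Delta_{4,2}$ is immediate. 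Second, the $(\C^*)^2$-orbits: exactly two $a_i=0$. Here one must split into the subcase where the two vanishing Pl\"ucker coordinates among the mixed four share a common index versus not; when they share a common index a third mixed coordinate cannot be forced but $P^{34}$ must vanish (since then both $a_1a_4$ and $a_2a_3$ vanish or one monomial vanishes and the matrix degenerates), giving a triangle, and when they do not share a common index one gets the four-vertex diagonal rectangle — in all $\binom{4}{2}=6$ cases one gets a distinct $2$-dimensional polytope, hence six distinct strata. Third, the $\C^*$-orbits: exactly three $a_i=0$, giving three vanishing mixed coordinates plus $P^{34}=0$, i.e. a triangle (one of the eight boundary triangles of the octahedron); the four choices give four distinct strata.

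The last and most interesting item is part (4), the six-dimensional orbits, where all $a_i\neq0$ and the orbit is the hypersurface $z_1z_4/z_2z_3=c$ with $c=a_1a_4/(a_2a_3)$. When $c\neq1$ all six Pl\"ucker coordinates are nonzero (the four mixed ones because $a_i\neq0$, $P^{12}=1$, and $P^{34}=a_1a_4-a_2a_3\neq0$ precisely because $c\neq1$), so the polytope is $\Delta_{4,2}$ and all these orbits lie in the single top stratum $W_{\Delta_{4,2}}$. When $c=1$ we have $P^{34}(X)=0$ while all other five are nonzero, so the polytope has vertex set $\{\delta_{12},\dots,\delta_{34}\}\setminus\{\delta_{34}\}$ — and here one observes $34 = \{1,2,3,4\}\setminus\{1,2\}=\{1,2,3,4\}\setminus J$, i.e. the missing vertex is $\delta_L$ with $L=\{1,2,3,4\}\setminus J$, as claimed. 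I would state this last identification carefully and note that it is exactly the reason the $c=1$ orbit is singled out: it is the unique $6$-orbit in the chart whose image-polytope is a pyramid rather than all of $\Delta_{4,2}$.

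**Main obstacle.** There is no deep obstacle; the one place requiring genuine care is the repeated claim ``if two (mixed) Pl\"ucker coordinates vanish then $P^{34}$ vanishes too, and more are forced according to whether the indices overlap.'' This is the content already invoked in the proof of Lemma~\ref{polytopes} (``if two Pl\"ucker coordinates with a common index vanish then one more vanishes''), so I would simply cite that argument; but one must make sure that when fewer coordinates are prescribed to vanish (the $(\C^*)^3$ and generic $6$-orbit cases) the remaining ones, especially $P^{34}$, really are nonzero and not accidentally forced to vanish — this is where one checks $c\neq1$ is genuinely attainable with one $a_i$ zero, and is a one-line verification in each case. Beyond that the proof is a finite case check against the table~\eqref{number} and Lemma~\ref{polytopes}.
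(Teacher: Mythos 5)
Your approach is the same as the paper's: reduce to one chart via the Weyl group, read off the vanishing pattern of the six Pl\"ucker coordinates $P^{12}=1$, $P^{13}=a_3$, $P^{14}=a_4$, $P^{23}=-a_1$, $P^{24}=-a_2$, $P^{34}=a_1a_4-a_2a_3$ from which $a_i$ vanish, and identify the stratum by Theorem~\ref{moment-map-polytope}. The paper's proof is just a terser version of this bookkeeping, and your treatment of parts (1), (2) and (4) is correct (indeed, in part (1) the nonvanishing of $P^{34}$ is automatic, not merely arrangeable, exactly as you note parenthetically).

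There is one factual slip in your part (3): when exactly three of the $a_i$ vanish, \emph{four} Pl\"ucker coordinates vanish (three mixed ones plus $P^{34}$), leaving only two nonzero, namely $P^{12}$ and one mixed coordinate. The admissible polytope is therefore an \emph{edge} of the octahedron, not a boundary triangle; this is consistent with Theorem~\ref{moment-map-polytope} (a $\C^{*}$-orbit has complex dimension one, so its polytope is one-dimensional) and with the paper's dimension table, where the twelve edges carry the twelve strata of real dimension $2$. The triangles are already accounted for by your common-index subcase of part (2). The slip does not damage the theorem's conclusion, since part (3) only asserts that the four orbits lie in four different strata, and the four vanishing patterns (distinguished by which single mixed coordinate survives) are indeed pairwise distinct; but the polytope identification should be corrected.
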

\begin{proof}
Note that all polytopes which correspond to the orbits  from $M_{J}$ contain the vertex $\delta _{J}$.
We describe these orbits in one chart and by the action of the Weyl group the same  holdi for the other charts as well. The orbits of the first type we obtain if exactly one of the entries $a_i$'s of the matrix $A_{X}$ is zero what implies 
that $X$ has exactly one zero Pl\" ucker coordinate. Moreover all such $X$ having the fixed entry $a_i$ equal to zero belong to the same orbit. Therefore there are four such orbits which map by the moment map to different  three-dimensional polytopes which are also different from $\Delta _{4,2}$. Note that all such polytopes contain the vertex  $\delta _{L}$, $L=\{1,2,3,4\}-J$ as well. Analogously we prove the second and the third statement. For the fourth statement it directly checks that all  Pl\"ucker coordinates for the  orbits given by the surfaces $\frac{z_1z_4}{z_2z_3}=c$, $c\neq 0$ are non-zero and therefore, all such orbits belong to the same strata
which maps to $\Delta _{4,2}$ by the moment map. The orbit given by the hypersurface $\frac{z_1z_4}{z_2z_3}=1$ has the  Pl\"ucker coordinate $P^{L}=0$, $L=\{1,2,3,4\}-J$ implying that it maps to the three-dimensional polytope which does not contain the vertex $\delta _{L}$. Note that this polytope is different from the polytopes that correspond to the orbits from the first statement of the Theorem.      
\end{proof}
\begin{rem}
In the case when we want to differentiate  the orbits  which belong to the coordinate subspaces we introduce notation  $\C ^{*}_{I} = \{(z_1,z_2,z_3,z_4) \in \C ^4, z_i =0$ iff $i\notin I\}$,   where $I \subset \{1,\ldots, 4\}$ and $\| I\| \leq 3$. Note that the admissible polytopes for $\C ^{*} _{I}$, where $I=\{1,2\},\{1,3\},\{2,4\},\{3,4\}$ are triangles while the admissible polytopes for $\C ^{*}_{I}$ where $I=\{1,4\},\{2,3\}$ are squares.
\end{rem}
From Theorem~\ref{orbits_chart} and its proof it directly follows:
\begin{cor}\label{strata_chart}
In each chart $(M_{J}, u_{J})$ on $G_{4,2}$  a stratum over a polytope different from $\Delta _{4,2}$ consists of one orbit. 
\end{cor}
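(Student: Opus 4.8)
The plan is to read the corollary directly off the orbit classification of Theorem~\ref{orbits_chart} together with Proposition~\ref{orbits}. First I would enumerate all strata that meet a fixed chart $M_{J}$. By Proposition~\ref{orbits} the $(\C^{*})^{4}$-orbits inside $M_{J}\cong \C^{4}$ are precisely: the fixed point $O=(0,0,0,0)$ (which is the coordinate plane $\C^{J}$ itself); the $\C^{*}$-, $(\C^{*})^{2}$- and $(\C^{*})^{3}$-orbits lying in the coordinate subspaces of $\C^4$; and the one-parameter family of hypersurfaces $z_{1}z_{4}/z_{2}z_{3}=c$, $c\neq 0$. By Theorem~\ref{moment-map-polytope} the stratum of $X$ is determined by $\mu(\overline{\OO_{\C}(X)})$, and the only orbits of $M_{J}$ whose Pl\"ucker coordinates are all non-zero are the hypersurfaces with $c\neq 1$; these make up $W_{\Delta_{4,2}}\cap M_{J}$. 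Hence every remaining orbit of $M_{J}$ lies over some admissible polytope different from $\Delta_{4,2}$.

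Next I would argue dimension by dimension, using that every admissible polytope met through $M_{J}$ contains the vertex $\delta_{J}$. By part~(1) of Theorem~\ref{orbits_chart} the four orbits homeomorphic to $(\C^{*})^{3}$ lie in four pairwise distinct strata over four distinct three-dimensional polytopes $\neq \Delta_{4,2}$; by part~(4) the hypersurface $c=1$ lies in a fifth, again distinct, three-dimensional stratum, namely the one over the pyramid not containing $\delta_{L}$ with $L=\{1,2,3,4\}-J$. Since all other orbits of $M_{J}$ map to polytopes of dimension $\leq 2$ or to $\Delta_{4,2}$, each of these five three-dimensional strata meets $M_{J}$ in exactly one orbit. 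In the same way, part~(2) shows the six $(\C^{*})^{2}$-orbits are the only orbits of $M_{J}$ over two-dimensional polytopes and lie in six distinct strata; part~(3) shows the four $\C^{*}$-orbits give one orbit per one-dimensional stratum; and $O=\C^{J}$ is the unique orbit of $M_{J}$ over the point-stratum $\{\delta_{J}\}$ (the only $(\C^{*})^{4}$-fixed point in $M_{J}$). Assembling the cases, each stratum over a polytope $\neq \Delta_{4,2}$ intersects $M_{J}$ in a single orbit.

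The one place the argument is not pure bookkeeping is the assertion that no two of the orbits in Theorem~\ref{orbits_chart}(1)--(3) and the $c=1$ orbit can lie in the same stratum. Here I would invoke the fact, established in the proof of Theorem~\ref{orbits_chart}, that these orbits have pairwise different sets of vanishing Pl\"ucker coordinates, so by the rank-function definition of the strata (equivalently, by Theorem~\ref{moment-map-polytope}, since their moment-map polytopes have different vertex sets) they lie in different strata. Once this is granted the corollary follows immediately, and this is the only step that requires attention; the rest is a direct count against the list of orbits in $M_{J}$.
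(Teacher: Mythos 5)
Your proposal is correct and follows essentially the same route as the paper, which simply observes that the corollary is a direct consequence of Theorem~\ref{orbits_chart} and its proof; you have merely made the bookkeeping explicit, matching the $4+6+4+1$ lower-dimensional orbits plus the $c=1$ hypersurface against the admissible polytopes containing $\delta_{J}$, and correctly isolating the one substantive point (distinct vanishing sets of Pl\"ucker coordinates force distinct strata), which is exactly what the proof of Theorem~\ref{orbits_chart} establishes.
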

It shows that the same is true for such strata on the whole Grassmannian $G_{4,2}$:
\begin{cor}\label{Gr_other_strata}
On Grassmannian $G_{4,2}$ any stratum over a polytope different form $\Delta _{4,2}$ consists of one orbit.
\end{cor}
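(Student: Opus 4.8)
The plan is to reduce the statement to the chart-local Corollary~\ref{strata_chart} by showing that a stratum over a polytope $P\neq\Delta_{4,2}$ is entirely contained in a single Pl\"ucker chart $M_{J}$.

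First I would record the following observation, which is immediate from Theorem~\ref{moment-map-polytope} together with definition~(2) of a Grassmann stratum: if $W$ is the stratum over an admissible polytope $P$, then for every $X\in W$ the set $S(X):=\{\,J\subset\{1,2,3,4\},\ |J|=2 : P^{J}(X)\neq 0\,\}$ is precisely the set of index pairs $J$ for which $\delta_{J}$ is a vertex of $P$. In particular $S(X)$ does not depend on $X\in W$; denote this common set by $S_{P}$. Now suppose $P\neq\Delta_{4,2}$. By Lemma~\ref{polytopes} the only admissible polytope whose vertex set is all of $\{\delta_{12},\ldots,\delta_{34}\}$ is $\Delta_{4,2}$ itself, so $S_{P}$ is a proper subset of the six index pairs; and since every admissible polytope has at least one vertex, $S_{P}$ is non-empty. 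Fix any $J_{1}\in S_{P}$. Then $P^{J_{1}}(X)\neq 0$ for every $X\in W$, i.e. $W\subseteq M_{J_{1}}$.

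Finally, since $W$ lies inside the single chart $M_{J_{1}}$ and is, by construction, a stratum over the polytope $P\neq\Delta_{4,2}$, Corollary~\ref{strata_chart} applies verbatim and shows that $W$ consists of a single $(\C^{*})^{4}$-orbit, which is the assertion of the Corollary.

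There is no genuine obstacle here beyond the bookkeeping: the substantive work was already done in Proposition~\ref{orbits}, Theorem~\ref{orbits_chart} and Corollary~\ref{strata_chart}, and the only new ingredient is the elementary remark that the vanishing pattern of the Pl\"ucker coordinates is constant along a stratum, which forces the stratum into one coordinate chart. The one point to check carefully is that ``a stratum over a polytope different from $\Delta_{4,2}$ in the chart $(M_{J_{1}},u_{J_{1}})$'' as used in Corollary~\ref{strata_chart} coincides with $W\cap M_{J_{1}}=W$; this is automatic because, as explained in Section~\ref{structure}, the strata of $G_{4,2}$ restrict to the strata of the induced $(\C^{*})^{4}$-action on $M_{J_{1}}\cong\C^{4}$ described there, so the notion of ``stratum over $P$'' is the same whether computed globally on $G_{4,2}$ or inside the chart.
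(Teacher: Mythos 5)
Your proposal is correct and follows essentially the same route as the paper's own proof: both observe that the vanishing pattern of the Pl\"ucker coordinates is constant along a stratum (being determined by the vertex set of $P$), conclude that $W_{P}$ lies inside a chart $M_{J}$ with $\delta_{J}$ a vertex of $P$, and then invoke Corollary~\ref{strata_chart}. Your write-up is somewhat more careful about why $S_{P}$ is non-empty and about matching the global and chart-local notions of stratum, but the substance is identical.
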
  
\begin{proof}
Consider the stratum $W_{P}$ over some polytope $P$ different from $\Delta _{4,2}$. This polytope is determined by the vertices which correspond to the non-zero Pl\"ucker coordinates for $X$, where $X$ is an arbitrary element in $W_{P}$. On the other hand, the charts
on $G_{4,2}$ that contain any such $X$ are indexed by the non-zero Pl\" u cker coordinates for $X$. It implies that
\[
W_{P} \subset \bigcap M_{J},\; P^{J}(X)\neq 0,\; X\in W_{P} .
\]
It follows from Corollary~\ref{strata_chart}  that $W_{P}$ consists of one orbit.   
\end{proof}
As for the remaining stratum we prove:
\begin{cor}
The stratum $W_{\Delta _{4,2}}$ is open everywhere dense set in $G_{4,2}$ of dimension $8$. It can be obtained  as the intersection of all charts $M_{J}$ on $G_{4,2}$. 

\end{cor}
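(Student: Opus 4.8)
The plan is to derive all three assertions directly from the definitions, the only substantive inputs being that $G_{4,2}$ is a connected (indeed irreducible) complex manifold and that $W_{\Delta _{4,2}}$ is nonempty. First I would record the tautological identity $W_{\Delta _{4,2}}=\bigcap_{|J|=2}M_{J}$: by definition $X\in W_{\Delta _{4,2}}$ means $P^{J}(X)\neq 0$ for every two-element subset $J\subset\{1,2,3,4\}$, which is exactly the condition that $X$ belong to each of the six charts $M_{J}=\{X\in G_{4,2}\mid P^{J}(X)\neq 0\}$. In particular $W_{\Delta _{4,2}}\neq\emptyset$: the element $X$ whose matrix $A_{X}$ is exhibited in the first case of \lemref{polytopes} has all six Pl\"ucker coordinates nonzero.

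Openness is then clear, since each $M_{J}$ is a chart domain of the canonical atlas and hence open in $G_{4,2}$, so the finite intersection $W_{\Delta _{4,2}}=\bigcap_{J}M_{J}$ is open; equivalently, its complement $G_{4,2}\setminus W_{\Delta _{4,2}}=\bigcup_{J}\{P^{J}=0\}=\bigcup_{J}(G_{4,2}\setminus M_{J})$ is closed. For density I would observe that each $\{P^{J}=0\}$ is a proper complex-analytic subset of the connected complex manifold $G_{4,2}$ — it is closed, and it is a proper subset because of the point just exhibited — and that a proper analytic subset of a connected complex manifold has empty interior; hence the finite union $G_{4,2}\setminus W_{\Delta _{4,2}}$ is closed and nowhere dense, so $W_{\Delta _{4,2}}$ is everywhere dense. (Equivalently, $W_{\Delta _{4,2}}$ is a nonempty Zariski-open subset of the irreducible variety $G_{4,2}$, hence dense.)

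Finally, since $\dim_{\C}G_{4,2}=2(4-2)=4$, the manifold $G_{4,2}$ has real dimension $8$, and a nonempty open subset of it has the same dimension; one may also read this off Proposition~\ref{orbits}, where in the coordinates $u_{J}\colon M_{J}\to\C^{4}$ the stratum $W_{\Delta _{4,2}}$ corresponds to $(\C^{*})^{4}\subset\C^{4}$, an open set of real dimension $8$. I do not expect any genuine obstacle here: the single nonroutine ingredient is the standard fact that a proper analytic subset of a connected complex manifold is nowhere dense, and that is precisely what makes the density step immediate.
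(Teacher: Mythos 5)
Your proposal is correct, and it reaches the conclusion by a somewhat different route than the paper. The paper first establishes (via Theorem~\ref{orbits_chart} and Corollary~\ref{Gr_other_strata}) that every stratum other than $W_{\Delta_{4,2}}$ has dimension $\leq 6$, and then deduces density of the main stratum from the fact that $G_{4,2}$ is a compact $8$-manifold; the inclusion $W_{\Delta_{4,2}}\subset\bigcap_J M_J$ is read off from $\mu(W_{\Delta_{4,2}})=\stackrel{\circ}{\Delta}_{4,2}$. You instead take the identity $W_{\Delta_{4,2}}=\bigcap_J M_J$ as the tautological starting point (which it is, given the paper's explicit description $W_{\Delta_{4,2}}=\{X\mid P^J(X)\neq 0\ \forall J\}$) and get openness for free, then obtain density from the standard fact that a finite union of proper closed analytic subsets $\{P^J=0\}$ of a connected complex manifold is nowhere dense. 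Your argument is more self-contained — it does not rely on the orbit/stratum dimension count of the preceding results — and it makes the openness claim explicit, which the paper's proof leaves implicit; the paper's argument, by contrast, fits the corollary into the stratification bookkeeping it has already built. One small inaccuracy in your parenthetical aside: in the chart $(M_{J},u_{J})$ the main stratum does \emph{not} correspond to all of $(\C^{*})^{4}$ but to $(\C^{*})^{4}$ minus the hypersurface $z_1z_4=z_2z_3$ (the orbit with $c=1$ lies in a different stratum, per Theorem~\ref{orbits_chart}); this does not affect your dimension conclusion, since the set is still open and nonempty, but the identification as stated is not exact.
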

\begin{proof}
It follows from  Theorem~\ref{orbits_chart} and Corollary~\ref{Gr_other_strata} that the dimension of any strata on $G_{4,2}$ different from $W_{\Delta _{4,2}}$ is $\leq 6$. Since $G_{4,2}$ is a compact manifold of dimension eight we derive that this stratum is everywhere dense set of dimension eight. Since $\mu (W_{\Delta _{4,2}}) = \stackrel{\circ}{\Delta _{4,2}}$ it follows that  all  Pl\" ucker coordinates for any $X\in W_{\Delta _{4,2}}$  are non-zero. Therefore $W_{\Delta _{4,2}}\subset M_{J}$ for any chart $M_J$ on $G_{4,2}$.   
\end{proof} 

We obtain complete description of the stratification for $G_{4,2}$.
\begin{cor}
The first row of the following table gives the dimension of the strata on Grassmannian  $G_{4,2}$, while the second row gives their number in the corresponding dimension:
\begin{equation}
\left[\begin{array}{ccccc}
8 & 6 & 4 & 2 & 0\\
1 & 6 & 11 & 12 & 6
\end{array}\right].
\end{equation}
Moreover each strata of dimension $\leq 6$ consists of one orbit of the corresponding dimension.
\end{cor}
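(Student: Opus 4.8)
The plan is to consolidate what has already been proved. Recall that $W_P\mapsto P$ is a bijection between the Grassmann strata of $G_{4,2}$ and the admissible polytopes listed in Lemma~\ref{polytopes}, so by the count~\eqref{number} there are $7+11+12+6=36$ strata. The preceding corollary identifies $W_{\Delta_{4,2}}$ as the unique stratum of dimension $8$, associated with the unique $3$-dimensional admissible polytope that equals $\Delta_{4,2}$ itself. Hence it remains only to determine the dimensions of the $35$ strata lying over the other admissible polytopes, and to check that each is a single orbit of that dimension.

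The key step is: for every admissible polytope $P\neq\Delta_{4,2}$ the stratum $W_P$ is a single $(\C^*)^4$-orbit of real dimension $2\dim P$. That $W_P$ is one orbit, say $W_P=\OO_{\C}(X)$, is Corollary~\ref{Gr_other_strata}; moreover $W_P$ is the dense orbit of the toric variety $\overline{\OO_{\C}(X)}$, whose moment image, by Theorem~\ref{moment-map-polytope}, is exactly $P$. Since $\mu(W_P)$ is the relative interior of $P$, the bijection of Theorem~\ref{moment-map-polytope} matches the orbit $W_P$ with the top-dimensional open face of $P$, forcing $\dim_{\C}W_P=\dim P$ and hence $\dim_{\R}W_P=2\dim P$. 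The same conclusion can be read off directly from Theorem~\ref{orbits_chart}: in a chart $M_J$ with $\delta_J$ a vertex of $P$ the orbit $W_P$ is homeomorphic to $(\C^*)^3$, to the hypersurface $z_1z_4=z_2z_3$, to $(\C^*)^2$, to $\C^*$, or to the point $O$, and then $\mu(W_P)$ is the interior of a $3$-, $3$-, $2$-, $1$-, or $0$-dimensional admissible polytope respectively, giving real dimensions $6,6,4,2,0$, i.e. $2\dim P$ in every case.

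It remains to do the arithmetic. By table~\eqref{number}, besides $\Delta_{4,2}$ there are $6$ three-dimensional admissible polytopes (the pyramids), $11$ two-dimensional ones (the eight triangles on $\partial\Delta_{4,2}$ and the three diagonal squares), $12$ one-dimensional ones (the edges of the octahedron), and $6$ zero-dimensional ones (its vertices); by the key step these contribute, respectively, $6$ strata of dimension $6$, $11$ of dimension $4$, $12$ of dimension $2$, and $6$ of dimension $0$. Together with $W_{\Delta_{4,2}}$ of dimension $8$ this is exactly the asserted table, and since every stratum other than $W_{\Delta_{4,2}}$ has been exhibited as one orbit of the same dimension, the final sentence of the corollary follows. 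There is no genuine obstacle here, as the statement is a summary; the only point that needs care is to make sure no orbit type from Theorem~\ref{orbits_chart} is overlooked, in particular that the hypersurface orbit $z_1z_4=z_2z_3$, which also lies over a three-dimensional polytope, yields a $6$-dimensional stratum rather than an exceptional one, and that $W_{\Delta_{4,2}}$ is genuinely $8$-dimensional rather than a union that happens to have lower dimension, which is precisely the content of the preceding corollary.
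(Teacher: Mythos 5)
Your proof is correct and follows the same route the paper intends: the corollary is stated there without a separate argument precisely because it is the consolidation of table~\eqref{number}, Corollary~\ref{Gr_other_strata}, the preceding corollary on $W_{\Delta_{4,2}}$, and the dimension count $\dim_{\R}W_P=2\dim P$ coming from Theorem~\ref{moment-map-polytope} (or read off chart by chart from Theorem~\ref{orbits_chart}). Your bookkeeping of the $7=1+6$ three-dimensional polytopes and the resulting split into one $8$-dimensional stratum and six $6$-dimensional ones matches the paper exactly.
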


\section{The singularities of toric varieties obtained as compactification of $(\C ^{*})^4$ - orbits on $G_{4,2}$}

We describe first  the closure of the orbits in the charts.
\begin{prop}
The boundary of an orbit for the action of  $(\C ^{*})^{3}$ on $\C ^{4}$  induced by  the  chart $(M_{J}, u_{J})$ is as follows:
\begin{itemize}
\item for the orbit $\C _{I}^{*}$, $\| I\| =1$  it is the point $O=(0,0,0,0)$;
\item for the orbit $\C _{I}^{*}$, where $I = \{i,j\}$ it is $\C _{I_1}^{*}\cup \C_{I_2}^{*}\cup O$, where $I = \{i,j\}$, $I_1=\{i\}$ and $I_{2} = \{j\}$;
\item for the orbit $\C _{I}^{*}$, where $I =\{i, j, k\}$   it is $\C _{I_1}^{*} \cup \C _{I_2}^{*}\cup \C _{I_3}^{*}\cup \C _{I_4}^{*}\cup \C _{I_5}^{*}\cup \C _{I_6}^{*}\cup O$, where $I_1=\{i,j\}, I_{2}=\{i,k\}, I_{3}=\{j,k\}, I_4 = \{i\}, I_{5}=\{j\}, I_6=\{k\}$;  
\item for the orbits $z_2z_3=cz_1z_4$ it is $\C_{I_1}^{*}\cup \C _{I_2}^{*}\cup \C_{I_3}^{*}\cup \C _{I_4}^{*}\cup \C _{I_5}^{*}\cup \C_{I_6}^{*}\cup \C_{I_7}^{*}\cup \C _{I_8}^{*}\cup O$, where $I_1=\{1,2\}, I_{2}=\{1,3\}, I_3=\{2,4\}, I_{4}=\{3,4\}, I_5=\{1\}, I_6=\{2\}, I_7=\{3\}, I_{8}=\{4\}$.
\end{itemize}
\end{prop}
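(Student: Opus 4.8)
The statement to prove describes the boundary (orbit closure minus the orbit) of each orbit type for the $(\C^*)^3$-action on $\C^4$ arising from a chart $(M_J, u_J)$. By Proposition~\ref{orbits}, this action is via the representation $(t_1,t_2,t_3)\mapsto(t_1,t_2,t_3,t_3t_2/t_1)$ (in the $M_{12}$ model), and by Corollary~\ref{Gr_other_strata} together with Theorem~\ref{moment-map-polytope}, orbit closures correspond to the admissible polytopes. My plan is to compute closures directly in coordinates and then cross-check against the polytope data already established.

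\textbf{Plan.} First I would fix the chart $M_{12}$ and recall from Proposition~\ref{orbits} that the orbit of $(a_1,a_2,a_3,a_4)$ is $\{(t_1a_1,t_2a_2,t_3a_3,(t_3t_2/t_1)a_4) : t_i\in\C^*\}$. For the coordinate orbits $\C^*_I$ this is immediate: if $\|I\|=1$, say $I=\{i\}$, the orbit is an axis minus the origin, whose closure adds only $O$; if $I=\{i,j\}$, the orbit is a $2$-plane with two coordinate lines removed, so its closure adds $\C^*_{\{i\}}$, $\C^*_{\{j\}}$ and $O$; if $\|I\|=3$, the orbit is the hypersurface $z_2z_3=cz_1z_4$ intersected with the relevant coordinate $3$-space (one coordinate fixed to $0$), and taking limits $t_k\to 0$ one-by-one produces exactly the six smaller coordinate strata and $O$ listed. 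The one genuinely substantive case is the generic orbit $\{z_2z_3=cz_1z_4\}$ with all $a_i\neq0$: here I would argue that its closure is the full hypersurface $\{z_2z_3=cz_1z_4\}\subset\C^4$ (a Zariski-closed set containing the orbit as a dense subset, since the complement within the hypersurface is a union of lower-dimensional coordinate pieces), and then intersect this hypersurface with each coordinate stratum. Setting $z_i=0$ in $z_2z_3=cz_1z_4$ with $c\neq 0$: $z_1=0$ forces $z_2z_3=0$, giving the lines $\{z_1=z_2=0\}$ and $\{z_1=z_3=0\}$; similarly $z_4=0$ forces $z_2=0$ or $z_3=0$; $z_2=0$ forces $z_1=0$ or $z_4=0$; $z_3=0$ forces $z_1=0$ or $z_4=0$. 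Collating, the $2$-dimensional coordinate pieces that lie in the hypersurface are exactly $\C^*_{\{1,2\}},\C^*_{\{1,3\}},\C^*_{\{2,4\}},\C^*_{\{3,4\}}$, and no others — consistent with the remark after Theorem~\ref{orbits_chart} identifying squares and triangles. Adding the four coordinate lines and $O$ completes the claimed description.

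\textbf{Consistency check.} As a second, independent verification I would invoke Theorem~\ref{moment-map-polytope}: the face poset of the admissible polytope equals the orbit poset of the orbit closure. For $c\neq 1$ the polytope is $\Delta_{4,2}$, the octahedron, whose faces are the $3$ diagonal squares, the $8$ triangular faces reduce here (within one chart) to the relevant squares $\delta_{12}\delta_{13}\delta_{24}\delta_{34}$-type configurations; reading off which $\delta_I$ (with $|I|=1$ interpreted via the convention $\delta_{\{i\}}$ replaced by the appropriate vertex labelling) appear recovers precisely the lists $I_1,\dots,I_8$ and the absence of the ``complementary'' pairs $\{1,4\},\{2,3\}$ in the generic-square case matches the non-realizability statement at the end of Lemma~\ref{polytopes}. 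The bookkeeping of which pair $\{i,j\}$ with $z_iz_j$ appearing ``together'' in the defining monomial is the place where sign/index errors are easiest to make, so I would be careful to track the monomial $z_2z_3$ versus $z_1z_4$ throughout.

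\textbf{Main obstacle.} The only real subtlety is showing the generic orbit is \emph{dense} in the full hypersurface $\{z_2z_3=cz_1z_4\}$ and that its boundary is exactly the listed set rather than some proper subset — i.e.\ verifying that every listed stratum is actually attained as a limit and nothing extra appears. Density follows because the hypersurface is irreducible (the polynomial $z_2z_3-cz_1z_4$ is irreducible for $c\neq0$) and the orbit is the locus where all $z_i\neq0$, a nonempty Zariski-open subset; attainment of each boundary stratum is checked by explicit one-parameter degenerations $t_k\to0$. After that, the coordinate-orbit cases are routine, and the whole statement follows by applying the Weyl-group permutation of charts as in the proof of Theorem~\ref{orbits_chart} to transfer from $M_{12}$ to an arbitrary $M_J$.
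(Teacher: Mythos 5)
Your proposal is correct, and in the one substantive case it takes a genuinely different route from the paper. The paper's proof works entirely with the explicit parametrization $(t_1a_1,t_2a_2,t_3a_3,(t_2t_3/t_1)a_4)$ and enumerates the classical limit points as the parameters $t_1,t_2,t_3$ tend to zero at various relative rates (e.g.\ $t_3=\o(t_1)$ versus $t_3=O(t_1)$ to reach $(0,z_2,0,0)$ versus $(0,z_2,0,z_4)$). You instead identify the closure of the generic orbit with the full irreducible hypersurface $\{z_2z_3=cz_1z_4\}$ --- the orbit being exactly its nonempty Zariski-open locus where all coordinates are nonzero, hence dense --- and then read off the boundary by intersecting the hypersurface with the coordinate subspaces. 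Your route is cleaner and less error-prone: it avoids the rate bookkeeping entirely and simultaneously proves attainment of every listed stratum and exclusion of the two ``complementary'' squares $\C^*_{\{1,4\}}$ and $\C^*_{\{2,3\}}$, which the paper's limit enumeration establishes only implicitly. The one standard fact you are invoking (that Zariski density of a constructible set over $\C$ implies density in the classical topology) should be stated explicitly, or replaced by the elementary perturbation argument you hint at; the cross-check against Theorem~\ref{moment-map-polytope} is a nice independent confirmation but not needed.

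One small slip to correct: for $\|I\|=3$ the orbit $\C^*_I$ is \emph{not} ``the hypersurface intersected with the relevant coordinate $3$-space'' --- it is the full open torus $(\C^*)^3$ inside that coordinate $3$-space (e.g.\ for $a_1=0$ the factor $t_2t_3/t_1$ sweeps out all of $\C^*$ independently of $t_2,t_3$). Had you actually intersected the hypersurface with $\{z_i=0\}$ you would have obtained a strictly smaller boundary than the six strata claimed. Your subsequent limit computation gives the correct answer precisely because the orbit is the whole coordinate torus, so the conclusion stands, but the parenthetical characterization should be deleted.
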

\begin{proof}
The boundary  of any orbit in $\C ^4$ given by the chart $(M_{J}, u_{J})$ is  obtained when some of the parameters $t_1,t_2,t_3$ tends to zero. Without loss of generality we work in the chart $(M_{12}, u_{12})$ and   use the description of the orbits from Proposition~\ref{orbits}. For the orbits of the type $\C_{I}^{*}$, $\|I\|\leq 3$ it is clear that their boundary is the union of their coordinate subspaces of  less dimensions 
as it is given in the statement. 
For the orbit which is given as the hypersurface we see that its closure consists of the following points:
\[
(z_1,z_2,0,0), \; t_3\rightarrow 0;\; (z_1,0,z_3,0), \; t_2\rightarrow 0; 
\]
\[(z_1,0,0,0), \; t_2, t_3\rightarrow 0; (0,z_2,0,0), t_1,t_3\rightarrow 0, \; t_{3}=\o (t_1); \; (0,0,z_3,0), \; t_1, t_2\rightarrow 0,\; t_{2}=\o (t_1);
\]
\[
(0,0,0, 0), \; t_1,t_2, t_2\rightarrow 0, t_2t_3=\o (t_1);
\]
\[
(0,z_2,0,z_4), \; t_1, t_3\rightarrow 0, t_3 = O(t_1), \;  (0, 0, z_3, z_4), \; t_1, t_2\rightarrow 0, \; t_2 = O(t_1);
\]
\[
(0,0,0,z_4), \; t_1, t_2, t_3\rightarrow 0, t_2t_3 = O(t_1).
\]
\end{proof}  

\begin{prop}
In an arbitrary chart $(M_{J}, u_{J})$   the closure of the orbits of the type $\C _{I}^{*}$, $\| I\| \leq 3$ are smooth manifolds. The closure of an orbit given as $z_2z_3=cz_1z_4$, $c\neq 0$, has singularity at the point $O=(0,0,0,0)$.
\end{prop}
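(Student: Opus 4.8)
The plan is to treat the two cases separately, since they are genuinely different in nature. For the orbits $\C^{*}_{I}$ with $\|I\|\leq 3$, the closure is just a coordinate subspace of $\C^4$: explicitly, $\overline{\C^{*}_{I}}=\{(z_1,z_2,z_3,z_4)\in\C^4\ :\ z_i=0\text{ for }i\notin I\}$, which is a linear subspace of $\C^4$ of complex dimension $\|I\|$, hence manifestly a smooth manifold (indeed a complex submanifold, and in fact an affine space). This follows immediately from the previous proposition, which identifies the boundary of $\C^{*}_{I}$ as the union of the lower-dimensional coordinate subspaces $\C^{*}_{I'}$ with $I'\subset I$ together with $O$; adding these to $\C^{*}_{I}$ itself produces precisely the coordinate subspace spanned by $\{e_i : i\in I\}$. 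So the first assertion needs only this remark.

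For the second assertion, the closure of the orbit $z_2z_3=cz_1z_4$ (with $c\neq 0$) is the affine hypersurface $V_c=\{(z_1,z_2,z_3,z_4)\in\C^4\ :\ z_2z_3-cz_1z_4=0\}$; the previous proposition shows that the orbit together with all the boundary pieces it lists fills out exactly this zero set. The plan is to compute the singular locus of $V_c$ by the Jacobian criterion. Writing $f(z)=z_2z_3-cz_1z_4$, one has $\nabla f=(-cz_4,\ z_3,\ z_2,\ -cz_1)$, which vanishes precisely when $z_1=z_2=z_3=z_4=0$, i.e.\ only at $O$. Hence $V_c$ is a smooth complex hypersurface away from $O$, and $O$ is its unique candidate singular point; it remains to verify that $V_c$ genuinely fails to be a manifold at $O$ (a priori a variety can be singular as a variety yet still a topological manifold).

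To establish that $O$ is an honest (topological) singularity I would use a standard link/local-cohomology argument. After the linear change of coordinates $w_1=z_1,\ w_2=cz_1$ is unhelpful; instead note that $V_c$ is isomorphic as a variety to the quadric cone $\{w_1w_4=w_2w_3\}$ (rescale $z_1\mapsto z_1/c$), which is the affine cone over a smooth quadric surface $Q\cong\C P^1\times\C P^1\subset\C P^3$. The link of the cone point is then a circle bundle over $\C P^1\times\C P^1$ with Euler class the restriction of $\OO(1)$, and one computes its homology (e.g.\ via the Gysin sequence) to see it is not a homology sphere, or more simply one observes that $V_c\setminus\{O\}$ deformation retracts onto this link, which has $H_2\neq 0$ (rank $1$), whereas for a point in a topological $6$-manifold the punctured neighbourhood has the homology of $S^5$, so $H_2=0$. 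This contradiction shows $O$ is a singular point. The main obstacle is precisely this last step: ruling out that the quadric cone, though algebraically singular, happens to be a topological manifold — this requires the homology computation of the link rather than just the Jacobian criterion. An alternative, possibly cleaner, route is to appeal directly to $\mathfrak{the}$ fact that a normal affine variety which is a topological manifold at a point and of the form "cone over a projective variety" forces that projective variety to be a homology $S^{d}$, which $\C P^1\times\C P^1$ is not; either way the homological input is the crux.
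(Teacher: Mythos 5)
Your proof is correct, but it takes a genuinely different route from the paper's at the key point. For the coordinate orbits both arguments are identical (the closure of $\C^{*}_{I}$ is the coordinate subspace $\C_{I}$, hence smooth). For the hypersurface orbit the paper argues entirely through the orbit decomposition of the closure: points on the two-dimensional boundary orbits lie in the closure of no other orbit and are declared regular, points on the one-dimensional boundary orbits lie in the closure of exactly two coordinate planes, and the point $O$ lies in the closure of all four coordinate axes, which meet transversally at $O$; since these four lines span an $8$-real-dimensional tangent space while the hypersurface is $6$-dimensional, $O$ cannot be a smooth point. Your Jacobian computation $\nabla f=(-cz_4,z_3,z_2,-cz_1)$ replaces the case-by-case orbit analysis and is cleaner and more decisive for locating the singular locus; it also uses the (correct) observation that the closure is exactly the affine quadric $f^{-1}(0)$, which indeed matches the union of boundary orbits listed in the preceding proposition. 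Where you go beyond the paper is in the final step: the paper's tangent-cone/transversality argument only shows $O$ is not a smooth point (which is all the paper needs, since the subsequent application is that a smooth toric variety has a simple moment polytope), whereas your link computation --- the link of the cone over the Segre quadric is a circle bundle over $\C P^1\times\C P^1$ with $H_2\cong\Z$, so it is not a homology $S^5$ and $O$ is not even a topological manifold point --- establishes the strictly stronger topological statement. That extra homological input is correct (the Gysin sequence gives $H^2(L)\cong\Z^2/\Z(1,1)\cong\Z$) and is exactly what is needed if one wants ``singularity'' in the topological rather than merely the differential-geometric sense; the paper does not carry out this step. The only blemishes are cosmetic: the garbled sentence about the linear change of coordinates (the parenthetical rescaling $z_1\mapsto z_1/c$ is what you actually use) and the stray $\mathfrak{the}$.
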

\begin{proof}
The closure of an orbit of the type $\C _{I} ^{*}$, $\| I\| \leq 3$ is $\C _{I}$, so it is a smooth manifold.
The singularities for the closure of the orbits $z_2z_3=cz_1z_4$ may appear only on its boundary.  The points  that belong to the two-dimensional orbits $\C _{I}^{*}$, $\|I\|=2$ of its boundary, do not belong to the closure of any other orbit, so these points are regular. The points that belong to one-dimensional orbits $\C _{I}^{*}$, $\|I \|=1$ of its boundary, say to the orbit when $I=\{1\}$,  belong to the closure of exactly two orbits, $\C _{I_1}^{*}$ and $\C _{I_2}^{*}$, where $I_1=\{1,2\}$ or $I_2=\{1,3 \}$. Since the dimension of our hypersurface orbit is six, we deduce that these points are regular.
As for the point $O=(0,0,0,0)$, it belongs to the closure of four one-dimensional orbits from the boundary of this
hypersurface orbit. These orbits intersects transversally in $\C ^4$ at the point $O$. Since the dimension of the hypersurface orbit is six, it implies that the point $O$ is a singular point of its closure. 
\end{proof}

Recall that given a smooth toric manifold, its image by the moment map is a simple convex polytope.
Since for any $X\in G_{4,2}$ an orbit $\overline{\OO _{\C}(X)}$ is a toric manifold,  if it is  a smooth manifolds or equivalently if  it does not have any singularities, then $\mu (\overline{\OO _{\C}(X)})$ is a simple polytope. This gives  an explicit  description of the orbits on $G_{4,2}$.
\begin{thm}\label{closure-orbits}
The closure of $(\C ^{*})^4$-orbit  of  an element $X\in G_{4,2}$ different from a fixed point is:  
\begin{enumerate}
\item A six-dimensional toric manifold with six  singular points if $\mu (\overline{\OO _{\C}(X)}) =\Delta _{4,2}$;
\item A six-dimensional toric manifold with  one singular point if  $\mu (\overline{\OO _{\C}(X)}) =P$, for admissible polytope $P\neq \Delta _{4,2}$ such that $\dim P =3$;
\item  $\C P^2$ if $\mu (\overline{\OO _{\C}(X)})$ is a triangle;
\item $\C P^1\times \C P^1$ if $\mu (\overline{\OO _{\C} (X)})$ is a square;
\item $\C P^1$ if $\mu (\overline{\OO _{\C} (X)})$ is an interval;
\end{enumerate}
\end{thm}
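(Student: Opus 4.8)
The plan is to classify the closure $\overline{\OO_{\C}(X)}$ according to the admissible polytope $\mu(\overline{\OO_{\C}(X)})$, which by Lemma~\ref{polytopes} is one of the polytopes listed there, using the principle recorded just before the statement: since $\overline{\OO_{\C}(X)}$ is a projective toric variety and the moment polytope of a \emph{smooth} toric variety is simple, singular points of $\overline{\OO_{\C}(X)}$ must occur over the non-simple faces of $\mu(\overline{\OO_{\C}(X)})$. First I would dispose of the low-dimensional cases. When $\mu(\overline{\OO_{\C}(X)})$ is an interval, the toric variety is a complex one-dimensional compact toric manifold with two fixed points, hence $\C P^1$. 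When it is a triangle, it is a two-dimensional toric manifold over a $2$-simplex, hence $\C P^2$; when it is a square (a product of two intervals), it is the toric variety of $\Delta^1\times\Delta^1$, hence $\C P^1\times\C P^1$. These identifications are standard for smooth toric surfaces with the indicated polytopes, and I would cite them rather than reprove them.

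Next I would treat the two three-dimensional cases. For an admissible $3$-polytope $P\neq\Delta_{4,2}$, Lemma~\ref{polytopes} and the geometric discussion identify $P$ as one of the six pyramids over a diagonal square; such a pyramid is simple at all vertices except the apex, where four facets meet. Hence $\overline{\OO_{\C}(X)}$ is a $6$-dimensional toric variety, smooth away from the single torus-fixed point over the apex and singular there. To see it genuinely \emph{is} singular and not accidentally smooth, I would localize: $X$ lies in exactly those charts $M_{J}$ indexed by its nonzero Pl\"ucker coordinates, and by the proof of Theorem~\ref{orbits_chart} the relevant orbit is, in a suitable chart, the hypersurface $z_1z_4=z_2z_3$ in $\C^4$ with $c=1$ (or a Weyl translate of it); the preceding Proposition shows exactly that this hypersurface has a singular point at $O=(0,0,0,0)$ because four coordinate lines in its boundary meet transversally there while the hypersurface is only six-dimensional. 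That gives one singular point and no others, matching the apex of the pyramid. For $\mu(\overline{\OO_{\C}(X)})=\Delta_{4,2}$, the octahedron has all six vertices non-simple (four facets meet at each), and none of its other faces are non-simple; correspondingly the generic orbit is the hypersurface $z_1z_4=cz_2z_3$ with $c\neq 0,1$, and by the same Proposition each chart $M_{J}$ contributes exactly one singular point $O_J$ over the vertex $\delta_J$. Running over the six charts $M_{J}$ (equivalently the six vertices of the octahedron) yields exactly six singular points and smoothness elsewhere.

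The main obstacle I anticipate is bookkeeping consistency between the two descriptions of the singular locus: the ``global'' one coming from non-simple faces of the moment polytope, and the ``local'' one coming from the chart-by-chart analysis of the hypersurface orbit in $\C^4$. Concretely I need to be sure that (i) the singular point produced in each chart $M_J$ lies over the vertex $\delta_J$ and over no lower-dimensional face, so that distinct charts give distinct singular points and the count is exactly $6$ for $\Delta_{4,2}$ and exactly $1$ for a pyramid; and (ii) nothing singular hides over an edge or a two-face — this is where I must invoke the Proposition's computation that the boundary orbits $\C_I^*$ with $\|I\|=1,2$ are regular points of the closure because the hypersurface has dimension six and only two (resp.\ zero) other orbit-closures pass through such points. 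Once these two points are nailed down, the statement follows by matching the list of admissible polytopes in Lemma~\ref{polytopes} with the toric varieties just enumerated, and the ``smooth vs.\ singular'' dichotomy corresponds exactly to whether the polytope is simple (interval, triangle, square) or not (pyramid, octahedron).
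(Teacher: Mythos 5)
Your treatment of cases (1), (2), (3) and (5) is essentially the paper's own argument: the paper likewise observes that a non-simple admissible $3$-polytope forces singularities, localizes to the charts $(M_J,u_J)$ where the generic orbit is the hypersurface $z_1z_4=cz_2z_3$, uses the preceding Proposition to place the unique chart-singularity at $O$ (which is the fixed point $X_J$ over the vertex $\delta_J$), and counts six such points for $\Delta_{4,2}$ and one (over the apex) for a pyramid; the interval and triangle cases are dispatched exactly as you do.

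The genuine gap is in case (4). You write that a square moment polytope, being ``a product of two intervals,'' gives the toric variety of $\Delta^1\times\Delta^1$, hence $\C P^1\times\C P^1$, and propose to cite this as standard. But the combinatorial type of the polytope does not determine a smooth toric (or quasitoric) surface: over a combinatorial square one can also have $\C P^2\#\overline{\C P^2}$ (a Hirzebruch surface) or, quasitorically, $\C P^2\#\C P^2$. What distinguishes these is the normal fan, equivalently the characteristic function on the four edges, and your parenthetical ``a product of two intervals'' assumes precisely what has to be checked, namely that the lattice square $\delta_{13},\delta_{14},\delta_{23},\delta_{24}$ is a product of primitive segments with respect to the character lattice of the torus that acts effectively on this particular orbit closure. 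This is exactly the point the paper does not treat as standard: it computes, in the charts $M_{12}$ and $M_{13}$, the circle stabilizers of the four one-dimensional orbits lying over the edges of the square, reads off the characteristic function, and only then concludes via Davis--Januszkiewicz that the answer is $\C P^1\times\C P^1$ rather than a Hirzebruch surface. To close your argument you would need to supply this computation (or an equivalent verification that the edge vectors of the square are a lattice basis for the relevant two-dimensional weight lattice); everything else in your proposal is sound and matches the paper.
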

\begin{proof}
Since an admissible three-dimensional polytope for $G_{4,2}$ is not simple it follows that any six-dimensional orbit must have singularities. The orbit of an element $X\in G_{4,2}$ such that $\mu (\overline{\OO _{\C}(X))} =\Delta _{4,2}$
belongs to any chart $(M_{J}, u_{J})$. These means that in any chart the point $O=(0,0,0, 0)$ is the unique singular point of its closure. In the chart $(M_{J}, u_{J})$ to the point $O$  corresponds an element $X_{J}\in M_{J}$ such that the only non-zero Pl\"ucker coordinates for $X_{J}$ is $P^{J}(X_{J})$ meaning that  $X_{J}$ is a fixed point. Therefore the closure of an orbit for such $X$ has exactly six singular points given by the fixed points. In the same way we prove the second statement. Any toric manifold whose corresponding polytope is  a triangle or  intervals "equivariantly diffeomorphic" to  $\C P^2$ or $\C P^1$ respectively.  It is also known  that a quasitoric manifold whose corresponding polytope  is  a square is "equivariantly diffeomorphic" to $\C P^1 \times \C P^1$, $\C P^2\# \C P^2$ or $\C P^2\# \overline{ \C P^2}$. Note that actually we do not need to consider $\C P^2\# \C P^2$ since we look for a toric manifolds. 
These three cases are differentiate by the value of the characteristic function on the boundary of the square.
Let $P$ be a square with the vertices $\delta _{12}$, $\delta _{14}$, $\delta _{23}$ and $\delta _{34}$. The    $(\C ^{*})^{4}$-orbit  which maps to this square is two-dimensional and it admits the free action of $T^2\subset T^6$. It is in the chart $M_{12}$   an orbit of an element given by a matrix for which $a_{11}=a_{22}=1$, $a_{12}=a_{21}=0$ and its coordinates are $a_{31},a_{42}\neq 0$ while $a_{32}=a_{41}=0$ . The torus $T^2 = \{(t_3,t_4) | t_3,t_4\in S^1\}= \{(1,1,t_3,t_4) | t_3,t_4\in S^1\}\subset T^4$ acts freely on this orbit and its action is  given by $(t_{3}a_{31}, 0,0,t_4a_{42})$. It implies that  the stabilizers of the orbits which map to the boundary intervals of $P$ which contain the vertex $\delta _{12}$ are $(0,1)$ and $(1,0)$. For the other two intervals  we consider the chart $M_{13}$. In this chart  the $(\C ^{*})^4$-orbit is given by a matrix for which $a_{12}=a_{21}=a_{32}=a_{41}=0$, $a_{31}=a_{42}=1$ and $a_{11},a_{22}\neq 0$. The action of the torus $T^2$ is given by $(\frac{1}{t_3}a_{12},0,0,\frac{1}{t_4}a_{22})$, what implies that the stabilizers of the orbits which maps to the boundary intervals of $P$ which contain the vertex $\delta _{34}$ are $(0,1)$ and $(1,0)$. By~\cite{DJ} we conclude   that our toric manifold is $\C P^1\times \C P^1$.  
\end{proof}
\begin{cor}
The closure of any two orbits from the stratum over $\Delta _{4,2}$ intersect along eight complex projective spaces $\C P^2$
which represent the closure of four-dimensional orbits.
\end{cor}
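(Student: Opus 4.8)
The plan is to count and identify the four-dimensional $(\C^*)^4$-orbits that lie in the closure of a generic orbit, and to show the closures of any two such generic orbits share exactly these. First I would recall from \thmref{closure-orbits} that for $X$ with $\mu(\overline{\OO_\C(X)}) = \Delta_{4,2}$ the closure $\overline{\OO_\C(X)}$ is a six-dimensional toric manifold whose polytope is the octahedron $\Delta_{4,2}$. By \thmref{moment-map-polytope}, the $p$-dimensional orbits in $\overline{\OO_\C(X)}$ correspond bijectively to the $p$-dimensional open faces of $\Delta_{4,2}$; in particular the four-dimensional orbits correspond to the two-dimensional faces of the octahedron. The octahedron has eight triangular faces, so there are exactly eight four-dimensional orbits in $\overline{\OO_\C(X)}$, and by item (3) of \thmref{closure-orbits} each of them has closure equal to a $\C P^2$ (since each such sub-orbit has a triangular admissible polytope, namely a face of $\Delta_{4,2}$).

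Next I would argue that these eight $\C P^2$'s do not depend on the choice of the generic orbit, i.e. they are the same for every $X$ in the stratum $W_{\Delta_{4,2}}$. This is exactly where the analysis in the charts pays off: by \lemref{polytopes} and the remark following the proof of \thmref{orbits_chart}, a triangle that is an admissible polytope for $G_{4,2}$ is necessarily a face of the octahedron, and by Corollary~\ref{Gr_other_strata} the stratum over any such triangle consists of a single orbit, whose closure is a fixed $\C P^2 \subset G_{4,2}$. Therefore the eight triangular faces of $\Delta_{4,2}$ determine eight fixed copies of $\C P^2$ in $G_{4,2}$, each being the closure of the unique four-dimensional orbit over that face; these eight $\C P^2$'s lie in the closure of \emph{every} orbit from $W_{\Delta_{4,2}}$, because each such closure is the toric manifold over $\Delta_{4,2}$ and its codimension-two orbit-closures are precisely the orbit-closures over the faces of $\Delta_{4,2}$.

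Finally I would show the intersection of the closures of two distinct generic orbits is exactly this union of eight $\C P^2$'s. One inclusion is immediate from the previous paragraph. For the reverse inclusion, suppose a point $Y$ lies in $\overline{\OO_\C(X_1)} \cap \overline{\OO_\C(X_2)}$ for two distinct $X_1, X_2 \in W_{\Delta_{4,2}}$. Then $Y$ cannot lie on a six-dimensional orbit: by Proposition~\ref{orbits} and Theorem~\ref{orbits_chart}, working in a chart $(M_J, u_J)$ containing $Y$, the six-dimensional orbits are the level hypersurfaces $\tfrac{z_1 z_4}{z_2 z_3} = c$, which are pairwise disjoint, so $\overline{\OO_\C(X_1)}$ and $\overline{\OO_\C(X_2)}$ cannot meet along a six-dimensional orbit unless $X_1$ and $X_2$ are in the same orbit. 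Hence $Y$ lies on an orbit of dimension $\le 4$, i.e. on one of the orbits over a face of $\Delta_{4,2}$ of dimension $\le 2$; since the closure of the four-dimensional orbit over a triangular face contains the orbits over its edges and vertices, $Y$ belongs to one of the eight $\C P^2$'s. The main obstacle is the last step: making precise, chart by chart, that the two six-dimensional hypersurface orbits with different constants $c$ have disjoint interiors but closures that agree exactly on the lower-dimensional coordinate-type orbits — this requires combining the boundary description of the hypersurface orbit (first Proposition of \secref{structure}'s successor section) with the fact that over a triangular face the stratum is a single orbit.
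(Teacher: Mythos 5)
Your proposal is correct and follows exactly the route the paper intends: the face--orbit bijection of Theorem~\ref{moment-map-polytope} identifies the four-dimensional orbits in the closure of a generic orbit with the eight triangular faces of $\Delta_{4,2}$, Corollary~\ref{Gr_other_strata} guarantees these orbits are the same for every generic orbit, and Theorem~\ref{closure-orbits}(3) identifies their closures as $\C P^2$'s which absorb all lower-dimensional orbits. The paper states this as an immediate corollary without proof, and your argument supplies precisely the missing details, including the correct observation that two distinct dense orbits cannot share a six-dimensional orbit in their closures.
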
 
\begin{rem}
Note that none of admissible polytope is tetrahedron implying that canonical action of $(\C ^{*})^{4}$ on $G_{4,2}$ has no orbits diffeomorphic to $\C P^3$.
\end{rem}
\section{The structure of the orbit space $G_{4,2}/T^4$}\label{orbit}

The results  obtained in Proposition~\ref{orbits} make us possible to describe the topological structure of the orbit space $G_{4,2}/T^4$. The main stratum is everywhere dense in $G_{4,2}/T^4$ and by Proposition~\ref{orbits} its orbits are parametrized by $c\in  \C -\{0,1\}$. Using this parametrization we will continuously parametrize by $c\in \C P^1$ all orbits for $(\C ^{*})^4$-action on $G_{4,2}$. For the simplicity we further use the notion $P_{ij}$ for the poytope spanned by the vertices  $\{\delta _{12},\ldots ,\delta _{34}\}-\delta _{ij}$ and by $P_{ij,pq}$ for the polytope spanned by the vertices  $\{\delta _{12},\ldots ,\delta _{34}\}-\{\delta _{ij},\delta _{pq}\}$, where $1\leq i<j\leq 4$, $1\leq p< q\leq 4$ and $(i,j)\neq (p,q)$.

\begin{prop}\label{orbits3}
The six-dimensional orbit for $(\C ^{*})^4$ - action on $G_{4,2}$, which do not belong to the main stratum, can be, depending on its  admissible polytope $P$, continuously parametrized  by $c=0,1,\infty$ using the parametrization of the orbits of the main stratum in the chart $M_{12}$ as follows:
\begin{itemize}
\item for $P = P_{14}$ or $P = P_{23}$ it is parametrized by $c=0$;
\item for  $P = P_{13}$ or $P = P_{24}$ it is parametrized by $c=\infty$;
\item for $P = P_{12}$ or $P = P_{34}$ it is parametrized by $c=1$.
\end{itemize}
\end{prop}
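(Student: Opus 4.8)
The idea is to track how the hypersurface orbits $\{z_1z_4/z_2z_3 = c\}$ in the chart $M_{12}$ degenerate as $c \to 0$, $c \to 1$, and $c \to \infty$, and identify each limiting family of orbits with a six-dimensional orbit over one of the three-dimensional pyramids $P_{ij}$. By Proposition~\ref{orbits} and Theorem~\ref{orbits_chart}, the main-stratum orbits in $M_{12}$ are exactly the level sets $z_1z_4 = c\, z_2 z_3$ for $c \in \C - \{0,1\}$, while $c = 1$ gives the orbit with $P^{34} = 0$ (the vertex $\delta_{34}$ is dropped, so the polytope is $P_{34}$). So the case $c = 1$ is already settled by Theorem~\ref{orbits_chart}(4); I only need to handle $c = 0$ and $c = \infty$, and to check these are ``continuous'' limits in the sense that the corresponding points of $G_{4,2}$ arise as genuine limits of points on the $c \neq 0,1$ surfaces.

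\textbf{Key steps.} First, I would recall from Proposition~\ref{orbits} that a point $X \in M_{12}$ is represented by
\[
A_X = \begin{pmatrix} 1 & 0 \\ 0 & 1 \\ a_1 & a_3 \\ a_2 & a_4 \end{pmatrix},
\]
with Pl\"ucker coordinates (up to the common factor $P^{12}=1$) given by $P^{13} = a_4$, $P^{14} = -a_3$, $P^{23} = -a_2$, $P^{24} = a_1$, $P^{34} = a_1 a_4 - a_2 a_3$, and that for the main-stratum orbit the invariant is $c = a_1 a_4/(a_2 a_3)$. Letting $c \to 0$ (with all $a_i \neq 0$) one reaches configurations with $a_1 a_4 = 0$ but $a_2 a_3 \neq 0$; taking e.g. $a_1 = 0$, $a_2,a_3,a_4 \neq 0$ gives $P^{24}(X) = 0$ and all other Pl\"ucker coordinates nonzero, so by Lemma~\ref{polytopes} and the notation introduced in Section~\ref{orbit} this orbit lies over $P_{24}$ — wait, I should instead compute which vertex is lost: $P^{24}=0$ means $\delta_{24}$ is dropped, giving $P_{24}$. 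Hmm, but the statement claims $c=0$ corresponds to $P_{14}$ or $P_{23}$. The resolution is that ``$c \to 0$'' must be taken along the family so that the limit point's orbit drops $\delta_{14}$ or $\delta_{23}$: concretely one sends $a_2 \to \infty$ or $a_3 \to \infty$ rather than $a_1 \to 0$, i.e. $c = a_1a_4/(a_2a_3) \to 0$ via the denominator blowing up, which forces (after rescaling the rows to keep the frame bounded) exactly one of $P^{14} = -a_3$, $P^{23} = -a_2$ to vanish in the limit. Thus I would make the limiting procedure precise by rescaling: divide the third or fourth row appropriately so the limit stays in $G_{4,2}$, and read off which single Pl\"ucker coordinate degenerates. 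Carrying this out shows $c = 0$ yields the two orbits over $P_{14}$ and $P_{23}$ (those being the pyramids omitting $\delta_{14}$, resp.\ $\delta_{23}$), and symmetrically $c = \infty$, obtained by letting the numerator $a_1a_4$ dominate, yields the orbits over $P_{13}$ and $P_{24}$.

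\textbf{Main obstacle.} The delicate point is making the phrase ``continuously parametrized by $c = 0,1,\infty$'' rigorous: a priori $c$ is only defined on the main stratum, and several distinct six-dimensional orbits (over different polytopes) can sit ``at $c = 0$'', so I must exhibit, for each such orbit, an explicit path $c(s) \to 0$ in $\C - \{0,1\}$ together with points $X_{c(s)}$ on the corresponding hypersurfaces whose limit in $G_{4,2}$ lands in that orbit — and conversely show no other limit occurs. This is precisely the kind of rescaling-of-frames argument that is routine to carry out but easy to get backwards in the bookkeeping of which index is lost; the symmetry $1 \leftrightarrow 4$, $2 \leftrightarrow 3$ visible in the equation $z_1 z_4 = c\, z_2 z_3$ (swapping $c \leftrightarrow 1/c$ and permuting coordinates) organizes the three cases into the pattern $\{P_{14},P_{23}\}$, $\{P_{13},P_{24}\}$, $\{P_{12},P_{34}\}$ claimed in the statement, and once the limiting frames are written down explicitly the identification of the Pl\"ucker coordinate that vanishes is immediate. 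I would close by cross-checking against Theorem~\ref{orbits_chart}(4), which already pins down the $c = 1$ orbit as the one over the pyramid omitting $\delta_L$ with $L = \{1,2,3,4\} - J = \{3,4\}$, i.e.\ $P_{34}$, consistent with the third bullet.
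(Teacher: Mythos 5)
Your overall strategy --- reading off, for each six\-/dimensional non-main-stratum orbit, the limit of the invariant $c=z_1z_4/(z_2z_3)$ along sequences of main-stratum points converging to it --- is exactly the paper's approach. But the execution goes wrong at the first computation: with the chart representative
\[
A_X=\begin{pmatrix}1&0\\0&1\\a_1&a_3\\a_2&a_4\end{pmatrix},
\]
the $2\times 2$ minors are $P^{12}=1$, $P^{13}=a_3$, $P^{14}=a_4$, $P^{23}=-a_1$, $P^{24}=-a_2$, $P^{34}=a_1a_4-a_2a_3$, not the values you wrote ($P^{13}=a_4$, $P^{14}=-a_3$, $P^{23}=-a_2$, $P^{24}=a_1$). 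With the correct minors there is no paradox to resolve: letting $a_1\to 0$ kills $P^{23}$, so the limiting six-dimensional orbit lies over $P_{23}$, and $c=a_1a_4/(a_2a_3)\to 0$; letting $a_4\to 0$ gives $P_{14}$ and again $c\to 0$; letting $a_2\to 0$ or $a_3\to 0$ gives $P_{24}$ or $P_{13}$ with $c\to\infty$. This is precisely the first two bullets, obtained without ever leaving the chart, since the strata over $P_{14},P_{23},P_{13},P_{24}$ all have $\delta_{12}$ as a vertex of their polytope and hence meet $M_{12}$. Your attempted ``resolution'' --- reaching $c=0$ by sending $a_2$ or $a_3$ to $\infty$ --- is not a fix but a second error: normalizing the Pl\"ucker vector $[1:a_3:a_4:-a_1:-a_2:a_1a_4-a_2a_3]$ as $a_2\to\infty$ leaves only $P^{24}$ and $P^{34}$ nonzero, so the limit lands on a two-dimensional orbit over an edge of $\Delta_{4,2}$, not on a six-dimensional orbit over a pyramid.

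There is a second gap: you never treat the orbit over $P_{12}$, which is the genuinely delicate case. That orbit satisfies $P^{12}=0$, so it is disjoint from $M_{12}$ and cannot be reached by degenerating the $a_i$ inside that chart; the paper handles it by passing to the chart $M_{13}$, computing the coordinate change, and deriving the relation $d=c/(c-1)$ between the main-stratum parameters in the two charts, from which the value $c=1$ for $P_{12}$ follows. Your appeal to Theorem~\ref{orbits_chart}(4) does settle $P_{34}\leftrightarrow c=1$, but says nothing about $P_{12}$. In summary, the proposal asserts the correct final pattern, but the computation supporting the $c=0$ and $c=\infty$ cases is broken, and the $P_{12}$ case is missing.
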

\begin{proof}
The orbits whose admissible polytopes are $P_{14}, P_{23}, P_{13}, P_{24}$ and $P_{34}$ belongs to the chart $M_{12}$.
Let us consider the orbit whose admissible polytope is $P_{14}$. It is given in the chart $M_{12}$ by  $\C ^{*}_{I}$, $I=\{1,2,3\}$. Assume  we are given  a sequence of points from the main stratum which converges to the point from this orbit, meaning that we are given a sequence $(z_1^n,z_2^n,z_3^n,z_4^n)\in (\C ^{*})^4$ which converges to the point $(z_1,z_2,z_3,0)$. It implies that the sequence of the parameters $c_n = \frac{z_1^nz_4^n}{z_2^nz_3^n}$, which parametrizes the orbits of the main stratum  converges to $0$. Therefore the orbit whose admissible polytope is $P_{14}$ is continuously parametrized by $c=0$. In the same way we argue for the orbits whose admissible polytopes are $P_{23}, P_{13}$ and $P_{24}$. These orbits are in  the chart $M_{12}$ given by $\C ^{*}_{I}$, where $I=\{2,3,4\}$, $I=\{1,2,4\}$, $I=\{1,3,4\}$, what implies that they are parametrized by $c=0$ and $c=\infty$ respectively.

The orbit whose admissible polytope is $P_{34}$ is given in the chart $M_{12}$ as the hypersurface 
$\frac{z_1z_4}{z_2z_3} = 1$, what implies that if the sequence of points $(z_1^n,z_2^n,z_3^n,z_4^n)$ from the main stratum converges to the point from this hypersurface, then the sequence of parameters converges to $1$. 

The orbit whose admissible polytope is $P_{12}$ does not belong to the chart $M_{12}$. We instead consider the chart which contains this orbit. Without loss of generality it is enough to consider the chart $M_{13}$. The local coordinates $(w_1,w_2,w_3,w_4)$ in the chart  $M_{13}$ and $(z_1,z_2,z_3,z_4)$ in the chart $M_{12}$ are related by
\[
w_1 = -\frac{z_1}{z_3},\; w_{2} = z_2 - \frac{z_1}{z_3}z_4,\; w_{3} = \frac{1}{z_3},\; w_4 = \frac{z_4}{z_3} .
\]
It follows if we note that points from the main stratum have in the charts $M_{12}$ and $M_{13}$ the matrix representations
\[
\left(\begin{array}{cc}
1 & 0\\
0 & 1\\
z_1 & z_3\\
z_2 & z_4 
\end{array}\right) \; \text{and}\;  
\left(\begin{array}{cc}
1 & 0\\
w_1 & w_3\\
0 & 1\\
w_2 & w_4 
\end{array}\right) ,
\]
what implies that the matrix of coordinate change is $\left(\begin{array}{cc}
1 & 0\\
w_1 & w_3 \end{array}\right)$.
 
We obtain that the parameters $c$ and $d$ of the orbits from the main stratum in the charts $M_{12}$ and $M_{13}$  are related by
\begin{equation}\label{param}
d = \frac{w_1w_4}{w_2w_3} = -\frac{z_1z_4}{z_2z_3-z_1z_4} = \frac{c}{c-1}.
\end{equation}
The orbit whose admissible polytope is $P_{12}$ is given in the chart $M_{13}$ by $\C ^{*}_{{\bar I}}$, ${\bar I}=\{1,2,4\}$. Therefore if we have a sequence of points from the main stratum which converges to the point from this orbit, it gives in the chart $M_{13}$ the sequence $(w_1^n, w_2^n, w_3^n, w_4^n)\in (\C ^{*})^4$ which converges to the point  $(w_1,w_2,0,w_4)$. It implies that the sequence of parameters $d_n$, which in the chart $M_{13}$ parametrizes the orbits of the main stratum, converges to $0$. If now consider the same sequence in the chart $M_{12}$,  using~\eqref{param}, we conclude that its corresponding sequence of parameters $c_n$  converges to $1$.
\end{proof}

\begin{prop}\label{orbitsl}
The $l$-dimensional orbit, where $l\leq 2$,  for $(\C ^{*})^4$ - action on $G_{4,2}$ can be, depending on $l$ and its admissible polytope $P$, continuously parametrized using the parametrization of the orbits of the main stratum in the chart $M_{12}$ as follows :
\begin{itemize}
\item if $l=0,1$ it is parametrized by any $c\in \C P^{1}$;
\item if $l=2$ and $P$ is a triangle it is parametrized by any $c\in \C P^1$;
\item if $l=2$ and $P$ is a square and  
\begin{enumerate}
\item $P = P_{14,23}$ it is parametrized by $c=0$,
\item $P = P_{13,24}$  it is parametrized by $c=\infty$,
\item $P = P_{12,34}$ it is parametrized by $c=1$.
\end{enumerate}
\end{itemize}
\end{prop}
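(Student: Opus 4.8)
The strategy mirrors Proposition~\ref{orbits3}: exhibit each low-dimensional orbit as a limit of orbits from the main stratum, and read off the limiting value of the parameter $c$ (or $d$ after the coordinate change~\eqref{param}). First I would handle the cases $l=0,1$. A zero-dimensional orbit is a fixed point $\delta_J$, and a one-dimensional orbit maps to an edge of $\Delta_{4,2}$; in either case the orbit lies in the closure of \emph{every} six-dimensional orbit of the main stratum — this follows because every admissible three-dimensional polytope and every hypersurface $z_1z_4 = cz_2z_3$ contains all the vertices $\delta_{ij}$ (resp. all the coordinate edges) in its boundary, as recorded in the boundary computation of the Proposition preceding Theorem~\ref{closure-orbits}. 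Hence such an orbit is a limit point of the main-stratum orbit for arbitrary $c$, so it is parametrized by any $c\in\C P^1$.

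Next, for $l=2$ with $P$ a triangle: by Theorem~\ref{orbits_chart}(4) and its proof, every triangular admissible polytope is, in an appropriate chart, realised by an orbit $\C^*_I$ with $\|I\|=2$, $I\in\{\{1,2\},\{1,3\},\{2,4\},\{3,4\}\}$, and by the boundary proposition each such $\C^*_I$ lies in the closure of the hypersurface orbit $z_2z_3=cz_1z_4$ for \emph{every} $c\neq 0$, and also (being a face of an admissible triangle contained in some $P_{ij}$) in the closure of the three exceptional six-dimensional orbits attached to $c=0,1,\infty$. So again any $c\in\C P^1$ works. The substantive case is $l=2$ with $P$ a square. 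Here I use that the three squares $P_{14,23}$, $P_{13,24}$, $P_{12,34}$ are exactly the faces of the three pyramids $P_{14}\cup P_{23}$, $P_{13}\cup P_{24}$, $P_{12}\cup P_{34}$ respectively, and the corresponding two-dimensional orbit sits in the chart $M_{12}$ (for the first two) as $\C^*_I$ with $I=\{1,4\}$ or $I=\{2,3\}$ — the two ``square'' cases singled out in the Remark after Theorem~\ref{orbits_chart}. I would take a sequence in the main stratum converging to a point of $\C^*_{\{1,4\}}$, i.e. $(z_1^n,0,0,z_4^n)$ with $z_1^n,z_4^n\neq 0$ and $z_2^n,z_3^n\to 0$; then $c_n=z_1^nz_4^n/(z_2^nz_3^n)\to\infty$, so $P_{14,23}$ is parametrized by $c=\infty$. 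Wait — I must be careful which pyramid the square bounds: the square $P_{14,23}$ is the common face of the orbits with polytopes $P_{14}$ and $P_{23}$, which by Proposition~\ref{orbits3} carry the parameter $c=0$; consistency then forces the value $c=0$ for $P_{14,23}$. Concretely the orbit $\C^*_I$ lying in the closure of $\C^*_{\{1,2,3\}}$ (the $P_{14}$-orbit, parametrized by $c=0$) must inherit $c=0$, so $I=\{2,3\}$ is the relevant representative; for $P_{13,24}$ the representative is $I=\{1,4\}$ giving $c=\infty$. For $P_{12,34}$, which does not meet $M_{12}$, I pass to $M_{13}$ as in the proof of Proposition~\ref{orbits3}: the square-orbit there is a $\C^*_{\bar I}$ in the closure of the $d=0$ orbit, so $d=0$, and by~\eqref{param} this is $c=1$.

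\textbf{Main obstacle.} The delicate point is \emph{bookkeeping the chart and the index set correctly}, i.e. matching each square to the pair of pyramids it bounds and to the right $\C^*_I$ inside $M_{12}$ or $M_{13}$, so that the limiting value of $c$ comes out consistently with Proposition~\ref{orbits3}; a sign or index transposition flips $c=0$ and $c=\infty$. The cleanest way to make this airtight is to observe that the square-orbit is contained in the closure of the corresponding pyramid-orbit (both being faces of the same admissible polytope lattice), invoke that the parametrization by $c\in\C P^1$ is continuous on all of $G_{4,2}$ — which is exactly what Proposition~\ref{orbits3} together with the $l\le 1$ cases above establishes — and conclude that the value on the square must equal the value on the pyramid. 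The triangle and $l\le 1$ cases are then genuinely routine: they follow from the boundary description already proved, since those orbits lie in the closure of main-stratum orbits for every $c$.
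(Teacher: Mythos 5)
Your overall strategy coincides with the paper's: realise each low-dimensional orbit as a limit of main-stratum orbits, compute $\lim c_n$ directly in the chart $M_{12}$, and pass to $M_{13}$ via~\eqref{param} for the polytopes not containing $\delta_{12}$. The final parameter assignments you reach are all correct, and your treatment of $P_{14,23}$ and $P_{13,24}$ --- after the self-correction matching $\C ^{*}_{\{2,3\}}$ to $P_{14,23}$ and $\C ^{*}_{\{1,4\}}$ to $P_{13,24}$ --- agrees with the paper, which simply computes $c_n=z_1^nz_4^n/(z_2^nz_3^n)\to 0$ for a main-stratum sequence converging to a point $(0,z_2,z_3,0)$.

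However, your argument for $P_{12,34}$ is wrong in both of its steps and only lands on the right answer by accident. First, the square orbit over $P_{12,34}$ is \emph{not} in the closure of the $d=0$ orbits of $M_{13}$: $P_{12,34}$ is the common face of the pyramids $P_{12}$ and $P_{34}$, whose orbits in $M_{13}$ are $\C ^{*}_{\{1,2,4\}}$ (i.e.\ $w_3=0$) and $\C ^{*}_{\{1,3,4\}}$ (i.e.\ $w_2=0$) and therefore carry $d=\infty$; the $d=0$ orbits in $M_{13}$ lie over $P_{14}$ and $P_{23}$, neither of which has $P_{12,34}$ as a face. Second, \eqref{param} is the involution $d=c/(c-1)$, so $d=0$ corresponds to $c=0$, not $c=1$; it is $d\to\infty$ that gives $c\to 1$. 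The correct chain is: the square orbit in $M_{13}$ consists of points $(w_1,0,0,w_4)$, a main-stratum sequence converging to such a point has $w_2^n,w_3^n\to 0$, hence $d_n=w_1^nw_4^n/(w_2^nw_3^n)\to\infty$ and $c_n=d_n/(d_n-1)\to 1$. A smaller inaccuracy occurs in the $l=0,1$ case: you justify ``any $c$'' by claiming that every admissible three-dimensional polytope contains all six vertices in its boundary, which is false since $P_{ij}$ omits $\delta _{ij}$. The conclusion survives because each vertex (resp.\ edge) is a face of five (resp.\ four) of the six pyramids, and since pyramids carrying equal parameters come in complementary pairs $\{P_{ij},P_{kl}\}$ with $\{k,l\}=\{1,2,3,4\}-\{i,j\}$, all three values $0,1,\infty$ are still realised among the remaining ones; this is the counting the paper actually performs.
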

\begin{proof}
The admissible polytope of $l$-dimensional orbit where $l=0,1$ is a vertex or edge of the boundary of $\Delta _{4,2}$,
what means that each such orbit is in the boundary of any orbit from the main stratum. Therefore if a point belong to the orbit whose  admissible  polytope is an edge which  
belongs to the chart $M_{12}$ or it is the vertex $\delta_{12}$, there is, for  any $c\in \C -\{0,1\}$, a sequence of points from the orbit of the main stratum parametrized by $c$ which converges to that point. Moreover, the vertex $\delta_{12}$ and the admissible edges with the vertex $\delta_{12}$ are in the boundary of five admissible three-dimensional polytopes with the vertex $\delta_{12}$. By Proposition~\ref{orbits3} it implies that there is  a sequence  of points from any of  orbits parametrized  by $c =0,1,\infty$ which converges to that point. Therefore, each point of such orbit can be continuously parametrized by any $c\in \C P^1$.
If an admissible  polytope for $l$-dimensional orbit where $l=0,1$  does not belong to the chart $M_{12}$, we instead consider the chart it belongs  and do the local coordinate change.

If an admissible polytope of two-dimensional orbit is a triangle, then such orbit is in the boundary of any orbit from the main stratum. Thus, if this triangle contains the vertex $\delta_{12}$ we can continuously parametrize this orbit, using the chart $M_{12}$, by any  $c\in \C -\{ 0,1\}$. Moreover, such triangle is in the boundary of three admissible three-dimensional polytopes and they are by Proposition~\ref{orbits3} all differently  parametrized. It implies that each point from this orbit can be parametrized by $c=0,1,\infty$.

If an admissible polytope of two-dimensional orbit is a square, then this orbit does not bound any orbit from the main stratum. It bounds exactly two three-dimensional orbits which, by Proposition~\ref{orbits3}, are parametrized by the same $c=0,1,\infty$. Let us consider such orbit whose admissible polytope is $P_{14,23}$. It belongs to the chart $M_{12}$ and it is represented by the points $(0,z_2,z_3,0)$, where $z_2,z_3\in \C ^{*}$. If $(z_1^n,z_2^n,z_3^n,z_4^n)$ is a sequence of points from the main stratum which converges to a point from this orbit we have  $z_1^n, z_4^n\rightarrow 0$, what implies that $c_n=\frac{z_1^nz_4^n}{z_2^nz_3^n}\rightarrow 0$. Thus, this orbit is continuously parametrized by $c=0$. In the same way we prove the case when the admissible polytope is $P_{13,24}$. When the admissible polytope is $P_{12,34}$, the orbit does not belong to the chart $M_{12}$. We similarly consider an arbitrary chart to which it belongs and do the local coordinate change. 
\end{proof}

Proposition~\ref{orbits}, together with the Proposition~\ref{orbits3} and Proposition~\ref{orbitsl} gives the topological description of the orbit space for compact torus action $T^4$ on $G_{4,2}$.

\begin{thm}\label{orbit-space-4}
The orbits space $X=G_{4,2}/T^4$ for the canonical action of the compact torus $T^4$ on $G_{4,2}$ is homeomorphic to the quotient space 
\begin{equation}
(\Delta _{4,2}\times \C P^1)/\approx \; \text{where}\; (x, c)\approx (y,c^{'})\Leftrightarrow x=y \in \partial \Delta _{4,2}.
\end{equation}
\end{thm}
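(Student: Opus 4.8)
The plan is to assemble the homeomorphism $G_{4,2}/T^4 \cong (\Delta_{4,2}\times \C P^1)/\!\approx$ from the moment-map fibration over the interior of $\Delta_{4,2}$ together with the boundary identifications dictated by Propositions~\ref{orbits}, \ref{orbits3}, and~\ref{orbitsl}. First I would recall that the moment map $\mu$ descends to a continuous surjection $\bar\mu : G_{4,2}/T^4 \to \Delta_{4,2}$ (by equivariance), and that the stratification table tells us the fibers: over $\stackrel{\circ}{\Delta_{4,2}}$ the fiber is the parameter space $\C-\{0,1\}$ of the hypersurface orbits $z_1z_4/z_2z_3 = c$ from Proposition~\ref{orbits}, over each of the six three-dimensional admissible polytopes $P_{ij}$ the fiber is a single orbit (Corollary~\ref{Gr_other_strata}), and over each lower-dimensional admissible polytope the fiber is again a single orbit. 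So set-theoretically the orbit space is $\stackrel{\circ}{\Delta_{4,2}} \times (\C-\{0,1\})$ together with one extra point over each boundary polytope of $\Delta_{4,2}$, and the question is purely how these pieces glue.

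The second step is to extend the continuous parametrization $c$ over all of $G_{4,2}/T^4$ with values in $\C P^1 = (\C-\{0,1\}) \cup \{0,1,\infty\}$. Proposition~\ref{orbits3} shows that the six orbits over three-dimensional $P \ne \Delta_{4,2}$ receive the limiting values $c = 0$ (for $P_{14}, P_{23}$), $c=\infty$ (for $P_{13},P_{24}$), $c=1$ (for $P_{12},P_{34}$), and that these limits are attained continuously along sequences from the main stratum; Proposition~\ref{orbitsl} does the same for the lower strata, showing that $0$- and $1$-dimensional orbits and triangular $2$-orbits can be approached by sequences with \emph{any} $c\in\C P^1$, whereas a square $2$-orbit inherits a single value $c\in\{0,1,\infty\}$. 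I would package this as: the map $G_{4,2}/T^4 \to \Delta_{4,2}\times\C P^1$ sending an orbit $\mathcal O$ to $(\mu(\mathcal O), c(\mathcal O))$ is well defined except on the interior points $\partial\Delta_{4,2}$, where no single $c$ is forced — precisely the coordinates on which the target collapses the $\C P^1$ factor. One then checks this induces a continuous bijection
\[
G_{4,2}/T^4 \;\longrightarrow\; (\Delta_{4,2}\times\C P^1)/\!\approx,\qquad (x,c)\approx(y,c')\Leftrightarrow x=y\in\partial\Delta_{4,2}.
\]
Continuity over the interior is the moment-map triviality $\bar\mu^{-1}(\stackrel{\circ}{\Delta_{4,2}})\cong \stackrel{\circ}{\Delta_{4,2}}\times(\C-\{0,1\})$ from the main-stratum description in the introduction; continuity at boundary points is exactly the ``continuously parametrized'' content of Propositions~\ref{orbits3} and~\ref{orbitsl}, since on the quotient target all values of $c$ over a boundary point have been identified. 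Bijectivity follows by comparing fibers polytope-by-polytope: over $\partial\Delta_{4,2}$ both sides have exactly one point per admissible boundary polytope, and Lemma~\ref{polytopes} enumerates these.

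The final step is compactness: $G_{4,2}/T^4$ is the continuous image of the compact $G_{4,2}$, hence compact, and $(\Delta_{4,2}\times\C P^1)/\!\approx$ is Hausdorff (it is a quotient of a compact Hausdorff space by a closed equivalence relation — the relation is closed because $\partial\Delta_{4,2}$ is closed in $\Delta_{4,2}$), so the continuous bijection is automatically a homeomorphism. The main obstacle I anticipate is \emph{continuity at the boundary}, i.e. verifying that the assignment $\mathcal O \mapsto c(\mathcal O)$, pushed into the collapsed quotient, is genuinely continuous near every boundary orbit; this requires knowing that there is no ``hidden'' extra identification or failure-to-converge beyond what Propositions~\ref{orbits3}--\ref{orbitsl} record. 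The subtle case is the square $2$-orbits, which are \emph{not} in the closure of the main stratum for generic $c$ but only for one value of $c$: one must check that a sequence from the main stratum converging to a point of such an orbit forces that specific $c$, so that in the quotient (where $\partial\Delta_{4,2}$ fibers are collapsed) the map remains continuous — and this is precisely what the square-polytope cases of Proposition~\ref{orbitsl} establish. Once continuity is in hand the rest is formal.
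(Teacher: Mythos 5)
Your proposal is correct and follows essentially the same route as the paper, which likewise deduces the theorem directly from Propositions~\ref{orbits}, \ref{orbits3} and~\ref{orbitsl} (the paper in fact records no further argument, so your explicit step upgrading the continuous bijection to a homeomorphism via compactness of the source and Hausdorffness of the target is a welcome addition). One small imprecision: the fiber of the induced map $G_{4,2}/T^4\to\Delta_{4,2}$ over an \emph{interior} point is the full $\C P^1$, the values $c=0,1,\infty$ being supplied by the pyramid and square strata whose interiors lie inside the octahedron --- your ``set-theoretic'' summary places these extra points only over the boundary polytopes, although your later discussion of Propositions~\ref{orbits3}--\ref{orbitsl} handles them correctly.
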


Recall that the pair $(X, A)$ of topological spaces is called  a Lefschetz pair if $A$ is a compact subset in $X$ and $X-A$ is an open (non compact) manifold.  For $X$ being the orbit space we consider, Theorem~\ref{orbit-space-4} gives that $X- \partial \Delta _{4,2}$ is an open, non-compact manifold  $\stackrel{\circ}{\Delta _{4,2}}\times \C P^1$. As $\partial \Delta _{4,2}$ is topologically sphere $S^2$ it implies:

\begin{cor}
For the orbit space $X=G_{4,2}/T^4$ the pair $(X,  S^2)$ is a Lefschetz pair.
\end{cor}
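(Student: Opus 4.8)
The statement to be proven is the corollary that, for $X = G_{4,2}/T^4$, the pair $(X, S^2)$ is a Lefschetz pair, where $S^2 = \partial\Delta_{4,2}$ sits inside $X$ via the quotient description of Theorem~\ref{orbit-space-4}. By definition, a Lefschetz pair requires two things: that $S^2$ is a compact subset of $X$, and that the complement $X - S^2$ is an open non-compact manifold. The plan is to verify both conditions directly from the explicit model $X \cong (\Delta_{4,2}\times\C P^1)/\!\approx$ furnished by Theorem~\ref{orbit-space-4}.

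\textbf{Step 1: identify $S^2$ inside $X$ and check compactness.} Under the identification $x\approx y$ for $x=y\in\partial\Delta_{4,2}$ (with the $\C P^1$-factor collapsed over the boundary), the image of $\partial\Delta_{4,2}\times\C P^1$ in $X$ is precisely a copy of $\partial\Delta_{4,2}$. Since $\Delta_{4,2}$ is the hypersimplex (octahedron), its boundary is homeomorphic to $S^2$; this is already noted in the excerpt. Compactness is immediate: $\partial\Delta_{4,2}$ is a closed bounded subset of $\R^4$, hence compact, and its image under the continuous quotient map $\Delta_{4,2}\times\C P^1\to X$ is compact; moreover it is closed in $X$ because its preimage $\partial\Delta_{4,2}\times\C P^1$ is closed in $\Delta_{4,2}\times\C P^1$. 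So $S^2$ is a compact subset of $X$.

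\textbf{Step 2: show $X - S^2$ is an open non-compact manifold.} As remarked just before the corollary in the excerpt, Theorem~\ref{orbit-space-4} gives a canonical homeomorphism $X - S^2 \cong \stackrel{\circ}{\Delta_{4,2}}\times\C P^1$, because away from $\partial\Delta_{4,2}$ the equivalence relation $\approx$ is trivial. The open hypersimplex $\stackrel{\circ}{\Delta_{4,2}}$ is an open subset of the affine hyperplane $\{x_1+x_2+x_3+x_4=2\}$ in $\R^4$, hence a $3$-dimensional manifold without boundary, and $\C P^1$ is a closed $2$-manifold; their product is a $5$-manifold without boundary. It is non-compact since $\stackrel{\circ}{\Delta_{4,2}}$ is non-compact (an open bounded subset of $\R^3$). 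Finally, $X - S^2$ is open in $X$ because $S^2$ is closed in $X$ by Step 1. Combining Steps 1 and 2 gives exactly the two defining properties of a Lefschetz pair, completing the proof.

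\textbf{Main obstacle.} There is essentially no serious obstacle here: the corollary is a direct unpacking of Theorem~\ref{orbit-space-4} together with the elementary fact that $\partial\Delta_{4,2}\cong S^2$. The only point requiring a word of care is the verification that the quotient map restricts to a homeomorphism $\stackrel{\circ}{\Delta_{4,2}}\times\C P^1 \to X - S^2$ (and not merely a continuous bijection), which follows because the restriction of a quotient map to the saturated open set $\stackrel{\circ}{\Delta_{4,2}}\times\C P^1$ is again a quotient map, and on this set $\approx$ is the identity. Everything else is routine point-set topology.
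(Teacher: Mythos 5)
Your proof is correct and follows essentially the same route as the paper: the authors likewise deduce the corollary directly from Theorem~\ref{orbit-space-4}, observing that $X - \partial\Delta_{4,2}$ is the open non-compact manifold $\stackrel{\circ}{\Delta}_{4,2}\times\C P^1$ and that $\partial\Delta_{4,2}$ is a compact copy of $S^2$. Your version merely spells out the point-set details (closedness of $S^2$ in $X$, the quotient map restricting to a homeomorphism on the saturated open set), which the paper leaves implicit.
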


We see that  the space $X$ is obtained by gluing all $\C P^1$'s copies of  $\Delta _{4,2}$ along their boundary $\partial \Delta _{4,2}$. Therefore $X$ consist of $\C P^1$'s copies of $\stackrel{\circ}{\Delta _{4,2}}$ and one copy of the space $\partial \Delta _{4,2}$. It implies obvious cell decomposition for $X$. 

 \begin{cor}\label{cell}
The orbit space $X=G_{4,2}/T^4$  has the cell decomposition with one cell in each of dimensions $5$, $3$, $2$ and $0$.
\end{cor}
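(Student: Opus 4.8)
The plan is to read off the cell structure directly from the homeomorphism in Theorem~\ref{orbit-space-4}, which presents $X$ as $(\Delta_{4,2}\times \C P^1)/\approx$ with $(x,c)\approx(y,c')$ iff $x=y\in\partial\Delta_{4,2}$. First I would fix a CW structure on $\C P^1=S^2$ with exactly two cells, one of dimension $0$ and one of dimension $2$; call them $e^0$ and $e^2$. The quotient collapses the subspace $\partial\Delta_{4,2}\times\C P^1$ onto $\partial\Delta_{4,2}$ (identifying all the $\C P^1$-fibres over each boundary point to a single point), so $X$ is obtained from $\partial\Delta_{4,2}$ by attaching, for the interior, a copy of $\stackrel{\circ}{\Delta_{4,2}}\times\C P^1$. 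Since $\Delta_{4,2}$ is a $3$-dimensional convex polytope, $\stackrel{\circ}{\Delta_{4,2}}$ is an open $3$-cell, and $\partial\Delta_{4,2}\cong S^2$; the latter I would give the minimal CW structure with one $0$-cell and one $2$-cell, which is compatible with the identification because $\partial\Delta_{4,2}$ appears as the image of $\partial\Delta_{4,2}\times\{pt\}$.

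Next I would assemble the cells. On $\partial\Delta_{4,2}\cong S^2$ we use one $0$-cell and one $2$-cell. Over the interior, the product $\stackrel{\circ}{\Delta_{4,2}}\times\C P^1$ decomposes, using the two-cell structure on $\C P^1$, as $\stackrel{\circ}{\Delta_{4,2}}\times e^0$ (an open $3$-cell) together with $\stackrel{\circ}{\Delta_{4,2}}\times e^2$ (an open $5$-cell). Thus $X$ has exactly one cell in each of dimensions $0$, $2$, $3$, $5$ and no others. It remains to exhibit characteristic maps: the $3$-cell has characteristic map $\Delta_{4,2}\times\{pt\}\to X$ (the closed polytope maps in, with its boundary sphere landing on the $2$-skeleton), and the $5$-cell has characteristic map $\Delta_{4,2}\times\C P^1\to X$, whose restriction to the boundary $\partial(\Delta_{4,2}\times\C P^1)=(\partial\Delta_{4,2}\times\C P^1)\cup(\Delta_{4,2}\times e^0)$ is continuous and lands in the $3$-skeleton. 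One checks the quotient topology on $X$ agrees with the CW topology determined by these attaching maps; this is immediate since $X$ is a quotient of a compact space by a closed equivalence relation and the cells cover $X$ disjointly.

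I expect the only genuinely delicate point to be verifying that the attaching maps are well-defined and continuous after passing to the quotient, in particular that the $5$-cell's boundary is sent into a genuine CW subcomplex of dimension $3$ rather than something larger — but this is forced because the identification $\approx$ only acts on the $\partial\Delta_{4,2}$ direction, so the boundary $\partial\Delta_{4,2}\times\C P^1$ is collapsed precisely onto the $2$-sphere $\partial\Delta_{4,2}$, and the remaining boundary piece $\Delta_{4,2}\times e^0$ maps onto the closure of the $3$-cell, so the whole boundary lands in the $3$-skeleton. Everything else is bookkeeping of cells in a product and a quotient, and needs no further computation. Hence $X$ has the stated cell decomposition with one cell each in dimensions $5$, $3$, $2$, $0$.
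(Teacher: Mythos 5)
Your proof is correct and follows essentially the same route as the paper, which reads the cell structure directly off the identification $X=(\Delta_{4,2}\times\C P^1)/\approx$ from Theorem~\ref{orbit-space-4}: one copy of $\partial\Delta_{4,2}\cong S^2$ (giving the $0$- and $2$-cells) plus $\stackrel{\circ}{\Delta}_{4,2}\times\C P^1$ split by the two-cell structure of $\C P^1$ (giving the $3$- and $5$-cells), exactly the skeleta $sk_1X=\ast$, $sk_2X=S^2$, $sk_3X=\bar D^3$ used in the paper's subsequent homology computation. Your write-up is in fact more careful than the paper's one-line justification; the only cosmetic point is that the characteristic map of the $5$-cell should be stated on $D^5\cong D^3\times D^2$ (composing with the characteristic map of the $2$-cell of $\C P^1$) rather than on $\Delta_{4,2}\times\C P^1$ itself.
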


This cell decomposition leads to the description of the homology groups for $X$.

\begin{prop}\label{orbit-H}
The orbit space $X=G_{4,2}/T^4$ is simply connected and, thus, orientable. Moreover $H_{0}(X)\cong H_{5}(X)\cong \Z$, while $H_{k}(X)\cong 0$ for $k\neq 0,5$. 
\end{prop}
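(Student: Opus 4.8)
The plan is to read off everything from the cell decomposition established in Corollary~\ref{cell} together with the explicit description of $X$ in Theorem~\ref{orbit-space-4}. First I would prove simple connectivity. Since $X=(\Delta_{4,2}\times \C P^1)/\!\approx$, and $\Delta_{4,2}$ is a contractible (in fact convex, hence CW) compact set with $\partial\Delta_{4,2}\cong S^2$, I would think of $X$ as the mapping construction: take the subspace $\partial\Delta_{4,2}\cong S^2$ sitting inside $X$, and observe that $X/S^2 = \bigvee_{c\in\C P^1}(\Delta_{4,2}/\partial\Delta_{4,2})$, i.e. a ``fat suspension''. More usefully, $X$ is the union of the closed subsets $\Delta_{4,2}\times\{c\}$ glued along the common $S^2$. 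Pick the open cover by $U = X\setminus(\text{one point of the open }\Delta_{4,2}\times\{c_0\})$ and $V = \stackrel{\circ}{\Delta}_{4,2}\times\{c_0\}$; then $V$ is contractible, $U$ deformation retracts onto the union of $S^2$ with all the other slices, which is again of the same type, and $U\cap V$ is $\stackrel{\circ}{\Delta}_{4,2}\times\{c_0\}$ minus a point, which is connected (as $\dim\Delta_{4,2}=3\ge 2$). By van Kampen, $\pi_1(X)=\pi_1(U)$; iterating/taking a limit argument over the $\C P^1$ of slices, and using that the ``core'' $S^2$ is simply connected, gives $\pi_1(X)=1$. Cleanest is actually: $X$ has a CW structure (Corollary~\ref{cell}) with a single $0$-cell and no $1$-cells, so $\pi_1(X)=1$ automatically, and a simply connected finite CW complex is orientable.

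Next the homology. Again invoke Corollary~\ref{cell}: $X$ has exactly one cell in each of dimensions $0,2,3,5$ and no others. Hence the cellular chain complex is $\Z$ in degrees $0,2,3,5$ and $0$ elsewhere, with all differentials forced to be zero in degrees where the source or target group vanishes; the only possibly nonzero differential is $\partial_3\colon C_3=\Z\to C_2=\Z$. So I must show $\partial_3 = 0$ and $\partial_5=0$ (the latter is automatic since $C_4=0$). The boundary $\partial_3$ is detected by the attaching map of the $3$-cell, which here is the map $\partial\Delta_{4,2}=S^2\to X^{(2)}$; but $X^{(2)}$ is the $2$-skeleton, which from the decomposition (one copy of $S^2$ glued up) is itself $S^2$ — indeed $X^{(2)}$ equals $\partial\Delta_{4,2}$ with one $2$-cell attached by the identity/degree-one map filling it in, giving $X^{(2)}\simeq S^2$. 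The $3$-cell is $\stackrel{\circ}{\Delta}_{4,2}\times\{c_0\}$ attached along its boundary $2$-sphere by the identity inclusion $S^2\hookrightarrow \partial\Delta_{4,2}\subset X^{(2)}$, which on $H_2$ is an isomorphism $\Z\to\Z$; therefore in the cellular chain complex the map $\partial_3$ sends the generator of $C_3$ to a generator of $C_2$, i.e.\ $\partial_3$ is an isomorphism. Consequently $H_2(X)=\coker\partial_3=0$ and $H_3(X)=\ker\partial_3=0$, while $H_0(X)=\Z$, $H_5(X)=\ker\partial_5=C_5=\Z$, and $H_k(X)=0$ otherwise. This matches the claim.

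Alternatively, and perhaps more transparently, I would use the cofiber sequence $S^2\hookrightarrow X\to X/S^2$. Here $X/S^2=(\Delta_{4,2}/\partial\Delta_{4,2})\vee(\Delta_{4,2}/\partial\Delta_{4,2})\vee\cdots$ over $c\in\C P^1$; since $\Delta_{4,2}$ is a convex $3$-polytope, $\Delta_{4,2}/\partial\Delta_{4,2}\cong S^3$, so $X/S^2$ is a bouquet of $3$-spheres parametrized by $\C P^1$ — concretely $X/S^2 \cong \Sigma^2(\C P^1_+)/\!\!\sim$, more precisely it is the Thom-space-like object whose reduced homology is $\widetilde H_*(\C P^1)$ shifted: $\widetilde H_{k+3}(X/S^2)\cong \widetilde H_{k}(\C P^1_+)$ after suspension, giving $\widetilde H_3(X/S^2)=\Z$, $\widetilde H_5(X/S^2)=\Z$ (from $H_2(\C P^1)=\Z$ shifted by $3$), and $0$ else. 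Then the long exact sequence of the pair together with $\widetilde H_*(S^2)=\Z$ in degree $2$ forces $H_2(X)=0$ (the $S^2$ is killed: the connecting map $\widetilde H_3(X/S^2)=\Z\to \widetilde H_2(S^2)=\Z$ must be an isomorphism because $\widetilde H_3(X)$ and $\widetilde H_2(X)$ are squeezed), and leaves $H_5(X)=\Z$.

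The main obstacle is the single nontrivial differential $\partial_3$ (equivalently, the connecting homomorphism): one must justify carefully that the $2$-cell of $X$ genuinely fills in $\partial\Delta_{4,2}$ so that $X^{(2)}\simeq S^2$, and that the $3$-cell's attaching map has degree one onto this $S^2$. Both follow from the very explicit ``glue all $\C P^1$ copies of $\Delta_{4,2}$ along $\partial\Delta_{4,2}$'' description in Theorem~\ref{orbit-space-4}: the $2$-skeleton is one copy of $\stackrel{\circ}{\Delta}_{4,2}$'s closure's boundary-plus-one-$2$-face, and each $3$-cell is attached by the identity on $\partial\Delta_{4,2}\cong S^2$. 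Once that geometric identification is pinned down, the homology computation is immediate, and simple connectivity is a formal consequence of having no $1$-cells.
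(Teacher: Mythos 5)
Your proposal is correct and follows essentially the same route as the paper: both use the cell decomposition of Corollary~\ref{cell} (one cell in each of dimensions $0,2,3,5$, hence no $1$-cells and $\pi_1(X)=0$) and both reduce everything to showing that $\partial_3\colon C_3(X)\to C_2(X)$ is an isomorphism. The paper extracts this from the exact sequence of the triple $(sk_3X, sk_2X, sk_1X)=(\bar D^3,S^2,\ast)$, while you read it off as the degree of the attaching map $\partial\Delta_{4,2}\to X^{(2)}\cong S^2$ being one; these are the same computation in different clothing.
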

\begin{proof}
The cell decomposition for $X$ given by Corollary~\ref{cell} implies that $\pi _{1}(X) = \pi _{1}(S^2)=0$. The homology groups for $X$ can be computed in the standard way. The groups  $C_{q}(X)=H_{q}(sk_{q}X, sk_{q-1}X)$ of the cell chain for $X$ are isomorphic to $\Z $ for  $q = 0,2,3,5$ while  otherwise they are trivial. It directly implies that $H_{1}(X)\cong H_{4}(X)\cong 0$ and $H_{0}(X)\cong H_{5}(X)\cong \Z$. The differential $\partial _{3} : C_{3}(X)\to C_{2}(X)$ is defined by the exact homology sequence of the triple $(sk_{3}X, sk_{2}X, sk_{1}X)=({\bar D}^{3}, S^2, \ast)$ which is $\ldots \rightarrow 0\cong H_{3}({\bar D}^3, \ast )\rightarrow H_{3}({\bar D}^{3}, S^2)\rightarrow H_{2}({\bar D}^3, \ast)\cong 0\rightarrow \ldots$. It implies that $\partial _{3} : C_{3}(X)\to C_{2}(X)$ is an isomorphism and therefore $H_{2}(X)\cong H_{3}(X)\cong 0$.  
\end{proof}

In this way we prove that the  orbit space $X$ is the homology sphere $S^5$.  The Hurewitz map together with homological Whitehead theorem implies:

\begin{cor}\label{homot-eq}
The orbit space $X=G_{4,2}/T^4$ is homotopy equivalent to $S^5$.
\end{cor}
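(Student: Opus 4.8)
The plan is to deduce the homotopy equivalence $X \simeq S^5$ from the two facts already established: that $X$ is simply connected (hence orientable), and that $X$ has the integral homology of $S^5$, i.e. $H_0(X) \cong H_5(X) \cong \Z$ and $H_k(X) = 0$ for $k \neq 0, 5$, as proved in Proposition~\ref{orbit-H}. The strategy is the standard Hurewicz--Whitehead argument: produce a map $S^5 \to X$ generating $H_5$ and show it is a homology isomorphism, then invoke the homological Whitehead theorem for simply connected spaces.

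First I would note that since $X$ is path-connected and $\pi_1(X) = 0$, the Hurewicz theorem applies in the first nonvanishing degree above $1$. Because $H_k(X) = 0$ for $2 \leq k \leq 4$ and $H_5(X) \cong \Z$, the Hurewicz homomorphism $h : \pi_5(X) \to H_5(X)$ is an isomorphism (one applies Hurewicz inductively: $\pi_2(X) \cong H_2(X) = 0$, and then using that $X$ is $4$-connected — which follows from $\pi_2 = \pi_3 = \pi_4 = 0$, each identified with the corresponding vanishing homology group — the map $\pi_5(X) \to H_5(X)$ is an isomorphism). Here one should be a little careful: strictly, Hurewicz gives $\pi_2 \cong H_2 = 0$ directly; then $X$ is $2$-connected, so $\pi_3 \cong H_3 = 0$; then $3$-connected, so $\pi_4 \cong H_4 = 0$; then $4$-connected, so $\pi_5 \cong H_5 \cong \Z$. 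Choose a generator $\alpha \in \pi_5(X)$ mapping to a generator of $H_5(X) \cong \Z$, and represent it by a continuous map $f : S^5 \to X$.

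Next I would check that $f$ induces an isomorphism on all homology groups. On $H_0$ this is clear since both spaces are path-connected; on $H_5$, by construction $f_*$ sends the fundamental class $[S^5]$ to a generator of $H_5(X)$, hence is an isomorphism $\Z \to \Z$; in all other degrees both groups vanish, so $f_*$ is trivially an isomorphism. Since $S^5$ and $X$ are both simply connected CW complexes (the cell decomposition of Corollary~\ref{cell} exhibits $X$ as a finite CW complex, and any space homotopy equivalent to a CW complex may be used; alternatively one works with singular homology and the CW approximation theorem), the homological Whitehead theorem applies: a map between simply connected spaces inducing an isomorphism on all integral homology groups is a homotopy equivalence. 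Therefore $f : S^5 \to X$ is a homotopy equivalence, which is the assertion of Corollary~\ref{homot-eq}.

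The only genuinely delicate point — really a bookkeeping point rather than a deep one — is the inductive application of Hurewicz to conclude $\pi_5(X) \cong H_5(X)$; one must feed in the vanishing of $H_2, H_3, H_4$ at each stage to upgrade the connectivity of $X$ before extracting the degree-$5$ isomorphism. Everything else is formal once Proposition~\ref{orbit-H} is in hand. I do not expect any obstacle: the argument is the textbook recognition principle for spheres via homology, and the hypotheses (simple connectivity plus homology of $S^5$) have already been verified.
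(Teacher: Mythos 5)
Your proposal is correct and follows essentially the same route as the paper: an inductive application of the Hurewicz theorem to show $X$ is $4$-connected with $\pi_5(X)\cong\Z$, a representative $S^5\to X$ of a generator of $\pi_5(X)$ inducing an isomorphism on all integral homology, and the homological Whitehead theorem for simply connected spaces. The only difference is that you spell out the connectivity bootstrap and the CW hypothesis slightly more carefully than the paper does, which is a minor improvement rather than a change of method.
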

\begin{proof}
We provide the standard proof for the sake of clearness. Since $X$ is a homology sphere $S^5$, Hurewicz theorem gives that $\pi _{k}(X)\cong 0$ for $k\leq 4$ and $\pi _{5}(X)\cong \Z$. Let $\alpha : S^5\to X$ be the generator for $\pi _{5}(X)$. Then $\alpha _{*} : \pi _{5}(S^5)\to \pi _{5}(X)$ is an isomorphism, since $\alpha _{*}([id])=[\alpha]$. Now the Hurewicz map induces the isomorphisms $h_{1} : \pi _{5}(S^5)\to H_{5}(S^5)$ and $h_{2} : \pi _{5}(X)\to H_{5}(X)$. It implies that the induced map $\alpha _{\bullet} : H_{5}(S^5)\to H_{5}(X)$ is an isomorphism. As $X$ is a homology sphere $\alpha_{\bullet}$ will be an isomorphism for all other homology groups as well, and by Whitehead homology theorem $X$ is homotopy equivalent to $S^5$.
\end{proof}

The previous Corollary can be proved in the following way as well.
\begin{cor}\label{join}
The orbit space $G_{4,2}/T^4$ is homeomorphic to the join $S^2\ast S^2$ and, thus, homotopy equivalent to $S^5$.
\end{cor}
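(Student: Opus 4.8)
The plan is to identify the description of $X$ given by Theorem~\ref{orbit-space-4} with an explicit model of the join $S^2 \ast S^2$, and then invoke the standard fact that $S^2 \ast S^2 \cong S^5$. Recall that the join $A \ast B$ can be described as $(A \times B \times [0,1])/\sim$, where $(a,b,0) \sim (a,b',0)$ for all $b,b' \in B$ and $(a,b,1) \sim (a',b,1)$ for all $a,a' \in A$. Equivalently, collapsing in one direction first, $A \ast B$ is the quotient of $\cone(A) \times B$ obtained by collapsing $\{v\} \times B$ (where $v$ is the cone point) — but the form we want is the one where we think of $A \ast B$ as two "halves": over the cone on $A$ we have a copy of $B$, and these copies are glued to a single $B$ only at the cone point of $\cone(A)$.

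First I would recall from Theorem~\ref{orbit-space-4} that $X$ is $(\Delta_{4,2} \times \C P^1)/\approx$ where $(x,c) \approx (y,c')$ iff $x = y \in \partial\Delta_{4,2}$. The key geometric input is that $\Delta_{4,2}$ is a $3$-dimensional convex polytope (the octahedron), so it is homeomorphic to the cone $\cone(\partial\Delta_{4,2})$ on its boundary, with cone point the barycenter; and $\partial\Delta_{4,2}$ is homeomorphic to $S^2$. Under the homeomorphism $\Delta_{4,2} \cong \cone(S^2)$, the relation $\approx$ collapses, for each point $p$ of the base $S^2 = \partial\Delta_{4,2}$, the entire fiber $\{p\} \times \C P^1$ to a point. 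So $X \cong (\cone(S^2) \times \C P^1) / ((p,1_{\cone}) \times \C P^1 \mapsto \text{pt for each } p \in S^2)$, where I write the boundary of the cone as "level $1$". Now $\C P^1 \cong S^2$, so this is exactly $(\cone(S^2) \times S^2)$ with the $S^2$-fibers over $\partial(\cone(S^2))$ collapsed, which is one of the standard models for $S^2 \ast S^2$: writing $\cone(S^2) = (S^2 \times [0,1])/(S^2 \times \{0\})$, the space $\cone(S^2) \times S^2$ with the top $S^2$-fibers collapsed is $(S^2 \times [0,1] \times S^2)/\sim$ with $(a,0,b)\sim(a',0,b)$ and $(a,1,b)\sim(a,1,b')$ — which is precisely $S^2 \ast S^2$.

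I would then assemble these homeomorphisms into one explicit map $\Phi : X \to S^2 \ast S^2$ and check it is a well-defined continuous bijection; since $X$ is compact (as a quotient of the compact space $\Delta_{4,2} \times \C P^1$) and $S^2 \ast S^2$ is Hausdorff, $\Phi$ is automatically a homeomorphism. Finally, the homotopy-equivalence (indeed homeomorphism) statement $S^2 \ast S^2 \cong S^5$ follows from the general fact $S^m \ast S^n \cong S^{m+n+1}$, which gives $S^2 \ast S^2 \cong S^5$ directly; I would just cite this. The main obstacle, and the only point requiring genuine care, is the bookkeeping in matching the two quotient relations: one must be careful that collapsing the $\C P^1$-fibers over \emph{all} of $\partial\Delta_{4,2}$ (not just a distinguished subset) corresponds correctly to the join relation after writing $\Delta_{4,2}$ as a cone — in particular that the cone point of $\cone(\partial\Delta_{4,2})$ plays no special role and the whole "top level" $\partial\Delta_{4,2}$ is what gets collapsed in the $\C P^1$-direction. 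Everything else is the standard identification of iterated cone/product quotients with joins.

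\begin{proof}
By Theorem~\ref{orbit-space-4}, $X$ is homeomorphic to $(\Delta_{4,2}\times \C P^1)/\approx$ with $(x,c)\approx(y,c')$ iff $x=y\in\partial\Delta_{4,2}$. Since $\Delta_{4,2}$ is a $3$-dimensional convex polytope with $\partial\Delta_{4,2}\cong S^2$, coning off its boundary gives a homeomorphism $\Delta_{4,2}\cong \cone(\partial\Delta_{4,2})\cong\cone(S^2)$ carrying $\partial\Delta_{4,2}$ to the base $S^2\times\{1\}$ of the cone $\cone(S^2)=(S^2\times[0,1])/(S^2\times\{0\})$. Combining with $\C P^1\cong S^2$, the relation $\approx$ becomes: collapse, for each $p\in S^2$, the fiber $\{(p,1)\}\times S^2$ to a point. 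Hence
\[
X\;\cong\;\bigl(S^2\times[0,1]\times S^2\bigr)/\sim,\qquad (a,0,b)\sim(a',0,b),\quad (a,1,b)\sim(a,1,b'),
\]
which is precisely the standard model of the join $S^2\ast S^2$. As $X$ is compact and $S^2\ast S^2$ is Hausdorff, the resulting continuous bijection is a homeomorphism. Finally $S^2\ast S^2\cong S^{2+2+1}=S^5$, so $X$ is homeomorphic, in particular homotopy equivalent, to $S^5$.
\end{proof}
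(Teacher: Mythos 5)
Your proof is correct and follows essentially the same route as the paper: identify $\Delta_{4,2}$ with the cone on its boundary sphere $\partial\Delta_{4,2}\cong S^2$, identify $\C P^1$ with $S^2$, and observe that the gluing relation of Theorem~\ref{orbit-space-4} then becomes exactly the defining relation of the join $S^2\ast S^2$. The only (harmless) difference is that you invoke the stronger standard fact $S^m\ast S^n\cong S^{m+n+1}$, while the paper contents itself with homotopy equivalence at this point.
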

\begin{proof}
We apply the general construction of homotopy theory. Namely,  given topological spaces $X$ and $Y$, one can consider the space $CX\times Y$, where $CX$ is a cone over $X$, where $(x_1,1)=(x_2,1)$ for $x_1,x_2\in X$. The quotient of the space $CX\times Y$  by the relation $(x_1,0,y_1)\approx (x_2,0,y_2)\Leftrightarrow  x_1=x_2$ is the join $X\ast Y$.  In our case Theorem~\ref{orbit-space-4} gives that $G_{4,2}/T^4$ is homeomorphic to the quotient of ${\bar D}^{3}\times S^2$ by the relation $(x_1,y_1)\approx (x_2,y_2) \Leftrightarrow x_1=x_2\in S^2 = \partial {\bar D}^{3}$. We identify ${\bar D}^{3}$ with $CS^2$ by $(1-t)x=xt$,  where $x\in S^2$ and $t\in [0, 1]$.  Therefore $G_{4,2}/T^4$  can be considered as the quotient of the space $CS^2\times S^2$ by the relation $(x_1,0,y_1)\approx (x_1,0,y_2)$.
It is well known that  $S^2\ast S^2$ is   homotopy equivalent to $S^5$.   
\end{proof}

We  prove further that the orbit space $X = G_{4,2}/T^4$ has a manifold structure.  The  gluing, given by Theorem~\ref{orbit-space-4},   of  the continuous family, parametrized by $\C P^1$, of  topological manifolds $\Delta _{4,2}$ along the boundary $\partial \Delta _{4,2}$ will produce a manifold without boundary. 
\begin{prop}\label{orbit-main}
The orbit space $X=G_{4,2}/T^4$ is a five dimensional manifold (topological, without boundary).
\end{prop}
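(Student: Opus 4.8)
The plan is to start from the explicit model $X\cong(\Delta_{4,2}\times\C P^{1})/{\approx}$ supplied by Theorem~\ref{orbit-space-4}, where $(x,c)\approx(y,c')$ iff $x=y\in\partial\Delta_{4,2}$, and to exhibit, for each point of $X$, an open neighbourhood homeomorphic to an open subset of $\R^{5}$. Write $q\colon\Delta_{4,2}\times\C P^{1}\to X$ for the quotient projection. The interior points are handled at once: the set $\stackrel{\circ}{\Delta_{4,2}}\times\C P^{1}$ is open and $\approx$-saturated, so $q$ restricts to a homeomorphism from it onto an open subset of $X$, and that subset is a $5$-manifold, being the product of the open $3$-ball $\stackrel{\circ}{\Delta_{4,2}}$ with $\C P^{1}\cong S^{2}$. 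Hence every $[x,c]$ with $x$ interior has a Euclidean neighbourhood.

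The substance of the argument is the neighbourhood of the seam, the image of $\partial\Delta_{4,2}\times\C P^{1}$, which is one copy of $S^{2}$ onto which the whole $\C P^{1}$-family of balls is glued. Since $\Delta_{4,2}$ is homeomorphic to $\bar D^{3}$, its boundary admits a collar; one may take the radial collar $h\colon\partial\Delta_{4,2}\times[0,1)\hookrightarrow\Delta_{4,2}$ with $h(s,0)=s$, whose image $N$ is an open neighbourhood of $\partial\Delta_{4,2}$ in $\Delta_{4,2}$. Then $N\times\C P^{1}$ is open and $\approx$-saturated, because $\approx$ only identifies points whose $\Delta_{4,2}$-coordinate lies in $\partial\Delta_{4,2}\subset N$; consequently $N':=q(N\times\C P^{1})$ is an open neighbourhood of the entire seam. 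Transporting $\approx$ through the collar, $N'$ is homeomorphic to $(\partial\Delta_{4,2}\times[0,1)\times\C P^{1})/{\sim}$, where $\sim$ collapses $\{s\}\times\{0\}\times\C P^{1}$ to a point for each $s$.

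Next I would simplify this quotient. Because $\partial\Delta_{4,2}\cong S^{2}$ is locally compact Hausdorff, multiplying by $\partial\Delta_{4,2}$ preserves quotient maps, so collapsing $\{0\}\times\C P^{1}$ in the last two coordinates commutes with the product, giving
\[
N'\;\cong\;\partial\Delta_{4,2}\times\bigl(([0,1)\times\C P^{1})/(\{0\}\times\C P^{1})\bigr),
\]
where the second factor is the open cone on $\C P^{1}$. Since $\C P^{1}\cong S^{2}$ and the open cone on $S^{2}$ is $\R^{3}$ (via $(s,t)\mapsto t\cdot s$), we obtain $N'\cong S^{2}\times\R^{3}$, a topological $5$-manifold without boundary, so every seam point too has a Euclidean neighbourhood. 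Finally $X$ is compact, Hausdorff and second countable (the relation $\approx$ is closed on the compact metric space $\Delta_{4,2}\times\C P^{1}$), and the two open sets $\stackrel{\circ}{\Delta_{4,2}}\times\C P^{1}$ and $N'$ cover $X$ and are $5$-manifolds without boundary; hence $X$ is a topological $5$-manifold without boundary.

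The step I expect to be the crux is the identification $N'\cong S^{2}\times\R^{3}$: one must verify that collapsing the whole $\C P^{1}$-family over the zero level of the collar introduces no non-manifold points, i.e. that the ``book with a $2$-parameter continuum of $3$-dimensional pages'' glued along an $S^{2}$-spine is still locally Euclidean. This reduces to the two elementary facts used above — that the product of a quotient map with $\mathrm{id}_{S^{2}}$ is a quotient map, and that the open cone on $S^{2}$ is $\R^{3}$ — after which the apparent pathology at the seam dissolves; the interior case and the point-set bookkeeping are routine.
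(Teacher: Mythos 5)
Your proof is correct and takes essentially the same route as the paper: both treat the interior points as immediate and reduce the seam points to the fact that the open cone on $S^2$ is $\R^3$, the paper working with a local half-disk neighbourhood $D^{3}_{\geq 0}\times S^2$ of each boundary point (getting $D^2\times\cone(S^2)\cong D^5$), while you use a global collar of $\partial\Delta_{4,2}$ to obtain $S^2\times\R^3$ for the whole seam at once. The point-set details you make explicit (saturation of the collar neighbourhood, products of quotient maps with a locally compact factor) are exactly the ones the paper leaves implicit.
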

\begin{proof}
 We need to find for any point $q\in X$ a  neighbourhood which is homeomorphic to an open five-dimensional disk. If $q\in \stackrel{\circ}{\Delta _{4,2}}\times \C P^1$,  it is obviously possible to do that as $\stackrel{\circ}{\Delta _{4,2}}$ is an open three-dimensional manifold, while $\C P^2$ is  a closed two-dimensional manifold. So, fix $q=[(x,c)]$, where $x\in \partial [\Delta _{4,2}]$.  Denote by $p : \Delta _{4,2}\times \C P^1 \to X$ the canonical projection map given by gluing from Theorem~\ref{orbit-space-4}.  Now if $U$ is an open neighbourhood for $q$ in $X$, then $p^{-1}(U)$ is an open subset in $\Delta _{4,2}\times \C P^1$ which contains $p^{-1}(q)=x\times \C P^1$. It gives that  $p^{-1}(U)=U_1\times \C P^1$, where $U_1$ is an open subset in $\Delta _{4,2}$ which contains $x$, so we may assume that $U_{1} = D^{3}_{\geq 0}$, a half-disk of an open disk $D^3$. Therefore $U$ is by $p$ homeomorphic to the quotient space of $D^{3}_{\geq 0}\times S^2$  obtained by the gluing of the points $(x,s)$ along $S^2$ for any fixed $x\in D^2$, where $D^2$ is the base of $D^3_{\geq 0}$:
\[
U\cong D^{3}_{\geq 0}\times S^2/\approx \; \text{where}\; (x,s)\approx (y,s)\Leftrightarrow x=y\in D^2.
\]      
The later quotient space is homeomorphic to an open disk $D^5$. 

We can see that homeomorphism as follows. It is obvious that $[0,1)\times S^2$ qoutiened by the relation $(0,s_1)\approx (0,s_2)$ for any $s_1,s_2\in S^2$  is homeomorphic to an open three-dimensional disk $D^3$. It implies that $D^{2}_{\geq 0}\times S^2$ qoutiened by the relation $(x_1,0,s_1)\approx (x_1,0,s_2)$ is homeomorphic to $D^4$. Namely, $D^2_{\geq 0}\times S^2$ is homeomorphic to $(0,1)\times [0,1)\times S^2$ and the relation translates as $(t_1,0,s_1)\approx (t_1,0,s_2)$. Therefore its quotient is homeomorphic to the space $(0,1)\times ([0,1)\times S^2)/\approx$, with the relation $(0,s_1)\approx (0,s_2)$. The later space is homeomorphic to $D^4$. Now, considering $D^{3}_{\geq 0}\times S^2$ as $(0,1)\times (0,1)\times [0,1)\times S^2$ and translating the relation we obtain this space to be homeomorphic to $(0,1)\times (0,1)\times ([0,1)\times S^2)/\approx$, what is $D^5$.   
\end{proof}

All together leads to complete topological characterization of $X$.

\begin{thm}\label{top-main}
The orbit space $X=G_{4,2}/T^4$ is homeomorphic to the sphere $S^5$.
\end{thm}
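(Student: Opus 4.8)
The plan is to combine the three main results already established in the excerpt. By Corollary~\ref{homot-eq} (or equivalently Corollary~\ref{join}), the orbit space $X=G_{4,2}/T^4$ is homotopy equivalent to $S^5$. By Proposition~\ref{orbit-main}, $X$ is a closed topological $5$-manifold (compact, since it is the continuous image of the compact manifold $G_{4,2}$, and without boundary by the local analysis in that proposition). So $X$ is a closed topological manifold of dimension $5$ that is homotopy equivalent to the $5$-sphere, and the goal is to upgrade this to a homeomorphism.

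First I would record that $X$ is a simply connected homology $5$-sphere, which already follows from Proposition~\ref{orbit-H} together with Proposition~\ref{orbit-main}. Then the key input is the topological Poincar\'e conjecture in dimension $5$: every closed topological manifold homotopy equivalent to $S^n$ is homeomorphic to $S^n$. In dimension $5$ this is classical, following from the work of Smale (in the smooth/PL category for $n\geq 5$) and Newman for the topological category; it is standard that a homotopy $n$-sphere which is a topological manifold is homeomorphic to $S^n$ for $n\geq 5$. Applying this to $X$ with $n=5$ immediately yields $X\cong S^5$.

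Alternatively, and perhaps cleaner given what is already in the paper, I would invoke Corollary~\ref{join}: $X$ is homeomorphic to the join $S^2\ast S^2$. It is elementary that $S^p\ast S^q$ is homeomorphic (not merely homotopy equivalent) to $S^{p+q+1}$, since the join of spheres $S^p\ast S^q = \partial(D^{p+1}\times D^{q+1})= \partial D^{p+q+2}= S^{p+q+1}$. Taking $p=q=2$ gives $S^2\ast S^2\cong S^5$ directly, so $X\cong S^5$ with no appeal to the Poincar\'e conjecture at all. Either route closes the argument; I would present the join argument as the primary one and remark that the homotopy-theoretic route plus topological Poincar\'e gives a second proof.

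The main obstacle, such as it is, lies not in this final deduction but in the ingredients already assembled: the nontrivial content is Proposition~\ref{orbit-main}, that the gluing $(\Delta_{4,2}\times\C P^1)/\!\approx$ along the boundary $\partial\Delta_{4,2}$ produces a manifold \emph{without boundary} near the glued locus, which rests on the explicit homeomorphism $\bigl(D^3_{\geq 0}\times S^2\bigr)/\!\approx\, \cong D^5$. Granting that, the proof of Theorem~\ref{top-main} is a one-line consequence: one only has to identify the join $S^2\ast S^2$ of Corollary~\ref{join} with $S^5$ via $\partial(D^3\times D^3)\cong S^5$, and the theorem follows.
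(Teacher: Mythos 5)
Your proposal is correct, and your primary route is genuinely different from the one the paper takes. The paper's proof of Theorem~\ref{top-main} combines Proposition~\ref{orbit-H}, Proposition~\ref{orbit-main} and Corollary~\ref{homot-eq} to conclude that $X$ is a simply connected closed $5$-manifold homotopy equivalent to $S^5$, and then invokes the generalized Poincar\'e conjecture of Smale and Stallings to upgrade the homotopy equivalence to a homeomorphism --- exactly your ``second'' route. Your preferred argument instead goes through Corollary~\ref{join}: since Theorem~\ref{orbit-space-4} identifies $X$ with $(\Delta_{4,2}\times \C P^1)/\!\approx$, which is the join $S^2\ast S^2$, and since $S^p\ast S^q$ is \emph{homeomorphic} (not merely homotopy equivalent) to $S^{p+q+1}$ via $S^p\ast S^q\cong \partial\bigl(D^{p+1}\times D^{q+1}\bigr)\cong \partial D^{p+q+2}$, one gets $X\cong S^5$ immediately. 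This is more elementary: it bypasses the Poincar\'e conjecture entirely, and it also makes Proposition~\ref{orbit-main} (the manifold structure of $X$) and the homology computation of Proposition~\ref{orbit-H} consequences of the homeomorphism rather than ingredients of the proof. The paper's statement of Corollary~\ref{join} only draws the weaker conclusion ``homotopy equivalent to $S^5$'' from the join description, which is why it still needs the surgery-theoretic input; your observation that the join identification already yields the homeomorphism is a genuine, and worthwhile, simplification. What the paper's route buys in exchange is an independent verification of the local manifold structure near the glued locus, which is then reused in the analysis of smooth versus singular points, so both arguments have a place.
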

\begin{proof}
By Proposition~\ref{orbit-H}, Proposition~\ref{orbit-main} and Corollary~\ref{homot-eq}, the orbit space $X$ is simply-connected closed manifold homotopy equivalent to the sphere $S^5$. Then the generalized Poincare conjecture~\cite{Smale},~\cite{Stallings} implies that $X$ is homeomorphic to $S^5$.
\end{proof} 

\begin{rem}
We want to point that the decomposition of $X$ induced by the orbits of $(\C ^{*})^4$-action on $G_{4,2}$ or equivalently by the admissible polytopes in $\Delta _{4,2}$ will not be a cell decomposition of $X$. The moment map $\mu : G_{4,2}\to \Delta _{4,2}$ is $T^4$-invariant and using canonical projection $p : G_{4,2}\to X=G_{4,2}/T^4$, it  induces a continuous map $q : X \to \Delta _{4,2}$ by $q(x)=\mu (p^{-1}(x))$. In this way the map $q$ gives that $X$ can be decomposed as the union of the infinitely many sets homeomorphic  to the interior of octahedron, $6$ sets homeomorphic to the interior of the admissible pyramids, $8$ sets  homeomorphic to the interior of the admissible triangles, $3$ sets  homeomorphic to the interior of the admissible squares, $12$ sets homeomorphic to the interior of the admissible intervals and $6$ points  that correspond to the fixed points. These sets are glued to each other along the boundary of the corresponding polytopes using the map $q$. We obtain the decomposition of $X$ induced by the orbits of $(\C ^{*})^4$-action on $G_{4,2}$ or equivalently by the admissible polytopes in $\Delta _{4,2}$ and it is not a cell decomposition of $X$. More precisely, the cell axiom will be  satisfied but  the  weak topology is not satisfied. Namely, $X$ is compact in the quotient topology $X$ inherits from $G_{4,2}$, while in the case this is a cell decomposition of $X$, we would have that $X$ is non-compact neither locally compact. 
\end{rem}

\begin{rem}
Note that in the case of a toric manifold $X$, on the contrary,  the decomposition of the  corresponding polytope into admissible polytopes, defined by the moment map for $X$,
  gives the cell decomposition of the quotient $X/T$, where $T$ is a compact form of an algebraic torus  acting on $X$.
\end{rem}

\subsection{On the existence of  a section} It follows from Theorem that the map $\mu : G_{4,2}/T^4 \to \Delta _{4,2}$ is not one to one, so it is not possible to construct a section $s: \Delta _{4,2}\to G_{4,2}$ as it is done in the case of toric manifolds. Therefore our idea is to unpack the octahedron $\Delta _{4,2}$ into cell complex of admissible polytopes. 

Denote by $\mathfrak{S}$ the  family of admissible polytopes for $G_{4,2}$. Note that it consists of six points, twelve edges, eight triangles, three squares, six pyramids and one octahedron.

It is defined the operator 
\[
d : \mathfrak{S}\to S(\mathfrak{S})\;
\text{by}\; dP\; \text{is disjoint union of the faces of}\; P.
\]

In this way we obtain cell complex $\mathfrak{W}$ whose  cells are the admissible polytopes from $\mathfrak{S}$ and we glue them by induction using the operator $d$.  

By the construction we see that there is the canonical map $\pi : \mathfrak{W}\to \Delta _{4,2}$ and that the moment map $\mu : G_{4,2}\to \Delta _{4,2}$ extends to the map $\mu _{\mathfrak{W}} : G_{4,2}\to \mathfrak{W}$ such that 
$\mu = \pi \circ \mu _{\mathfrak{W}}$. 

Consider the set $X = G_{4,2} - W_{\Delta _{4,2}}$, where as before $W_{\Delta _{4,2}}$ denotes the main stratum. The set $X$ is a closed subset in $G_{4,2}$ and it is invariant under the action of $T^4$ and $\mu _{\mathfrak{W}}(X) = \mathfrak{Q} = \mathfrak{W} - \stackrel{\circ}{\Delta}_{4,2}$, which is again a cell complex. Since any two point from $G_{4,2}$ which have the same admissible polytope have  the same stationary subgroup in $T^4$, it is defined the characteristic function $\chi : \mathfrak{Q}\to S(T^4)$ by 
$\chi (q) = \chi (x)$ for $x\in G_{4,2}$ such that $\mu _{\mathfrak{W}}(x) =q$. 

It further follows from Corollary~\ref{Gr_other_strata} that the map $\mu _{\mathfrak{W}}(X/T^4)\to \mathfrak{Q}$ is one to one, what implies that there exists the section $s: \mathfrak{Q}\to X$ given by $s(q) = (q,1)$. As in the case of toric manifolds~\cite{BP} it  implies that $X$ is equivariantly homeomorphic to
\begin{equation}\label{novi}
X = T^4 \times \mathfrak{Q}/\approx,\;\; (t_1,q_1)\approx (t_2,q_2) \Leftrightarrow q_1=q_2,\;\text{and}\; t_1t_2^{-1}\in \chi (q_1).
\end{equation}
Therefore $G_{4,2}$ is obtained by gluing to the main stratum   $W_{\Delta _{4,2}}$, which is homeomorphic to $T^3 \times \stackrel{\circ}{\Delta}_{4,2}\times (\C -\{0,1\})$, the set given by~\eqref{novi}.

\section{The smooth and singular points of the orbit space $G_{4,2}/T^4$}

\subsection{Some general facts.}\label{gen-smth} In order to determine the smooth and singular points on $X$, related to the smooth structure on $G_{4,2}$, we make an explicit  use of the slice or equivariant tubular neighbourhood theorem due to~\cite{Kosz} and~\cite{Pal}, see also~\cite{BR}. From that reason we recall it in more detail. 

Assume we are given a smooth manifold $M$ with a smooth action of a compact group $G$ and let $p\in M$. 
If $p$ is a fixed point for the given action then the slice theorem or so called local linearization theorem states:
\begin{thm}
For a fixed point $p$ there exist $G$-equivariant diffeomorphism from a neighbourhood of the origin in $T_{p}M$ onto neioghbourhood of $p$ in $M$.
\end{thm}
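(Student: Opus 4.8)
The statement to be proved is the local linearization theorem for a fixed point $p$ of a smooth action of a compact Lie group $G$ on a smooth manifold $M$: there is a $G$-equivariant diffeomorphism from a neighbourhood of the origin in $T_pM$ onto a neighbourhood of $p$ in $M$. The plan is to produce the diffeomorphism as an exponential map associated to a carefully chosen Riemannian metric, and to use compactness of $G$ to arrange $G$-invariance at every stage. First I would construct a $G$-invariant Riemannian metric on $M$: start from any metric and average it over $G$ using the normalized Haar measure, i.e.\ set $\langle u,v\rangle_q^{\mathrm{inv}} = \int_G \langle dg\,u,\, dg\,v\rangle_{g\cdot q}\,dg$; the integral converges because $G$ is compact, and by construction $G$ acts by isometries. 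Because $p$ is a fixed point, the isotropy representation of $G$ on $T_pM$ (the derivative action $g\mapsto d_pg$) is well defined, and it preserves the inner product $\langle\,,\,\rangle_p^{\mathrm{inv}}$, so it is an orthogonal representation.

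The next step is to invoke the Riemannian exponential map $\exp_p \colon T_pM \supseteq U \to M$ associated to this invariant metric. It is a standard fact that $\exp_p$ is a diffeomorphism from some star-shaped open neighbourhood $U$ of $0\in T_pM$ onto an open neighbourhood of $p$; shrinking $U$ to a metric ball if desired. The key point is equivariance: since each $g\in G$ acts as an isometry fixing $p$, it carries geodesics through $p$ to geodesics through $p$, and therefore $g\cdot \exp_p(v) = \exp_p(d_pg\cdot v)$ for all $v$ in the domain. To make this precise I would note that $t\mapsto g\cdot\exp_p(tv)$ is the geodesic with initial point $p$ and initial velocity $d_pg(v)$, by uniqueness of geodesics; hence it equals $\exp_p(t\,d_pg(v))$, and evaluating at $t=1$ gives the intertwining identity. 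One must check the domain is respected: since $d_pg$ is a linear isometry of $T_pM$, it preserves the metric ball $U$, so the identity holds on all of $U$ once $U$ is taken to be an invariant (e.g.\ metric-ball) neighbourhood.

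Finally I would assemble the conclusion: $\exp_p$ restricted to such an invariant $U$ is a diffeomorphism onto an open neighbourhood $V$ of $p$ in $M$, and the intertwining identity says precisely that this diffeomorphism is $G$-equivariant, where $G$ acts on $T_pM$ by the linear isotropy representation and on $M$ by the given action. Pulling back the $G$-action on $V$ through $\exp_p^{-1}$ thus gives the \emph{linear} action on $U\subset T_pM$, which is what the theorem asserts. The only genuine obstacle, and it is a mild one, is the verification of the equivariance of $\exp_p$; everything else (existence of an invariant metric by averaging, local diffeomorphism property of $\exp_p$) is routine. This obstacle is handled cleanly by the uniqueness theorem for geodesics together with the observation that isometries fixing $p$ send geodesics to geodesics with the differentiated initial conditions. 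I would also remark, for later use in the paper, that by the same averaging one can choose $U$ to be a $G$-invariant ball, so that the neighbourhoods obtained are genuinely invariant, not merely that the map is equivariant where defined.
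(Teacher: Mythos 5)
Your proof is correct and is the standard argument (invariant metric by Haar averaging, then equivariance of $\exp_p$ via uniqueness of geodesics under isometries fixing $p$). The paper does not prove this statement itself — it recalls it as a classical result of Koszul and Palais (see also Bredon) — and your argument is precisely the proof given in those references, so there is nothing to compare beyond noting the agreement.
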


If $p$ is not a fixed point, denote by $H$ its stabilizer, which is a proper subgroup of $G$. Let further $TM/(G\cdot p)$ denotes the tangent bundle for $M$ along the points of orbit $G\cdot p$.   The slice  representation $V$ for $p$ is defined to be  the normal bundle in $TM/(G\cdot p)$  to the tangent bundle $T(G\cdot p)$ of the orbit. It is taken related to some $G$-invariant metric on $M$. Then we  have the representation of $H$ in $V$.  

The general slice theorem states:
\begin{thm}
There exists $G$-equivariant diffeomorphism from the vector bundle $G\times _{H}V$ onto neighbourhood of the orbit $G\cdot p$ in $M$.
\end{thm}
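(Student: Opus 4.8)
The plan is to realise $G\times_H V$ as the normal bundle of the orbit $\OO = G\cdot p$ and then transport a neighbourhood of its zero section into $M$ by a geodesic exponential map. First I would fix a $G$-invariant Riemannian metric on $M$, which exists because $G$ is compact: average an arbitrary metric over $G$ against Haar measure. With respect to such a metric the orbit $\OO$ is a closed embedded submanifold of $M$ (it is the image of the map $G/H\to M$ from a compact space into a Hausdorff space), and its normal bundle $\nu(\OO)$ carries a natural $G$-action covering the transitive $G$-action on $\OO$.

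Second, I would identify $\nu(\OO)$ with the associated bundle $G\times_H V$. The fibre of $\nu(\OO)$ over $p$ is exactly the orthogonal complement $V$ of $T_p(G\cdot p)$ in $T_pM$; since $H$ fixes $p$ and preserves the metric, $H$ acts orthogonally on $V$, which is the slice representation. The map $G\times V\to\nu(\OO)$, $(g,v)\mapsto dg_p(v)$, is constant on the $H$-orbits $(g,v)\mapsto (gh^{-1},hv)$ and descends to a map $G\times_H V\to\nu(\OO)$; using that the stabiliser of $p$ is precisely $H$, one checks that this map is a $G$-equivariant diffeomorphism, fibrewise a linear isomorphism.

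Third, I would apply the equivariant tubular neighbourhood construction. Let $\exp$ denote the exponential map of the invariant metric, defined on a neighbourhood of the zero section of $TM$. Invariance of the metric gives $\exp(dg_q(w)) = g\cdot\exp_q(w)$, so $\exp$ is $G$-equivariant where defined. Restricting $\exp$ to $\nu(\OO)$ and using compactness of $\OO$, there is a $G$-invariant open neighbourhood $\mathcal{N}$ of the zero section of $\nu(\OO)$ on which $\exp$ is a diffeomorphism onto a $G$-invariant open neighbourhood of $\OO$ in $M$ (that $\mathcal{N}$ may be taken $G$-invariant follows by averaging a radius function over $G$). Composing with the identification $G\times_H V\cong\nu(\OO)$ and post-composing with a $G$-equivariant fibrewise diffeomorphism of $G\times_H V$ onto $\mathcal{N}$ — obtained from a smooth positive $G$-invariant function on $\OO$ bounding $\mathcal{N}$, e.g. the fibrewise rescaling $v\mapsto v/\sqrt{1+\|v\|^2}$ suitably adjusted — yields the required $G$-equivariant diffeomorphism of the whole bundle $G\times_H V$ onto a neighbourhood of $G\cdot p$ in $M$.

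The main obstacle is the book-keeping of $G$-equivariance through the tubular neighbourhood theorem: one must ensure that the neighbourhood of the zero section on which $\exp$ is injective, and the rescaling function, can be chosen $G$-invariant. Both reductions rest on compactness of $G$ (to average) together with compactness of the orbit $\OO$ (so that "small enough" is uniform along $\OO$). A second, more structural point is verifying that $G\times_H V\to\nu(\OO)$ is a diffeomorphism of smooth $G$-manifolds rather than merely a continuous bijection; here the closedness of $H$ in $G$ and the homogeneous-space structure of the orbit are used essentially.
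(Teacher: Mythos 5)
Your proof is correct and is the standard Koszul--Bredon argument (invariant metric, identification of $G\times_H V$ with the normal bundle of the orbit, equivariant normal exponential map, fibrewise rescaling); the paper recalls this theorem from the literature without proof and only records the resulting diffeomorphism $\psi([(g,v)])=g\cdot\varphi(v)$, which is exactly your map once $\varphi$ is taken to be the exponential chart at $p$. So you have supplied, correctly, the same construction the paper relies on.
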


The  diffeomorphism $\psi : G\times _{H}V\to U_{M}(G\cdot p)$ in the slice theorem is given by 
$\psi ([(g,v)])=g\cdot \varphi (v)$, where $\varphi : U_{T_{p}M}(0)\to U_{M}(p)$ is an $H$-equivariant diffeomorphism from the local linearization theorem. The diffeomorphism $\psi$ is $G$-equivariant since an action of $G$  on $G\times _{H}V$ is given by $g_1\cdot [(g,v)]=[(g_1g,v)].$

The action of $H$ on $V$ induces the  decomposition $V=V^{H}\oplus L$ related to some $G$-invariant metric, where $V^{H}$ is the subspace of the vectors fixed by $H$.  Therefore it holds
\[
G\times _{H}V=G\times _{H}(V^{H}\oplus L) = V^{H}\times (G\times _{H}L).
\]
It can be easily seen that $[(g,v)]=[(hg,v)]$ in $G\times _{H}V$ for any $g\in G$, $h\in H$ and for any $v\in V^{H}$. Thus, $[(g,v_1,v_2)]=[(g_1,v_1^{'},v_2^{'})]$ in $G\times _{H}(V^{H}\oplus L)$ if and only if $v_1=v_1^{'}$ and $[(g,v_2)]=[(g_1,v_2^{'})]$ in $G\times _{H}L$. 

If further implies that
\[
(G\times _{H}V)/G = V^{H}\times L/H.
\]
Namely the action of $G$ on $G\times _{H}V=V^{H}\times (G\times _{H}L)$ is given by $g\cdot (v_1, [(g_1,v_2)])=(v_1,[(gg_1,v_2)])$ what implies that $(G\times _{H}V)/G = V^{H}\times (G\times _{H}L)/G$. Now the orbit of an element $[(g_{0},v_{0})]\in G\times _{H}L$ by the action of the group $G$ is $[(g,v_0)]$, where $g\in G$. It implies that the elements $[(g_1,v_1)]$ and $[(g_2,v_2)]$ are in the same $G$-orbit in $G\times _{H}L$ if and only if $v_1$ and $v_2$ are in the same orbit for the $H$-action on $L$.

Recall that we have fixed some $G$-invariant Riemannian metric what gives that the action of $H$ on $L$ preserves the scalar product meaning that $H(S(L))\subseteq S(L)$ where $S(L)$ is an unitary  sphere whose center is at origin $p$ of $L$. Therefore 
\[
L/H = ([0,\infty)\times S(L))/H = \cone(S(L)/H),
\]
what gives
\begin{equation}\label{ndif}
(G\times _{H}V)/G = V^{H}\times \cone (S(L)/H).
\end{equation} 

\subsection{Application to the orbit space $G_{4,2}/T^4$.} 
By Theorem~\ref{top-main} we have that $X=G_{4,2}/T^4$ is homeomorphic to the sphere $S^5$. On the other hand it is the classical result~\cite{KER-MIL},~\cite{LEV} that the sphere $S^5$ has unique differentiable structure, which is the standard one. It suggests that the differentiable structure  on $G_{4,2}$ does not induce the differentiable structure on $X$ meaning that there is no smooth structure on $X$ such that the natural projection $\pi : G_{4,2}\to X$ is a smooth map. Otherwise $X$  would be  diffeomorphic to the standard sphere $S^5$ for which it is known to admits the smooth action of the circle $S^1$, while it is not clear where such an action on $X$ would come from.  

\begin{rem}
We can interpret this fact using the notion of functional "smooth structure" as defined in~\cite{BR}. The functional structure $\mathfrak{F}(G_{4,2})$  which is induced from the smooth structure of $G_{4,2}$ gives, by the push-forward, the "smooth structure"  $\pi _{*}\mathfrak{F}(G_{4,2})$ on $X$ and $\pi : G_{4,2}\to X$ is a "smooth map" related to the functional structures on these spaces. On the other hand $X\cong S^5$ has the functional  structure $\mathfrak{F}(S^5)$ induced from the unique smooth structure on $S^5$. In this terminology we actually prove that the natural projection $\pi : (G_{4,2}, \mathfrak{F}(G_{4,2}))\to (S^5, \mathfrak{F}(S^5))$ is not a morphism of the functional structures. It, in particulary, implies that $\pi : G_{4,2}\to X$ is not a smooth map. 
\end{rem}

We prove  that the quotient structure on $G_{4,2}/T^4$ is not differentiable and describe the corresponding smooth and singular points.  We do it by applying  the given general diffeomorphic description of the equivariant neighbourhood of the orbits for the smooth action of the compact group.

Denote by  $\pi : G_{4,2}\to G_{4,2}/T^4$ the  natural projection.
\begin{thm}\label{smain}
The point $q\in G_{4,2}/T^4$ is:
\begin{itemize}
\item a smooth point if $\dim \pi ^{-1}(q) = 3$;
\item a cone-like singularity point if $\dim \pi ^{-1}(q)\leq 2$ and has  neighbourhood of the form 
\begin{enumerate}
\item $D^2\times \cone (S^2)\;\;$ for $\;\;\dim \pi ^{-1}(q)=2$;
\item  $D^1\times \cone (S^5/T^2)\;\;$ for $\;\;\dim \pi ^{-1}(q) =1$;
\item $\cone (S^7/T^3)\;\;$ for $\;\; \dim \pi ^{-1}(q)=0$,
\end{enumerate}
where the induced actions on $S^5$ and $S^7$ are without fixed points.
\end{itemize}
\end{thm}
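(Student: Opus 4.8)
The plan is to apply the slice theorem (the general equivariant tubular neighbourhood theorem recalled in \secref{gen-smth}) to a point $p\in G_{4,2}$ lying over $q$, and to compute the normal slice representation explicitly using the chart descriptions of the orbits obtained in Proposition~\ref{orbits} and Theorem~\ref{orbits_chart}. The stabilizer $H\subset T^4$ of $p$ has dimension $4-\dim\OO_\C(p)\cap T^4 = 4-\dim(T^4\cdot p)$, and the key formula~\eqref{ndif} reduces the local structure of $q$ in $X$ to $V^{H}\times\cone(S(L)/H)$, where $V=V^H\oplus L$ is the $H$-decomposition of the normal slice at $p$. So the whole argument amounts to, in each of the four orbit-dimension cases, identifying $\dim V^H$, $\dim L$, and the $H$-action on the unit sphere $S(L)$.

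First I would handle the generic case $\dim\pi^{-1}(q)=3$: then $\dim(T^4\cdot p)=3$, so $H$ is finite (in fact, by the effective $(\C^*)^3$-description of each chart action in Proposition~\ref{orbits}, $H$ is trivial), hence $G\times_H V = T^4\times V$ over the orbit, $(G\times_H V)/G = V$, and $q$ has a Euclidean neighbourhood; this is the smooth case. For the remaining cases I would work in a Pl\"ucker chart $(M_J,u_J)$ containing $p$ and use that there the $(\C^*)^4$-action is the one of Proposition~\ref{orbits}: an orbit $\C^*_I$ with $\|I\|=2$ has a two-dimensional $T$-orbit, so $\dim H=2$; with $\|I\|=1$, $\dim H=3$; and the fixed points are the vertices. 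In each case the tangent space $T_pG_{4,2}\cong\C^4$ decomposes under $H$ into weight spaces, the orbit tangent space $T(T^4\cdot p)$ is spanned by the weight lines with nonzero weight that are ``used up'' by the orbit, and the complementary weight lines give $V$. Counting: for $\dim\pi^{-1}(q)=2$ one gets $\dim_\R V^H=2$ and $L$ a copy of $\C$ on which $H\cong T^1$ (an effective circle quotient of $H$) acts by rotation, so $S(L)/H$ is a point and $\cone(S(L)/H)$... — more carefully one should get $L\cong\C^2$ with $S(L)=S^3$ and $S^3/T^1=S^2$, giving the claimed $D^2\times\cone(S^2)$; for $\dim\pi^{-1}(q)=1$ one gets $\dim V^H=1$ and $L$ with $S(L)=S^5$, $H$ acting through an effective $T^2$ without fixed points, yielding $D^1\times\cone(S^5/T^2)$; and for a fixed point $V^H=0$, $L=T_pG_{4,2}$ with $S(L)=S^7$ and $H$ acting through an effective $T^3$ without fixed points, yielding $\cone(S^7/T^3)$. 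The ``no fixed points'' claims follow because a fixed vector in $L$ would enlarge $V^H$, contradicting the dimension count; equivalently, from Proposition~\ref{orbits} the boundary strata of a chart orbit are exactly the coordinate subspaces, so no extra directions are fixed.

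The part I expect to be the main obstacle is the bookkeeping of which weight lines of $T_pG_{4,2}$ belong to the orbit tangent space and which survive to $V$, and then verifying that the residual $H$-action on $S(L)$ is effective with no fixed points and has the stated quotient — in particular that the weights are such that $S^3/T^1\cong S^2$, $S^5/T^2$ and $S^7/T^3$ are the right quotients (this is where one must use that the $(\C^*)^4$-action, restricted appropriately, descends to an \emph{effective} action of a torus of the indicated rank, exactly as in the reduction $(t_1,t_2,t_3,t_4)\mapsto(\tilde t_1,\tilde t_2,\tilde t_3)$ in the proof of Proposition~\ref{orbits}). Once the weight data is pinned down, \eqref{ndif} gives the four local models verbatim, and the smoothness dichotomy (smooth iff $\dim\pi^{-1}(q)=3$) follows since in all other cases the factor $\cone(S(L)/H)$ is a cone on a nontrivial torus-quotient sphere, hence not a manifold point unless the cone link is a sphere — which a direct check (e.g.\ the link is not simply a sphere, or a local homology computation) rules out, so these are genuine cone-like singular points.
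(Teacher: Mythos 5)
Your proposal is correct and follows essentially the same route as the paper: slice theorem plus the decomposition $V=V^{H}\oplus L$ and formula~\eqref{ndif}, with the four cases indexed by the orbit dimension. The only real difference is how $\dim V^{H}$ is pinned down: where you propose explicit weight-line bookkeeping in a Pl\"ucker chart (the step you flag as the main obstacle), the paper short-circuits this by observing that the algebraic orbit $(\C^{*})^4\cdot p$ is fixed pointwise by $H$ and has real dimension twice that of the compact orbit, which forces $\dim V^{H}=2,1,0$ in the respective cases and hence $\dim L=4,6,8$ directly.
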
 

\begin{proof}
Let $p\in G_{4,2}$  such that $\pi (p) =q$ and let $H\subseteq T^3$ be the stabilizer of the point $p$ for an effective action of the torus $T^3$ on $G_{4,2}$. The following cases are possible:

{\bf 1)} $H=\{ e\}$  is trivial. The slice theorem implies that there exists $T^3$-equivariant neighbourhood $U(p)$ of the orbit $T^3\cdot p$ in $G_{4,2}$  which is $T^3$-equivariantly diffeomorphic to $T^3\times _{e}V = T^3\times V$, where
$V=T_{p}G_{4,2}/T_{p}(T^3\cdot p)$ related to some $T^3$-invariant metric. It implies that $U(p)/T^3$ has the same singular and smooth points as  $(T^3\times V)/T^3 = V$. Therefore the point $q$ is a smooth point.

{\bf 2)} $H=T^1$ is one-dimensional torus. The orbit $T^3\cdot p$ is two-dimensional what implies that $V=T_{p}G_{4,2}/T_{p}(T^3\cdot p)$ is six-dimensional. It follows from~\eqref{ndif} that there exists a neighbourhood $U(p)$ of the orbit $T^3\cdot p$ such that $U(p)/T^3$ has the same singular and smooth points as   $V^{T^1}\times \cone (S(L)/T^1)$.
In order to determine the dimension of $V^{T^1}$ note  that any point of the  algebraic torus orbit $(\C ^{*})^4\cdot p$, which is four-dimensional,  is invariant under the action of a  compact stabilizer $T^1$ for $p$. It gives that $\dim V^{T^1}\geq 2$. Also  if $T^1$ fixes a point $s\in G_{4,2}$ then $\dim (\C ^{*})^4\cdot s \leq 2$.   On the other hand there is a neighbourhood of an orbit $(\C ^{*})^4\cdot s$  which does not intersect any of the orbits of   smaller dimensions. It can be seen for example by looking at the  local chart which contains this orbit. Thus, $\dim V^{T^1}=2$. It implies that $\dim L=4$ and 
$V^{T^1}\times \cone (S(L)/T^1)$ has the same smooth and singular points as 
\[
D^{2}\times \cone (S^3/T^1) = D^2\times \cone (S^2),
\]
since $T^1$ acts here on $S^3$ without fixed points. Therefore, the orbit space $X=G_{4,2}/T^4$ has  cone-like singularities at the points whose stabilizer is $T^1$.

{\bf 3)} $H=T^2$ is two-dimensional torus. The orbit $T^3\cdot p$ is one-dimensional and $V$ is then seven-dimensional. The neighbourhood of $U(p)/T^3$ has the same smooth and singular points as   $V^{T^2}\times \cone (S(L)/T^2)$. As in the previous case we deduce that $\dim V^{T^2} = 1$ implying that $\dim L=6$. Therefore we obtain that $U(p)/T^3$ has the same smooth and singular points as 
\[
D^1\times \cone (S^5/T^2).
\]

{\bf 4)} Let $H = T^3$. Then $p$ is a fixed point and, as the local linearization theorem says, $U(p)/T^3$ has the same smooth and singular points as   $(T_{p}G_{4,2})/T^3 = \cone (S^7/T^3)$, where torus $T^3$ acts on $S^7$ without fixed points. 
\end{proof}
We describe further the differentiable structure of the orbit spaces $S^5/T^2$ and $S^7/T^3$ from the previous theorem.
\begin{prop}\label{first}
The orbit space $S^5/T^2$ has three cone-like singular points, while all other points are smooth. Moreover, the singular points have a neighbourhood of the form $\cone (S^2)$.
\end{prop}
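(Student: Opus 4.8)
The plan is to apply the slice theorem a second time, now to the linear $T^2$-action on $S^5$ itself, and to reduce the local analysis to a circle action on $S^3$.

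First I would make the space $S^5/T^2$ concrete. By case~3 of the proof of Theorem~\ref{smain}, $S^5=S(L)$ where $L$ is the six-dimensional normal slice at a point $p\in G_{4,2}$ whose admissible polytope is an edge of $\Delta_{4,2}$ and whose stabilizer in the effective $T^3$ is a two-torus $H$. Since $G_{4,2}$ is a complex manifold on which $T^3$ acts holomorphically and $H$ fixes $p$, the space $T_pG_{4,2}$ is a complex $H$-representation; moreover $H$, being abelian, acts trivially on the tangent direction of the orbit $(\C^*)^4\cdot p$, which is a complex line of weight $0$ whose real quotient is exactly $V^{T^2}$. Hence $L$ is the sum of the three remaining complex weight spaces, so $L\cong\C^3$ with $H\cong T^2$ acting through three nonzero weights $w_1,w_2,w_3$. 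Taking $p=u_{12}^{-1}(1,0,0,0)$ (admissible polytope $[\delta_{12},\delta_{23}]$) and the description of the induced $(\C^*)^4$-action from Proposition~\ref{orbits}, I would compute $w_1,w_2,w_3$ explicitly and check that they are primitive and pairwise linearly independent in $\Hom(T^2,T^1)\cong\Z^2$; by the $S_4$-symmetry permuting the coordinates of $\C^4$, the twelve edges of $\Delta_{4,2}$ all give the same slice data, so one such computation suffices.

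Next I would read off the orbit types of $T^2$ acting on $S^5\subset\C^3$. The stabilizer of a point $z$ is $\bigcap_{z_i\neq 0}\ker w_i$; since every $w_i\neq 0$ there are no fixed points, and since any two of the $w_i$ are independent the action is free off the three coordinate circles $C_j=\{z_i=0:i\neq j\}$. Each $C_j$ is a single $T^2$-orbit (its orbit is all of $C_j$ because $w_j$ is primitive), with stabilizer the circle $K_j=\ker w_j$, so $C_j$ descends to one point of $S^5/T^2$; these are the three candidate singular points, and every other point is smooth since it has a neighbourhood homeomorphic, by the slice theorem, to the quotient of a free $T^2$-action, i.e.\ to an open subset of $\R^3$. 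To finish, I would identify the local model at the three special points: applying the slice theorem to the $T^2$-action on $S^5$ at a point $z_0\in C_j$, the stabilizer $K_j\cong T^1$ acts on the normal space to the orbit $T^2\cdot z_0$, which is the coordinate plane $\C^2$ spanned by the other two coordinates, through two nonzero characters; hence $(\C^2)^{K_j}=0$, and formula~\eqref{ndif} gives a neighbourhood of the image point of the form $\cone(S(\C^2)/K_j)=\cone(S^3/T^1)$, with $S^3/T^1$ the orbit space of a circle action with two nonzero weights, hence homeomorphic to $S^2$ (in fact, from the weights, the Hopf quotient). So each of the three points has a neighbourhood of the form $\cone(S^2)$, and it is a genuine singular point rather than a smooth one because $\C^2/T^1$ is a quadric cone: its quotient functional structure contains the function $|z|^2$, which is not smooth at the vertex, so no neighbourhood of that point is diffeomorphic to an open subset of $\R^3$.

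The main obstacle is the first step: correctly identifying $L$ as a complex $T^2$-representation and pinning down its weights. This requires keeping careful track of the non-effective diagonal torus, of the chosen effective $T^3$, and of the difference between the orbit of the compact torus (whose tangent direction is absorbed into $V^{T^2}$) and that of the algebraic torus (a weight-zero complex line inside $T_pG_{4,2}$). Once the weights are in hand, the remaining steps are routine applications of the slice theorem and of the Hopf fibration.
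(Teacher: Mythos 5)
Your argument is correct and follows essentially the same route as the paper: the weights of the $T^2$-action on $L\cong\C^3$ that you propose to compute are exactly the ones the paper writes down (namely $(1,0),(0,1),(1,1)$ in the chart containing $p=(z_1,0,0,0)$), the three one-dimensional orbits are the coordinate circles, and the second application of the slice theorem gives $(\C^2)^{K_j}=0$ and the local model $\cone(S^3/T^1)=\cone(S^2)$. The only additions are your (correct) observation that the action is genuinely free off the coordinate circles and your closing remark on why the cone point fails to be smooth, both of which the paper leaves implicit.
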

\begin{proof}
We use the notation from the proof of Theorem~\ref{smain}. Thus, we  consider the point $p\in G_{4,2}$ such that $\dim ((\C ^{*})^4\cdot p) =1$.   Consider the chart for $G_{4,2}$ where the point $p$  has the coordinates $(z_1,0,0,0)$.  Then the sphere $S^5\subseteq V$ is the sphere in the  subspace  $\C ^{3}\subseteq \C ^{4}$ which is given by the points $(0,z_2,z_3,z_4)$. The action of $T^2$ on this $\C ^{3}$ is given by $(t_2,t_3)\cdot (z_2,z_3,z_4)=(t_2z_2,t_3z_3,t_2t_3z_4)$. Thus the points $(z_2,0,0)$, $(0,z_3,0)$ and $(0,0,z_4)$ from $\C ^3$ have one-dimensional orbits, while the orbits for all other points  are two-dimensional. Now it is clear that the points on $S^5/T^2$ which are obtained as the  quotient of two-dimensional orbits are smooth. Therefore let $s\in S^5$ be a point whose $T^2$-orbit is one-dimensional. Then the dimension of the slice representation $V$ is four and the orbit space of the  neighbourhood of $T^2\cdot s$ by the action of $T^2$ has the same smooth and singular points as
 \[
 V^{T^1}\times \cone (S^3/T^1) = \cone (S^2),
 \]
 since the only points fixed by $T^1$ are from the orbit $T^3\cdot s$. It implies that  $S^5/T^2$ has three cone-like singular points which correspond to the one-dimensional  orbits on $S^5$.    
\end{proof}        

\begin{prop}
The points of the orbit space $S^7/T^3$  which correspond to the:
\begin{itemize}
\item  three-dimensional orbits are smooth points;
\item two-dimensional orbits are cone - like singularities with a neighbourhood of the form $D^1\times \cone (S^2)$;
\item one-dimensional orbits are cone-like singularities  with a neighbourhood of the form $\cone (S^5/T^2)$.
\end{itemize}
\end{prop}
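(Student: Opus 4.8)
The plan is to run the slice-theorem argument of Theorem~\ref{smain} and Proposition~\ref{first} once more, this time for the linear $T^3$-action on $S^7\subset\C^4=T_pG_{4,2}$ at a $T^3$-fixed point $p$. By Proposition~\ref{orbits} this is the restriction to the compact torus of the linear $(\C^*)^3$-representation on $\C^4$ whose weight decomposition is $\C_{w_1}\oplus\C_{w_2}\oplus\C_{w_3}\oplus\C_{w_4}$ with $w_1=(1,0,0)$, $w_2=(0,1,0)$, $w_3=(0,0,1)$, $w_4=(-1,1,1)$ in $\Hom(T^3,S^1)\cong\Z^3$. First I would record the orbit picture from these weights: no two of the $w_i$ are proportional and every three of them form a $\Z$-basis, so a point with support $S\subseteq\{1,2,3,4\}$ (the indices of its nonzero coordinates) has stabilizer equal to the subtorus $\{t:t^{w_k}=1,\ k\in S\}$, with no finite part. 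Hence on $S^7$ the $T^3$-orbits are exactly $3$-dimensional with trivial stabilizer when $|S|\ge3$, $2$-dimensional with stabilizer $\cong T^1$ when $|S|=2$, and $1$-dimensional with stabilizer $\cong T^2$ when $|S|=1$; since always $|S|\ge1$ on $S^7$, there are no fixed points, as asserted in Theorem~\ref{smain}.

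For a point $s$ whose orbit is $3$-dimensional the stabilizer is trivial, so by the slice theorem a neighbourhood of $T^3\cdot s$ is equivariantly diffeomorphic to $T^3\times V$ with quotient $V$, and $\pi(s)$ is smooth. For $|S|=2$ with stabilizer $H\cong T^1$ I would read off the slice representation from the weights: $(\C^4)^H$ is the sum of the two weight lines indexed by $S$ (the remaining two weights restrict nontrivially to $H$), it contains the orbit $T^3\cdot s$, and hence $V^H$ is the $1$-dimensional image of $T_s\bigl(S^7\cap(\C^4)^H\bigr)=T_sS^3$, while $L$ is $4$-dimensional with $H$ acting freely on $L\cong\C^2$ through the two remaining weights. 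By~\eqref{ndif} a neighbourhood of $\pi(s)$ then has the same smooth and singular points as $V^H\times\cone(S(L)/H)=D^1\times\cone(S^3/T^1)=D^1\times\cone(S^2)$, using that a free $T^1$-action on $S^3$ has quotient $S^2$. For $|S|=1$ with stabilizer $H\cong T^2$: now $(\C^4)^H$ is the single weight line containing the orbit, so $V^H=0$ and $L=V$ is $6$-dimensional with $H\cong T^2$ acting through the three remaining weights, giving a neighbourhood of the form $\cone(S^5/T^2)$; moreover one checks that this $T^2$-action on $S^5\subset\C^3$ agrees, after an automorphism of $T^2$ and a relabelling of coordinates, with the action $(t_2,t_3)\cdot(z_2,z_3,z_4)=(t_2z_2,t_3z_3,t_2t_3z_4)$ of Proposition~\ref{first}, so the three cone-like singular points found there are the remaining singularities.

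The cases corresponding to different subsets $S$ of a fixed size are equivalent under the Weyl-group action permuting the charts, so it suffices to treat one representative of each. The only real work is the computation of $V^H$ in the two singular cases, and because the representation $T_pG_{4,2}\cong\C^4$ is linear with the explicit weights above, this reduces to short linear algebra rather than the softer dimension count used in Theorem~\ref{smain}; the points I would be most careful with are verifying that the stabilizers have no finite part (so that the $|S|\ge3$ points are genuinely smooth, not orbifold points) and that in the $|S|=2$ case the residual $T^1$ acts freely on the relevant $S^3$, so that its quotient is indeed $S^2$.
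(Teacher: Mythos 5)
Your proof is correct and follows essentially the same route as the paper's: the slice theorem with the decomposition $V=V^{H}\oplus L$ and formula~\eqref{ndif}, treating the three stabilizer types separately and reducing the one-dimensional-orbit case to Proposition~\ref{first}. The only difference is that you compute the stabilizers and slice representations explicitly from the weights $(1,0,0),(0,1,0),(0,0,1),(-1,1,1)$ --- in particular checking that the stabilizers are connected subtori and that the residual circle acts freely on the relevant $S^3$ --- where the paper relies on a softer dimension count; this added care is welcome but does not change the argument.
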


\begin{proof} 
Consider as earlier the chart on $G_{4,2}$ which contains $p$. In this chart $p=(0,0,0,0)$. Related to the induced action of $T^3$, the points from $S^7\subseteq \C ^4$ which are on the  coordinate axis have one-dimensional orbits, those which are on the two-dimensional coordinate planes have two-dimensional orbits, while all  other points have three-dimensional orbits.   
Thus, there are four one-dimensional orbits and six two-dimensional orbits.  For a point $s$ whose orbit is  one-dimensional  the orbits space $U(s)/T^3$ has the same smooth and singular points as $\cone (S^5/T^2)$. By Proposition~\ref{first}  we deduce that $U(s)/T^3$ has cone-like singularity at the points $s$ and $(0,\infty)\times S_i$, where $S_i$, $1\leq i\leq 3$ are the singular points on $S^5/T^2$.

If the orbit of a point $s$ is two-dimensional, then the slice representation $V$ for $s$ is five-dimensional.
Moreover $V^{T^1}\subseteq (\C ^2\cap S^7)/T^3$ is one-dimensional what implies that $L$ is four-dimensional.
Therefore the  orbit space $U(s)/T^3$ has the same smooth and singular points  as $D^1\times \cone (S^3/T^1)= D^1\times \cone (S^2)$.

\end{proof}

\begin{rem}
By Theorem~\ref{orbit-space-4} we have  that $X=G_{4,2}/T^4$ is homeomorphic to   $\Delta _{4,2}\times \C P^1$ where all copies of the hypersimplices $\Delta _{4,2}$ are glued along their common boundary.  Now $\Delta _{4,2}\times \C P^1$ is a manifold  with the singular points of the form $(v,c)$, for any vertex $v$  of $\Delta _{4,2}$ and any $c\in \C P^1$. Namely, at each vertex $v$ there are four edges $e_1,\ldots ,e_4$ which on $X$ gives four "cylinders" $e_i\times \C P^1$, $1\leq i\leq 4$. Each of  three of these cylinders  intersect on $\Delta _{4,2}\times \C P^1$ transversally at $v$. Since there are four of them and, in the case of smoothness,  the dimension of the tangent space at any point of $\Delta _{4,2}\times \C P^1$ has to be $\leq 5$,  we see that at the points $v\times \C P^1$ we have cone-like singularity on $\Delta _{4,2}\times \C P^1$.           
Note that this argument will not work any more if we consider the points $v\times \C P^1$ on $X=G_{4,2}/T^4$, for a vertex $v$ of $\Delta _{5,2}$. Namely, at these points each of the corresponding edges  $e_i\times \C P^1=e_i$, $1\leq i\leq 4$, is one-dimensional, while $X$ is five-dimensional manifold.
\end{rem}

\section{The orbit space  $\C P^5/T^4$}\label{complex}
Consider the embedding of $(\C ^{*})^{4}$ into $(\C ^{*})^6$ given by the second symmetric power:  
\begin{equation}\label{emb}
(t_1,t_2,t_3,t_4)\to (t_1t_2,t_1t_3,t_1t_4,t_2t_3,t_2t_4,t_3t_4).
\end{equation}
The composition of this embedding with the standard action of the algebraic torus $(\C ^{*})^{6}$ on  $\C P^5$ gives the action of $(\C ^{*})^{4}$ on $\C P^5$.

We  describe the combinatorics of the stratification for this  $(\C ^{*})^{4}$ - action on $\C P^5$. 
The weights for the embedding~\eqref{emb} of $(\C ^{*})^4\subset (\C ^{*})^6$   are: 
\[
\alpha _{ij} = x_i+x_j,\;\; 1\leq i < j\leq 4.
\]
It implies that $(\C ^{*})^4$ is a regular subtorus of  $(\C ^{*})^6$, so $(\C ^{*})^4$-action and the standard $(\C ^{*})^6$-action on $\C P^5$ have the same set of fixed points.
  
By~\cite{Kir} the moment map $\mu : \C P^5 \to \R ^4$  for $(\C ^{*})^{4}$-action   is given by
\begin{equation}\label{CP-moment}
\mu ([(z_1,\ldots ,z_6)]) = \frac{|z_1|^2(1,1,0,0)+\ldots + |z_6|^2(0,0,1,1)}{\sum\limits_{i=1}^{6}|z_i|^2}.
\end{equation}
The image of the moment map is the octahedron $\Delta _{4,2}$. As for the admissible polytopes we prove the following.
\begin{lem}
Every  convex polytope spanned by some subset of vertices for $\Delta _{4,2}$ is admissible polytope for the moment map
of the $(\C ^{*})^4$-action on $\C P^5$.
\end{lem}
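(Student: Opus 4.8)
The plan is to reduce the statement to a direct application of Theorem~\ref{moment-map-polytope} adapted to $\C P^5$, exactly as in the proof of Lemma~\ref{polytopes}, but now with the extra freedom that the $(\C^{*})^4$-action on $\C P^5$ has six independent homogeneous coordinates rather than the six Pl\"ucker coordinates of a point of $G_{4,2}$, which are constrained by the quadratic Pl\"ucker relation. Concretely, for a point $[z]=[(z_1,\dots,z_6)]\in\C P^5$, the weight calculation given above shows that the vertices of $\mu(\overline{\OO_{\C}([z])})$ are exactly the $\delta_{ij}$ for which $z_{ij}\neq 0$, where we index the six coordinates of $\C P^5$ by the pairs $ij$ with $1\le i<j\le 4$ via~\eqref{CP-moment}. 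So the task is: given an arbitrary subset $S\subseteq\{\delta_{12},\dots,\delta_{34}\}$, exhibit a point $[z]$ whose nonzero coordinates are precisely those indexed by $S$, and check that the resulting polytope $\operatorname{conv}(S)$ has vertex set exactly $S$ (i.e.\ no element of $S$ lies in the convex hull of the others).

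First I would note that for the standard action of $(\C^{*})^6$ on $\C P^5$ there is, for every nonempty $S$, the coordinate point $[z^{S}]$ with $z_{ij}=1$ for $ij\in S$ and $z_{ij}=0$ otherwise; since $(\C^{*})^4\subset(\C^{*})^6$ is a regular (isotropy-connected) subtorus — the weights $\alpha_{ij}=x_i+x_j$ being pairwise distinct — the closure of the $(\C^{*})^4$-orbit of $[z^{S}]$ is a toric subvariety whose moment image is $\operatorname{conv}\{\delta_{ij}\mid ij\in S\}$ by Theorem~\ref{moment-map-polytope}. Then I would invoke the combinatorial fact, already recorded for $\Delta_{4,2}$ in the geometric discussion after Lemma~\ref{polytopes}, that the six points $\delta_{12},\dots,\delta_{34}$ are in \emph{convex position}: each $\delta_{ij}$ is a vertex of the octahedron $\Delta_{4,2}$, so for any subset $S$ no point of $S$ is in the convex hull of $S\setminus\{\delta_{ij}\}$. (The only incidences among these six points are the three ``long diagonals'' $\delta_{ij}$–$\delta_{kl}$ with $\{i,j,k,l\}=\{1,2,3,4\}$, which pass through the center of the octahedron; but two endpoints of a segment are always its vertices, so even in that case the two-element subset has the claimed vertex set.) Hence $\operatorname{conv}(S)$ has vertex set exactly $S$, and $\mu(\overline{\OO_{\C}([z^{S}])})=\operatorname{conv}(S)$ is the desired admissible polytope.

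The one point that genuinely needs to be handled with care — the main (and only) obstacle — is why on $\C P^5$ we get \emph{all} subsets $S$, in contrast to Lemma~\ref{polytopes} where many subsets (e.g.\ all three-subsets containing a long diagonal, such as $\delta_{12},\delta_{34},\delta_{13}$) fail to occur for $G_{4,2}$. The explanation is precisely that the six homogeneous coordinates $z_{ij}$ on $\C P^5$ are unconstrained, whereas the six Pl\"ucker coordinates $P^{ij}$ of a point of $G_{4,2}$ satisfy $P^{12}P^{34}-P^{13}P^{24}+P^{14}P^{23}=0$, which is exactly what forbids certain vanishing patterns (e.g.\ two Pl\"ucker coordinates with a common index vanishing forces a third to vanish, per the argument in the proof of Lemma~\ref{polytopes}). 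Since no such relation is present here, every prescribed support pattern is realizable, and I would make this contrast explicit in the write-up. After that, it only remains to remark that the lowest-dimensional cases ($|S|=1$, giving the vertices $\delta_{ij}$ themselves) are the fixed points of the action, so the statement covers the full face lattice-style family claimed; the enumeration itself (how many polytopes of each dimension) can be recorded as a remark but is not needed for the proof of the lemma.
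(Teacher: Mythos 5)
Your proposal is correct and follows essentially the same route as the paper: for each subset $S$ of vertices one takes the coordinate point with $z_{ij}=1$ for $ij\in S$ and $z_{ij}=0$ otherwise, and applies Theorem~\ref{moment-map-polytope} to identify the moment image of its orbit closure with $\operatorname{conv}(S)$. Your added remarks on convex position and on the contrast with the Pl\"ucker relation for $G_{4,2}$ are accurate elaborations of what the paper leaves implicit.
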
 
\begin{proof}
Take a convex polytope over some subset of vertices $\delta_{i_1j_1},\ldots \delta_{i_s,j_s}$, $1\leq s\leq 6$ for $\Delta _{4,2}$. Consider the point $[(z_{11},z_{12},\ldots ,z_{34})]\in \C P^5$, where $z_{ij}=1$ if $\{i, j\} = \{ i_l,j_l\}$ for some $1\leq l\leq s$, while otherwise $z_{ij}=0$. It is straightforward to see that $(\C ^*)^{4}$ - orbit of this point maps by the moment map to the convex polytope spanned by $\delta_{i_l,j_l}$, $1\leq l\leq s$.
\end{proof}

Consider the chart $M_0$ on $\C P^{5}$ given by the condition $z_0\neq 0$ in homogeneous coordinates for $\C P^{5}$, meaning that  
\[
M_{0}= \{ [(1,z_1,,\ldots ,z_5)], z_1,\ldots ,z_5\in \C \}.
\] 
Let us fix a point $[(1,a_1,\ldots ,a_5)]\in M_0$. Its $(\C ^{*})^4$-orbit is given by
\[
[(t_1t_2,t_1t_3a_1,t_1t_4a_2,t_2t_3a_3,t_2t_4a_4,t_3t_4a_5)] = [(1,\frac{t_3}{t_2}a_1, \frac{t_4}{t_2}a_2, \frac{t_3}{t_1}a_3,  \frac{t_4}{t_1}a_4, \frac{t_3t_4}{t_1t_2}a_5)].
\]
If we put 
\[\tau _1 =  \frac{t_3}{t_2},\; \tau _2 =  \frac{t_4}{t_2},\; \tau _3 =  \frac{t_3}{t_1},
\]
this orbit writes as
\[
(\C ^{*})^4 [(1,a_1,\ldots,a_5)] = [(1, \tau _1a_1, \tau _2a_2,\tau _3a_3,\frac{\tau _{2}\tau _{3}}{\tau _1}a_4, \tau _{2}\tau _{3}a_5)].
\]
We first note  any polytope which correspond to such orbit contains the vertex $\delta_{12}$.

As in the case of $G_{4,2}$ we denote further by $P_{ij}$ and $P_{ij,kl}$, $i<j$, $k<l$  the polytopes spanned by the vertices of $\Delta _{4,2}$ different from the vertex $\delta_{ij}$ and $\delta_{ij}$, $\delta_{kl}$ respectively.      

\begin{lem}
If $a_1\cdots a_5\neq 0$ then the orbit  $(\C ^{*})^4\cdot [(1,a_1,\ldots ,a_5)]$ is given by the surface 
\begin{equation}\label{main}
\frac{z_2z_3}{z_5} = \frac{a_2a_3}{a_5} = c_1,\;\; \frac{z_1z_4}{z_5} = \frac{a_1a_4}{a_5} = c_2,
\end{equation}
where $c_1c_2\neq 0$. Moreover
\[
\mu ((\C ^{*})^4[(1,a_1,\ldots ,a_5)]) = \stackrel{\circ}{\Delta _{4,2}}.
\]
\end{lem}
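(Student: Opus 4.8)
The plan is to read the orbit off the explicit parametrization obtained just above, namely $(\C^{*})^4\cdot[(1,a_1,\ldots,a_5)] = [(1,\tau_1a_1,\tau_2a_2,\tau_3a_3,\frac{\tau_2\tau_3}{\tau_1}a_4,\tau_2\tau_3a_5)]$ with $(\tau_1,\tau_2,\tau_3)=(t_3/t_2,\,t_4/t_2,\,t_3/t_1)$. First I would note that the substitution $(t_1,t_2,t_3,t_4)\mapsto(\tau_1,\tau_2,\tau_3)$ is surjective onto $(\C^{*})^3$ --- e.g. $t_2=1,\ t_3=\tau_1,\ t_4=\tau_2,\ t_1=\tau_1/\tau_3$ --- so the orbit is precisely $\{[(1,\tau_1a_1,\tau_2a_2,\tau_3a_3,\frac{\tau_2\tau_3}{\tau_1}a_4,\tau_2\tau_3a_5)]:(\tau_1,\tau_2,\tau_3)\in(\C^{*})^3\}$, and since normalizing the zeroth coordinate to $1$ removes the projective scaling this parametrization is injective; in particular the orbit has complex dimension three.

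For the first claim I would prove the two inclusions. If $[(1,z_1,\ldots,z_5)]$ lies in the orbit then $z_2z_3=\tau_2\tau_3a_2a_3$, $z_1z_4=\tau_2\tau_3a_1a_4$ and $z_5=\tau_2\tau_3a_5$, whence $z_2z_3/z_5=a_2a_3/a_5$ and $z_1z_4/z_5=a_1a_4/a_5$, all coordinates being nonzero because $a_1\cdots a_5\neq 0$; this also shows $c_1c_2\neq 0$. Conversely, suppose $[(1,z_1,\ldots,z_5)]$ satisfies \eqref{main} with $z_5\neq 0$; since $c_1c_2\neq 0$ this forces $z_1,z_2,z_3,z_4\neq 0$ as well. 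Put $\tau_1=z_1/a_1$, $\tau_2=z_2/a_2$, $\tau_3=z_3/a_3$. Using $a_5/(a_2a_3)=z_5/(z_2z_3)$ (from $c_1$) one gets $\tau_2\tau_3a_5=z_5$, and combining this with $a_1a_4/a_5=z_1z_4/z_5$ (from $c_2$) one gets $\frac{\tau_2\tau_3}{\tau_1}a_4=z_4$ automatically; hence the point lies in the orbit. This identifies the orbit with the surface \eqref{main}.

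For the moment-map statement I would apply \thmref{moment-map-polytope} to $X=[(1,a_1,\ldots,a_5)]$: all six homogeneous coordinates $1,a_1,\ldots,a_5$ are nonzero, so the vertex set of $\mu(\overline{\OO_{\C}(X)})$ is all of $\{\delta_{12},\ldots,\delta_{34}\}$ and therefore $\mu(\overline{\OO_{\C}(X)})=\Delta_{4,2}$. Since $\mu$ restricts to a bijection between the $(\C^{*})^4$-orbits contained in $\overline{\OO_{\C}(X)}$ and the open faces of $\Delta_{4,2}$, the unique dense open orbit $\OO_{\C}(X)$ must correspond to the unique top-dimensional open face, i.e. $\mu(\OO_{\C}(X))=\stackrel{\circ}{\Delta_{4,2}}$. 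Alternatively, \eqref{CP-moment} shows directly that every point of the orbit, having all coordinates nonzero, is sent by $\mu$ to a strictly positive convex combination of all six vertices, hence into $\stackrel{\circ}{\Delta_{4,2}}$; surjectivity onto it then follows because $\partial\Delta_{4,2}$ is the union of the images of the proper faces, which by the same theorem are hit only by the lower-dimensional orbits in the closure.

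I do not expect a real obstacle here: the whole lemma reduces to elementary manipulation of the explicit parametrization. The only point needing a little care is the reverse inclusion --- verifying that once $\tau_1,\tau_2,\tau_3$ are pinned down by $z_1,z_2,z_3$, the two remaining coordinates $z_4,z_5$ are forced by \eqref{main} --- together with the observation that the degenerate locus $z_5=0$ is automatically excluded once $c_1c_2\neq 0$, so that the locus cut out by \eqref{main} is exactly the orbit and not something larger.
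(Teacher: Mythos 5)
Your proof is correct and follows exactly the route the paper intends: the paper states this lemma without a written proof, leaving it as an immediate consequence of the explicit parametrization $[(1,\tau_1a_1,\tau_2a_2,\tau_3a_3,\frac{\tau_2\tau_3}{\tau_1}a_4,\tau_2\tau_3a_5)]$ derived just above, and your argument simply fills in that computation (both inclusions for the surface, plus the convexity-theorem argument for $\mu(\OO_\C(X))=\stackrel{\circ}{\Delta_{4,2}}$). The one point you add beyond what the paper leaves implicit --- checking that $(t_1,\ldots,t_4)\mapsto(\tau_1,\tau_2,\tau_3)$ is onto $(\C^*)^3$ and that the locus cut out by \eqref{main} is not larger than the orbit --- is exactly the care the omitted proof requires, and you handle it correctly.
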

Thus the orbits from the main stratum are parametrized by $(c_1,c_2)$, $c_1,c_2\in \C ^{*}$. The orbits are six-dimensional what implies that the main stratum is ten-dimensional. Because of the further purposes we use
the parametrization of the orbits from the main stratum in the form $(c_1,c_2, \frac{c_1}{c_2})$, $c_1,c_2\neq 0$.

\begin{lem}
The six-dimensional orbits  $\OO _{\C}({\bf a}) = (\C ^{*})^4[(1,a_1,\ldots ,a_5)]$ in the chart $M_{0}$  which do not belong to the main stratum are   as follows:
\begin{itemize}
\item for $a_1=0$ or $a_4=0$ it is 
\[
z_1=0 \;\text{or}\; z_4=0\;  \text{and}\; \frac{z_2z_3}{z_5} = \frac{a_2a_3}{a_5} = c_1,
\]
where $c_1\neq 0$. In this case, respectively
\[
\mu (\OO _{\C}({\bf a})) = \stackrel{\circ}{P_{13}}\;\; \text{or}\;\; \mu (\OO _{\C}({\bf a})) = \stackrel{\circ}{P_{24}};
\]
\item  for $a_2=0$ or $a_3=0$ it is 
\[
z_2=0 \; \text{or}\; z_3=0\; \text{and}\; \frac{z_1z_4}{z_5} = \frac{a_2a_3}{a_5} = c_2,
\]
where $c_2\neq 0$. In this case
\[
\mu (\OO _{\C}({\bf a})) = \stackrel{\circ}{P_{14}}\; \text{or}\; \mu (\OO _{\C}({\bf a})) = \stackrel{\circ}{P_{23}};
\]
\item for $a_5 = 0$ it is 
\[
z_5=0 \; \text{and}\; \frac{z_1z_4}{z_2z_3} = \frac{a_1a_4}{a_2a_3} = c,
\]
where $c\neq 0$.  
\[
\mu (\OO _{\C}({\bf a}) ) = \stackrel{\circ}{P_{34}},
\]
\item for $a_1=a_2=0$  it is  
\[
\OO _{\C}({\bf a})= \C ^{*}_{I},\; I=\{3,4,5\}\;\;
\text{and}\;\; 
\mu (\OO_{\C}({\bf a})) = \stackrel{\circ}{P_{13,14}}.
\]
\item  for  $a_1=a_3=0$ it is 
\[
\OO_{\C}({\bf a})= \C ^{*}_{I},\; I=\{2,4,5\}\;\;
\text{and}\;\;
 \mu (\OO_{\C}({\bf a})) = \stackrel{\circ}{P_{13,23}}.
\]
\item  for  $a_1=a_5=0$ it is 
\[
\OO_{\C}({\bf a}) = \C ^{*}_{I},\; I=\{2,3,4\}\;\;
\text{and}\;\; 
 \mu (\OO_{\C}({\bf a})) = \stackrel{\circ}{P_{13,34}}.
\]
\item  for  $a_2=a_4=0$ it is g
\[
\OO_{\C}({\bf a}) = \C ^{*}_{I},\; I=\{1,3,5\}\;\;
\text{and}\;\; 
 \mu ( \OO_{\C}({\bf a}))= \stackrel{\circ}{P_{14,24}}.
\]
\item for $a_2=a_5=0$ it is 
\[
\OO_{\C}({\bf a}) = \C ^{*}_{I},\; I=\{1,3,4\}\;\;
\text{and}\;\; 
 \mu (\OO_{\C}({\bf a})) = \stackrel{\circ}{P_{14,34}}.
\]
 \item  for $a_3=a_4=0$ it is 
\[
\OO_{\C}({\bf a}) = \C ^{*}_{I},\; I=\{1,2,5\}\;\;
\text{and}\;\; 
 \mu (\OO_{\C}({\bf a})) = \stackrel{\circ}{P_{23,24}},
\]
\item  for $a_3=a_5=0$ it is 
\[
\OO_{\C}({\bf a}) = \C ^{*}_{I},\; I=\{1,2,4\}\;\;
\text{and}\;\; 
 \mu (\OO_{\C}({\bf a})) = \stackrel{\circ}{P_{23,34}}.
\]

\item  for  $a_4=a_5=0$ it is 
\[
\OO_{\C}({\bf a}) = \C ^{*}_{I},\; I=\{1,2,3\}\;\;
\text{and}\;\; 
 \mu ( \OO_{\C}({\bf a}))= \stackrel{\circ}{P_{24,34}}.
\]

\end{itemize}
\end{lem}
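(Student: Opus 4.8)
The plan is to obtain each case by direct substitution into the parametrization of the generic orbit found above, namely
\[
\OO _{\C}({\bf a}) = (\C ^{*})^4[(1,a_1,\ldots ,a_5)] = [(1,\tau _1a_1,\tau _2a_2,\tau _3a_3,\tfrac{\tau _2\tau _3}{\tau _1}a_4,\tau _2\tau _3a_5)],\qquad (\tau _1,\tau _2,\tau _3)\in (\C ^{*})^3,
\]
together with the dictionary $(z_0,z_1,z_2,z_3,z_4,z_5)=(z_{12},z_{13},z_{14},z_{23},z_{24},z_{34})$, under which $z_{ij}$ carries the weight $\delta _{ij}$. First I would record a uniform principle: setting some of the $a_i$ equal to zero, the homogeneous coordinates that remain identically nonzero on $\OO _{\C}({\bf a})$ are precisely $z_0$ together with those $z_{ij}$ whose $a$ was not annihilated; by the convexity theorem \cite{AT}, \cite{GUST} applied to the toric subvariety $\overline{\OO _{\C}({\bf a})}\subset \C P^5$ (cf. \thmref{moment-map-polytope}), $\mu (\overline{\OO _{\C}({\bf a})})$ is the convex hull of the corresponding $\delta _{ij}$ and $\mu (\OO _{\C}({\bf a}))$ is its relative interior. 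Since deleting one vertex of $\Delta _{4,2}$ yields $P_{ij}$ and deleting two yields $P_{ij,kl}$, the stated moment images follow at once from the list of surviving coordinates in each case.

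Next I would treat the five cases in which exactly one $a_i$ vanishes. For $a_5=0$ one has $z_5\equiv 0$, while $z_1z_4=\tau _2\tau _3a_1a_4$ and $z_2z_3=\tau _2\tau _3a_2a_3$, so $z_1z_4/z_2z_3=a_1a_4/a_2a_3$ is constant on the orbit; the surviving coordinates are $z_0,z_1,z_2,z_3,z_4$, hence $\mu (\OO _{\C}({\bf a}))=\stackrel{\circ}{P_{34}}$. The cases $a_1=0$ and $a_4=0$ annihilate $z_{13}$ and $z_{24}$ respectively; here $z_2z_3=\tau _2\tau _3a_2a_3$ and $z_5=\tau _2\tau _3a_5$ give $z_2z_3/z_5=a_2a_3/a_5$, with images $\stackrel{\circ}{P_{13}}$ and $\stackrel{\circ}{P_{24}}$. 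The cases $a_2=0$ and $a_3=0$ annihilate $z_{14}$ and $z_{23}$; a symmetric computation gives $z_1z_4/z_5=a_1a_4/a_5$, with images $\stackrel{\circ}{P_{14}}$ and $\stackrel{\circ}{P_{23}}$. In each of these cases one must verify that the parametrization is onto the indicated zero set, not merely into it; this amounts to the reversibility of eliminating $(\tau _1,\tau _2,\tau _3)$ from the three remaining monomial identities, which I would state once and then quote.

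Then I would handle the eight cases in which two of the $a_i$ vanish. After discarding the two identically zero homogeneous coordinates, the three surviving nonzero coordinates besides $z_0$ are monomials in $(\tau _1,\tau _2,\tau _3)$ whose exponent matrix is unimodular, so one can solve successively for one $\tau$, then the next, then the last; thus the parametrization is a bijection onto a coordinate torus and $\OO _{\C}({\bf a})=\C ^{*}_{I}$ with $I$ the triple of surviving indices, and $\mu (\OO _{\C}({\bf a}))$ is the relative interior of the triangle, quadrilateral or tetrahedron $P_{ij,kl}$ spanned by the four remaining vertices. Running through the eight admissible pairs reproduces the list. For completeness I would note that a six-dimensional orbit inside $M_0$ and outside the main stratum has at least one vanishing $a_i$ but at most two (three or more confine the orbit closure to a coordinate $\C P^2$ or smaller, hence dimension $\leq 4$), and that the pairs $\{1,4\}$ and $\{2,3\}$ are excluded because the corresponding orbit depends on only two of the three independent parameters $\tau _1,\tau _2,\tau _3$ and is therefore four-dimensional; the remaining eight pairs are exactly those in the statement.

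The argument as a whole is a finite bookkeeping exercise, and I do not expect a genuine obstacle: the only steps that need care are the single uniform surjectivity claim (reversibility of the $\tau$-elimination, so that the orbit equals, rather than is merely contained in, the displayed zero set) and keeping the dictionary $z_i\leftrightarrow \delta _{ij}$ straight so that the polytope labels $P_{ij}$ and $P_{ij,kl}$ come out correctly in every case.
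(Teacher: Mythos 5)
Your proof is correct and follows the route the paper itself intends: the paper states this lemma without proof, leaving it as a direct substitution into the orbit parametrization $[(1,\tau_1a_1,\tau_2a_2,\tau_3a_3,\tfrac{\tau_2\tau_3}{\tau_1}a_4,\tau_2\tau_3a_5)]$, and your write-up supplies exactly that bookkeeping, including the two points that genuinely need checking (surjectivity onto the displayed zero sets via unimodularity of the exponent matrices, and the exclusion of the pairs $\{1,4\}$, $\{2,3\}$ whose exponent vectors are dependent and give four-dimensional orbits). You also correctly identify the constant in the second bullet as $a_1a_4/a_5$ rather than the paper's misprinted $a_2a_3/a_5$.
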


Note that we obtain that the strata over the polytopes $P_{13},P_{24},P_{14},P_{23}$ and $P_{34}$ are one-parameter strata and, thus, eight-dimensional. The strata over the other polytopes from previous lemma consists of one $(\C ^{*})^4$-orbit and they are six-dimensional. 
Using~\eqref{main} we obtain:
\begin{cor}
The six-dimensional orbits for $(\C ^{*})^4$ - action on $\C P^5$ of the strata different form the main stratum and which belong to the chart $M_0$ can be, depending on its  admissible polytope $P$, continuously parametrized using the parametrization of the orbits from the main stratum as follows:
\begin{itemize}
\item $P_{14}, P_{23}$ $\longrightarrow$  $(0, c_2,0)$, $c_2\in \C ^{*}$;
\item $P_{13}, P_{24}$ $\longrightarrow$  $(c_1,0,\infty)$, $c_1\in \C ^{*}$;
\item $P_{34}$ $ \longrightarrow$ $(\infty ,\infty ,c)$, $c\in \C P^1$;
\item $P_{13,14}$, $P_{13,23}$, $P_{14,24}$, $P_{23,24}$ $\longrightarrow$ $(0,0,c)$, $c\in \C P^1$;
\item $P_{13,34}$, $P_{24,34}$ $\longrightarrow$ $(\infty ,c,\infty )$, $c\in \C P^1$;
\item $P_{14,34}$, $P_{23,34}$ $ \longrightarrow$ $(c,\infty ,0)$, $c\in \C P^1$.
\end{itemize}
\end{cor}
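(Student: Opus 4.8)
The plan is to mimic the proofs of Proposition~\ref{orbits3} and Proposition~\ref{orbitsl}: for each non-main six-dimensional orbit lying inside the chart $M_0$ I will take a sequence of points of the main stratum that converges to a point of that orbit and read off the limit of the parameter triple. Recall from~\eqref{main} and the normalization fixed just before the corollary that a main-stratum orbit through $[(1,a_1,\ldots,a_5)]$ with all $a_i\neq 0$ carries the parameters $c_1=a_2a_3/a_5$, $c_2=a_1a_4/a_5$ and $c_1/c_2=a_2a_3/(a_1a_4)$. Since the zeroth homogeneous coordinate stays equal to $1$, convergence of a sequence $[(1,a_1^n,\ldots,a_5^n)]$ inside $M_0$ is just coordinatewise convergence $a_j^n\to a_j$ in $\C$, so the three limits are obtained by substituting the limiting values $a_j$ into the three expressions above. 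The whole argument then reduces to running through the finite list of vanishing patterns recorded in the preceding lemma.

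For the one-parameter strata: if the limit point has $a_2=0$ (polytope $P_{14}$) then $a_1,a_3,a_4,a_5\neq 0$, so $c_1^n=a_2^na_3^n/a_5^n\to 0$, $c_2^n\to a_1a_4/a_5\in\C^{*}$ and $c_1^n/c_2^n\to 0$, which gives the parametrization $(0,c_2,0)$; the case $a_3=0$ ($P_{23}$) is identical. If $a_1=0$ ($P_{13}$) then $c_1^n\to c_1\in\C^{*}$, $c_2^n=a_1^na_4^n/a_5^n\to 0$ and $c_1^n/c_2^n\to\infty$, giving $(c_1,0,\infty)$; the case $a_4=0$ ($P_{24}$) is the same. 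If $a_5=0$ ($P_{34}$) then $a_1,\ldots,a_4\neq 0$, so $c_1^n,c_2^n\to\infty$ while $c_1^n/c_2^n\to a_2a_3/(a_1a_4)$ stays finite and nonzero, so this stratum is parametrized by $(\infty,\infty,c)$ with $c$ the residual orbit invariant.

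For the strata that are a single orbit $\C^{*}_I$ with $\|I\|=3$ (two of the $a_i$ vanish), two of the three entries of the triple are again forced to $0$ or $\infty$ exactly as above, while the third is an indeterminate ratio of two quantities that both tend to $0$; the key point is that this ratio realizes \emph{every} value of $\C P^1$, which I will arrange by prescribing the relative rates of the two vanishing coordinates. For instance for $a_1=a_2=0$ ($P_{13,14}$) one has $c_1^n/c_2^n=a_2^na_3^n/(a_1^na_4^n)$, and taking $a_1^n=\varepsilon_n$, $a_2^n=\lambda\varepsilon_n$ with $\varepsilon_n\to 0$ gives the limit $\lambda a_3/a_4$, taking $a_2^n=\varepsilon_n^2$ gives the limit $0$, and taking $a_1^n=\varepsilon_n^2$, $a_2^n=\varepsilon_n$ gives the limit $\infty$. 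This produces the parametrization $(0,0,c)$ for $P_{13,14},P_{13,23},P_{14,24},P_{23,24}$, the parametrization $(\infty,c,\infty)$ for $P_{13,34},P_{24,34}$ (now $c_1^n\to\infty$ and $c_2^n$ is the indeterminate entry), and $(c,\infty,0)$ for $P_{14,34},P_{23,34}$, each with $c\in\C P^1$. Together with~\eqref{main} this accounts for every non-main six-dimensional orbit meeting $M_0$; the remaining ones, namely those over $P_{12}$ and over the polytopes $P_{12,kl}$, have $z_0=0$, so they lie outside $M_0$ and are handled in another chart exactly as in the proof of Proposition~\ref{orbits3}. The one step I expect to require genuine care is this last sweeping claim for the two-vanishing cases, that the free parameter runs over all of $\C P^1$ rather than over a single value; it is exactly what the explicit choices of blow-up rates above are for, and everything else is routine substitution.
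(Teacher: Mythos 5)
Your proposal is correct and follows essentially the same route as the paper: the corollary is stated there as an immediate consequence of~\eqref{main} and the preceding lemma, with the limiting values of $(c_1,c_2,c_1/c_2)$ read off from the vanishing pattern of the $a_i$ exactly as in the proofs of Proposition~\ref{orbits3} and Proposition~\ref{orbitsl}, which you explicitly mimic. Your extra care with the indeterminate $0/0$ ratios (prescribing relative vanishing rates to realize every value of $\C P^1$) supplies the one detail the paper leaves implicit, and your computations agree with the stated parametrizations in every case.
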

\begin{rem}
Note that the polytopes $P_{14},P_{23},P_{13},P_{24},P_{34}$ are four-sided pyramids, while the polytopes $P_{13,14}$, $P_{13,23}$, $P_{14,24}$, $P_{23,24}$, $P_{13,34}$, $P_{24,34}$
$P_{14,34}$, $P_{23,34}$ are tetrahedra in $\Delta _{4,2}$ which contain the vertex $\delta _{12}$.
\end{rem} 
\begin{lem}
The four-dimensional orbits  $\OO _{\C}({\bf a}) = (\C ^{*})^4[(1,a_1,\ldots ,a_5)]$ in the chart $M_{0}$   are   as follows:
\begin{itemize}
\item   for $a_1=a_4=0$ it  is given by
\[
\OO _{\C}({\bf a}) =\{(0,z_2,z_3,0, z_5) : \frac{z_2z_3}{z_5}=c_1\}\;\;
\text{and} \;\;
 \mu (\OO _{\C}({\bf a})) = \stackrel{\circ}{P_{13,24}};
\]
\item  for $a_2=a_3=0$ it is given by
\[
\OO _{\C}({\bf a}) = \{(z_1,0,0,z_4,z_5) :  \frac{z_1z_4}{z_5}=c_2\}\;\;
\text{and}\;\;
 \mu (\OO _{\C}({\bf a})) = \stackrel{\circ}{P_{14,23}}.
\]
\end{itemize}
\end{lem}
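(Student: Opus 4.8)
The plan is to compute the orbit directly in the chart $M_0$ from the explicit parametrization already established, and then read off the image of the moment map from the coordinates of the non‑zero homogeneous entries. Recall that for ${\bf a}=(a_1,\ldots,a_5)$ the orbit is
\[
(\C ^{*})^4 \cdot [(1,a_1,\ldots ,a_5)] = [(1, \tau _1a_1, \tau _2a_2,\tau _3a_3,\tfrac{\tau _{2}\tau _{3}}{\tau _1}a_4, \tau _{2}\tau _{3}a_5)],
\]
with $\tau_1,\tau_2,\tau_3\in\C^{*}$ free. First I would substitute $a_1=a_4=0$ (resp.\ $a_2=a_3=0$): the orbit collapses to points of the form $(1,0,\tau_2 a_2,\tau_3 a_3,0,\tau_2\tau_3 a_5)$ — equivalently, after clearing the leading $1$, to points $(0,z_2,z_3,0,z_5)$ with $z_2,z_3,z_5\in\C^{*}$, since $a_2,a_3,a_5\neq 0$. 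The single relation among the three free parameters $\tau_1$ (now absent), $\tau_2,\tau_3$ is that $z_2z_3/z_5 = a_2a_3/a_5$, a constant $c_1\neq 0$; this gives exactly the stated surface. The symmetric computation with $a_2=a_3=0$ yields $(z_1,0,0,z_4,z_5)$ with $z_1z_4/z_5=c_2$.

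Next I would verify the dimension count and the moment‑map image. The surface $\{z_2z_3/z_5=c_1\}$ inside $\{z_2,z_3,z_5\neq 0,\, z_1=z_4=0\}$ is a two‑complex‑dimensional (i.e.\ four‑real‑dimensional) torus orbit: the torus $(\C^{*})^4$ acts through a rank‑two quotient here, consistent with the general fact that the $(\C^{*})^4$-action factors through $(\C^{*})^3$ and here further degenerates. For the moment map, by~\eqref{CP-moment} the image of $\overline{\OO _{\C}({\bf a})}$ is the convex hull of the $\delta_{ij}$ corresponding to the non‑zero homogeneous coordinates; here those are the coordinates indexed by $\{1,2\}$ (the leading entry $z_0$, carrying weight $x_1+x_2$), $\{2,3\}$, $\{3,4\}$ and $\{3,4\}$'s partner — more precisely the indices $23$, $24$, $34$ together with $12$, so the polytope is the convex hull of $\delta_{12},\delta_{23},\delta_{24},\delta_{34}$, which by the notation convention is exactly $P_{13,24}$; and the $\OO_{\C}({\bf a})$ itself, being the generic (open, dense) orbit in its closure, maps onto the relative interior $\stackrel{\circ}{P_{13,24}}$ by Theorem~\ref{moment-map-polytope}. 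The second case is handled identically, giving $\stackrel{\circ}{P_{14,23}}$.

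I do not anticipate a serious obstacle: this lemma is essentially a bookkeeping specialization of the already‑proved parametrization lemma together with Theorem~\ref{moment-map-polytope}. The only point requiring a little care is making the index‑to‑vertex dictionary fully explicit — i.e.\ tracking which of the six homogeneous coordinates of $\C P^5$ corresponds to which vertex $\delta_{ij}$ of $\Delta_{4,2}$ under the embedding~\eqref{emb}, so that ``$z_1=z_4=0$'' is correctly translated into ``the vertices $\delta_{13},\delta_{24}$ are absent'' and hence the polytope is named $P_{13,24}$ rather than something else. Once that dictionary is fixed, both bullet points follow by inspection, and the dimension assertion follows from counting the free parameters modulo the single relation.
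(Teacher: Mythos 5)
Your overall strategy is the right one, and it is in effect the only proof available: the paper states this lemma without proof, and the surrounding lemmas in the same section are all obtained by exactly the specialization you describe, namely substituting the vanishing $a_i$ into the orbit parametrization $[(1,\tau_1a_1,\tau_2a_2,\tau_3a_3,\tfrac{\tau_2\tau_3}{\tau_1}a_4,\tau_2\tau_3a_5)]$ and reading off the invariant relation. Your first half is correct: for $a_1=a_4=0$ the orbit is $\{(0,z_2,z_3,0,z_5):z_2z_3/z_5=a_2a_3/a_5\}$ with $z_2,z_3,z_5\in\C^{*}$, the map $(\tau_2,\tau_3)\mapsto(\tau_2a_2,\tau_3a_3,\tau_2\tau_3a_5)$ is injective, so the orbit is four-real-dimensional, and the relation is exactly the stated surface.

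The step that fails is the index-to-vertex dictionary, which you yourself flagged as the delicate point. Under the embedding~\eqref{emb} the homogeneous coordinates $z_0,z_1,z_2,z_3,z_4,z_5$ carry the weights $t_1t_2,t_1t_3,t_1t_4,t_2t_3,t_2t_4,t_3t_4$ and hence correspond to $\delta_{12},\delta_{13},\delta_{14},\delta_{23},\delta_{24},\delta_{34}$ in that order. Setting $a_1=a_4=0$ kills $z_1$ and $z_4$, i.e.\ removes the vertices $\delta_{13}$ and $\delta_{24}$, so the surviving vertex set is $\{\delta_{12},\delta_{14},\delta_{23},\delta_{34}\}$ --- a diagonal square, since $\delta_{12}+\delta_{34}=\delta_{14}+\delta_{23}=(1,1,1,1)$ --- and this is what the paper's convention names $P_{13,24}$. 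You instead list the surviving vertices as $\delta_{12},\delta_{23},\delta_{24},\delta_{34}$; that set is wrong (it retains $\delta_{24}$, which is killed, and omits $\delta_{14}$, which survives), it is not even coplanar (those four points span a tetrahedron), and under the paper's convention its name would be $P_{13,14}$, not $P_{13,24}$. So the displayed justification of $\mu(\OO_\C(\mathbf a))=\stackrel{\circ}{P_{13,24}}$ is a non sequitur that happens to land on the correct label. The fix is purely mechanical: restate the dictionary correctly, after which both bullets follow by inspection exactly as you intended, with the second case giving surviving vertices $\{\delta_{12},\delta_{13},\delta_{24},\delta_{34}\}=P_{14,23}$.
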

Note that the strata over the polytopes $P_{13,24}$ and $P_{14,23}$ are one-parameter strata and they are six-dimensional. 
\begin{cor}
The four-dimensional orbits for $(\C ^{*})^4$ - action on $\C P^5$, which belong to the chart $M_0$ can be, depending on its  admissible polytope $P$, continuously parametrized using the parametrization of the orbits from the main stratum as follows:
\begin{itemize}
\item $P_{13,24} \longrightarrow (c_1,0,\infty)$, $c_1\in \C ^{*}$;
\item $P_{14,23}\longrightarrow (0,c_2,0)$, $c_2\in \C ^{*}$.
\end{itemize}
\end{cor}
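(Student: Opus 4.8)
The plan is to repeat, almost verbatim, the limiting argument already used for the six‑dimensional orbits in the previous corollary and, in the $G_{4,2}$ case, in Propositions~\ref{orbits3} and~\ref{orbitsl}: approximate a point of the four‑dimensional orbit by a sequence of points of the main stratum and read off the limit of the parameter triple $(c_1,c_2,c_1/c_2)$. The only thing to check beyond a routine computation is that this limit does not depend on the chosen approximating sequence, so that the assignment really is a well‑defined continuous parametrization.

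First I would fix the orbit $\OO _{\C}({\bf a})$ with $a_1=a_4=0$, which by the preceding lemma consists of the points $(0,z_2,z_3,0,z_5)$ with $z_2z_3/z_5=c_1\in\C ^{*}$ and has moment image $\stackrel{\circ}{P_{13,24}}$. The main stratum is dense in $\C P^5$ (its orbits are six‑dimensional and parametrized by $(c_1,c_2)\in(\C ^{*})^2$, so it is ten‑dimensional, i.e.\ of full dimension), so any point $q=(0,z_2,z_3,0,z_5)$ of this orbit is a limit of a sequence $(z_1^n,\ldots ,z_5^n)$ of points of the main stratum with all coordinates non‑zero; one may even take $z_2^n=z_2$, $z_3^n=z_3$, $z_5^n=z_5$ and $z_1^n=z_4^n\to 0$. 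For any such sequence one has $c_1^n=z_2^nz_3^n/z_5^n\to z_2z_3/z_5=c_1\neq 0$, while $c_2^n=z_1^nz_4^n/z_5^n\to 0$ since $z_1^n,z_4^n\to 0$, and consequently $c_1^n/c_2^n=z_2^nz_3^n/(z_1^nz_4^n)\to\infty$ in $\C P^1$, the numerator staying bounded away from $0$ and the denominator tending to $0$. Hence the parameter triple tends to $(c_1,0,\infty )$ independently of the chosen sequence, and the orbit over $P_{13,24}$ is continuously parametrized by $(c_1,0,\infty )$, $c_1\in\C ^{*}$.

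The orbit with $a_2=a_3=0$, consisting of the points $(z_1,0,0,z_4,z_5)$ with $z_1z_4/z_5=c_2\in\C ^{*}$ and image $\stackrel{\circ}{P_{14,23}}$, is handled identically: for a sequence of main‑stratum points converging to $(z_1,0,0,z_4,z_5)$ one gets $c_2^n\to c_2\neq 0$, $c_1^n=z_2^nz_3^n/z_5^n\to 0$ and $c_1^n/c_2^n\to 0$, whence the parametrization $(0,c_2,0)$. Equivalently, this second case follows from the first by applying the permutation $(3\,4)\in S_4$, which is a symmetry of the $(\C ^{*})^4$‑action on $\C P^5$ preserving the chart $M_0$, fixing $\delta _{12}$ and interchanging $P_{13,24}$ with $P_{14,23}$ while swapping $c_1\leftrightarrow c_2$. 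I do not expect a real obstacle here; the point needing a line of care is exactly the uniqueness of the limit just noted. As a consistency check one may observe that $P_{13,24}$ is the common facet of the pyramids $P_{13}$ and $P_{24}$, so the four‑dimensional orbit over it lies in the closures of the six‑dimensional orbits over $P_{13}$ and $P_{24}$, all three being assigned the same value $(c_1,0,\infty )$; likewise $P_{14,23}$ is the common facet of $P_{14}$ and $P_{23}$, carrying $(0,c_2,0)$.
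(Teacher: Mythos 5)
Your proof is correct and follows essentially the same route the paper takes (the corollary is stated as an immediate consequence of the preceding lemma via the same limiting argument used in Propositions~\ref{orbits3} and~\ref{orbitsl} and in the corollary for the six-dimensional orbits): approximate a point of the four-dimensional orbit by main-stratum points and pass to the limit in $(c_1,c_2,c_1/c_2)$. Your extra remarks on independence of the approximating sequence and the consistency with the parametrization of the adjacent pyramids $P_{13},P_{24}$ and $P_{14},P_{23}$ are correct and only reinforce the paper's implicit argument.
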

\begin{rem}
Note that the polytopes $P_{13,24}$ and $P_{14,23}$ are the diagonal rectangles in $\Delta _{4,2}$ which contain the vertex $\delta _{12}$.
\end{rem}

The two-dimensional orbits for $(\C ^{*})^4$ action on $\C P^5$ which belong to the chart $M_0$ are given by $\{(z_1,\ldots ,z_5),\; z_i=z_j=z_k=0,\; z_l\neq 0,\; l\neq i,j,k\}$, $1\leq i<j<k\leq 5$.   
It follows from~\eqref{CP-moment} that the admissible polytopes for such strata are all triangles in $\Delta _{4,2}$ which contain the vertex $\delta _{12}$. According to previous notation by  $P_{ij, kl, pq}$ we denote triangle which does not contains the vertices $\delta_{ij}, \delta_{kl}$ and $\delta_{pq}$. Any stratum over a triangle is one orbit stratum and it is four-dimensional.
\begin{lem}
The strata which contain  four-dimensional orbits for $(\C ^{*})^4$ - action on $\C P^5$, which belong to the chart $M_0$ can be, depending on its  admissible polytope $P$, continuously parametrized using the parametrization of the orbits from the main stratum as follows:
\begin{itemize}
\item $P_{13,14, 23}$ $\longrightarrow$ $(0,0,c), c\in \C P^1$,
\item $P_{13,14, 24}$ $\longrightarrow$ $(0,0,c), c\in \C P^1$,
\item $P_{13,14, 34}$ $\longrightarrow$ $(c_1,c_2,\frac{c_1}{c_2})$,  
\item $P_{13, 23, 24}$ $\longrightarrow$ $(0,0,c)$, $c\in \C P^1$,
\item $P_{13,14, 13}$  $\longrightarrow$ $(c_1,c_2,\frac{c_1}{c_2})$, $c_1,c_2\in \C P^1$,
\item $P_{13, 24, 34}$ $\longrightarrow$ $(\infty, c,\infty)$, $c\in \C P^1$,
\item $P_{14,23,24}$ $\longrightarrow$ $(0,0,c)$, $c\in \C P^1$,
\item $P_{14,23,34}$ $\longrightarrow$ $(c, 0,\infty)$, $c\in \C P^1$,
\item $P_{14,24,34}$ $\longrightarrow$ $(c_1,c_2,\frac{c_1}{c_2})$, $c_1,c_2\in \C P^1$,
\item $P_{23,24,34}$ $\longrightarrow$ $(c_1,c_2,\frac{c_1}{c_2})$, $c_1,c_2\in \C P^1$.
\end{itemize}
\end{lem}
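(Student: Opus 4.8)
The plan is to argue exactly as in Proposition~\ref{orbits3} and Proposition~\ref{orbitsl} and their $\C P^5$ counterparts above: for each of these strata I pass to a sequence inside the main stratum converging to a point of the stratum, and take the limit of the main-stratum parameters. By the classification of the two-dimensional orbits on $\C P^5$ just established, a stratum over a triangle $P$ containing the vertex $\delta _{12}$ lies entirely in the chart $M_0$ and is a single $(\C ^{*})^4$-orbit $\OO _{\C}({\bf a})$; in $M_0$ this orbit is cut out by the vanishing of the three coordinates $z_l$ corresponding, under the dictionary $z_1\leftrightarrow\delta _{13}$, $z_2\leftrightarrow\delta _{14}$, $z_3\leftrightarrow\delta _{23}$, $z_4\leftrightarrow\delta _{24}$, $z_5\leftrightarrow\delta _{34}$, to the three vertices of $\Delta _{4,2}$ missing from $P$, while the remaining two coordinates run freely over $\C ^{*}$. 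I would then take a sequence $(z_1^n,\dots ,z_5^n)\in(\C ^{*})^5$ of main-stratum points converging in $M_0$ to a point of $\OO _{\C}({\bf a})$ and compute, in $\C P^1$, the limits of $c_1^n=z_2^nz_3^n/z_5^n$ and $c_2^n=z_1^nz_4^n/z_5^n$ from~\eqref{main}; the third coordinate $c_1/c_2=z_2^nz_3^n/(z_1^nz_4^n)$ is then determined. The converse — that every value in the table is actually realized, so that the parametrization is continuous — is obtained by choosing these sequences explicitly.

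The computation splits according to whether $\delta _{34}$ is a vertex of $P$. If it is, then $z_5^n$ has a nonzero limit, so $c_1^n\to 0$ as soon as $z_2$ or $z_3$ vanishes on the orbit and $c_2^n\to 0$ as soon as $z_1$ or $z_4$ does; since on such an orbit exactly one of $z_1,z_2,z_3,z_4$ is nonzero, one gets $(c_1,c_2)=(0,0)$, while the ratio $z_2^nz_3^n/(z_1^nz_4^n)$ can still be steered to any value in $\C P^1$, giving the entry $(0,0,c)$. This covers $P_{13,14,23}$, $P_{13,14,24}$, $P_{13,23,24}$, $P_{14,23,24}$. If $\delta _{34}\notin P$, then $z_5^n\to 0$ and the limit is governed by relative rates of vanishing: when one of $z_2,z_3$ and one of $z_1,z_4$ tend to zero together with $z_5$, the two quotients against $z_5$ can be varied independently and $(c_1,c_2)$ fills $\C P^1\times\C P^1$ (this is the case for $P_{13,14,34}$, $P_{14,24,34}$, $P_{23,24,34}$, $P_{13,23,34}$); when $z_2$ and $z_3$ both vanish on the orbit while $z_1,z_4$ do not — or the symmetric situation — one of $c_1,c_2$ is pinned to $\infty$ and the other ranges over $\C P^1$ (the cases $P_{13,24,34}$ and $P_{14,23,34}$). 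In each case the third entry is read off from $(c_1,c_2)$, and comparing with the dictionary above yields the ten lines of the table.

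I expect the only genuinely delicate point to be the rate analysis in the cases $\delta _{34}\notin P$: one has to check that the quotients $z_i^n/z_5^n$ of the three vanishing coordinates can be assigned arbitrary, mutually independent limits in $\C P^1$ while the whole sequence stays inside the main stratum — which is achieved by letting the $z_i^n$ approach the origin along rays scaled by suitable powers of $1/n$ — and that a coordinate which is not free is forced to $0$ or to $\infty$, never to a finite nonzero constant (the latter would require the limiting orbit itself to carry a nonzero value of $c_1$ or $c_2$, which a triangle stratum does not). Beyond this, the argument is the same routine bookkeeping as in the proofs of Proposition~\ref{orbits3} and Proposition~\ref{orbitsl}, now carried out in the single chart $M_0$ since every triangle under consideration contains $\delta _{12}$.
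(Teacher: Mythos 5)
Your proposal is correct and is exactly the argument the paper intends but does not write down: this lemma is stated without proof, and the computation of the limit set of $(c_1,c_2,c_1/c_2)=(z_2z_3/z_5,\,z_1z_4/z_5,\,z_2z_3/z_1z_4)$ along main-stratum sequences, split according to whether $\delta_{34}$ is a vertex of the triangle, is the same bookkeeping used in Proposition~\ref{orbits3}, Proposition~\ref{orbitsl} and the surrounding $\C P^5$ lemmas. Your rate analysis is sound: a coordinate quotient is pinned to $0$ (resp.\ $\infty$) exactly when the numerator vanishes to strictly higher (resp.\ lower) order by force, and is free in $\C P^1$ when both numerator and denominator vanish on the orbit, which one realizes by scaling along independent powers of $1/n$ while staying in the main stratum. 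One caveat: your (correct) computation for $P_{14,23,34}$ (nonzero coordinates $z_1,z_4$; vanishing $z_2,z_3,z_5$) gives $c_2=z_1z_4/z_5\to\infty$ and $c_1/c_2\to 0$, i.e.\ the limit set $(c,\infty,0)$, not the printed $(c,0,\infty)$; the printed entry is a typo in the paper, as one sees from the fact that this triangle lies in the closure of the tetrahedra $P_{14,34}$, $P_{23,34}$, which are parametrized by $(c,\infty,0)$, and from the gluing statement in Proposition~\ref{chart}. Likewise the entry labelled $P_{13,14,13}$ should read $P_{13,23,34}$, which your case analysis handles correctly among the $(c_1,c_2,c_1/c_2)$, $c_1,c_2\in\C P^1$, triangles.
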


The two-dimensional orbits in the chart $M_0$ are the orbits of those points from $\C ^{5}$ which have exactly four coordinates equal to zero. It implies:  

\begin{lem}
A stratum in the chart $M_{0}$ for $(\C ^{*})^4$ - action on $\C P^5$ over an interval can be parametrized by any $(c_1,c_2,\frac{c_1}{c_2})$, $c_1,c_2\in \C P^1$.
\end{lem}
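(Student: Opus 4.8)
Recall that a two-dimensional orbit in the chart $M_0$ is the orbit of a point $[(1,z_1,\ldots ,z_5)]$ having exactly one of $z_1,\ldots ,z_5$ different from zero, and that by~\eqref{CP-moment} its moment image is one of the five admissible intervals of $\Delta _{4,2}$ incident to $\delta _{12}$: the four edges $[\delta _{12},\delta _{13}]$, $[\delta _{12},\delta _{14}]$, $[\delta _{12},\delta _{23}]$, $[\delta _{12},\delta _{24}]$ of the octahedron, and the interior diagonal $[\delta _{12},\delta _{34}]$. Fix one such orbit $\OO$ and a representative $p=[(1,0,\ldots ,z_i,\ldots ,0)]\in\OO$ with $z_i\neq 0$. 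The plan is to show that for every prescribed value of the triple $(c_1,c_2,\frac{c_1}{c_2})$ with $c_1,c_2\in\C P^1$ there is a sequence of points of the main stratum, with parameters tending to that value, converging to a point of the orbit $T^4\cdot p$; hence, in the orbit space, $\OO$ is reached by that parameter.

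First I would dispose of the four edge intervals. An edge of $\Delta _{4,2}$ is a one-dimensional face of $\Delta _{4,2}$ and, more generally, a one-dimensional face of every admissible polytope containing both of its endpoints; by Theorem~\ref{moment-map-polytope} the corresponding orbit therefore lies in the closure of every $(\C ^{*})^4$-orbit whose polytope contains that edge --- in particular of every orbit of the main stratum and of every higher-dimensional stratum listed in the preceding lemmas. Concretely one keeps $a_i=z_i$ fixed and lets the other four coordinates go to zero along monomial curves $a_j(\varepsilon)=b_j\varepsilon^{m_j}$, $b_j\neq 0$; the two ratios $\frac{a_2a_3}{a_5}$ and $\frac{a_1a_4}{a_5}$ of~\eqref{main} then acquire prescribed limits in $\C P^1$ according to the exponents $m_j$, while the third entry $\frac{a_2a_3}{a_1a_4}$ is determined by the first two except in the degenerate cases $c_1=c_2=0$ and $c_1=c_2=\infty$, where the residual freedom in the exponents lets it range over all of $\C P^1$. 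A short case analysis, shortened by the fact that the Weyl group acts transitively on the four edges through $\delta _{12}$ (while fixing the chart $M_0$), shows that every value of $(c_1,c_2,\frac{c_1}{c_2})$ occurs.

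The remaining, and most delicate, case is the diagonal interval $[\delta _{12},\delta _{34}]$, whose orbit is $\OO=\{[(1,0,0,0,0,z_5)] : z_5\in\C ^{*}\}$: since $[\delta _{12},\delta _{34}]$ is a face neither of the octahedron nor of any admissible polytope, $\OO$ is not contained in the closure of any single $(\C ^{*})^4$-orbit, and one cannot argue as above. Instead I would approach $\OO$ through the strata over the diagonal rectangles $P_{13,24}$ and $P_{14,23}$, and more generally through the various higher-dimensional strata whose closures contain $\OO$ as a set: letting the appropriate pair of coordinates tend to zero in the explicit descriptions $\{[(1,0,w_2,w_3,0,w_5)] : \frac{w_2w_3}{w_5}=c_1\}$ and $\{[(1,w_1,0,0,w_4,w_5)] : \frac{w_1w_4}{w_5}=c_2\}$ produces sequences converging to $p$, and combining these with the coordinate changes between the affine charts of $\C P^5$ one checks that every value of the parameter allowed by the statement is attained.

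The obstacle in all of this is organizational rather than conceptual: one must run the monomial-curve computation uniformly over the five intervals and over all limiting values of $(c_1,c_2)$, paying special attention to the diagonal $[\delta _{12},\delta _{34}]$, which must be treated separately from the four edges. Once the Weyl-group symmetry is exploited, only a few genuinely distinct configurations remain, and each of them reduces to an elementary limit computation consistent with~\eqref{main} and~\eqref{CP-moment}.
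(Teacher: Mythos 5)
The paper states this lemma with no proof beyond the one-sentence observation preceding it, so the real question is whether your argument establishes the claim. Your treatment of the four boundary edges $[\delta_{12},\delta_{13}]$, $[\delta_{12},\delta_{14}]$, $[\delta_{12},\delta_{23}]$, $[\delta_{12},\delta_{24}]$ is correct and is clearly the intended argument (the analogue of Proposition~\ref{orbitsl}): each such edge is a face of the octahedron, hence a face of every admissible polytope containing both its endpoints, and your monomial-curve computation realizes every value of $(c_1,c_2,\frac{c_1}{c_2})$.

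The diagonal case is where there is a genuine gap, and it is twofold. First, your assertion that $[\delta_{12},\delta_{34}]$ is a face neither of the octahedron nor of any admissible polytope is false in this setting: for $\C P^5$, unlike for $G_{4,2}$, \emph{every} polytope spanned by vertices of $\Delta_{4,2}$ is admissible, and $[\delta_{12},\delta_{34}]$ is an edge of the four admissible tetrahedra $P_{13,14}$, $P_{13,23}$, $P_{14,24}$, $P_{23,24}$ and of the four admissible triangles $\{\delta_{12},\delta_{34},\delta_{y}\}$. Second, and more seriously, the conclusion you claim to ``check'' for the diagonal is not attainable by the limits you propose. Any sequence $(z_1^n,\ldots ,z_5^n)$ of main-stratum points converging to $(0,0,0,0,z_5)$ with $z_5\neq 0$ has $c_1^n=z_2^nz_3^n/z_5^n\to 0$ and $c_2^n=z_1^nz_4^n/z_5^n\to 0$; approaching through the rectangle stratum $\{(0,z_2,z_3,0,z_5): z_2z_3/z_5=c_1\}$ likewise forces $c_1\to 0$ (since $z_2,z_3\to 0$ while $z_5$ stays away from $0$), symmetrically $c_2\to 0$ for $P_{14,23}$, and all tetrahedra and triangles having the diagonal as a face are themselves parametrized by $(0,0,c)$. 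Hence the orbit over the open diagonal is continuously parametrized only by $(0,0,c)$, $c\in \C P^1$ --- the single point $(0:0:1)$ of $\C P^2$ in the notation of the next lemma --- and not by arbitrary $(c_1,c_2,\frac{c_1}{c_2})$. This is also what Theorem~\ref{orbit-space-5} requires: over an interior point of the diagonal the fiber must be a full $\C P^2$ with each orbit accounting for exactly one point, and after counting the main stratum, the pyramid $P_{34}$, the two rectangles and the two half-rectangle triangles, the diagonal orbit must supply precisely the missing point $(0:0:1)$. So the lemma should be read as applying to the four boundary edges, with the diagonal listed separately alongside the tetrahedra; the case analysis you defer for the diagonal would fail if carried out.
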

We introduce slightly different notation of this parametrization using the notion of projective completion  of the complex plane, as it is standardly done to pass from affine to projective geometry~\cite{AU}. 
\begin{lem}
The parametrization  of the strata in the chart $M_1$ is equivalent to the following:
\begin{enumerate}
\item $(c_1,c_2,\frac{c_1}{c_2}) \longleftrightarrow (c_1 : c_2 : 1) \in \C P^2$, $c_1,c_2\in \C ^{*}$,
\item $(0,c_2,0)\longleftrightarrow (0 : c_2 : 1)$, $c_2\in \C ^{*}$,
\item $(c_1,0,\infty)\longleftrightarrow (c_1 :0 :1)$, $c_1\in \C ^{*}$,
\item $(\infty ,\infty ,c) \longleftrightarrow (c : 1 : 0)$ for $c\in \C ^{*}$
\item $(\infty, \infty, \infty) \longleftrightarrow (1: 0: 0)$.
\item $(0,0,c), c\in \C P^1 \longleftrightarrow (0 : 0 : 1)$,
\item $(c_1,c_2,\frac{c_1}{c_2})$, $c_1,c_2\in \C P^1 \longleftrightarrow (c_1 : c_2 : 1)\; \text{or}\; (c_1 : c_2 :0)$, 
\item $(\infty, c,\infty), c\in \C P^1 \longleftrightarrow (0: 1 :0)$,
\item $(c, 0,\infty)$, $c\in \C P^1 \longleftrightarrow (1: 0 :0)$,
\end{enumerate}
\end{lem}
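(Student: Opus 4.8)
The plan is to recognise the list as nothing more than the standard passage from the affine torus $(\C^{*})^{2}$ to its projective completion $\C P^{2}$, written out stratum by stratum. By~\eqref{main} the orbits of the main stratum in the chart are parametrised by the two invariants $c_{1}=z_{2}z_{3}/z_{5}$ and $c_{2}=z_{1}z_{4}/z_{5}$ running independently over $\C^{*}$, the third entry $c_{1}/c_{2}=z_{1}z_{4}/(z_{2}z_{3})$ being determined by them. First I would identify this set of values $(c_{1},c_{2})\in(\C^{*})^{2}$ with the dense orbit of $(\C^{*})^{2}$ on $\C P^{2}$, i.e. with the big cell $\{[x:y:z]\mid xyz\neq0\}$, via $(c_{1},c_{2})\mapsto[c_{1}:c_{2}:1]$; this is item~(1), and under it the recorded ratio $c_{1}/c_{2}$ is exactly the affine coordinate $x/y$. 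This reduces the lemma to checking that, as the $c_{i}$ degenerate, the limiting parameters land on the coordinate loci bounding the big cell in $\C P^{2}$.

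Next I would match the boundary strata to the three coordinate lines and the three coordinate points of $\C P^{2}$. Letting one invariant tend to $0$ or $\infty$ while the other stays in $\C^{*}$ produces a value on one of the lines $x=0$, $y=0$, $z=0$ with the two coordinate points removed: $c_{1}\to0$ gives $[0:c_{2}:1]$ (item~2), $c_{2}\to0$ gives $[c_{1}:0:1]$ together with $c_{1}/c_{2}\to\infty$ (item~3), and $c_{1},c_{2}\to\infty$ with finite ratio $c=c_{1}/c_{2}$ gives the point $[c:1:0]$ on the line at infinity $z=0$ (item~4), while ratio $\infty$ there gives $[1:0:0]$ (item~5). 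For each of these the continuity of the identification is the convergence already established in the corollaries above, where a sequence of main-stratum orbits with parameters $c_{n}$ was shown to converge to the boundary orbit precisely when $c_{n}$ tends to the corresponding degenerate value; thus the embedding of the big cell extends continuously over the lines and recovers the closure $\C P^{2}$.

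Finally I would treat the three corners, where two invariants degenerate simultaneously. Here the preceding lemmas assign a whole family parametrised by $c\in\C P^{1}$, reflecting that these low-dimensional triangular strata lie in the closure of a one-parameter family of higher strata and are reached from the main stratum along every direction of approach; each such family must therefore collapse to a single torus-fixed point of $\C P^{2}$, namely $[0:0:1]$ for $c_{1},c_{2}\to0$ (item~6), $[0:1:0]$ for the corner over the triangle $P_{13,24,34}$ (item~8) and $[1:0:0]$ for the corner over $P_{14,23,34}$ (item~9), while the two-valued prescription $[c_{1}:c_{2}:1]$ or $[c_{1}:c_{2}:0]$ in item~7 records the intermediate strata lying both in the big cell and on the line at infinity. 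I expect this last step to be the main obstacle: one must verify that the $\C P^{1}$ of approach directions genuinely collapses to one point, so that the identification is well defined and continuous at the corners, and pin down which coordinate point each collapses to, reconciling the $c_{1}/c_{2}=\infty$ bookkeeping on the line at infinity with the geometry of the relevant tetrahedra and triangles. Granting this matching, $(c_{1},c_{2})\mapsto[c_{1}:c_{2}:1]$ extends to a stratum-preserving homeomorphism from the full parameter space onto $\C P^{2}$, which is the asserted equivalence of parametrisations.
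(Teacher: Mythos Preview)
Your proposal is correct and matches the paper's approach. The paper does not actually supply a proof for this lemma: it is stated immediately after the sentence ``We introduce slightly different notation of this parametrization using the notion of projective completion of the complex plane, as it is standardly done to pass from affine to projective geometry,'' and is treated as a definitional relabelling rather than a theorem requiring argument. Your write-up is therefore more detailed than anything in the paper, but the underlying idea---embed $(\C^{*})^{2}$ as the big cell of $\C P^{2}$ via $(c_{1},c_{2})\mapsto [c_{1}:c_{2}:1]$ and read off the boundary strata as the coordinate lines and points---is exactly what the authors have in mind.

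One small remark: you flag the collapse of the $\C P^{1}$ of approach directions at the corners as ``the main obstacle.'' In the paper's logic this is not an obstacle at all, because the lemma is purely notational: it assigns to each stratum label $(c_{1},c_{2},c_{1}/c_{2})$ a point of $\C P^{2}$, and the fact that several values of the auxiliary parameter $c$ map to the same projective point is a feature of the notation, not something to be proved. The genuine topological content (that this relabelling assembles into a homeomorphism of the orbit space with a join) is deferred to the subsequent Proposition and Theorem, where the gluing along $\partial\Delta_{4,2}$ is carried out.
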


In this way we obtain
\begin{prop}\label{chart}
For  the chart $M_0$ the orbit space $M_0/T^3$ is homeomorphic $\Delta _{4,2}\times (\C P^ 2-\C P^1)\cup P_{34}\times \C P^1$.
\end{prop}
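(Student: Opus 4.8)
The plan is to package the lemmas of this section into one explicit map. Every $T^3$-orbit of $M_0$ lies in a unique $(\C^{*})^4$-orbit $\OO _{\C}({\bf a})$, and the last two lemmas attach to each such orbit a point $\rho(\OO _{\C}({\bf a}))\in\C P^2$: the orbits of the main stratum give the points $(c_1:c_2:1)$ with $c_1,c_2\in\C^{*}$, the pyramidal, rectangular and tetrahedral strata give the remaining points of $\C P^2\setminus\C P^1$ and of the line $\C P^1$ at infinity, and the interval and point strata are sent to various of these points as tabulated. Together with the moment map~\eqref{CP-moment}, which is $T^3$-invariant and so descends to a continuous map $\bar\mu\colon M_0/T^3\to\Delta _{4,2}$, this produces $\Phi=(\bar\mu,\rho)\colon M_0/T^3\to\Delta _{4,2}\times\C P^2$. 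That $\rho$ is well defined and continuous is exactly the content of the various ``continuously parametrized'' corollaries and lemmas above, which record what the parameter does under the degenerations $a_i\to0$ and under the coordinate changes between the charts of $\C P^5$. The proposition will follow once $\Phi$ is shown to be a homeomorphism onto $\Delta _{4,2}\times(\C P^2\setminus\C P^1)\cup P_{34}\times\C P^1$.

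Injectivity I would deduce from the stratification. If two $T^3$-orbits have the same image under $\bar\mu$, their ambient $(\C^{*})^4$-orbits have the same open moment polytope, hence $\mu$ of their closures agree, so by the second definition of a stratum they lie in one stratum; if that stratum is a one-orbit stratum they coincide, and if it is one of the one-parameter strata above then $\rho$ restricted to its $T^3$-quotient is the parameter, which separates its orbits by the explicit tables. Since $\bar\mu$ carries the $T^3$-quotient of a single $(\C^{*})^4$-orbit homeomorphically onto the relative interior of its moment polytope, $\Phi$ is an embedding on every stratum-quotient. The image is then identified by bookkeeping over the lemmas: fixing a parameter $p$, the $(\C^{*})^4$-orbits of $M_0$ carrying $p$ are the main-stratum orbit when $p=(c_1:c_2:1)$ with $c_1c_2\neq0$, the appropriate pyramidal and rectangular strata when exactly one of $c_1,c_2$ vanishes, and the appropriate tetrahedral strata when $p\in\C P^1$, together in each case with the relevant triangular, interval and point strata; the relative interiors of their moment polytopes reassemble $\Delta _{4,2}$ over an affine $p$ and reassemble $P_{34}$ (the pyramid over the rectangle $P_{12,34}$, cut by the triangle through $\delta _{12}$ into the tetrahedra $P_{14,34}$ and $P_{23,34}$, all of which occur in $M_0$) over $p$ at infinity. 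Thus $\Phi$ is a continuous bijection onto the asserted subset of $\Delta _{4,2}\times\C P^2$.

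Upgrading this continuous bijection to a homeomorphism is the step I expect to cost the most work, because $M_0\cong\C^5$ is not compact, so $M_0/T^3$ is Hausdorff and locally compact but not compact and no naive compactness argument applies. The plan is to show $\Phi$ is proper, hence closed, hence a homeomorphism between locally compact Hausdorff spaces. On the open dense set $\stackrel{\circ}{\Delta}_{4,2}\times(\C^{*}\times\C^{*})$, the image of the main stratum, $\Phi$ is visibly a homeomorphism via the identification $\OO _{\C}({\bf a})/T^3\cong\stackrel{\circ}{\Delta}_{4,2}$ from the lemma on the main stratum, so it suffices to verify properness along the lower strata. There I would apply the slice (equivariant tubular neighbourhood) theorem together with the explicit local coordinates of this section to check that any sequence in $M_0/T^3$ whose $\Phi$-images converge already subconverges -- equivalently, that each degeneration of parameters and moment values present in the target is realised by an honest limit of $(\C^{*})^4$-orbits lying in $M_0$. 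This is the converse direction of the ``continuously parametrized'' lemmas and is the delicate point of the argument, since it is precisely here that one uses that passing to the \emph{compact} torus $T^3$ destroys the non-Hausdorff pathology of the algebraic quotient $M_0/(\C^{*})^4$ (whose ring of polynomial invariants is trivial) and that no pair of orbits gets merged or dropped in the process.
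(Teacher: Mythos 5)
Your proof takes essentially the same route as the paper's: both assemble the proposition by bookkeeping over the preceding parametrization lemmas, checking that for each parameter in $\C P^2-\C P^1$ the admissible polytopes of the corresponding strata in the chart $M_0$ glue together to give $\Delta _{4,2}$, while for each parameter on the line at infinity they glue together to give the pyramid $P_{34}$. The paper's proof consists of exactly this summary of cases, whereas you additionally (and reasonably) flag the point-set issues of continuity, bijectivity and properness needed to upgrade the resulting identification to a homeomorphism; the paper leaves these implicit.
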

\begin{proof}
It follows when summarize previous cases. 
The pyramids $P_{13}$, $P_{24}$ and the rectangle $P_{13,24}$  glue together to give $\Delta _{4,2}$ and they are parametrized by $(c_1 : 0 :1)$. Also the  pyramids $P_{14}$, $P_{23}$ and the rectangle $P_{14,23}$ glue together to  give $\Delta _{4,2}$ and they are parametrized by $(0 : c_2 :1)$. The tetrahedra $P_{13,14},P_{13,23}, P_{14,24}, P_{23,24}$  and the triangles $P_{13,14,23}$, $P_{13,14,24}$, $P_{13,23,24}$ and $P_{14,23,24}$ glue together to give $\Delta _{4,2}$ and they are parametrized by $(0:0:1)$.  The pyramid $P_{34}$ is parametrized by $(c:1:0)$, $c\in \C ^{*}$. Besides that the tetrahedra $P_{13,34}, P_{24,34}$ and the triangle $P_{13,24,34}$ glue together to give $P_{34}$ and they are parametrized by  $(1:0:0)$. The tetrahedra $P_{14,34}$, $P_{23,34}$ and the triangle $P_{14,23,34}$ glue together to give $P_{34}$ and they are parametrized by $(0:1:0)$. 
\end{proof}

We consider now the parametrization of the strata whose admissible polytopes do not contain the vertex $\delta_{12}$. Such strata do not belong to the chart $M_{0}$. The approach is the same as in the case of Grassmann manifold $G_{4,2}$. We demonstrate it for the admissible polytope $P_{34}$.

\begin{prop}\label{new}
The orbits of the stratum whose admissible polytope is $P_{34}$ can be parametrized, using the parametrization~\eqref{main} of the main stratum, by $(c : 1 : 0)$, where $c\in \C ^{*}$.
\end{prop}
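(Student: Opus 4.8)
The plan is to realise every orbit of the $P_{34}$--stratum as a limit of orbits from the main stratum and to read off what the parameter~\eqref{main} does in the limit; this is the same device already used for the Grassmannian in Proposition~\ref{orbits3}, and for $P_{34}$ the whole argument can be carried out inside the single chart $M_0$.

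First I would recall, from the list of the six--dimensional orbits of $M_0$ lying outside the main stratum (the case $a_5=0$), that the $P_{34}$--stratum is contained in $M_0$ --- indeed $\delta_{12}$ is a vertex of $P_{34}$, so by Theorem~\ref{moment-map-polytope} every point of this stratum has non--zero $\{1,2\}$--coordinate --- and that it is exactly the union of the $(\C^*)^4$--orbits
\[
\OO_c=\{\,[(1,z_1,z_2,z_3,z_4,0)]\ :\ z_1,\dots,z_4\in\C^*,\ \tfrac{z_1z_4}{z_2z_3}=c\,\},\qquad c\in\C^*,
\]
one orbit for each value of the invariant $c$. Since the proposition only asks for a parametrization of these orbits, it suffices to decide, for each $c$, which point of the line at infinity of the parameter plane the orbit $\OO_c$ sits over when it is approached from the main stratum.

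Then I would fix $c\in\C^*$ and take a sequence $[(1,a_1^n,\dots,a_5^n)]$ in the main stratum --- all $a_i^n\in\C^*$ --- converging to a point of $\OO_c$, say $[(1,c,1,1,1,\varepsilon_n)]$ with $\varepsilon_n\to 0$; the only thing to keep under control is that $a_1^n,\dots,a_4^n$ stay bounded away from $0$ and $\infty$ while $a_5^n\to 0$. By~\eqref{main} the orbit of $[(1,a_1^n,\dots,a_5^n)]$ carries the parameters
\[
c_1^n=\frac{a_2^na_3^n}{a_5^n}\longrightarrow\infty,\qquad c_2^n=\frac{a_1^na_4^n}{a_5^n}\longrightarrow\infty,\qquad \frac{c_1^n}{c_2^n}=\frac{a_2^na_3^n}{a_1^na_4^n}\longrightarrow\frac1c,
\]
so in the projective completion $\C P^2$ of the $(c_1,c_2)$--plane the parameter $(c_1^n:c_2^n:1)=(c_1^n/c_2^n:1:1/c_2^n)$ tends to $(\tfrac1c:1:0)$. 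This matches $\OO_c$ with the point $(\tfrac1c:1:0)$ of the line at infinity; as $c$ runs over $\C^*$ this runs bijectively over $\{(c:1:0):c\in\C^*\}$, and after the harmless relabelling $c\mapsto 1/c$ (equivalently, naming an orbit by the invariant $\tfrac{z_2z_3}{z_1z_4}$) one obtains the statement as written. Continuity is immediate, the point $(c:1:0)$ being sent to the image in $\C P^5/T^4$ of the continuously varying representative $[(1,1/c,1,1,1,0)]$ of $\OO_{1/c}$.

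The hard part, I expect, will not be this limit computation --- which is essentially a one--line estimate --- but the bookkeeping needed to make the parametrization globally coherent: one must check that approaching from the main stratum as above genuinely reaches the six--dimensional orbits $\OO_c$ and not an orbit in their closures or in a neighbouring stratum --- this is precisely what confines the parameter to $c\in\C^*$, the two remaining points $(1:0:0)$ and $(0:1:0)$ of the line at infinity carrying instead the orbits over the proper subpolytopes $P_{13,34},P_{24,34},P_{13,24,34}$, respectively $P_{14,34},P_{23,34},P_{14,23,34}$, of $P_{34}$, as sorted out in Proposition~\ref{chart} --- and that the parametrizations read off in different charts agree. For a stratum whose admissible polytope does not contain $\delta_{12}$ the same computation has to be run in a chart $M_j$ to which it belongs, with the appropriate M\"obius change of variables relating the two parameters, exactly as for $G_{4,2}$ in Proposition~\ref{orbits3}; this, rather than any single estimate, is where the real work sits.
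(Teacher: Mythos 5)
Your proof is correct for the statement as written, but it takes a genuinely different route from the paper's. You observe that $\delta_{12}$ is a vertex of $P_{34}$, so the stratum over $P_{34}$ lies entirely in the chart $M_0$, and you then compute the limit of the projective parameter directly there: for $a_5^n\to 0$ with the other coordinates controlled, $(c_1^n:c_2^n:1)=(a_2^na_3^n:a_1^na_4^n:a_5^n)\to(1/c:1:0)$. This agrees with the paper's earlier tabulation ($P_{34}\longrightarrow(\infty,\infty,c)$ and $(\infty,\infty,c)\longleftrightarrow(c:1:0)$), and your bookkeeping of the two remaining points $(1:0:0)$, $(0:1:0)$ of the line at infinity matches Proposition~\ref{chart}. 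The paper instead passes to the chart $M_1=\{[(w_1,1,w_2,\dots,w_5)]\}$ and derives the transition formulas $c_1=d_2$, $c_2=d_2/d_1$ between the main-stratum parameters in the two charts before reading off the answer. Your observation that the chart change is unnecessary for $P_{34}$ is right; note, however, that the locus the paper's proof actually analyses in $M_1$ is $w_1=0$, i.e.\ the vanishing of the $\delta_{12}$-coordinate, which is the stratum over $P_{12}$ (the pyramid \emph{not} containing $\delta_{12}$), so the paper's proof is really the model computation for the strata lying outside $M_0$, consistent with the sentence introducing it and with Proposition~\ref{other}. What each approach buys: yours is shorter and self-contained in $M_0$ for the polytope named in the statement; the paper's chart-change computation is the one that generalizes to the polytopes omitting $\delta_{12}$, and that computation — which you only sketch in your closing remark via "the appropriate M\"obius change of variables" — would still have to be written out to cover those cases.
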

\begin{proof}
This strata belongs to the chart $M_{1} = \{[(w_1,1,w_2,w_3,w_4,w_5)], w_i\in \C \}$. The $(\C ^{*})^4$-orbit of a point $[(a_1,1,a_2,a_3,a_4,a_5)$ is given by
\[
[(\tau _{1}a_1,1,\tau _{2}a_2,\tau _{3}a_3,\tau _{2}\tau _{3}a_4,\frac{\tau _{2}\tau _{3}}{\tau _{1}})],
\]
what implies that the main stratum  is, in this chart, given by the equation
\[
\frac{w_2w_3}{w_4} = d_{1},\;\; \frac{w_2w_3}{w_1w_5}=d_{2},
\]
where $d_1, d_2\in \C ^{*}$.
The parameters $(c_1,c_2)$ of the main stratum in the chart $M_{0}$ and $(d_{1},d_{2})$ in the chart  $M_{1}$ are related by
\[
c_1 = d_{2},\;\; c_{2} = \frac{d_2}{d_1}.
\]
The stratum whose admissible polytope is $P_{34}$ is, in the chart $M_{1}$, given by
\[
w_{1}=0,\;\; \frac{w_2w_3}{w_4} = w_{1},
\]
what implies that its orbits  are parametrized by $(d_1, 0, 0)$. It gives that using parametrization of the main stratum in the chart $M_{0}$ it can be parametrized by $(0, 0, c)$, $c\in \C ^{*}$ what is equivalent to $(c : 1: 0)$, where $c\in \C ^{*}$.
\end{proof}
In the same way we prove:
\begin{prop}\label{other}
Using parametrization of the main stratum:
\begin{itemize}
\item the rectangle $P_{34,12}$ can be parametrized by $(c : 1 :0)$, $c\in \C ^{*}$,
\item the tetrahedra $P_{13, 12}$, $P_{12, 24}$ and the triangle $P_{12,13,24}$ by $(1:0:0)$,
\item the tetrahedra $P_{12, 23}$, $P_{12,14}$ and the triangle $P_{12,14,23}$ by $(0:1:0)$.
\end{itemize}
\end{prop}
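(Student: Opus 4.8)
The plan is to observe that the parametrization~\eqref{main} of the main stratum is the restriction of a single globally defined $(\C^*)^4$-invariant rational map
\[
\Phi\colon \C P^5\dashrightarrow \C P^2,\qquad
\Phi\big([(z_{12}:z_{13}:z_{14}:z_{23}:z_{24}:z_{34})]\big)=(\,z_{14}z_{23}:z_{13}z_{24}:z_{12}z_{34}\,),
\]
where the homogeneous coordinates of $\C P^5$ are ordered by the weights $\alpha_{ij}=x_i+x_j$ of the embedding~\eqref{emb}, so that $(t_1,\dots,t_4)$ acts by $z_{ij}\mapsto t_it_jz_{ij}$. Then Proposition~\ref{other} reduces to evaluating $\Phi$ on each of the seven strata listed.

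First I would establish the properties of $\Phi$. It is $(\C^*)^4$-invariant, since each of its three homogeneous coordinates is multiplied by the common factor $t_1t_2t_3t_4$, which disappears in $\C P^2$; hence $\Phi$ descends to the orbit space wherever it is regular. Comparing with~\eqref{main}: in the chart $M_0$, where $z_{12}=1$ and $z_{13},z_{14},z_{23},z_{24},z_{34}$ are the coordinates $a_1,\dots,a_5$, one has $c_1=z_{14}z_{23}/(z_{12}z_{34})$ and $c_2=z_{13}z_{24}/(z_{12}z_{34})$, so on the main stratum $\Phi=(c_1:c_2:1)$; thus $\Phi$ is the projective form of the parametrization used throughout this section (and in fact reproduces, by the same substitution, all the parametrizations of the strata that meet $M_0$, e.g.\ $(c_1:0:1)$ on $P_{13}$ and $(c:1:0)$ on $P_{34}$). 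Finally $\Phi$ is regular on the open set $U\subset\C P^5$ that is the complement of the common zero locus $\{z_{14}z_{23}=z_{13}z_{24}=z_{12}z_{34}=0\}$; the main stratum is dense in $U$ and $\Phi|_U$ is continuous, so the ``continuous parametrization by the main stratum'' of any orbit contained in $U$ equals, by density, its image under $\Phi$.

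It remains to check that the seven strata $P_{34,12}$, $P_{13,12}$, $P_{12,24}$, $P_{12,13,24}$, $P_{12,23}$, $P_{12,14}$, $P_{12,14,23}$ lie in $U$ and to compute $\Phi$ on them. On a stratum over an admissible polytope $P$ one has $z_{ij}=0$ precisely for $\delta_{ij}\notin P$ (and $z_{ij}\neq0$ otherwise), and one checks case by case that in each of the seven situations at least one of $z_{14}z_{23},\,z_{13}z_{24},\,z_{12}z_{34}$ stays nonzero, so the stratum sits in $U$. Substituting the vanishing conditions: on $P_{34,12}$ (here $z_{12}=z_{34}=0$) one gets $\Phi=(z_{14}z_{23}:z_{13}z_{24}:0)=(c:1:0)$ with $c=z_{14}z_{23}/(z_{13}z_{24})$ ranging over $\C^*$ as the orbit ranges over this one-parameter stratum; on $P_{13,12}$, $P_{12,24}$, $P_{12,13,24}$ one has $z_{13}z_{24}=z_{12}z_{34}=0$ and $z_{14}z_{23}\neq0$, so $\Phi=(1:0:0)$; on $P_{12,23}$, $P_{12,14}$, $P_{12,14,23}$ one has $z_{14}z_{23}=z_{12}z_{34}=0$ and $z_{13}z_{24}\neq0$, so $\Phi=(0:1:0)$. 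These are exactly the asserted parametrizations.

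I expect the only genuinely delicate point to be the precise meaning of ``parametrized by the main stratum'': one must make sure that $\Phi$ is truly regular (not merely formally evaluated) at the points of these strata, which is exactly what containment in $U$ provides, and one must keep the chart conventions consistent. If one prefers to mimic Proposition~\ref{new} literally --- working in a chart that contains the given stratum and transporting the parameters back to $M_0$ through the transition maps --- then $M_1$ handles all seven strata except $P_{13,12}$ and $P_{12,13,24}$, which need a further chart, e.g.\ the one where $z_{14}\neq0$; the map $\Phi$ merely performs all of these transition-map computations uniformly at once, which is why I would use it.
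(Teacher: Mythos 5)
Your argument is correct, and every evaluation of your map $\Phi=(z_{14}z_{23}:z_{13}z_{24}:z_{12}z_{34})$ on the seven strata agrees with the claimed parametrizations; but the route is genuinely different from the paper's. The paper proves Proposition~\ref{other} ``in the same way'' as Proposition~\ref{new}: for each stratum one picks an affine chart containing it, writes the orbit equations and the main-stratum parameters $(d_1,d_2)$ in that chart, transports them back to the chart $M_0$ through the explicit transition maps, and then passes to the limit along a sequence from the main stratum --- which, as you note, forces the use of at least two auxiliary charts ($M_1$ does not contain the strata over $P_{12,13}$ and $P_{12,13,24}$, since these have $z_{13}=0$). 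You instead observe that the three quantities $z_{14}z_{23}$, $z_{13}z_{24}$, $z_{12}z_{34}$ all rescale by $t_1t_2t_3t_4$, so they define a single $(\C^*)^4$-invariant continuous map $\Phi$ on the open set $U$ where they do not all vanish, that $\Phi$ restricts to $(c_1:c_2:1)$ on the main stratum, and that the main stratum is dense in $U$; hence the limit of the parameters along \emph{any} approximating sequence is $\Phi(p)$, and the proposition reduces to substituting the vanishing pattern $z_{ij}=0\Leftrightarrow\delta_{ij}\notin P$ into $\Phi$. This buys uniformity (all seven strata, and in fact all strata of Propositions~\ref{chart} and~\ref{new} as well, are handled by one formula, with no chart bookkeeping) and makes the independence of the limit from the choice of approximating sequence transparent; what it costs is only the need to verify regularity of $\Phi$ on each stratum, i.e.\ containment in $U$, which you do check case by case. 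The one point worth stating explicitly if you write this up is the identification of the stratum over $P$ with $\{z_{ij}\neq 0\Leftrightarrow \delta_{ij}\in P\}$, which for this $\C P^5$ action follows from Theorem~\ref{moment-map-polytope} and is implicit in the lemmas of Section~\ref{complex}.
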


The Propositions~\ref{chart},~\ref{new} and~\ref{other} gives:
\begin{thm}\label{orbit-space-5}
The orbit space $\C P^5/T^4$ for the action of the compact torus $T^4$ on $\C P^5$ given by the composition of the second symmetric power  $T^4\hookrightarrow T^6$ and the canonical action of $T^6$ to $\C P^5$ is homeomorphic to 
\begin{equation}
(\Delta _{4,2}\times \C P^2)/\approx \; \text{where}\; (x, c)\approx (y,c^{'})\Leftrightarrow x=y \in \partial \Delta _{4,2}.
\end{equation}
\end{thm}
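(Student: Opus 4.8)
The plan is to assemble the local descriptions already obtained from the two principal charts $M_0$ and $M_1$ into a global gluing picture, exactly mirroring the strategy used for $G_{4,2}/T^4$ in Theorem~\ref{orbit-space-4}. First I would recall that, by the standard convexity/moment-map formalism in~\eqref{CP-moment}, the map $\mu$ descends to a continuous surjection $\bar\mu : \C P^5/T^4 \to \Delta_{4,2}$, and the fibre over a point $x \in \stackrel{\circ}{\Delta_{4,2}}$ is the orbit space of the main stratum over $x$, which by the lemma following~\eqref{main} is parametrized by $(c_1,c_2) \in (\C^{*})^2$. The key point is that the closure of the parameter space of all six-dimensional orbits, interior plus boundary strata, is exactly $\C P^2$: the interior orbits contribute the affine chart $\{(c_1:c_2:1) : c_1,c_2 \in \C^{*}\}$, and the orbits from the lower strata, as catalogued in Proposition~\ref{chart} for $M_0$ and Propositions~\ref{new} and~\ref{other} for $M_1$, fill in the remaining points $(c_1:c_2:1)$ with some $c_i \in \{0,\infty\}$ together with the line at infinity $(c_1:c_2:0)$. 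This is the analogue of the $\C P^1 = \C \cup \{0,1,\infty\}$ parametrization in the Grassmannian case.

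Next I would verify the gluing relation. Over each interior point $x \in \stackrel{\circ}{\Delta_{4,2}}$ the fibre of $\bar\mu$ is a full copy of $\C P^2$ (the closure of the $(\C^{*})^2$-worth of main-stratum orbits over $x$ together with the orbits of the strata whose admissible polytope is a proper sub-polytope still containing $x$ in its interior — but since $x$ is interior to $\Delta_{4,2}$, only the main stratum maps onto a neighbourhood of $x$, so one must be slightly careful: the $\C P^2$-fibre structure comes from how the $\C P^2$ of orbit-parameters sits over all of $\Delta_{4,2}$, not over a single interior point). More precisely, combining Proposition~\ref{chart} with Propositions~\ref{new},~\ref{other} shows that for every admissible polytope $P$ the stratum over $P$ contributes, in the $\C P^2$ of parameters, a single point or a projective line, and that the union over all $P$ of (interior of $P$) $\times$ (parameter set of the stratum over $P$) reconstitutes $\Delta_{4,2} \times \C P^2$ modulo the identification that collapses the $\C P^2$-factor precisely over $\partial\Delta_{4,2}$. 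Concretely: if $x \in \partial\Delta_{4,2}$ then $x$ lies in the interior of a unique proper admissible face $Q \subsetneq \Delta_{4,2}$, the stratum over $Q$ is a single orbit (by the explicit lists — every proper face is a pyramid, tetrahedron, rectangle, triangle or interval, and each such stratum is one orbit up to the one-parameter pyramid/rectangle cases which are handled by the corollaries), hence $\bar\mu^{-1}(x)$ is a single point; whereas for $x \in \stackrel{\circ}{\Delta_{4,2}}$ the fibre is a full $\C P^2$. This is exactly the quotient $(\Delta_{4,2}\times\C P^2)/\!\approx$ in the statement.

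Then I would check continuity of the resulting homeomorphism. One builds the map $\Phi : (\Delta_{4,2}\times\C P^2)/\!\approx \;\to\; \C P^5/T^4$ by sending $(x,c)$ to the $T^4$-orbit of the point of $\C P^5$ lying over $x$ with orbit-parameter $c$; over $M_0 \cup M_1$ this is given by the explicit formulas in~\eqref{main} and its analogue in the chart $M_1$, suitably glued via the coordinate change $c_1 = d_2$, $c_2 = d_2/d_1$ recorded in the proof of Proposition~\ref{new}. Well-definedness on the quotient follows from the identification of parametrizations in Propositions~\ref{chart},~\ref{new},~\ref{other}; continuity is checked chartwise; bijectivity follows from the stratum-by-stratum count (every $(\C^{*})^4$-orbit of $\C P^5$ lies in $M_0 \cup M_1$ because the two charts already exhaust all admissible polytopes — those containing $\delta_{12}$ in $M_0$, the rest in $M_1$); and since the domain is compact and the target Hausdorff, $\Phi$ is a homeomorphism. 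The main obstacle, as in the Grassmannian case, is the bookkeeping: one must confirm that the finitely many lower strata, each parametrized in a different chart, patch together consistently along the line at infinity of $\C P^2$ and along $\partial\Delta_{4,2}$, i.e. that no orbit is counted twice and none is missed, and that the chart transition $c_1 = d_2,\ c_2 = d_2/d_1$ genuinely extends to a homeomorphism of the two copies of $\C P^2$ identifying the parametrizations — this is the step where the earlier lemmas and their corollaries do all the real work, and the present proof is essentially the observation that they suffice.
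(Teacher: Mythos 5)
Your overall strategy is exactly the paper's: the paper derives Theorem~\ref{orbit-space-5} in one line from Propositions~\ref{chart}, \ref{new} and~\ref{other}, i.e.\ precisely by assembling the chart-wise parametrizations of the strata into the $\C P^2$-completion of the main-stratum parameters $(c_1,c_2)\in(\C^{*})^2$ and observing that the fibre collapses to a point over $\partial\Delta_{4,2}$, in direct analogy with the $\C P^1$-parametrization for $G_{4,2}$. Your additional remarks on well-definedness, continuity and the compact-to-Hausdorff argument are correct supplements to what the paper leaves implicit.

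There is, however, one concrete slip in the step where you argue bijectivity: the claim that every $(\C^{*})^4$-orbit of $\C P^5$ lies in $M_0\cup M_1$ because ``the two charts already exhaust all admissible polytopes --- those containing $\delta_{12}$ in $M_0$, the rest in $M_1$'' is false. An orbit lies in $M_0$ (resp.\ $M_1$) exactly when its admissible polytope contains the vertex $\delta_{12}$ (resp.\ $\delta_{13}$), and since for $\C P^5$ \emph{every} convex hull of a subset of vertices is admissible, there are many strata whose polytope contains neither: the fixed point over $\delta_{34}$, the strata over the edge $\delta_{24}\delta_{34}$, over the boundary triangle $\delta_{14}\delta_{24}\delta_{34}$, over the tetrahedron with vertices $\delta_{14},\delta_{23},\delta_{24},\delta_{34}$, and so on. These are invisible from $M_0\cup M_1$, so as written your surjectivity count fails. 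The repair is the one the paper uses implicitly: the symmetric group permuting the indices acts transitively on the six charts $M_0,\dots,M_5$, so the parametrization established in $M_0$ (and transported to $M_1$ via $c_1=d_2$, $c_2=d_2/d_1$) transfers to every other chart by the analogous coordinate changes, and the six charts do cover $\C P^5$. You should also note that the one-parameter strata (over the pyramids $P_{ij}$ and the rectangles $P_{ij,kl}$) contribute \emph{punctured} lines in $\C P^2$, not full projective lines; the missing points of those lines are supplied by the tetrahedra and triangles, which is exactly the bookkeeping carried out in Proposition~\ref{chart}. With these two corrections your argument matches the paper's proof.
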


Arguing in the same  way as in the proof  of Corollary~\ref{join} we deduce:
\begin{cor}
The orbit space $\C P^5/T^4$ for the  action of the compact torus $T^4$ on $\C P^5$ given by the composition of the second symmetric power $T^4\hookrightarrow T^6$ and the canonical action of $T^6$ on $\C P^5$ is homeomorphic to $S^2\ast \C P^2$.
\end{cor}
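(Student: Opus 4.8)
The plan is to mimic exactly the argument used for Corollary~\ref{join}, which identified $G_{4,2}/T^4$ with the join $S^2 \ast S^2$. By Theorem~\ref{orbit-space-5} the orbit space $\C P^5/T^4$ is homeomorphic to the quotient of $\Delta_{4,2} \times \C P^2$ by the relation $(x,c) \approx (y,c')$ iff $x = y \in \partial \Delta_{4,2}$. The key geometric input, already recorded in the excerpt, is that $\partial \Delta_{4,2}$ is topologically the sphere $S^2$ (it is the boundary of the octahedron), and hence $\Delta_{4,2}$, being the cone on its boundary, is homeomorphic to the cone $C S^2 = CS^2$ over $S^2$. Under such a homeomorphism $\Delta_{4,2} \cong CS^2$ the boundary $\partial \Delta_{4,2}$ corresponds to the base $S^2 \times \{0\}$ of the cone.

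First I would make the identification $\Delta_{4,2} \cong CS^2$ explicit in the same way as in Corollary~\ref{join}: write $CS^2 = (S^2 \times [0,1])/(S^2 \times \{1\})$ and send a point of $\Delta_{4,2}$ at "radius" $t$ in direction $x \in S^2 = \partial \Delta_{4,2}$ to the class of $(x,t)$, with $\partial \Delta_{4,2}$ going to the image of $S^2 \times \{0\}$. Then the quotient $(\Delta_{4,2} \times \C P^2)/\approx$ becomes the quotient of $CS^2 \times \C P^2$ by the relation that collapses $\{(x,0)\} \times \C P^2$ to a point for each fixed $x \in S^2$; that is, $(x,0,y_1) \approx (x,0,y_2)$ for all $y_1,y_2 \in \C P^2$. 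This is precisely the general construction of the join recalled in the proof of Corollary~\ref{join}: for spaces $A$ and $B$, the quotient of $CA \times B$ by the relation $(a_1,0,b_1) \approx (a_2,0,b_2) \Leftrightarrow a_1 = a_2$ is the join $A \ast B$. Applying this with $A = S^2$ and $B = \C P^2$ yields that $\C P^5/T^4$ is homeomorphic to $S^2 \ast \C P^2$.

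The only point requiring any care is bookkeeping: one must check that the two quotient relations (the one from Theorem~\ref{orbit-space-5} and the one in the abstract join construction) correspond under the homeomorphism $\Delta_{4,2} \cong CS^2$, and that the identification $\partial\Delta_{4,2}\cong S^2$ matches the base of the cone. Both are immediate once the cone structure of $\Delta_{4,2}$ is fixed, since the relation only involves points of $\partial \Delta_{4,2}$ and is independent of the second coordinate, exactly as in the $G_{4,2}$ case. There is no real obstacle here: the argument is a verbatim repetition of Corollary~\ref{join} with the fibre $S^2$ replaced by $\C P^2$, and it does not even use any special property of $\C P^2$ beyond its being a topological space.
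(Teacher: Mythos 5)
Your argument is correct and is exactly the paper's own route: the paper proves this corollary by citing Theorem~\ref{orbit-space-5} and repeating the proof of Corollary~\ref{join} verbatim, identifying $\Delta_{4,2}$ with $CS^2$ and invoking the general description of the join $A\ast B$ as a quotient of $CA\times B$, with $B=\C P^2$ in place of $S^2$. Nothing is missing; the bookkeeping you flag is the only content of the step, and you handle it as the paper does.
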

\section{On $T^4$-pair $(\C P^5, G_{4,2})$}
The Plu\"cker coordinates  give the  standard embedding of $G_{4,2}$ into  $\C P^{5}$ for which we have: 
\begin{lem}
The embedding of $G_{4,2}$ into $\C P^5$ given by the Pl\"ucker coordinates is the complex hypersurface in $\C P^5$, which is, in the homogeneous coordinates $(z_1,\ldots ,z_6)$ for $\C P^5$,  given by the equation
\begin{equation}\label{hyper}     
z_0z_5 + z_2z_3 = z_1z_4.
\end{equation}
\end{lem}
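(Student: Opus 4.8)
The plan is to realize the Pl\"ucker map through the exterior algebra of $\C^4$ and to read off its image directly. A point $X=\langle v_1,v_2\rangle\in G_{4,2}$, with $v_j=\sum_i a_{ij}e_i$, is sent by the Pl\"ucker map to the line spanned by the decomposable $2$-vector
\[
\omega=v_1\wedge v_2=\sum_{1\le i<j\le 4}P^{ij}(X)\,e_i\wedge e_j\in\Lambda^2\C^4 ,
\]
so that $\C P^5=\mathbb{P}(\Lambda^2\C^4)$ and the six homogeneous coordinates are the Pl\"ucker coordinates. I would use the ordering $P^{12},P^{13},P^{14},P^{23},P^{24},P^{34}$ attached to the vertices $\delta_{12},\dots,\delta_{34}$ as in the moment map~\eqref{CP-moment}; relabelling these coordinates $z_0,\dots,z_5$ as in~\eqref{hyper} is then just bookkeeping.

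First I would compute $\omega\wedge\omega\in\Lambda^4\C^4$. Only the pairs $\{i,j\},\{k,l\}$ of complementary index sets contribute, and collecting the six resulting terms with the correct signs gives
\[
\omega\wedge\omega=2\bigl(P^{12}P^{34}-P^{13}P^{24}+P^{14}P^{23}\bigr)\,e_1\wedge e_2\wedge e_3\wedge e_4 .
\]
Since $\omega=v_1\wedge v_2$ is decomposable, $\omega\wedge\omega=(v_1\wedge v_2)\wedge(v_1\wedge v_2)=0$, hence $P^{12}P^{34}-P^{13}P^{24}+P^{14}P^{23}=0$; in the coordinates above this is exactly the relation $z_0z_5+z_2z_3=z_1z_4$. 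This shows the Pl\"ucker image of $G_{4,2}$ is contained in the quadric $Q=\{z_0z_5+z_2z_3-z_1z_4=0\}\subset\C P^5$.

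For the reverse inclusion I would invoke the classical decomposability criterion in $\Lambda^2\C^4$: a nonzero $\omega$ is decomposable if and only if $\omega\wedge\omega=0$. Indeed, after a linear change of basis every nonzero $\omega\in\Lambda^2\C^4$ has the form $w_1\wedge w_2$ or $w_1\wedge w_2+w_3\wedge w_4$ with $w_1,\dots,w_4$ independent, and in the second case $\omega\wedge\omega=2\,w_1\wedge w_2\wedge w_3\wedge w_4\neq0$. Hence every $[\omega]\in Q$ is of the form $[w_1\wedge w_2]$ with $w_1,w_2$ linearly independent, i.e.\ it is the Pl\"ucker image of $\langle w_1,w_2\rangle\in G_{4,2}$, so the image is all of $Q$. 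Alternatively one can finish by a dimension count: the Pl\"ucker embedding is a closed embedding, $G_{4,2}$ is irreducible of complex dimension $4$, and the quadratic form $z_0z_5+z_2z_3-z_1z_4$ is nondegenerate of rank $6$, so $Q$ is a smooth, hence irreducible, hypersurface of dimension $4$; an irreducible subvariety of dimension $4$ sitting inside an irreducible variety of dimension $4$ must coincide with it. In either case $G_{4,2}=Q$, which is smooth because its defining form has full rank, giving the statement.

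The computations are entirely elementary; the only genuine content is the reverse inclusion, where one must use the standard fact that a $2$-vector in a $4$-dimensional space with vanishing square is decomposable, rather than a purely formal manipulation. The main thing to be careful about is the bookkeeping: getting the sign pattern in the expansion of $\omega\wedge\omega$ right, and matching the Pl\"ucker indices $P^{ij}$ to the homogeneous coordinates $z_i$ so that the Pl\"ucker relation comes out precisely in the form~\eqref{hyper}.
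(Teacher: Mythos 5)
Your proof is correct, but it takes a genuinely different route from the paper's. The paper simply writes out the six $2\times 2$ minors $z_0=a_1a_4-a_2a_3,\ldots$ of a $4\times 2$ matrix representing $X$ and verifies by direct computation that they satisfy $z_0z_5+z_2z_3=z_1z_4$; it does not address why every point of the quadric is actually in the Pl\"ucker image, so strictly speaking it only establishes the inclusion $G_{4,2}\subseteq Q$. You instead work in $\Lambda^2\C^4$, derive the relation conceptually from $\omega\wedge\omega=0$ for decomposable $\omega$ (your sign bookkeeping in the expansion of $\omega\wedge\omega$ is right and matches the coordinate labelling $z_0=P^{12},z_1=P^{13},z_2=P^{14},z_3=P^{23},z_4=P^{24},z_5=P^{34}$ needed for~\eqref{hyper}), and then close the reverse inclusion either via the classical decomposability criterion in $\Lambda^2$ of a $4$-dimensional space or via irreducibility and a dimension count. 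What the paper's approach buys is brevity and zero prerequisites; what yours buys is an actual proof of equality rather than containment, plus the smoothness of the quadric for free from the nondegeneracy of the form. Both arguments are sound; yours is the more complete one.
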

\begin{proof}
Let $X\in G_{4,2}$ and $X$ is represented by a matrix
\[
A_{X}=\left(\begin{array}{cc}
a_1 & a_2\\
a_3 & a_4\\
a_5 & a_6\\
a_7 & a_8 
\end{array}\right) .   
\]
The embedding $G_{4,2}\hookrightarrow \C P^5$ given by the Pl\"ucker coordinates is $z_0=a_1a_4-a_2a_3, z_1=a_1a_6-a_2a_5, z_2=a_1a_9-a_2a_7, z_3=a_3a_6-a_4a_7, z_4=a_3a_8-a_4a_7, z_5=a_5a_8-a_6a_7$. The direct computation shows that it gives hypersurface defined by the equation~\eqref{hyper}.
\end{proof}

The embedding of $G_{4,2}$ into $\C P^5$ is equivariant under the standard action of $(\C ^{*})^4$ on $G_{4,2}$ and the action  of $(\C ^{*})^4$ on $\C P^5$  given by the embedding~\eqref{emb}. Therefore the hypersurface~\eqref{hyper} is invariant under the given action of $(\C ^{*})^{4}$. 
It implies that the sphere $S^5$ which is the orbit space $G_{4,2}/T^4$ is embedded into seven-dimensional orbit space $\C P^{5}/T^4$.

In this we obtain:
\begin{thm}\label{embedd}
The embedding $G_{4,2}\hookrightarrow \C P^5$ given by the Pl\"ucker embedding induces an  embedding $G_{4,2}/T^4\cong S^5\cong S^2\ast \C P^1 \hookrightarrow \C P^5/T^4\cong S^2\ast \C P^2$ given by the canonical embedding $\C P^1\hookrightarrow \C P^2$
\[
(c:1) \to (c : 1 : (1-c)),\;\; (1,0)\to (0, 0, 1).
\]
\end{thm}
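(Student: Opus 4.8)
The plan is to show first that the Pl\"ucker embedding $\iota\colon G_{4,2}\hra\C P^5$ descends to a topological embedding $\bar\iota\colon G_{4,2}/T^4\hra\C P^5/T^4$, and then to compute $\bar\iota$ explicitly in the coordinates supplied by Theorem~\ref{orbit-space-4} and Theorem~\ref{orbit-space-5}. The conceptual input is cheap; the only real work is a finite bookkeeping computation identifying the induced map on the $\C P^1$, resp. $\C P^2$, factor.

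\emph{Step 1 (descent and embedding).} The embedding $\iota$ is equivariant for the $T^4$-action on $\C P^5$ induced by \eqref{emb} and, by the Remark following \eqref{moment-map} together with \eqref{CP-moment} and \eqref{moment-map}, it intertwines the two moment maps: $\mu_{\C P^5}\circ\iota=\mu_{G_{4,2}}$, both having image $\Delta_{4,2}$. Hence $\iota$ induces a continuous map $\bar\iota\colon G_{4,2}/T^4\to\C P^5/T^4$. It is injective: since $\iota(G_{4,2})$ is $T^4$-invariant (equation \eqref{hyper} is invariant under the action), $t\cdot\iota(X_1)=\iota(X_2)$ gives $\iota(t\cdot X_1)=\iota(X_2)$, hence $t\cdot X_1=X_2$ by injectivity of $\iota$. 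As both orbit spaces are compact Hausdorff, $\bar\iota$ is a closed topological embedding. Finally, because $\mu_{\C P^5}\circ\iota=\mu_{G_{4,2}}$, the map $\bar\iota$ commutes with the canonical projections of the orbit spaces onto $\Delta_{4,2}$; so under the homeomorphisms of Theorem~\ref{orbit-space-4} and Theorem~\ref{orbit-space-5} it preserves the $\Delta_{4,2}$-coordinate and restricts to the identity on the common factor $S^2=\partial\Delta_{4,2}$. Consequently $\bar\iota$ has the form $(x,c)\mapsto(x,\phi(c))$ for a continuous map $\phi\colon\C P^1\to\C P^2$, which remains to be identified.

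\emph{Step 2 (computing $\phi$).} It suffices to compute $\phi$ on the dense set of main-stratum parameters. Take a main-stratum orbit of $G_{4,2}$ in the chart $M_{12}$, represented by $\left(\begin{smallmatrix}1&0\\0&1\\a_1&a_3\\a_2&a_4\end{smallmatrix}\right)$, whose $\C P^1$-parameter is $c=\tfrac{a_1a_4}{a_2a_3}$ by Proposition~\ref{orbits}. Its Pl\"ucker coordinates are $(P^{12},P^{13},P^{14},P^{23},P^{24},P^{34})=(1,\,a_3,\,a_4,\,-a_1,\,-a_2,\,a_1a_4-a_2a_3)$, so $\iota$ of the orbit lies in the chart $M_0$ of $\C P^5$; reading off \eqref{main}, with $c_1=P^{14}P^{23}/(P^{12}P^{34})$ and $c_2=P^{13}P^{24}/(P^{12}P^{34})$, and dividing numerator and denominator by $-a_2a_3$ yields $(c_1,c_2)=\bigl(\tfrac{c}{1-c},\tfrac{1}{1-c}\bigr)$. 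Passing to the $\C P^2$-parametrization of Theorem~\ref{orbit-space-5} this is $(c_1:c_2:1)=(c:1:1-c)$, i.e.\ $\phi$ is the canonical linear embedding $\C P^1\hra\C P^2$, $(c:1)\mapsto(c:1:1-c)$, onto the projective line $\{x-y+z=0\}$. In particular $\phi$ is a morphism, hence continuous, and its values at $c=0,1,\infty$ and on the lower-dimensional strata agree with the limits predicted by comparing Propositions~\ref{orbits3} and~\ref{orbitsl} with their $\C P^5$-analogues (each such check reducing to a limit of $c\mapsto(c:1:1-c)$).

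\emph{Step 3 (join description) and the main obstacle.} Identifying $\Delta_{4,2}\cong\bar D^3$ with the cone $C(S^2)$ on $S^2=\partial\Delta_{4,2}$ exactly as in the proof of Corollary~\ref{join}, the relation $\bar\iota(x,c)=(x,\phi(c))$ descends on the joins to $\mathrm{id}_{S^2}\ast\phi$; since $\phi$ is the inclusion $\C P^1\hra\C P^2$, this is precisely the asserted embedding $S^2\ast\C P^1\hra S^2\ast\C P^2$. The only genuine difficulty is the bookkeeping in Step~2: keeping straight the index conventions between the homogeneous coordinates $(z_1,\dots,z_6)$ and $(z_0,\dots,z_5)$ for $\C P^5$, the ordering of the Pl\"ucker coordinates against the vertices $\delta_{ij}$, the two distinct orbit parameters ($c$ for $G_{4,2}$ versus $(c_1,c_2)$ for $\C P^5$), and the change of chart $M_{12}\rightsquigarrow M_0$. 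Everything else --- equivariance forcing the descent, invariance of \eqref{hyper} and injectivity of $\iota$ forcing injectivity of $\bar\iota$, compactness forcing that $\bar\iota$ is an embedding, and the moment-map identity forcing preservation of the $\Delta_{4,2}$-factor --- is immediate.
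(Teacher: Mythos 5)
Your proposal is correct and follows essentially the same route as the paper: equivariance of the Pl\"ucker embedding gives the induced map of orbit spaces, the main-stratum parameters $(c_1,c_2)$ of $\C P^5$ are expressed through the Pl\"ucker coordinates of a main-stratum representative in the chart $M_{12}$ to obtain $(c:1)\mapsto(c:1:1-c)$, and continuity of the parametrizations extends the formula to all strata. Your Step~1 (injectivity plus compactness giving a closed embedding, and the moment-map identity forcing preservation of the $\Delta_{4,2}$-factor) is spelled out more carefully than in the paper, and your computation $(c_1,c_2)=\bigl(\tfrac{c}{1-c},\tfrac{1}{1-c}\bigr)$ lands on the stated formula directly, whereas the paper's displayed intermediate identities carry an inconsequential index swap.
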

\begin{proof}
We consider the points from the main stratum  for $G_{4,2}$ in the chart $M_{12}$. They are given by $a_1=a_4=1$, $a_2=a_3=0$ and $\frac{a_5a_8}{a_6a_7}=c$, $c\in \C-\{0,1\}$, $a_i\neq 0$, $5\leq i\leq 8$. They are  by~\eqref{hyper}  embedded in $\C P^5$ in the hypersurface $z_5+z_2z_3=z_1z_4$, such that $z_1z_4 = \frac{c}{1-c}z_5$ and $z_i\neq 0$.  The main stratum  for  $\C P^5$ which belongs to this hypersurface is, in the chart  $M_{0}$ for $\C P^5$,  given by $\frac{z_2z_3}{z_5} = c_1$, $\frac{z_1z_4}{z_5}=c_2$, $c_1,c_2\in \C -\{0\}$. It follows that on these hypersurfaces we have $c_2=\frac{c}{1-c}$ and $c_1=\frac{1}{1-c}$. Therefore the embedding of the orbit space of the main stratum for $G_{4,2}$ into the orbit space of the main stratum  for  $\C P^5$ is given by $(c:1)\to (\frac{1}{1-c}: \frac{c}{1-c}: \frac{1}{c}) = (c : 1 : (1-c))$. Since all  other orbits  for the both spaces are continiously  parametrized using the parametrization of the main strata it follows that the embedding   $G_{4,2}/T^3\cong S^2\ast \C P^1 \hookrightarrow \C P^5/T^3\cong S^2\ast \C P^2$ is given by the same formula. 
\end{proof}

\section{Results in the real case}
The action of the compact torus $T^4$ on the complex Grassmann manifold $G_{4,2}$ and the complex projective space $\C P^5$ induces naturally the action of the group $\Z _{2}^{4}$ on the real Grassmann manifold $G_{4,2}(\R )$ and the real projective space $\R P^5$. The importance for  the consideration of the real case is connected with the problematics and the result of~\cite{GFMC} and~\cite{TO}.   

Since all our constructions in the complex case given in the previous sections  are compatible with the involution given by the complex conjugation the results obtained in the complex case translates directly to the real case.  Therefore Theorem~\ref{orbit-space-4} implies:

\begin{thm}
The orbits space $X(\R)=G_{4,2}(\R)/\Z _{2}^{4}$ for the canonical action of  $\Z _{2}^{4}$ on $G_{4,2}(\R)$ is homeomorphic to the quotient space 
\begin{equation}
(\Delta _{4,2}\times \R P^1)/\approx \; \text{where}\; (x, r)\approx (y,r^{'})\Leftrightarrow x=y \in \partial \Delta _{4,2}.
\end{equation}
\end{thm}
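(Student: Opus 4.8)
The plan is to run the entire argument of Sections~\ref{structure}--\ref{orbit} verbatim, replacing $\C$, $(\C^*)^4$, $T^4$ and $\C P^1$ by $\R$, $(\R^*)^4$, $\Z_2^4$ and $\RP^1$, and to check that every step uses only algebraic identities and limit arguments that are insensitive to the ground field. First I would record the fact underlying ``compatibility with complex conjugation'': entrywise conjugation on $\C^4$ induces an involution $\sigma\colon G_{4,2}\to G_{4,2}$ whose fixed set is $G_{4,2}(\R)$ (via $X\mapsto X\cap\R^4$), which satisfies $\sigma(t\cdot X)=\bar t\cdot\sigma(X)$ for $t\in(\C^*)^4$, and whose restriction to $T^4$ is the standard conjugation with fixed subgroup $\Z_2^4=\{\pm1\}^4$. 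Since the Pl\"ucker embedding and the moment map~\eqref{moment-map} are given by polynomials with integer coefficients, $\sigma$ intertwines them with the corresponding real objects: the real Pl\"ucker coordinates of $X\in G_{4,2}(\R)$ are the restrictions of the $P^J$, and $\mu$ restricts to a map $G_{4,2}(\R)\to\Delta_{4,2}$ with image still the octahedron.

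Next I would transcribe the chart analysis. Each Pl\"ucker chart $M_J$ restricts to $M_J(\R)\cong\R^4$, and the computation in the proof of Proposition~\ref{orbits} is purely formal: in $M_{12}(\R)$ the $(\R^*)^4$-action on $\R^4$ factors through an effective $(\R^*)^3$-action, the orbit of $(a_1,a_2,a_3,a_4)$ with some coordinates zero is the corresponding coordinate $(\R^*)^k_I$, and the orbit of a point with all $a_i\neq0$ is the real hypersurface $\{z_1z_4=c\,z_2z_3\}$ with $c=a_1a_4/(a_2a_3)\in\R^*$. Consequently Lemma~\ref{polytopes} and the table~\eqref{number} hold word for word, since the witnessing matrices in the proof of Lemma~\ref{polytopes} already have real entries; and the real analogues of Theorem~\ref{orbits_chart} and Corollary~\ref{Gr_other_strata} follow, so that a stratum of $G_{4,2}(\R)$ over a polytope $\ne\Delta_{4,2}$ is a single $(\R^*)^4$-orbit, the main stratum over $\Delta_{4,2}$ is everywhere dense, and its orbits are parametrized by $c\in\R\smallsetminus\{0,1\}$, the value $c=1$ being singled out exactly as before by the vanishing of $P^L$, $L=\{1,2,3,4\}\smallsetminus J$. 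Then I would reprove the real versions of Proposition~\ref{orbits3} and Proposition~\ref{orbitsl}: these use only limits of sequences of real parameters $c_n=z_1^nz_4^n/(z_2^nz_3^n)$ and the chart-change identity~\eqref{param}, $d=c/(c-1)$, a real M\"obius map extending to a homeomorphism $\RP^1\to\RP^1$; so the non-principal three-dimensional orbits are parametrized by $c=0,1,\infty$ and the orbits over polytopes of dimension $\le2$ by arbitrary $c\in\RP^1$. Assembling these exactly as in the proof of Theorem~\ref{orbit-space-4} shows that $X(\R)$ is obtained from the family of copies of $\Delta_{4,2}$ indexed by $\RP^1=\R\cup\{\infty\}$ by gluing them all together along $\partial\Delta_{4,2}$, which is the asserted quotient $(\Delta_{4,2}\times\RP^1)/\!\approx$.

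The one place where the real case is not a literal transcription, and the step I expect to be the main obstacle, is verifying that for each $c\in\R^*$ the real hypersurface $\{z_1z_4=c\,z_2z_3,\ z_i\neq0\}$ is a single $(\R^*)^3$-orbit: over $\C$ this is automatic, while over $\R$ the set is disconnected. Here one checks that under $(t_1,t_2,t_3)$ the sign vector $(\operatorname{sign}z_1,\dots,\operatorname{sign}z_4)$ is acted on by the order-$8$ subgroup of $(\Z_2)^4$ generated by $(1,0,0,1),(0,1,0,1),(0,0,1,1)$, which is precisely the subgroup of even-weight vectors, so that the product of the four signs, equal to $\operatorname{sign}c$, is the only invariant and $(\R^*)^3$ acts transitively on each level set of the parameter $c$. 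An entirely analogous and easier sign count handles the lower-dimensional coordinate orbits and the continuity of the gluing near $\partial\Delta_{4,2}$ in the $\RP^1$-topology. Once this bookkeeping is in place, the orbit-space description—and hence the claimed homeomorphism—follows from Theorem~\ref{orbit-space-4} by passing to $\sigma$-fixed points throughout.
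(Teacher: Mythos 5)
Your proposal is correct and follows essentially the same route as the paper, whose entire proof is the assertion that all the complex constructions are compatible with the involution given by complex conjugation, so that the real statement follows from Theorem~\ref{orbit-space-4}. The one point you rightly flag as not purely formal---that each real level set $\{z_1z_4=c\,z_2z_3,\ z_i\neq 0\}$ is a single $(\R ^{*})^3$-orbit even though it is disconnected---is glossed over in the paper, and your sign count settles it (even more directly: $t_i=w_i/z_i$ is forced by the first three coordinates, and the fourth equation then holds automatically on the level set).
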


\begin{cor}
The orbit space $G_{4,2}(\R)/\Z _{2}^{4}$ is homeomorphic to the join $S^2\ast S^1$ and, thus, homotopy equivalent to $S^4$.
\end{cor}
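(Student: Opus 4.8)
The plan is to follow the proof of Corollary~\ref{join} verbatim, now in the real setting. The input is the real analogue of Theorem~\ref{orbit-space-4} stated just above, which identifies $X(\R)=G_{4,2}(\R)/\Z_2^4$ with the quotient of $\Delta_{4,2}\times\RP^1$ in which, for each $x\in\partial\Delta_{4,2}$, the slice $\{x\}\times\RP^1$ is collapsed to a single point. Since $\Delta_{4,2}$ is a three-dimensional convex polytope (the octahedron), it is homeomorphic to the closed disk $\bar D^3$ as a pair with its boundary, so $\partial\Delta_{4,2}\cong S^2=\partial\bar D^3$; and $\RP^1\cong S^1$. Thus it is enough to understand the quotient of $\bar D^3\times S^1$ by the relation $(x_1,r_1)\approx(x_2,r_2)\Leftrightarrow x_1=x_2\in\partial\bar D^3=S^2$.

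Next I would use the radial homeomorphism identifying $\bar D^3$ with the cone $CS^2=CX$ over $X=\partial\bar D^3=S^2$, sending $\partial\bar D^3$ onto the base $\{(x,0):x\in X\}$ of the cone and the centre of the disk to the cone point; this is exactly the identification $(1-t)x=xt$ used in the proof of Corollary~\ref{join}. Under it the relation above becomes $(x_1,0,r_1)\approx(x_1,0,r_2)$, i.e.\ the relation collapsing $\{(x,0)\}\times S^1$ for every $x\in X$. By the general construction of the join recalled in the proof of Corollary~\ref{join} --- the quotient of $CX\times Y$ by $(x_1,0,y_1)\approx(x_2,0,y_2)\Leftrightarrow x_1=x_2$ is the join $X\ast Y$ --- this quotient is precisely $S^2\ast S^1$. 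Hence $X(\R)$ is homeomorphic to $S^2\ast S^1$, and since $S^p\ast S^q$ is homotopy equivalent (in fact homeomorphic) to $S^{p+q+1}$, it is homotopy equivalent to $S^4$.

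There is essentially no obstacle here: the argument is a transcription of the complex case with $\C P^1$ replaced by $\RP^1$ and the dimensions shifted. The only points that merit a sentence of care are (i) that $\Delta_{4,2}\cong\bar D^3$ and $\partial\Delta_{4,2}\cong S^2$, which holds because any compact convex body of dimension $3$ with nonempty interior is homeomorphic to $\bar D^3$ by a homeomorphism carrying its topological boundary onto $S^2$; and (ii) that the chosen homeomorphism $\Delta_{4,2}\to CX$ is compatible with the quotients, which is immediate since it restricts to a homeomorphism $\partial\Delta_{4,2}\to X\times\{0\}$ and therefore carries the fibrewise collapse over $\partial\Delta_{4,2}$ to the fibrewise collapse over the base of the cone. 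Alternatively one could observe, as in Proposition~\ref{orbit-main}, that $X(\R)$ is a closed topological $4$-manifold homotopy equivalent to $S^4$ and invoke the topological Poincar\'e conjecture in dimension four to obtain $X(\R)\cong S^4$, but the join description above is more elementary and records the sharper statement $X(\R)\cong S^2\ast S^1$.
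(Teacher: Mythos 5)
Your proof is correct and is essentially the paper's own argument: the paper derives this corollary by running the proof of Corollary~\ref{join} verbatim with $\C P^1$ replaced by $\R P^1\cong S^1$, i.e.\ identifying $\Delta_{4,2}$ with the cone $CS^2$ and recognizing the quotient of $CS^2\times S^1$ under the boundary collapse as the join $S^2\ast S^1$. Your added remarks on the homeomorphism $\Delta_{4,2}\cong \bar D^3$ and its compatibility with the quotient are sound but not points the paper dwells on.
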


We can proceed in the same way as in the proof of Theorem~\ref{orbit-main} and Theorem~\ref{top-main} to obtain:

\begin{thm}
The orbit space $X(\R)=G_{4,2}(\R)/\Z _{2}^{4}$ is homeomorphic to the sphere $S^4$.
\end{thm}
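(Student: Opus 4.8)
The plan is to carry over, almost verbatim, the argument used for $G_{4,2}/T^4$ in Theorem~\ref{orbit-main} and Theorem~\ref{top-main}, with the parametrizing space $\C P^1\cong S^2$ replaced everywhere by $\R P^1\cong S^1$. By the theorem and corollary just above, $X(\R)$ is homeomorphic to $(\Delta_{4,2}\x\R P^1)/\!\approx$, and, identifying $\Delta_{4,2}={\bar D}^3$ with the cone $C(\partial\Delta_{4,2})=CS^2$ via $(1-t)x=xt$ exactly as in the proof of Corollary~\ref{join}, this quotient is the join $S^2\ast S^1$. Two routes now lead to the conclusion; I would present the first and then remark on the second.

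The first route mimics Proposition~\ref{orbit-main} and Theorem~\ref{top-main}. First I would check that $X(\R)$ is a closed topological $4$-manifold without boundary: a point of $\stackrel{\circ}{\Delta _{4,2}}\x\R P^1$ obviously has a Euclidean neighbourhood, while for $q=[(x,r)]$ with $x\in\partial\Delta_{4,2}$ a neighbourhood is, exactly as in Proposition~\ref{orbit-main}, homeomorphic to the quotient of $D^{3}_{\geq 0}\x S^1$ by $(x,s)\approx(y,s)\Leftrightarrow x=y\in D^2$, where $D^2$ is the base of the half-disk $D^{3}_{\geq 0}$; the same bootstrap used there --- namely, $[0,1)\x S^1$ modulo $(0,s_1)\approx(0,s_2)$ is an open $D^2$, whence $D^{2}_{\geq 0}\x S^1/\!\approx\;\cong D^3$ and $D^{3}_{\geq 0}\x S^1/\!\approx\;\cong D^4$ --- shows this neighbourhood is an open $4$-disk. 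Next, $X(\R)\cong S^2\ast S^1$ is simply connected (a join of path-connected spaces; in fact $3$-connected), and, since $S^2\ast S^1\simeq\Sigma(S^2\wedge S^1)=\Sigma S^3=S^4$ (equivalently, by the K\"unneth formula for the homology of a join), it is a homotopy $4$-sphere. Being a simply connected closed topological $4$-manifold homotopy equivalent to $S^4$, it is then homeomorphic to $S^4$ by Freedman's solution of the four-dimensional topological Poincar\'e conjecture.

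The one place where the real case genuinely differs from the complex one is the dimension: Theorem~\ref{top-main} invoked the classical Poincar\'e conjecture in dimension five, whereas here one must appeal to Freedman's theorem --- and this is the only step I would flag as a nontrivial input. It can, however, be bypassed altogether, which is the second route: the quotient $(\Delta_{4,2}\x\R P^1)/\!\approx$ is literally the join $S^2\ast S^1$, and a join of spheres is a sphere, $S^p\ast S^q\cong S^{p+q+1}$ --- realise it as $\partial(D^{p+1}\x D^{q+1})\cong\partial D^{p+q+2}=S^{p+q+1}$ --- so with $p=2$, $q=1$ one obtains $X(\R)\cong S^4$ directly, the quotient topology agreeing with the CW join topology because $\Delta_{4,2}\x\R P^1$ is compact and $S^4$ is Hausdorff. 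Consequently I expect no serious obstacle: the manifold description via the first route is worth recording for its own sake (it is what the later discussion of the smooth and singular points of $X(\R)=S^4$ will build on), but the homeomorphism $X(\R)\cong S^4$ itself is immediate from the join structure already established.
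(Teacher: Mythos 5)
Your first route is exactly the paper's argument: the paper proves this theorem by saying one proceeds as in Proposition~\ref{orbit-main} and Theorem~\ref{top-main}, i.e., the local half-disk analysis gives a closed topological $4$-manifold, the join description gives a homotopy $4$-sphere, and the generalized Poincar\'e conjecture finishes. You are right, however, to flag the one point the paper glosses over: its citations \cite{Smale},~\cite{Stallings} cover dimensions $\geq 5$ only, and in dimension $4$ the topological Poincar\'e conjecture is Freedman's theorem, a much heavier input that the paper does not actually cite. Your second route is a genuinely different and better argument: the quotient $(\Delta_{4,2}\x \R P^1)/\!\approx$ is literally the join $C(\partial\Delta_{4,2})\x S^1/\!\approx\;=S^2\ast S^1$, and the join of spheres is \emph{homeomorphic} (not merely homotopy equivalent) to a sphere, $S^p\ast S^q\cong\partial(D^{p+1}\x D^{q+1})\cong S^{p+q+1}$, with the quotient and join topologies agreeing by compactness and Hausdorffness. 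This bypasses both the manifold verification and Freedman entirely, and the same observation would have simplified the complex case (where the paper only records that $S^2\ast S^2$ is homotopy equivalent to $S^5$ and then still invokes Smale--Stallings). The only thing the paper's route buys that yours discards is the explicit local manifold structure, which, as you note, is what the subsequent analysis of smooth and singular points of $G_{4,2}(\R)/\Z_2^4$ relies on, so it is worth keeping as a separate statement rather than as part of this proof.
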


As for the induced action of $\Z _{2}^{4}$ on $\R P^5$, from Theorem~\ref{orbit-space-5} it directly follows:

\begin{thm}
The orbit space $\R P^5/\Z _{2}^{4}$ for the action of $\Z _{2}^{4}$ on $\R P^5$ given by the composition of the second symmetric power  $\Z _{2}^4\hookrightarrow \Z _{2}^6$ and the canonical action of $\Z _{2}^6$ on $\R P^5$ is homeomorphic to 
\begin{equation}
(\Delta _{4,2}\times \R P^2)/\approx \; \text{where}\; (x, r)\approx (y,r^{'})\Leftrightarrow x=y \in \partial \Delta _{4,2}.
\end{equation}
\end{thm}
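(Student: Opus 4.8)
The plan is to repeat, in the real setting, the entire analysis of \secref{complex} that culminates in \thmref{orbit-space-5}, observing that every construction used there is compatible with the involution $\sigma$ given by complex conjugation. Concretely, $\sigma$ acts on $\C^6$, hence on $\C P^5$, with fixed point set $\R P^5$; the embedding $T^4\hookrightarrow T^6$ by the second symmetric power~\eqref{emb} carries the conjugation involution $t\mapsto\bar t$ on $T^4$ to the conjugation involution on $T^6$, and restricts to $\Z_2^4\hookrightarrow\Z_2^6$; and the $(\C^*)^6$-action on $\C P^5$ satisfies $\overline{s\cdot z}=\bar s\cdot\bar z$, so that $\Z_2^4=(T^4)^\sigma$ acts on $\R P^5=(\C P^5)^\sigma$ by precisely the prescribed action. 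Under $\sigma$ the moment map~\eqref{CP-moment} is unchanged, since it depends only on the $|z_i|^2$; hence $\mu(\R P^5)=\Delta_{4,2}$, and because in the proof that every polytope spanned by a subset of the vertices of $\Delta_{4,2}$ is admissible one may take the witnessing point $[(z_{ij})]$ with $z_{ij}\in\{0,1\}\subset\R$, the list of admissible polytopes is the same as in the complex case.

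Next I would carry out the orbit computations over $\R$. In the chart $M_0\cap\R P^5=\{[(1,z_1,\dots,z_5)]:z_i\in\R\}$, for a real point $[(1,a_1,\dots,a_5)]$ the $(\R^*)^4$-orbit is again cut out by equations of the type~\eqref{main}, now with $c_1=a_2a_3/a_5$ and $c_2=a_1a_4/a_5$ ranging over $\R^*$ when all $a_i\neq0$; and for the lower-dimensional orbits one obtains the same coordinate subspaces, with $\C^*_I$ replaced by its real locus and with the same admissible polytopes. Thus every lemma and corollary of \secref{complex} describing the orbits in $M_0$, together with the passage to the charts $M_1,\dots$ not containing $\delta_{12}$ and the associated change of parameter, holds verbatim with $\C^*$ replaced by $\R^*$ and $\C P^1$ by $\R P^1$. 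In particular the set of parameters $(c_1:c_2:c_3)$ that occurs, after projective completion, is exactly the real locus $\R P^2\subset\C P^2$, and the real analogue of Proposition~\ref{chart} (with the same gluing of pyramids, tetrahedra, rectangles and triangles over the corresponding values $(c_1:0:1)$, $(0:c_2:1)$, $(0:0:1)$, $(c:1:0)$, $(1:0:0)$, $(0:1:0)$) identifies $(M_0\cap\R P^5)/\Z_2^4$ with $\Delta_{4,2}\times(\R P^2-\R P^1)\cup P_{34}\times\R P^1$. Combining this with the real analogues of Propositions~\ref{new} and~\ref{other} yields a continuous bijection $(\Delta_{4,2}\times\R P^2)/\approx\ \to\ \R P^5/\Z_2^4$, which is a homeomorphism because the source is compact and the target is Hausdorff.

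The step requiring the most care is the one that is already delicate in the complex case: checking that over the boundary $\partial\Delta_{4,2}$ the identification collapses all of $\R P^2$, i.e. that each orbit lying over a vertex, edge, or triangle of $\partial\Delta_{4,2}$ can be continuously parametrized by \emph{every} value of the real parameter. As in the proofs of the lemmas of \secref{complex}, one exhibits, for such an orbit and any prescribed value (a real number, $0$, or $\infty$), a sequence of main-stratum points degenerating to it whose parameters tend to that value; the only thing to verify is that these degeneration arguments, which amount to letting real coordinates tend to $0$, go through over $\R$, which they do since $\R^*$ accumulates at $0$ and $\infty$ just as $\C^*$ does. Equivalently, one may phrase the argument as the computation of the fixed point set of the conjugation induced on $\C P^5/T^4\cong(\Delta_{4,2}\times\C P^2)/\approx$ — which acts trivially on $\Delta_{4,2}$ and by standard conjugation on $\C P^2$, hence has fixed set $(\Delta_{4,2}\times\R P^2)/\approx$ — once one has checked that $\R P^5/\Z_2^4$ maps bijectively onto this fixed set, i.e. that every real-parameter orbit meets $\R P^5$ and that the real locus of each such $T^4$-orbit is a single $\Z_2^4$-orbit; but the direct re-derivation above avoids needing this comparison.
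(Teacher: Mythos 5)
Your proposal is correct and follows essentially the same route as the paper: the paper's proof of this theorem is precisely the observation that every construction of the section on $\C P^5/T^4$ (charts, orbit equations, parametrizations, gluings) is compatible with complex conjugation, so Theorem~\ref{orbit-space-5} restricts to the real locus with $\C^*$, $\C P^1$, $\C P^2$ replaced by $\R^*$, $\R P^1$, $\R P^2$. You simply carry out in detail the verification that the paper leaves implicit (including the genuinely delicate point about real degenerations over $\partial\Delta_{4,2}$ and the caveat about fixed sets of the induced involution on the quotient), which is a faithful elaboration rather than a different argument.
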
 

\begin{cor}
The orbit space $\R P^5/\Z _{2}^4$ for the  action of  $\Z _{2}^4$ on $\R P^5$ given by the composition of the second symmetric power $\Z _{2}^4\hookrightarrow \Z _{2}^6$ and the canonical action of $\Z _{2}^6$ on $\R P^5$ is homeomorphic to $S^2\ast \R P^2$.
\end{cor}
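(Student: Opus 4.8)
The plan is to repeat, almost word for word, the argument used for Corollary~\ref{join}, with the second join factor $S^2$ replaced by $\R P^2$. First I would recall the standard model for the join: for topological spaces $X$ and $Y$ one forms $CX\times Y$, where $CX=(X\times[0,1])/(X\times\{1\})$ is the cone on $X$ with apex at $t=1$, and then the join $X\ast Y$ is the quotient of $CX\times Y$ by the relation $(x_1,0,y_1)\approx(x_2,0,y_2)\Leftrightarrow x_1=x_2$; that is, over the base $t=0$ one collapses the $Y$-direction pointwise in $X$, while away from the base nothing is identified.

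Next I would invoke the preceding Theorem (the real analogue of Theorem~\ref{orbit-space-5}), which identifies $\R P^5/\Z_2^4$ with $(\Delta_{4,2}\times\R P^2)/\approx$, where $(x,r)\approx(y,r')$ iff $x=y\in\partial\Delta_{4,2}$. Since $\Delta_{4,2}$ is a three-dimensional convex polytope it is homeomorphic to the closed ball $\bar D^3$, with $\partial\Delta_{4,2}$ corresponding to $S^2$. Fixing a homeomorphism $\partial\Delta_{4,2}\cong S^2$, I would extend it radially to a homeomorphism $\Delta_{4,2}\cong CS^2$, sending a point on the segment from the centre to a boundary point $x\in S^2$ at parameter $t\in[0,1]$ to the class of $(x,t)$; in particular $\partial\Delta_{4,2}$ goes onto the base $\{t=0\}$ of the cone.

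Under this identification the gluing relation ``$x=y\in\partial\Delta_{4,2}$'' becomes exactly $(x,0,r_1)\approx(x,0,r_2)$ for $x\in S^2$ and $r_1,r_2\in\R P^2$, which is precisely the defining relation of $S^2\ast\R P^2$. Hence $\R P^5/\Z_2^4\cong S^2\ast\R P^2$, as claimed; equivalently, this is obtained from the homeomorphism $\C P^5/T^4\cong S^2\ast\C P^2$ by passing to the fixed sets of complex conjugation, exactly as in the complex case. I do not expect a genuine obstacle here: the only point that needs a word is that the chosen boundary homeomorphism extends to a homeomorphism $\Delta_{4,2}\cong CS^2$, and this is just the standard radial extension of a self-homeomorphism of a sphere to the disk it bounds, so the corollary is a formal consequence of the Theorem.
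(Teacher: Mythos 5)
Your proposal is correct and matches the paper's intended argument: the paper derives this corollary from the preceding theorem by exactly the same join construction used for Corollary~\ref{join}, identifying $\Delta_{4,2}\cong \bar D^3\cong CS^2$ radially so that the gluing along $\partial\Delta_{4,2}$ becomes the defining relation of $S^2\ast\R P^2$. No gaps; the only point needing care (the radial extension of the boundary homeomorphism) is the one you flag and handle.
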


It is also straightforward to see that Theorem~\ref{embedd} is true in the real case as well:

\begin{thm}
The embedding $G_{4,2}(\R)\hookrightarrow \R P^5$ given by the Pl\"ucker embedding induces an  embedding $G_{4,2}(\R)/\Z _{2}^{4}\cong S^4\cong S^2\ast \R P^1 \hookrightarrow \R P^5/\Z _{2}^4\cong S^2\ast \R P^2$ given by the canonical embedding $\R P^1\hookrightarrow \R P^2$
\[
(r:1) \to (r : 1 : (1-r)),\;\; (1,0)\to (0, 0, 1).
\]
\end{thm}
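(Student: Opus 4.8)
The plan is to deduce this real statement from its complex counterpart, \thmref{embedd}, by restricting the whole picture to the fixed point set of complex conjugation. The first step is to check that complex conjugation $\sigma$ is a symmetry of everything in sight: it acts on $\C^{n}$, hence on $G_{4,2}$ and, through the Pl\"ucker coordinates $P^{J}$ (polynomials with integer coefficients in the matrix entries), on $\C P^{5}$; it normalizes $(\C^{*})^{4}$ and restricts to conjugation on it, with $(T^{4})^{\sigma}=\Z_{2}^{4}$. The fixed point sets are $G_{4,2}(\R)$, $\R P^{5}$ and $\Z_{2}^{4}$; the Pl\"ucker embedding $G_{4,2}\hra\C P^{5}$ is $\sigma$-equivariant because its defining equation \eqref{hyper} has real (indeed integer) coefficients, and the moment map \eqref{moment-map} is $\sigma$-invariant since $|P^{J}(\sigma X)|^{2}=|P^{J}(X)|^{2}$. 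Consequently every construction of the earlier sections --- the admissible polytopes, the chart descriptions $M_{J}$ and $M_{0}$, the orbit invariants, the coordinate-change relation \eqref{param}, the passage between the charts $M_{0},M_{1}$ on $\C P^{5}$ --- is given by formulas with real coefficients and therefore restricts verbatim, with $\C,\C^{*},\C P^{1},\C P^{2}$ replaced throughout by $\R,\R^{*},\R P^{1},\R P^{2}$. In particular the homeomorphisms $G_{4,2}(\R)/\Z_{2}^{4}\cong S^{2}\ast\R P^{1}\cong S^{4}$ and $\R P^{5}/\Z_{2}^{4}\cong S^{2}\ast\R P^{2}$ used in the statement are precisely the real analogues of \thmref{orbit-space-4} and \thmref{orbit-space-5} established just above.

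With that in hand, the second step is to repeat, over $\R$, the local computation from the proof of \thmref{embedd}. I would take a point of the main stratum of $G_{4,2}(\R)$ in the chart $M_{12}$ with real orbit invariant $r\in\R-\{0,1\}$, push it into $\R P^{5}$ using the real Pl\"ucker equation \eqref{hyper}, and read off from \eqref{main} the orbit invariants $(c_{1},c_{2})$ of the corresponding main-stratum orbit of $\R P^{5}$ in the chart $M_{0}$. Because \eqref{hyper} and \eqref{main} have real coefficients, this produces exactly the relations found in \thmref{embedd} with $c$ replaced by $r$, namely $c_{1}=\tfrac{1}{1-r}$ and $c_{2}=\tfrac{r}{1-r}$, so that in the projective coordinates $(c_{1}:c_{2}:1)$ of $\R P^{2}$ the image is $(r:1:(1-r))$. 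This identifies the induced map on the orbit spaces of the main strata with the canonical inclusion $\R P^{1}\hra\R P^{2}$, $(r:1)\mapsto(r:1:(1-r))$.

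The third and final step is to propagate this over all of $S^{2}\ast\R P^{1}$: since in both orbit spaces every lower-dimensional stratum is reached by continuously extending the parametrization of the main stratum --- this is the content of the real analogues of Propositions~\ref{orbits3}, \ref{orbitsl}, \ref{chart}, \ref{new} and \ref{other} --- and since the Pl\"ucker embedding is $T^{4}$-equivariant, the same formula $(r:1)\mapsto(r:1:(1-r))$ describes the embedding on the whole $\R P^{1}$-parameter, with the distinguished point $(1:0)$, the limit $r\to\infty$, going to $(0:0:1)$; meanwhile the two $S^{2}=\del\Delta_{4,2}$ factors are identified identically in both spaces, so the join structures are respected. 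I do not anticipate a genuine obstacle: the only point that really needs care is the bookkeeping that $\sigma$ commutes with each individual step of the earlier arguments, but because all the relevant maps (Pl\"ucker coordinates, moment map, chart transitions, the relations \eqref{param} and \eqref{main}) are defined over $\R$, this verification is routine, and the remainder is a transcription of the complex proof with $\C$ replaced by $\R$.
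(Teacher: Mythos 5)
Your proposal is correct and follows the same route as the paper: the paper disposes of this statement by observing that all constructions of the preceding sections are compatible with complex conjugation, so that the complex result (Theorem~\ref{embedd}) restricts to the fixed-point sets $G_{4,2}(\R)$, $\R P^5$ and $\Z_2^4$, and it declares the verification straightforward. You have simply written out in detail the conjugation-equivariance bookkeeping and the real version of the local computation $c_1=\tfrac{1}{1-r}$, $c_2=\tfrac{r}{1-r}$ that the paper leaves implicit.
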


\subsection{The smooth and singular  points of the orbit space $G_{4,2}(\R )/\Z _{2}^{4}$}
The equivariant tubular neighbourhood theorem is valid for finite groups as well~\cite{BR} and in the analogous way as it is done in the complex case we describe the smooth and the singular points of the orbit space $G_{4,2}(\R )/\Z _{2}^{4}$.
Denote by  $\pi : G_{4,2}(\R)\to G_{4,2}(\R)/\Z _{2}^{4}$ the  natural projection.
\begin{thm}\label{srmain}
The point $q\in G_{4,2}(\R )/\Z _{2}^{4}$ is:
\begin{itemize}
\item a smooth point if $\pi ^{-1}(q) = \Z _{2}^{3}$;
\item a cone-like singularity point if $\pi ^{-1}(q)\cong \Z_2^k$, $0\leq k\leq 2$ and has  neighbourhood of the form 
\begin{enumerate}
\item $D^2\times \cone (\R P^1 )\;\;$ for $\;\; \pi ^{-1}(q)=\Z _{2}^{2}$;
\item  $D^1\times \cone (S^2/\Z _{2}^{2})\;\;$ for $\;\; \pi ^{-1}(q) =\Z _{2}$;
\item $\cone (S^3/\Z _{2}^{3})\;\;$ for $\;\; \pi ^{-1}(q)$ is a point.
\end{enumerate}
where the induced actions of $\Z_{2}^{2}$ and $\Z_{2}^{3}$ on $S^2$ and $S^3$ respectively are without fixed points.
\end{itemize}
\end{thm}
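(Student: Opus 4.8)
The plan is to repeat the proof of Theorem~\ref{smain} almost verbatim, with the compact torus $T^3$ replaced by the finite group $\Z_2^3$ --- the effective quotient of $\Z_2^4$ by the diagonal $\Z_2$, which acts trivially on $G_{4,2}(\R)$ exactly as $\C^*$ acts trivially on $G_{4,2}$ --- with the complex Pl\"ucker charts $M_J\cong\C^4$ replaced by their conjugation-fixed real forms $\cong\R^4$, and $(\C^*)^4$ replaced by $(\R^*)^4$. Since the equivariant tubular neighbourhood (slice) theorem is valid for finite groups as well~\cite{BR}, for a point $p\in G_{4,2}(\R)$ with $\pi(p)=q$ and stabiliser $H\subseteq\Z_2^3$ there is a $\Z_2^3$-equivariant diffeomorphism from $\Z_2^3\times_{H}V$ onto a neighbourhood of the orbit $\Z_2^3\cdot p$, where $V=T_pG_{4,2}(\R)$ is four-dimensional and --- a $\Z_2^3$-orbit being $0$-dimensional --- coincides with the whole slice. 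Hence $U(p)/\Z_2^3$ has the same smooth and singular points as $(\Z_2^3\times_{H}V)/\Z_2^3$, and, writing $V=V^{H}\oplus L$ for a $\Z_2^3$-invariant metric, formula~\eqref{ndif} identifies the latter with $V^{H}\x\cone(S(L)/H)$. Since $\pi^{-1}(q)=\Z_2^3/H$, if $H\cong\Z_2^{\,j}$ then $\pi^{-1}(q)\cong\Z_2^{\,3-j}$, so the case $H\cong\Z_2^{\,3-k}$ of the case analysis is precisely the case $\pi^{-1}(q)\cong\Z_2^{\,k}$ of the statement.

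First I would dispose of the two extreme cases. If $H=\{e\}$ then $(\Z_2^3\x V)/\Z_2^3=V\cong\R^4$, so $q$ is a smooth point. If $H=\Z_2^3$ then $p$ is a fixed point and the local linearisation theorem gives $U(p)/\Z_2^3\cong(T_pG_{4,2}(\R))/\Z_2^3=\R^4/\Z_2^3=\cone(S^3/\Z_2^3)$; reading the linear $\Z_2^3$-action off the real analogue of Proposition~\ref{orbits} shows that the four coordinate characters have no common nonzero fixed vector, so $(\R^4)^{\Z_2^3}=0$ and $\Z_2^3$ acts on $S^3$ without a global fixed point. For the two intermediate cases I would mimic cases 2) and 3) of Theorem~\ref{smain}, computing $V^{H}$ directly in the chart. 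When $H\cong\Z_2$, the Weyl-group action permuting charts lets us assume that exactly two coordinates of $p$ in $\R^4$ are nonzero and that $H$ is the order-two subgroup fixing them; then $(\R^4)^{H}$ is the two-plane spanned by the corresponding two axes, so $\dim V^{H}=2$, $\dim L=2$, and since $L^{H}=0$ the group $\Z_2$ acts on $S(L)=S^1$ antipodally, giving $S^1/\Z_2=\R P^1$ and a neighbourhood of the form $D^2\x\cone(\R P^1)$. When $H\cong\Z_2^2$, exactly one coordinate of $p$ is nonzero and $H$ is the rank-two subgroup fixing it; then $(\R^4)^{H}$ is that coordinate axis, $\dim V^{H}=1$, $\dim L=3$, $L^{H}=0$, and the neighbourhood has the form $D^1\x\cone(S^2/\Z_2^2)$ with $\Z_2^2$ acting on $S^2$ without a global fixed point. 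Collecting the four cases proves the theorem.

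The only step requiring genuine care --- the main obstacle --- is the verification of the equalities $\dim V^{H}=2$ (for $H\cong\Z_2$) and $\dim V^{H}=1$ (for $H\cong\Z_2^2$), i.e.\ that the $H$-fixed subspace is no larger than the coordinate subspace through $p$. In the complex case this rested on two ingredients: the $(\C^*)^4$-orbit through $p$ is pointwise fixed by the abelian compact stabiliser (which bounds $\dim V^{H}$ below), and the explicit linear $T^1$- or $T^2$-action in the chart (which bounds it above); both carry over to $\R$ unchanged, with the pleasant simplification that a finite $\Z_2^3$-orbit contributes nothing to subtract, so the bookkeeping is if anything cleaner than in the complex case. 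Everything above is legitimate over $\R$ precisely because the Pl\"ucker charts, the coordinate characters and the admissible polytopes entering the argument are all compatible with complex conjugation: the real stabilisers, real orbits and real slice representations are the conjugation-fixed sets of the corresponding complex objects, and the dimension counts halve in the expected way.
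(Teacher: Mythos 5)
Your proposal is correct and follows essentially the same route as the paper: the paper's proof likewise invokes the equivariant tubular neighbourhood theorem for finite groups, notes that the slice $V$ is the full four-dimensional tangent space since $\Z_2^4$-orbits are zero-dimensional, decomposes $V=V^{H}\oplus L$ via~\eqref{ndif}, and runs the same case analysis on $H\cong\Z_2^{j}$, citing "analogy with the complex case" for the dimension counts $\dim V^{H}=2,1$ that you verify explicitly in the charts.
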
 
\begin{proof}
We provide the proof for the sake of clearness. We use the notation of~\ref{gen-smth} and follow the proof of Theorem~\ref{smain}. Let $q\in G_{4,2}(\R )/\Z _{2}^{4}$ and let $H$ be a stabilizer of the points from $\pi ^{-1}(q)$. For a point $p\in \pi ^{-1}(q)$, the tangent bundle for the orbit $\Z _{2}^{4}\cdot p$ is zero-dimensional what implies that the slice $V$ along such orbit is four-dimensional. Consider an orthogonal decomposition  $V = V^{H} \oplus L$ for some $Z_{2}^{4}$-invariant metric. The theorem on equivariant neighbourhood for a compact group action implies that  there exists a neighbourhood of this orbit~\eqref{ndif}, whose orbit space has the same smooth and singular points as  $V^{H}\times \cone (S(L)/H)$, where $S(L)$ is the unit sphere in $L$. In this interpretation the following cases are possible. If $H$ is trivial then $V^{H}=V = D^{4}$ and such points $q\in G_{4,2}(\R )/\Z _{2}^{4}$ are the smooth ones. If $H$ is not trivial then $H= \Z _{2}^{k}$, where $1\leq k\leq 3$. For $H =\Z _{2}$ in analogy to the complex case we obtain that $\dim V^{H}=2$, what implies that $\dim L=2$. Therefore the points  $q \in G_{4,2}(\R )/\Z _{2}^{4}$  with this stabilizer have the neighbourhood which has the same smooth and singular points as  $D^{2}\times \cone (S^{1}/\Z _{2}) =
D^{2}\times \cone (\R P^{1})$. For $H=\Z _{2}^{2}$, we have that $\dim V =1$, what gives that the corresponding points  have the neighbourhood which has the same smooth and singular points as  $D^{1}\times \cone (S^{2}/\Z _{2}^{2})$. For $H=\Z _{2}^{3}$ we obtain the fixed points and in the orbit space they have the neighbourhood with the same smooth and singular points as  $\cone(S^{3}/\Z _{2}^{4})$. 
\end{proof} 

\begin{rem}
It is well known open problem about the existence of the smooth structures on the sphere $S^{4}$ different from the standard one. From Theorem~\ref{srmain} it follows that any  neighbourhood of a point from the orbits space $G_{4,2}(\R)/\Z_{2}^{4}\cong S^{4}$ whose stabilizer is non-trivial contains the cone-like singularity point. It implies that there is no smooth structure on the sphere $S^{4}$ for which the projection $\pi : G_{4,2}(\R) \to G_{4,2}(\R)/\Z _{2}^{4}$ would be a smooth map.
\end{rem}

\section{Some properties of $T^4$-action on $G_{4,2}$}
We derive from Section~\ref{structure} and  Section~\ref{orbit} some properties of the standard action of the compact torus $T^4$ on $G_{4,2}$ which we found to be crucial for the topological description of its orbit space. The following properties immediately follows:
\begin{lem}
The Grassmann manifold $M=G_{4,2}$ endowed with the canonical action of $T^4$ 
and the standard moment map satisfy the following properties:
\begin{enumerate} 
\item There exists a smooth atlas $(M_{ij},\varphi _{ij})$  for $M$ such that each chart $M_{ij}$  is  everywhere dense set in $M$ and  contains exactly one fixed point which maps to zero by the coordinate map.
\item For any  chart $(M_{ij}, \varphi _{ij})$ it is given  the characteristic homomorphism $\alpha _{ij} : T^3\to T^4$
such that the homeomorphism $\varphi _{ij}$ is $\alpha _{ij}$ - equivariant: 
\[
\varphi _{ij}(tm)= \alpha _{ij}(t) \varphi _{ij}(m),\;\; 
t\in T^3,\; m\in M_{ij}.
\]
\item For any characteristic homomorphism $\alpha _{ij} : \T ^3\to \T ^4$, the weight vectors are pairwise linearly independent.
\item The map $\mu$ gives the bijection between  the set of fixed points   and the set of vertices of the polytope $\Delta _{4,2}$. 
\end{enumerate}
\end{lem}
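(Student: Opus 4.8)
\emph{Proof plan.} The plan is to obtain all four items directly from the chart-by-chart analysis of Section~\ref{structure}, the orbit description of Proposition~\ref{orbits}, and the moment-map formula~\eqref{moment-map}; each assertion is essentially a repackaging of something already established, so the proof will consist of collecting these facts together with the few routine verifications not spelled out earlier.

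For item (1) I would take the atlas to be the six Pl\"ucker charts $M_J=\{X\in G_{4,2} : P^J(X)\neq 0\}$, $J\subset\{1,2,3,4\}$, $|J|=2$, with the coordinate homeomorphisms $u_J:M_J\to\C^4$ recalled in Section~\ref{structure}; that these charts cover $G_{4,2}$ and that the transition maps are smooth is classical for the standard affine atlas of a Grassmannian, and I would only cite it. Density of each $M_J$ follows from the fact, established in Section~\ref{structure}, that the main stratum $W_{\Delta_{4,2}}$ is everywhere dense of full dimension $8$ and is contained in every $M_J$. The claim that $M_J$ contains a unique $T^4$-fixed point, carried to $0$ by $u_J$, is seen in coordinates: a point of $M_J$ is fixed exactly when its only nonzero Pl\"ucker coordinate is $P^J$, i.e.\ when it is the coordinate plane $\langle e_i : i\in J\rangle$, and this plane is precisely $u_J^{-1}(0)$.

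For items (2) and (3) I would read the characteristic homomorphism off the proof of Proposition~\ref{orbits}. Working in $M_{12}$ — the remaining charts then follow by applying the Weyl-group permutations, which permute the $M_J$ — the $(\C^*)^4$-action transported by $u_{12}$ to $\C^4$ becomes, after passing to the effective quotient, $(t_1,t_2,t_3)\cdot(z_1,z_2,z_3,z_4)=(t_1z_1,t_2z_2,t_3z_3,\tfrac{t_2t_3}{t_1}z_4)$. Restricting to the compact subtorus defines $\alpha_{12}:T^3\to T^4$ and yields exactly the equivariance identity $\varphi_{12}(tm)=\alpha_{12}(t)\varphi_{12}(m)$, which is item (2) in this chart. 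The weight vectors of this representation are $(1,0,0),(0,1,0),(0,0,1),(-1,1,1)$, and since none of them is a scalar multiple of another, they are pairwise linearly independent; by the Weyl symmetry the same holds in every chart, which is item (3). For item (4), the $T^4$-fixed points of $G_{4,2}$ are exactly the coordinate $2$-planes $\langle e_i,e_j\rangle$, $i<j$, and by~\eqref{moment-map} one has $\mu(\langle e_i,e_j\rangle)=\delta_{ij}$; since the $\delta_{ij}$ are precisely the six vertices of $\Delta_{4,2}$, the correspondence is a bijection.

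There is no substantial obstacle here: every ingredient is already available from the earlier sections, and the remaining work is bookkeeping. The one point I would treat carefully is item (2): one must fix, once and for all, an identification of the effective torus acting on $G_{4,2}$ with $T^3$, and then check that the homomorphisms $\alpha_{ij}$ obtained in the different charts are genuinely related by the Weyl-group action on the charts, so that item (3) becomes a statement about a single $W$-orbit of homomorphisms rather than six unrelated ones. This compatibility is immediate from the explicit form of the Pl\"ucker transition maps (compare the coordinate change~\eqref{param} used in Proposition~\ref{orbits3}), but it is the step I would write out in full.
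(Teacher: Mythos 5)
Your proposal is correct and follows exactly the route the paper intends: the paper offers no explicit proof (it states the properties ``immediately follow'' from Sections~\ref{structure} and~\ref{orbit}), and your derivation from the Pl\"ucker charts, the density of the main stratum, the effective $T^3$-action $(t_1,t_2,t_3)\cdot(z_1,z_2,z_3,z_4)=(t_1z_1,t_2z_2,t_3z_3,\tfrac{t_2t_3}{t_1}z_4)$ from Proposition~\ref{orbits}, and the moment-map formula~\eqref{moment-map} matches the data the paper itself records immediately afterwards (e.g.\ $\alpha_{12}$ and the weights $(1,0,0),(0,1,0),(0,0,1),(-1,1,1)$ in the subsequent lemma). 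Your extra care about fixing one identification of the effective torus across charts via the Weyl action is a sensible refinement of a point the paper leaves implicit, not a deviation.
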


Denote by $C_{ij}$ the cone over the weight vectors in the chart $M_{ij}$ and by $C_{\Delta _{4,2},\delta_{ij}}$ the cone of the octahedron $\Delta _{4,2}$ at the vertex $\delta _{ij}$. The moment map in this local coordinates satisfies:

\begin{lem}
 For any chart $(M_{ij}, \varphi _{ij})$ there exist the maps  $\psi _{ij} : C_{ij}\to C_{\Delta _{4,2},\delta_{ij}}$ and $\mu _{ij} : \C ^{n}\to C_{ij}$ such that for the moment $\mu$ on $M_{ij}$ it holds
\[
\mu  =  \psi _{ij}\circ \mu _{ij}\circ \varphi _{ij}.
\]
\end{lem}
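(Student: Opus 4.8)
The plan is to exploit the local model for the moment map in each Plücker chart $(M_{ij},\varphi_{ij})$ that has already been established in Proposition~\ref{orbits} and in the earlier lemmas of Section~\ref{structure}, together with the factorization of the moment map through Plücker coordinates. Working in a fixed chart, say $M_{12}$ (the general chart follows by applying the Weyl group action which permutes the charts), a point $X$ is represented by the matrix with identity in the rows indexed by $\{1,2\}$ and free entries $(a_{1},a_{2},a_{3},a_{4})$ in the rows indexed by $\{3,4\}$, so $\varphi_{12}(X)=(a_{1},a_{2},a_{3},a_{4})\in\C^{4}$. The Plücker coordinates of such an $X$ are, up to the common normalization, $P^{12}=1$, $P^{13}=a_{3}$ (or the relevant entry), and similarly the remaining $P^{J}$ are monomials or $2\times2$ minors in the $a_{i}$'s; in particular each $|P^{J}(X)|^{2}$ is a nonnegative real-analytic function of the $a_{i}$'s. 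Define $\mu_{12}:\C^{4}\to C_{12}$ to be the map $(a_{1},\dots,a_{4})\mapsto \bigl(|P^{13}(X)|^{2},|P^{14}(X)|^{2},\dots\bigr)$ composed with the linear identification that sends the coordinate ray associated to $P^{J}$ to the ray of $C_{12}$ spanned by the corresponding weight vector $\delta_{J}-\delta_{12}$; concretely, $\mu_{12}$ records the ``barycentric weights'' $|P^{J}(X)|^{2}$ attached to the edge directions emanating from $\delta_{12}$.

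Next I would define $\psi_{12}:C_{12}\to C_{\Delta_{4,2},\delta_{12}}$. Since $C_{12}$ is the cone spanned by the five weight vectors at $\delta_{12}$ (the edge directions of $\Delta_{4,2}$ at that vertex), and $C_{\Delta_{4,2},\delta_{12}}$ is by definition the cone of the octahedron at $\delta_{12}$ translated to the origin, there is a natural surjection: given a point of $C_{12}$ written as a nonnegative combination $\sum_{J\neq 12}\lambda_{J}(\delta_{J}-\delta_{12})$ of the edge directions, send it to the point $\sum_{J\neq 12}\lambda_{J}(\delta_{J}-\delta_{12})$ regarded inside $\R^{4}$ (or, if one wants $\psi_{12}$ to be the normalized/projective version, divide by $\sum\lambda_{J}$ and reattach at $\delta_{12}$). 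The content of the lemma is then the identity
\[
\mu(X)=\psi_{12}\bigl(\mu_{12}(\varphi_{12}(X))\bigr),
\]
which is exactly the statement that formula~\eqref{moment-map} — the weighted barycenter $\bigl(\sum_{J}|P^{J}(X)|^{2}\delta_{J}\bigr)/\bigl(\sum_{J}|P^{J}(X)|^{2}\bigr)$ — can be rewritten, after pulling out the $\delta_{12}$ term (recall $P^{12}(X)=1$ in this chart), as $\delta_{12}$ plus a normalized nonnegative combination of the edge vectors $\delta_{J}-\delta_{12}$ with coefficients $|P^{J}(X)|^{2}$. So the verification reduces to the elementary algebraic rearrangement
\[
\mu(X)-\delta_{12}=\frac{\sum_{J\neq 12}|P^{J}(X)|^{2}\,(\delta_{J}-\delta_{12})}{1+\sum_{J\neq 12}|P^{J}(X)|^{2}},
\]
which holds tautologically, and one then absorbs the normalization into the definition of $\psi_{12}$ (making it the radial projection of the cone onto the appropriate slice followed by translation by $\delta_{12}$).

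The only genuine point requiring care — and the step I expect to be the main obstacle — is checking that $\mu_{12}$ really does land in the cone $C_{12}$ (equivalently, that the weight vectors $\delta_{J}-\delta_{12}$ for the five $J\neq 12$ positively span exactly the cone at the vertex $\delta_{12}$, with the two-vertex obstruction of Lemma~\ref{polytopes}, item (6), reflected correctly), and that $\psi_{12}$ is well defined, i.e.\ independent of the way a cone point is written as a combination of the (not linearly independent) five edge directions. For well-definedness one uses that the values actually occurring, namely $(|P^{13}|^{2},\dots,|P^{34}|^{2})$ with $P^{12}=1$, are constrained by the Plücker relation $P^{12}P^{34}-P^{13}P^{24}+P^{14}P^{23}=0$, so that the image of $\mu_{12}$ lies on the quadric hypersurface inside $C_{12}$ on which the barycentric map $\psi_{12}$ descends unambiguously to $C_{\Delta_{4,2},\delta_{12}}$; alternatively, one simply \emph{defines} $\psi_{12}$ on the image of $\mu_{12}$ by the displayed formula and extends it continuously to the whole cone, which suffices for the stated conclusion. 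Finally, the identity $\mu=\psi_{ij}\circ\mu_{ij}\circ\varphi_{ij}$ for the remaining five charts follows formally by transport of structure under the permutation $\sigma\in S_{4}$ carrying $\{1,2\}$ to $\{i,j\}$, since $\mu$ is $S_{4}$-equivariant, $\varphi_{ij}$ is obtained from $\varphi_{12}$ by that permutation, and the cones $C_{ij}$, $C_{\Delta_{4,2},\delta_{ij}}$ are the $\sigma$-images of $C_{12}$, $C_{\Delta_{4,2},\delta_{12}}$.
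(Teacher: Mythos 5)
Your proposal is correct in substance and rests on exactly the identity the paper uses, namely that with $P^{ij}=1$ in the chart one has $\mu(X)-\delta_{ij}=\bigl(\sum_{J\neq ij}|P^{J}(X)|^{2}(\delta_{J}-\delta_{ij})\bigr)/\bigl(1+\sum_{J\neq ij}|P^{J}(X)|^{2}\bigr)$; but the way you package this into the factorization $\mu=\psi_{ij}\circ\mu_{ij}\circ\varphi_{ij}$ differs from the paper's. First, the paper's $C_{ij}$ is not the cone in $\R^{4}$ over the five vectors $\delta_{J}-\delta_{ij}$ (and note $\delta_{34}-\delta_{12}$ is the diagonal of the octahedron at $\delta_{12}$, not an edge direction): it is the cone in $\R^{3}$ over the four weight vectors $\Lambda_{12}^{1},\dots,\Lambda_{12}^{4}$ of the characteristic homomorphism $\alpha_{12}:T^{3}\to T^{4}$; the two cones are linearly isomorphic (both are cones over a square, with $\Lambda^{1}+\Lambda^{4}=\Lambda^{2}+\Lambda^{3}$ matching $V_{1}+V_{4}=V_{2}+V_{3}=V_{5}$), so your construction transports over, but as written it does not land in the object the lemma refers to. Second, and more interestingly, the paper sidesteps the well-definedness problem you correctly flag (a cone point has many expressions as a nonnegative combination of five dependent generators, and the naive normalization $\lambda\mapsto\delta_{12}+\sum\lambda_{J}V_{J}/(1+\sum\lambda_{J})$ depends on the expression): it absorbs the fifth coefficient $|z_{1}z_{4}-z_{2}z_{3}|^{2}=|P^{34}|^{2}$ into the second and third via $V_{5}=V_{2}+V_{3}$, defines $\mu_{12}$ using the square roots $\kappa_{0}\cdot(|z_{1}|,\sqrt{|z_{2}|^{2}+|P^{34}|^{2}},\sqrt{|z_{3}|^{2}+|P^{34}|^{2}},|z_{4}|)$ mapped onto the four weights, and takes $\psi_{12}$ to be the induced map $r\Lambda_{12}^{j}\mapsto\delta_{12}+r^{2}V_{j}$, which it asserts is a homeomorphism of cones. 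Your alternative fix --- using the Pl\"ucker relation $P^{12}P^{34}-P^{13}P^{24}+P^{14}P^{23}=0$ to see that the realizable coefficient vectors are constrained, and defining $\psi$ on the image and extending --- is legitimate for the bare existence statement, but it yields a weaker conclusion than the paper's construction, whose point (in the $(2n,k)$-manifold framework) is that $\mu_{ij}$ resembles the moment map of a linear torus representation and $\psi_{ij}$ is a cone homeomorphism. So: the argument goes through, but you should replace your $C_{ij}$ by the weight cone of $\alpha_{ij}$ and either adopt the paper's redistribution trick or verify carefully that your $\psi$ descends on the image of $\mu_{ij}$.
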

\begin{proof}
 Without loss of generality we provide the proof in the  chart $M_{12}$. The homomorphism $\alpha _{12} : T^{3}\to T^{4}$ is given by  
\[\alpha _{12}(t_1,t_2,t_3) = (t_1, t_2, t_3, \frac{t_2t_3}{t_1}).\] 
Therefore the corresponding  weight vectors are 
\[
\Lambda _{12}^{1} = (1,0,0), \Lambda _{12}^{2} = (0,1,0), \Lambda _{12}^{3} = (0,0,1), \Lambda _{12}^{4} = (-1,1,1).
\]
Let $V_j = \delta _{pq} - \delta _{12}$, where $\{p,q\}\neq \{1,2\}$,\; $1\leq j\leq 5$ . We assume here that $\{p,q\}\subset\{1,\ldots ,4\}$, $\{p,q\}\neq \{1,2\}$ are ordered lexicographically. Note that in such ordering it holds $V_5 = V_2+V_3 = V_1 + V_4$. 
Then  $\mu \circ \varphi _{12}^{-1} : \C ^{4} \to \Delta _{4,2}$ can be written as 
\[
(z_1,z_2,z_3,z_4) \to  \delta _{12}+ \frac{\sum _{j=1}^{4}|z_j|^{2}V_j +|z_1z_4-z_2z_3|^2V_5}{1+\sum _{j=1}^{4}|z_j|^2+|z_1z_4-z_2z_3|^2}
\]

Put ${\bf z} = (z_1,z_2,z_3,z_4)$ and consider $\xi _{12}: \C ^{4}\to \R ^{4}_{\geq 0}$ given by
\[
\xi _{12}({\bf z}) = \kappa _{0}({\bf z})\cdot (|z_1|, \sqrt{|z_2|^2+|z_1z_4-z_2z_3|^2}, \sqrt{|z_3|^2+|z_1z_4-z_2z_3|}, |z_4|),
\]
\[
\kappa _{0}({\bf z}) = \frac{1}{\sqrt{1+\sum_{j=1}^{4}|z_j|^2+|z_1z_4-z_2z_3|^2}}.
\]
There is the canonical map $\gamma _{12} : R_{\geq 0}^{4}\to C_{12}$ given  by
 \[\gamma _{12}(x_1,\ldots ,x_4) = \sum_{j=1}^4 x_j\Lambda_{12}^j. \] 
Define $\mu _{12} : \C ^4 \to C_{12}$ by
\[
\mu _{12} = \gamma _{12}\circ \xi _{12}.
\]
On the other hand let $C_{\Delta _{4,2},\delta _{12}}$ be the cone of $\Delta _{4,2}$ at the vertex $\delta _{12}$ and consider the  map $\psi _{12}^{j} : \{ r\Lambda _{12}^{j} | r\geq 0\} \to C_{\Delta _{4,2}, \delta _{12}}$ given by
\[
\psi _{12}^{j}(r\Lambda _{12}^j) = \delta_{12}+ r^2V_j, 1\leq j\leq 4.
\]  
There is the  canonical map $\eta _{12} : R_{\geq 0}^{4} \to C_{\Delta _{4,2},\delta _{12}}$  defined by
\[
\eta _{12}(x_1,\ldots, x_4) = \delta _{12}+\sum _{j=1}^{4}x_j^2V_j.
\]
We obtain  the map  $\psi _{12} : C_{12} \to C_{\Delta _{4,2}, \delta _{12}}$ defined by $\eta _{12} = \psi _{12}\circ \gamma_{12}$. The map $\psi _{12}$ gives  the  homeomorphism of the cones $C_{12}$ and $C_{\Delta _{4,2}, \delta _{12}}$. 
In this way we obtain 
\[
\mu  =   \psi _{12}\circ \mu _{12}\circ \varphi _{12}.
\] 
\end{proof}
From the definition of the moment map it follows:
\begin{prop}
Let $P$ be an admissible polytope of $G_{4,2}$  with the set of vertices $\delta _{ij}$ where $(i,j)\in S$ for a fixed subset $S\subset \{(1,2),\ldots ,(3,4)\}$. Then the stratum which correspond to $P$ can be obtained as
\[
W_{P} = (\cap _{(i,j)\in S}M_{ij})\cap _{(i,j)\notin S}(G_{4,2}-M_{ij}).
\] 
\end{prop}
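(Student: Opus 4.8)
The plan is to read off the statement directly from \thmref{moment-map-polytope} together with the moment-map description of the strata. Recall that, by the second of the three equivalent definitions of a Grassmann stratum, two points $X_1,X_2\in G_{4,2}$ lie in the same stratum exactly when $\mu(\overline{\OO_{\C}(X_1)})=\mu(\overline{\OO_{\C}(X_2)})$; consequently the stratum $W_P$ over an admissible polytope $P$ is, by definition, $\{X\in G_{4,2}:\mu(\overline{\OO_{\C}(X)})=P\}$. So the whole task is to characterize, in terms of the Pl\"ucker charts, those $X$ whose orbit closure has moment image equal to $P$.

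First I would invoke \thmref{moment-map-polytope}, which says that the vertex set of $\mu(\overline{\OO_{\C}(X)})$ is exactly $\{\delta_J:P^J(X)\neq 0\}$. Since a convex polytope is the convex hull of its vertices, and since the six points $\delta_{12},\dots,\delta_{34}$ are pairwise distinct and in convex position, the assignment "admissible polytope $\mapsto$ its set of vertex-indices" is a bijection onto the admissible index sets; hence $\mu(\overline{\OO_{\C}(X)})=P$ holds if and only if $\{(i,j):P^{ij}(X)\neq 0\}=S$, i.e. if and only if $P^{ij}(X)\neq 0$ for every $(i,j)\in S$ and $P^{ij}(X)=0$ for every $(i,j)\notin S$. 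Then I would simply translate these (non)vanishing conditions into chart membership: by the very definition $M_{ij}=\{X\in G_{4,2}:P^{ij}(X)\neq 0\}$, so $G_{4,2}-M_{ij}=\{X:P^{ij}(X)=0\}$, and the displayed condition becomes literally $X\in\bigcap_{(i,j)\in S}M_{ij}$ together with $X\in\bigcap_{(i,j)\notin S}(G_{4,2}-M_{ij})$, which is the asserted formula for $W_P$.

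I do not expect a genuine obstacle: the statement is essentially a repackaging of \thmref{moment-map-polytope}, the definition of $W_P$, and the definition of the charts $M_{ij}$. The only point that deserves an explicit sentence is the bijectivity just mentioned — that distinct admissible index sets $S$ span distinct convex polytopes in $\Delta_{4,2}$, so that the vertex data recovers the polytope — and this is immediate from the fact that $\delta_{12},\dots,\delta_{34}$ are the six distinct vertices of the octahedron and are in convex position.
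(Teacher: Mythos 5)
Your proof is correct and is exactly the elaboration the paper intends: the paper offers no argument beyond the remark that the identity ``follows from the definition of the moment map,'' and your unpacking via Theorem~\ref{moment-map-polytope}, the moment-map definition of a stratum, and the definition of the charts $M_{J}$ is the right way to fill that in. Your one explicit care point --- that distinct vertex subsets of the octahedron span distinct polytopes because the six $\delta_{ij}$ are in convex position --- is precisely the detail worth recording.
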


Recall  that whenever $P\neq \Delta _{4,2}$ the corresponding stratum $W_{P}$ on $G_{4,2}$ consists of one $(\C ^{*})^4$-orbit. By Theorem~\ref{moment-map-polytope} the boundary of any $(\C ^{*})^4$-orbit on $G_{4,2}$ consists of the orbits of smaller dimensions and it holds:
\begin{lem}
\begin{itemize}
\item The characteristic function $\chi$ is constant on $W_{P}$ for any stratum $W_{P}$ on $G_{4,2}$,
\item If $W_{P^{'}}\subset \overline {W_{P}}$ then $\chi (W_{P}) \subset \chi (W_{P ^{'}})$.
\end{itemize}
\end{lem}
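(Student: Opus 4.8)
The plan is to unwind the definition of $\chi$, which assigns to a point of $\mathfrak{Q}$ (equivalently, to a stratum) the stabilizer in $T^4$ of any point of $G_{4,2}$ lying over it; both assertions then become statements about how stabilizers behave along $(\C^{*})^4$-orbits and under passing to orbit closures. Note first that the diagonal circle $S^1=\{(t,t,t,t)\}\subset T^4$ fixes every point of $G_{4,2}$ (rescaling all rows of a frame does not change the subspace), so every value of $\chi$ contains $S^1$; this is harmless for everything below since $(\C^{*})^4$ is abelian.

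For the first bullet I would distinguish $P\neq\Delta_{4,2}$ and $P=\Delta_{4,2}$. If $P\neq\Delta_{4,2}$, then by Corollary~\ref{Gr_other_strata} the stratum $W_P$ is a single $(\C^{*})^4$-orbit $\OO_{\C}(x)$; writing $y=gx$ with $g\in(\C^{*})^4$ and using commutativity, $ty=y$ is equivalent to $t\in\operatorname{Stab}_{(\C^{*})^4}(x)$, whence $\operatorname{Stab}_{T^4}(y)=T^4\cap\operatorname{Stab}_{(\C^{*})^4}(x)=\operatorname{Stab}_{T^4}(x)$ does not depend on the chosen point, so $\chi$ is constant on $W_P$. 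If $P=\Delta_{4,2}$, then $W_{\Delta_{4,2}}$ is the main stratum; in any chart $M_J$ its points are those of the hypersurfaces $z_1z_4/z_2z_3=c$ with all $z_i\neq0$ (Proposition~\ref{orbits}), and a one-line computation with the matrix normalisation shows that the only elements of $T^4$ fixing such a point are those in the diagonal $S^1$. Hence $\chi\equiv S^1$ there, again constant.

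For the second bullet, if $P=\Delta_{4,2}$ then $\chi(W_{\Delta_{4,2}})=S^1$ is contained in $\chi(W_{P'})$ for every stratum, so the inclusion is automatic (and indeed $\overline{W_{\Delta_{4,2}}}=G_{4,2}$). So assume $P\neq\Delta_{4,2}$ and let $W_{P'}\subset\overline{W_P}=\overline{\OO_{\C}(x)}$. By Theorem~\ref{moment-map-polytope} the $(\C^{*})^4$-orbits inside $\overline{\OO_{\C}(x)}$ correspond bijectively to the open faces of $P$, so $P'$ is a face of $P$ and $W_{P'}$ is the corresponding orbit, sitting in the boundary of $\OO_{\C}(x)$. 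Pick $x'\in W_{P'}$; from the toric structure of $\overline{\OO_{\C}(x)}$, visible in the affine charts through the explicit orbit and boundary descriptions obtained in the previous sections, there is a one-parameter subgroup $\lambda:\C^{*}\to(\C^{*})^4$ with $\lim_{s\to0}\lambda(s)\cdot x=x'$. Then for $t\in\operatorname{Stab}_{T^4}(x)=\chi(W_P)$, commutativity of $(\C^{*})^4$ gives
\[
t\cdot x'=t\cdot\lim_{s\to0}\lambda(s)\cdot x=\lim_{s\to0}\lambda(s)\cdot(t\cdot x)=\lim_{s\to0}\lambda(s)\cdot x=x',
\]
so $t\in\operatorname{Stab}_{T^4}(x')=\chi(W_{P'})$, i.e. $\chi(W_P)\subseteq\chi(W_{P'})$.

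The only step that is not pure bookkeeping with stabilizers of an abelian group is the existence of the degenerating one-parameter subgroup $\lambda$ taking a prescribed point of the large orbit to a prescribed point of the small one in its closure; this is the standard structure theory of toric varieties (limit points of a torus orbit are reached along one-parameter subgroups of the torus), and for $G_{4,2}$ it can alternatively be read off directly from the matrix presentations, since in each chart $M_J$ the orbit closures are explicit and the faces of $P$ are obtained by letting suitable coordinates tend to $0$.
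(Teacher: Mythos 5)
Your argument is correct and is essentially the argument the paper intends: the lemma appears there with no separate proof, immediately after the paper recalls exactly the two facts you use (every stratum with $P\neq\Delta_{4,2}$ is a single $(\C^{*})^4$-orbit, and the closure of an orbit decomposes into lower-dimensional orbits), and the constancy of the stabilizer on a stratum is already built into the paper's definition of $\chi$. One intermediate claim is stated too strongly: for a \emph{prescribed} $x'\in W_{P'}$ there need not exist a one-parameter subgroup $\lambda$ with $\lim_{s\to0}\lambda(s)\cdot x=x'$ (already for $(\C^{*})^2$ acting on $\C^2$ with $x=(1,1)$, only $(1,0)$, not an arbitrary $(z,0)$ with $z\neq 0$, arises as such a limit); this is harmless here, since by your first bullet it suffices to reach a single point of $W_{P'}$, or, more directly, each $t\in\chi(W_P)$ fixes the orbit $\OO_{\C}(x)$ pointwise by abelianness and its fixed-point set is closed, hence it fixes $\overline{\OO_{\C}(x)}\supset W_{P'}$ pointwise.
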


Since any stratum $W_{P}$ is invariant under the action of compact torus $T^4$ on $G_{4,2}$ and the  moment map $\mu : G_{4,2}\to \Delta _{4,2}$ is $T^4$ invariant as well, we have the mapping $\widehat{\mu} : W_{P}\to \stackrel{\circ}{P}$ for any stratum $W_{P}$. Moreover it holds: 
\begin{lem}
 The mapping $\mu : W_{P}/T^4\to \stackrel{\circ}{P}$ is a fibration for any stratum $W_{P}$.
\end{lem}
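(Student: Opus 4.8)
The plan is to treat the unique eight-dimensional stratum $W_{\Delta_{4,2}}$ separately from the lower ones, obtaining in each case an explicit (in fact global) product description of the induced map $\widehat{\mu}\colon W_P/T^4\to\stackrel{\circ}{P}$. For $P=\Delta_{4,2}$ I would use the structure of the main stratum recalled in Section~\ref{orbit}: $W_{\Delta_{4,2}}$ is $T^4$-equivariantly homeomorphic to $T^3\times\stackrel{\circ}{\Delta_{4,2}}\times(\C-\{0,1\})$, with $T^4$ acting through the projection $T^4\to T^3$ on the first factor only, and under this identification $\mu$ becomes the projection onto the factor $\stackrel{\circ}{\Delta_{4,2}}$ — this is precisely the assertion that an element of $W_{\Delta_{4,2}}$ is determined, modulo its $T^4$-orbit, by the pair consisting of its moment image $\mu(X)$ and the parameter $c\in\C-\{0,1\}$ of Proposition~\ref{orbits}. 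Passing to the quotient, $W_{\Delta_{4,2}}/T^4\cong\stackrel{\circ}{\Delta_{4,2}}\times(\C-\{0,1\})$ and $\widehat{\mu}$ is the first projection, a trivial fibre bundle with fibre $\C-\{0,1\}$.

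For $P\neq\Delta_{4,2}$, Corollary~\ref{Gr_other_strata} says that $W_P$ is a single $(\C^*)^4$-orbit $\OO_{\C}(X)$. Since $T^4$ is abelian, every point of $W_P$ has the same stabilizer $H\subseteq T^4$, so $T^4$ acts through a free action of the torus $T^4/H$ and $W_P\to W_P/T^4$ is a principal $(T^4/H)$-bundle. Using $(\C^*)^4=T^4\times\R^4_{+}$ we have $W_P=T^4\cdot(\R^4_{+}\cdot X)$; the step to carry out is that the positive part $\R^4_{+}\cdot X$ meets each $T^4$-orbit of $W_P$ in exactly one point, so that $\mu$ identifies $W_P/T^4$ with $\R^4_{+}\cdot X$. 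The input here is the classical behaviour of the moment map on the toric variety $\overline{W_P}=\overline{\OO_{\C}(X)}$ of Theorem~\ref{moment-map-polytope}: the positive real locus of its dense orbit is a copy of $\R^{d}_{+}$, $d=\dim\stackrel{\circ}{P}$, which $\mu$ carries homeomorphically onto $\stackrel{\circ}{P}$. Combining, $\widehat{\mu}\colon W_P/T^4\to\stackrel{\circ}{P}$ is a homeomorphism, hence a (one-point-fibre) fibre bundle; this is consistent with the dimension count, since in each of the cases of Theorem~\ref{orbits_chart} one has $\dim_{\R}W_P=2\dim_{\R}\stackrel{\circ}{P}$ and $\dim(T^4/H)=\dim_{\R}\stackrel{\circ}{P}$.

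I expect the main obstacle to be this last point in the second case: checking that $\mu$ genuinely separates the $T^4$-orbits inside one $(\C^*)^4$-orbit, i.e. that $\widehat{\mu}$ is injective and not merely surjective. Via the splitting $(\C^*)^4=T^4\times\R^4_{+}$ this reduces to the identity $\R^4_{+}\cap(T^4\cdot H_{\C})=\R^4_{+}\cap H_{\C}$ for the complex stabilizer $H_{\C}$ of $X$, together with the injectivity of $\mu$ on $\R^4_{+}\cdot X\cong\R^{d}_{+}$; the identity component of $H_{\C}$ is a coordinate subtorus and its possible finite part is disjoint from $\R^4_{+}$, so this needs only a short additional check. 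Once the two product descriptions are in place, local triviality of $\widehat{\mu}$ — and hence the statement that it is a fibration — is immediate.
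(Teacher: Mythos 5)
Your proposal is correct and follows essentially the same route as the paper: for $P\neq\Delta_{4,2}$ the stratum is a single $(\C^{*})^{4}$-orbit (the dense orbit of a toric variety of dimension $2\dim P$) whose quotient by the compact torus is carried homeomorphically onto $\stackrel{\circ}{P}$ by the moment map, while for the main stratum the parametrization by $c\in\C-\{0,1\}$ from Proposition~\ref{orbits} exhibits $\widehat{\mu}$ as a trivial fibration with fibre $\C-\{0,1\}$. Your extra care about why $\mu$ separates $T^4$-orbits inside one $(\C^{*})^{4}$-orbit (via $(\C^{*})^{4}=T^{4}\times\R^{4}_{+}$) is a worthwhile elaboration of a step the paper leaves implicit, but it is not a different argument.
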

\begin{proof}
It follows from Theorem~\ref{orbits_chart} that for  a stratum $W_{P}$ the torus $(\C ^{*})^{\dim P}$ acts freely on $W_{P}$. Thus, the stratum $W_{P}$ which consists of one $(\C ^{*})^{4}$-orbit is a toric manifold of dimension $2\dim P$. Therefore, $W_{P}/T^{\dim P} = W_{P}/T^4$ is homeomorphic to $\stackrel{\circ}{P}$. The only non-orbit stratum is the main stratum $W_{\Delta _{4,2}}$ and by Proposition~\ref{orbits}, we see that $\mu : W_{\Delta _{4,2}}/T^4 \to \stackrel{\circ}{\Delta _{4,2}}$ is a fibration with a fiber homeomorphic to $\C -\{0,1\}$. 
\end{proof}
\begin{rem}
Note that for any $X\in G_{4,2}$ we also have   $\dim   \OO _{\C}(X) = 2\dim P$, where $\mu (\OO _{\C}(X)) = \stackrel{\circ}{P}$.
\end{rem}

The following properties of the action of $T^4$ on $G_{4,2}$ turns out to be important and they are contained into the foundations of the theory of $(2n,k)$-manifolds.

Let $h: \R ^{4}\to \R$ be a linear function, $h(x) = <x,\nu>$ for a fixed vector $\nu \in \R^4$.  It can be considered the function $\mu _{h}: G_{4,2}\to \R$ defined by $\mu _{h} = h\circ \mu$. The following holds. 

\begin{prop}
There is a linear function   $h : \R^{4}\to \R$ such that:
\begin{enumerate}
\item $h(\delta _{ij})\neq h(\delta_{kl})$ for any two vertices $\delta _{ij}$ and $\delta _{kl}$ of $\Delta _{4,2}$,
\item the composition $\mu _{h} : G_{4,2}\to \R$ is a Morse function whose critical points coincides with the fixed points for $T^3$-action on $G_{4,2}$.
\end{enumerate}
\end{prop}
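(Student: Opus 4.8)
The plan is to construct $h$ explicitly as a "generic" linear functional on $\mathbb R^4$ and then use the standard Morse-theoretic machinery for moment maps. First I would recall that the vertices of $\Delta_{4,2}$ are the six points $\delta_{ij}$, $1\le i<j\le 4$, and that, by the remark following Theorem~\ref{moment-map-polytope} together with~\cite{Kir}, the moment map $\mu$ comes from restricting the Fubini--Study moment map on $\mathbb C P^5$ via the Plücker embedding; in particular $\mu_h=h\circ\mu$ is, for a suitable choice of $\nu$, the Hamiltonian of a circle subgroup of $T^4$ acting on $G_{4,2}$. To arrange~(1) I would pick $\nu=(\nu_1,\nu_2,\nu_3,\nu_4)$ with rationally independent entries (or just entries in sufficiently general position), so that the six numbers $h(\delta_{ij})=\nu_i+\nu_j$ are pairwise distinct; this is an open dense condition on $\nu$, and it is elementary to check that for such $\nu$ no two of the $\binom{4}{2}=6$ pairwise sums coincide.

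Next I would address~(2). The key input is the local model for $G_{4,2}$ near a fixed point $X_J$, already worked out in the excerpt: by the local linearization (slice) theorem and the chart description in Section~\ref{structure}, a neighbourhood of $X_J$ is $T^3$-equivariantly diffeomorphic to a neighbourhood of the origin in $\mathbb C^4=T_{X_J}G_{4,2}$, on which $T^3$ acts with the weights $\Lambda_J^1,\dots,\Lambda_J^4$ displayed in the last lemma (for $J=\{1,2\}$ these are $(1,0,0),(0,1,0),(0,0,1),(-1,1,1)$). In these coordinates $\mu$ is, up to the affine shift by $\delta_J$ and a positive rescaling, $\sum_j |z_j|^2 V_j + |z_1z_4-z_2z_3|^2 V_5$ with $V_j=\delta_{p_jq_j}-\delta_J$; composing with $h$ gives $\mu_h$ near $X_J$ as a positive-definite-type quadratic form $\sum_j h(V_j)|z_j|^2 + h(V_5)|z_1z_4-z_2z_3|^2$ plus higher order terms. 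Since condition~(1) forces each $h(V_j)=h(\delta_{p_jq_j})-h(\delta_J)\neq 0$, the differential of $\mu_h$ vanishes at the origin and the Hessian (in the real coordinates $\mathrm{Re}\,z_j,\mathrm{Im}\,z_j$) is the nondegenerate form $\sum_j h(V_j)(x_j^2+y_j^2)$ — the quartic term $|z_1z_4-z_2z_3|^2$ does not contribute to the Hessian at $0$. Hence each fixed point is a nondegenerate critical point of $\mu_h$ (of even index, equal to twice the number of $j$ with $h(V_j)<0$).

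It remains to show there are no other critical points. Here I would invoke that $\mu_h$ is the moment map of the circle action $S^1_\nu\subset T^4$ on $G_{4,2}$ (where $S^1_\nu$ is the one-parameter subgroup with tangent vector $\nu$): the critical set of a Hamiltonian circle action moment map is exactly the fixed-point set of that circle. Since $\nu$ has rationally independent coordinates, $S^1_\nu$ is dense in $T^4$, so its fixed-point set coincides with $\mathrm{Fix}(T^4)=\mathrm{Fix}(T^3)$, which by the previous lemma is the six fixed points mapping bijectively to the vertices of $\Delta_{4,2}$. Combined with the local nondegeneracy just established, this shows $\mu_h$ is a Morse function whose critical points are precisely the $T^3$-fixed points, proving~(2). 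The one point requiring care — and the main obstacle — is justifying rigorously that $\mu_h$ is genuinely the moment map of $S^1_\nu$ on $G_{4,2}$ (so that the "critical points $=$ circle fixed points" principle applies) rather than arguing purely from the explicit coordinate formula; the cleanest route is to cite the Kähler/moment-map compatibility in the remark after Theorem~\ref{moment-map-polytope}, which identifies $\mu$ restricted to the invariant subvariety $G_{4,2}\subset\mathbb C P^5$ as an honest moment map, and then note that pairing a moment map with $\nu$ yields the Hamiltonian of the corresponding circle.
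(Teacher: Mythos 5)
Your proposal is correct in substance but follows a genuinely different route from the paper. The paper simply takes the concrete covector $\nu=(1,2,4,8)$, checks by hand that the six sums $\nu_i+\nu_j$ are distinct, and then performs a direct computation in the chart $M_{12}$: it writes $h\circ\mu$ explicitly in the real coordinates $x_i,y_i$, verifies that all first partials vanish only at the origin (the unique fixed point in that chart), and computes the Hessian there to be a nondegenerate diagonal matrix; since the six charts cover $G_{4,2}$ and are permuted by the coordinate permutations, this settles both the location of the critical points and their nondegeneracy in one elementary (if laborious) calculation. You instead split the argument: the local normal form of $\mu$ near a fixed point gives the Hessian $\sum_j h(V_j)(x_j^2+y_j^2)$ with $h(V_j)\neq 0$ forced by condition (1) (and you correctly observe that the quartic term and the denominator do not affect the Hessian at the origin), while the absence of other critical points is obtained from the Hamiltonian identity $d\mu^{\nu}=\iota_{X_\nu}\omega$, so that $\mathrm{Crit}(\mu_h)$ equals the fixed set of the closure of the one-parameter subgroup generated by $\nu$, which for generic $\nu$ is all of $T^4$. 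This buys you a conceptual proof with no derivative computations, at the cost of having to justify that the formula for $\mu$ really is a symplectic moment map for the restricted Fubini--Study form --- which the paper's remark after Theorem~\ref{moment-map-polytope} (via Kirwan) does supply, as you note. Two small points to tighten: if $\nu$ has rationally independent entries the one-parameter subgroup is a dense line, not a circle, so either phrase the argument in terms of the vanishing locus of the vector field $X_\nu$ (equivalently the fixed set of the closure of the subgroup), or choose $\nu$ integral and generic, e.g.\ $(1,2,4,8)$ as the paper does, noting that $\langle\epsilon_i-\epsilon_j,\nu\rangle\neq 0$ for all isotropy weights already guarantees that the circle it generates has the same fixed set as $T^4$; and you should record explicitly that condition (1) implies $\nu_i\neq\nu_j$ for $i\neq j$ (compare $h(\delta_{13})$ with $h(\delta_{23})$), which is what the nonvanishing of the weight pairings actually requires.
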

\begin{proof}
Take $\nu = (1,2,4,8)$ and consider the corresponding function $h : \R ^{4}\to \R$ defined by $h(x)= x_1+2x_2+4x_3+8x_4$. It checks directly that $h(\delta _{ij})\neq h(\delta_{kl})$ for any two verices $\delta _{ij}$ and $\delta _{kl}$ of $\Delta _{4,2}$. Without loss of generality it is enough to show in the chart $M_{12}$ that $h\circ \mu$ is a Morse function whose critical point is $(0,0,0,0)$. If  $X\in M_{12}$ it can be represented by the matrix
\[
A_{X}=\left(\begin{array}{cc}
1 & 0\\
0 & 1\\
z_1 & z_3\\
z_2 & z_4 
\end{array}\right),   
\]
where $z_i=x_i+iy_i$, $1\leq i\leq 4$.
The computation gives that 
\[
(h \circ \mu)(X)  = \frac{A(X)}{B(X)+C(X)},
\]
where
\[
A(X)= 1+ 6(x_1^2+y_1^2)+10(x_2^2+y_2^2)+ 5(x_3^2+y_3^2) + 9(x_4^2+y_4^2) + 12C(X),
\]
\[
B(X)= 1+\sum_{i=1}^{4}(x_i^2+y_i^2),\]
\[
 C(X)=(x_1x_4-x_2x_3-y_1y_4+y_2y_3)^2+(x_1y_4-x_2y_3-x_3y_2+x_4y_1)^2.
\]
It follows that
\[
\frac{\partial (h\circ \mu )}{\partial x_i}(X) = a_i\cdot x_i\cdot A(X) - B(X)(2x_i + \frac{\partial C}{x_i}(X)),
\]
\[
\frac{\partial (h\circ \mu )}{\partial y_i}(X) = a_i\cdot y_i\cdot A(X) - B(X)(2y_i + \frac{\partial C}{y_i}(X)),
\]
where $a_1=12, a_2=20, a_3=10, a_4=18$. It implies that $\frac{\partial (h\circ \mu )}{\partial x_i}(X)=\frac{\partial (h\circ \mu )}{\partial y_i}(X)=0$ if and only if $x_i=y_i=0$, $1\leq i\leq 4$. Furthermore the direct computation shows that
\[
\frac{\partial ^{2}(h\circ \mu)}{\partial x_i^2}(X)=\frac{\partial ^{2}(h\circ \mu)}{\partial y_i^2}(X) = a_i,\; 1\leq i\leq 4,
\]
\[
\frac{\partial ^{2}(h\circ \mu)}{\partial x_ix_j}(X)=\frac{\partial ^{2}(h\circ \mu)}{\partial y_iy_j}(X)=0,\; i\neq j
\]
\[
\frac{\partial ^{2}(h\circ \mu)}{\partial x_iy_j}(X)=0, \; 1\leq i,j\leq 4
\]
Therefore the Hessian of the function $h\circ \mu$ is given by the diagonal matrix with non-zero diagonal entries, which implies that it is non-singular. 
\end{proof}

The following fact is standard:

\begin{lem}\label{embed}
Let  $F : G_{4,2}\to \R^{16}$ be defined by $F(A)= A\cdot (A^{*})^{T}$. The map $F$ gives an $T^{4}$-equivariant embedding of $G_{4,2}$ into $\R^{10}$ given by lower or upper diagonal matrix of $F(A)$.
\end{lem}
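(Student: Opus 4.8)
The plan is to recognise $F$ as the classical \emph{orthogonal–projection} model of the Grassmannian: if $X\in G_{4,2}$ is represented by a frame $A$ (a $4\times2$ matrix) with orthonormal columns, so that $\bar A^{\,T}A=(A^{*})^{T}A=I_{2}$, then $F(A)=A\,\bar A^{\,T}$ is the Hermitian projector of $\C^{4}$ onto $X$. With this identification in hand I would verify, in order, that $F$ is well defined on $G_{4,2}$, injective, a smooth embedding, and $T^{4}$-equivariant, and then note that only the entries on and below the diagonal of $F(A)$ carry information, which yields the asserted embedding into the smaller Euclidean space.

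\textbf{Well-definedness, the projector identity, injectivity.} Two orthonormal frames of the same $X$ differ by a right unitary factor $U\in U(2)$, and $F(AU)=AU\,\overline{AU}^{\,T}=AU\,\overline{U}^{\,T}\bar A^{\,T}=A\bar A^{\,T}=F(A)$ since $U\overline{U}^{\,T}=I_{2}$; hence $F$ descends to a map on $G_{4,2}$. The matrix $P:=F(A)$ is Hermitian, satisfies $P^{2}=A(\bar A^{\,T}A)\bar A^{\,T}=P$, and $PA=A$, so $\im P\supseteq X$ while $\im P\subseteq\operatorname{col}A=X$; thus $\im P=X$ and $P$ is exactly the orthogonal projector onto $X$. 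Injectivity is then immediate: $F(X_{1})=F(X_{2})=P$ forces $X_{1}=\im P=X_{2}$.

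\textbf{Embedding.} In each Pl\"ucker chart $M_{J}$ there is a canonical frame depending smoothly (indeed polynomially) on the local coordinates, so $F$ is smooth, and the standard check shows it is an immersion; since $G_{4,2}$ is compact and the target is Hausdorff, a continuous injective immersion is a smooth embedding. Because $P$ is Hermitian, $P_{ji}=\overline{P_{ij}}$, so $P$ is reconstructed from its lower- (or upper-) triangular part; restricting to those coordinates — the four real diagonal entries together with the six sub-diagonal complex entries, i.e. $\R^{16}$ (over $\R$ the symmetric matrix $AA^{T}$ has $10$ real entries) — preserves the embedding.

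\textbf{Equivariance.} The torus $T^{4}\subset U(4)$ acts on $\C^{4}$ by the diagonal matrices $D=\diag(t_{1},\dots,t_{4})$, hence on frames by $A\mapsto DA$, which induces the canonical action on $G_{4,2}$. Then
\[
F(DA)=DA\,\overline{DA}^{\,T}=D\,(A\bar A^{\,T})\,\bar D=D\,F(A)\,D^{-1},
\]
using $\bar D=D^{-1}$ for $|t_{i}|=1$. Entrywise $(DPD^{-1})_{ij}=t_{i}t_{j}^{-1}P_{ij}$, so conjugation by $D$ fixes the diagonal entries and scales each sub-diagonal entry by the character $t_{i}t_{j}^{-1}$; this is a (real-)linear $T^{4}$-action on the Euclidean target, for which $F$ is equivariant. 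I do not expect a genuine obstacle here — it is routine linear algebra — the only points needing a little care are the bookkeeping between orthonormal and arbitrary frames used elsewhere in the paper (for an arbitrary frame one takes $A(\bar A^{\,T}A)^{-1}\bar A^{\,T}$, which is again the projector and transforms in the same way under $A\mapsto DA$) and pinning down precisely the ambient Euclidean space together with the linear $T^{4}$-action on it.
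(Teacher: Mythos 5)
Your argument is correct and is the standard one: the paper itself offers no proof, simply declaring the lemma ``standard,'' and the orthogonal--projector model $X\mapsto P_X=A\bar A^{\,T}$ for an orthonormal frame $A$ (well defined since $F(AU)=F(A)$ for $U\in U(2)$, injective since $\im P_X=X$, equivariant since $F(DA)=D\,F(A)\,D^{-1}$) is exactly what is intended --- indeed the paper's subsequent Proposition on $\nu(A)=\diag(F(A))$ tacitly uses orthonormal frames in the same way. Your side remark about the target is also well taken: the Hermitian matrix $F(A)$ is determined by its lower-triangular part, which carries $4+2\cdot 6=16$ real coordinates, so the stated $\R^{10}$ is the count for the real symmetric case (or counts the ten matrix positions rather than real parameters); that is an imprecision in the lemma's statement, not a gap in your proof.
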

In this way we obtain the map $\nu : G_{4,2}\to \R^{4}$ given by $\nu (A) = \diag(F(A))$ what is   the diagonal of $F(A)$.
\begin{prop}\label{diag}
The map $\nu : G_{4,2}\to \R^{4}$ coincides with the standard moment map.
\end{prop}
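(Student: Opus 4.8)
The plan is to compute both maps on a common matrix representative of a point of $G_{4,2}$ and compare them coordinate by coordinate. Recall that for $F(A)=A(A^{*})^{T}$ to descend to a well-defined map on $G_{4,2}$ one must represent $X\in G_{4,2}$ by a matrix $A$ with orthonormal columns, so that $A^{*}A=I_{2}$; then $F(A)=AA^{*}$ is the Hermitian matrix of the orthogonal projection of $\C^{4}$ onto $X$, and its diagonal is $\nu(A)=(\,|A_{11}|^{2}+|A_{12}|^{2},\ldots,|A_{41}|^{2}+|A_{42}|^{2}\,)$. Since $\mu$ is computed from the same representative $A$ through its $2\times2$ minors $P^{J}(A)$, the statement reduces to the four scalar identities
\[
|A_{i1}|^{2}+|A_{i2}|^{2}=\frac{\sum_{J}|P^{J}(A)|^{2}(\delta_{J})_{i}}{\sum_{J}|P^{J}(A)|^{2}},\qquad i=1,2,3,4,
\]
for every $A\in V_{4,2}$.

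First I would dispose of the denominator: by the Cauchy--Binet formula $\sum_{J}|P^{J}(A)|^{2}=\det(A^{*}A)=1$, so the right-hand side is simply $\sum_{j\ne i}|P^{\{i,j\}}(A)|^{2}$. It then remains to check
\[
|A_{i1}|^{2}+|A_{i2}|^{2}=\sum_{j\ne i}|A_{i1}A_{j2}-A_{i2}A_{j1}|^{2},
\]
which is an elementary computation: expanding the squares on the right and inserting the relations $\sum_{j}|A_{j1}|^{2}=\sum_{j}|A_{j2}|^{2}=1$ and $\sum_{j}\overline{A_{j1}}A_{j2}=0$ (the entries of $A^{*}A=I_{2}$) collapses all the cross terms and leaves exactly $|A_{i1}|^{2}+|A_{i2}|^{2}$.

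A more conceptual route, which I would likely present instead, uses the Pl\"ucker vector $v=a_{1}\wedge a_{2}\in\Lambda^{2}\C^{4}$, whose coordinates are the $P^{J}(A)$ and which has $\|v\|^{2}=\det(A^{*}A)=1$. Writing $\mathrm{pr}_{X},\mathrm{pr}_{X^{\perp}}$ for the orthogonal projections onto $X$ and $X^{\perp}$, one has $(AA^{*})_{ii}=\|\mathrm{pr}_{X}e_{i}\|^{2}=1-\|\mathrm{pr}_{X^{\perp}}e_{i}\|^{2}=1-\|e_{i}\wedge v\|^{2}=1-\sum_{i\notin\{j,k\}}|P^{\{j,k\}}(A)|^{2}=\sum_{j\ne i}|P^{\{i,j\}}(A)|^{2}$, which is exactly $\mu(X)_{i}$. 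The one step needing a word of justification, $\|\mathrm{pr}_{X^{\perp}}e_{i}\|=\|e_{i}\wedge v\|$, follows by writing $e_{i}=u+w$ with $u\in X$, $w\in X^{\perp}$: then $e_{i}\wedge v=w\wedge v$ and $\|w\wedge v\|=\|w\|$ because $\{w/\|w\|,a_{1},a_{2}\}$ is orthonormal.

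I do not expect a genuine obstacle. The only points to watch are the well-definedness issue (one cannot use an arbitrary frame $A$, since $AA^{*}$ is not invariant under $A\mapsto AM$ for a general $M\in\GL_{2}(\C)$, so orthonormal frames are required) and keeping straight which minor $P^{\{i,j\}}$ appears; both arguments above handle these routinely.
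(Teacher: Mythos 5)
Your first computation is essentially the paper's own proof: the paper also takes an orthonormal frame $x,y$ for $L$, expands $\sum_{j\neq i}|P^{\{i,j\}}|^{2}=|x_i|^2+|y_i|^2$ by collapsing the cross terms with the relations $\sum_j|x_j|^2=\sum_j|y_j|^2=1$ and $\sum_j\bar x_j y_j=0$, and deduces $\sum_{J}|P^{J}|^{2}=1$ (the paper gets this by summing the four coordinate identities rather than quoting Cauchy--Binet, but it is the same fact). Both versions are correct, and you rightly flag the well-definedness point that $AA^{*}$ only descends to $G_{4,2}$ for orthonormal frames. Your second, conceptual route --- identifying $(AA^{*})_{ii}$ with $\|\mathrm{pr}_{X}e_{i}\|^{2}$ and computing $\|\mathrm{pr}_{X^{\perp}}e_{i}\|^{2}=\|e_{i}\wedge v\|^{2}=\sum_{i\notin J}|P^{J}|^{2}$ for the unit Pl\"ucker vector $v$ --- is not in the paper; it buys a coordinate-free explanation of \emph{why} the diagonal of the projection matrix is the moment map (it is the complement of the "angle" between $e_i$ and $X$), and it generalizes verbatim to $G_{n,k}$, whereas the paper's expansion is a direct check tailored to $k=2$. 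Either version is acceptable; no gaps.
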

\begin{proof}
Let $L\in G_{4,2}$ and  $x=(x_1,\ldots ,x_4)$, $y=(y_1,\ldots ,y_4)$ be its orthonormal frame  related to the standard Hermitian metric in $\C ^{4}$. Then $\nu (L)= (|x_1|^2+|y_1|^2,\ldots ,|x_4|^2+|y_4|^2)$. Let $A$ be a matrix whose column vectors are $x$ and $y$. The standard moment map is given by 
\[\mu (A) = \frac{1}{\sum\limits _{1\leq i<j\leq 4}|P^{ij}|^{2}}(|P^{12}|^2+|P^{13}|^2+|P^{14}|^2,\ldots ,|P^{14}|^2+|P^{24}|^2+|P^{34}|^2),\]
where $P^{ij}$ are the minors of the matrix $A$ determined by the $i$-th and $j$-th rows.

We have that 
\[ |P^{12}|^2+|P^{13}|^2+|P^{14}|^2 = |x_1y_2-x_2y_1|^2+|x_1y_3-x_3y_1|^2+|x_1y_4-x_4y_1|^2 =\]
\[ |x_1|^2(|y_2|^2+|y_3|^2+|y_4|^2) - x_1{\bar y_1}({\bar x_2}y_2+{\bar x_3}y_3+ {\bar x_4}y_4)- {\bar x_1}y_1(x_2\bar{y_2}+x_3\bar{y_3}+x_4\bar{y_4})+\]
\[|y_1|^2(|x_2|^2+|x_3|^2+ |x_4|^2= |x_1|^2+|y_1|^2\] 
and in analogous way  $\sum\limits_{i\neq k}|P^{ki}|^2 = |x_k|^2+|y_k|^2$ for any $1\leq k\leq 4$. On the other hand $\sum\limits_{i=1}^{4}(|x_i|^2+|y_i|^2) = \sum\limits_{k=1}^{4}\sum\limits_{i\neq 4}|P^{ki}|^2 = 2\cdot \sum\limits_{1\leq i<j\leq 4}|P^{ij}|^2$, what gives that $\sum\limits_{1\leq i<j\leq n}|P^{ij}|^2 = 1$. This implies that the first coordinate of the standard moment map coincides with the first coordinate for $\nu$. In the same way it follows that $\mu$ and $\nu$ coincide.
\end{proof}
Lemma~\ref{embed} and Proposition~\ref{diag} imply: 
\begin{cor}
There exists  equivariant embedding $f: G_{4,2}\to \R ^{10}$ such that:
\begin{itemize}
\item  $\R^{10} = \R ^{4}\times \R ^{6}$, where $T^4$ acts trivially on $\R ^{6}$.
\item for the projection $\pi : \R^{4}\times \R^{6}\to \R^{4}$ it holds $\mu = \pi \circ f$
\end{itemize}
\end{cor}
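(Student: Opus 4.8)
The plan is to read the statement off directly from Lemma~\ref{embed} and Proposition~\ref{diag}; the only thing to do is to record the product decomposition of $\R^{10}$ and the $T^{4}$-action on its factors.

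First I would take the $T^{4}$-equivariant embedding $f=F\colon G_{4,2}\to\R^{10}$ supplied by Lemma~\ref{embed}, where $F(A)=A(A^{*})^{T}$ is the Hermitian projection matrix of the subspace spanned by an orthonormal frame $A$, recorded through its four diagonal entries and its six upper off-diagonal entries. The $T^{4}$-action on the target is conjugation by the diagonal matrix $D_{t}=\diag(t_{1},\ldots ,t_{4})$, since $F(D_{t}A)=D_{t}F(A)D_{t}^{-1}$; from this formula one reads that every diagonal entry of $F(A)$ is fixed by $T^{4}$, while the off-diagonal entry in position $(i,j)$ is only multiplied by the unit scalar $t_{i}t_{j}^{-1}$.

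Next I would split $\R^{10}=\R^{4}\times\R^{6}$, taking $\R^{4}$ to be spanned by the diagonal coordinates and $\R^{6}$ by the off-diagonal ones. By the computation just recalled, $T^{4}$ acts trivially on the $\R^{4}$-summand, and this is precisely the factor onto which the projection $\pi$ is taken. Finally, Proposition~\ref{diag} identifies $\pi\circ f$ with the map $A\mapsto\diag(F(A))=\nu(A)$ and shows that $\nu$ coincides with the standard moment map $\mu$; hence $\mu=\pi\circ f$, which is the remaining assertion.

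I do not anticipate a genuine obstacle: the two substantive inputs, namely that $F$ is a $T^{4}$-equivariant embedding and that the diagonal of $F$ recovers the standard moment map, are exactly Lemma~\ref{embed} and Proposition~\ref{diag}. The only point needing a little care is the bookkeeping of the decomposition of $\R^{10}$: one must check that the summand carrying the trivial $T^{4}$-action is the same one onto which $\pi$ projects and on which $\mu$ is recorded.
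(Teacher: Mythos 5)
Your argument follows the paper's own route exactly: the paper's proof of this corollary is nothing more than the citation of Lemma~\ref{embed} and Proposition~\ref{diag}, and your fleshing-out via the conjugation formula $F(D_{t}A)=D_{t}F(A)D_{t}^{-1}$, $D_{t}=\diag(t_{1},\ldots ,t_{4})$, is precisely the intended content. One point deserves attention, though. What your computation establishes is that $T^{4}$ acts trivially on the \emph{diagonal} factor $\R^{4}$ --- the factor onto which $\pi$ projects --- whereas the corollary as printed asserts that $T^{4}$ acts trivially on $\R^{6}$. You should flag this as a typo in the statement rather than silently prove the corrected claim: as literally stated the corollary cannot hold, since $T^{4}$-invariance of $\mu=\pi\circ f$ together with equivariance of $f$ already forces the action to be trivial on the linear span of $\mu(G_{4,2})$, which is all of $\R^{4}$, and triviality on $\R^{6}$ in addition would make the action trivial on the entire image of the embedding, contradicting equivariance for the non-trivial action on $G_{4,2}$. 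A second, smaller bookkeeping issue is inherited from Lemma~\ref{embed}: the six upper off-diagonal entries of the Hermitian matrix $F(A)$ are complex, so the ``ten entries'' of the upper triangle really span $\R^{4}\times\C^{6}\cong\R^{16}$; the count $\R^{10}=\R^{4}\times\R^{6}$ is literally correct only for the real Grassmannian $G_{4,2}(\R)$, where $F(A)$ is real symmetric. Neither issue affects the substance of your argument, which is the right one.
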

\begin{rem}
In analogous way appealing to Section~\ref{complex} it can be shown that the considered action of $T^4$ on $\C P^5$ satisfies the same listed properties as the action of $T^4$ on $G_{4,2}$.
\end{rem}

\bibliographystyle{amsplain}

Victor M.~Buchstaber\\
Steklov Mathematical Institute, Russian Academy of Sciences\\ 
Gubkina Street 8, 119991 Moscow, Russia\\
E-mail: buchstab@mi.ras.ru
\\ \\ \\ 

Svjetlana Terzi\'c \\
Faculty of Science, University of Montenegro\\
Dzordza Vasingtona bb, 81000 Podgorica, Montenegro\\
E-mail: sterzic@ac.me 

\end{document}